\newtheorem{theorem}{Theorem}
\newtheorem{corollary}[theorem]{Corollary}
\newtheorem{conj}[theorem]{Conjecture}
\newtheorem{lemma}[theorem]{Lemma}
\theoremstyle{definition}
\newtheorem{defn}{Definition}
\theoremstyle{remark}
\newtheorem{rem}{Remark}
\numberwithin{equation}{section}
\numberwithin{theorem}{section}
\numberwithin{defn}{section}
\begin{document}
\title[Parity of coefficients of mock theta functions]
 {Parity of coefficients of mock theta functions}

\author{Liuquan Wang}
\address{School of Mathematics and Statistics, Wuhan University, Wuhan 430072, Hubei, People's Republic of China}
\email{wanglq@whu.edu.cn;mathlqwang@163.com}

\subjclass[2010]{11D57, 11E25,  11F27, 11P83, 11P84}

\keywords{Mock theta functions; parity; Hecke-type series; Rogers-Ramanujan identities; partitions}


\begin{abstract}
We study the parity of coefficients of classical mock theta functions. Suppose $g$ is a formal power series with integer coefficients, and let $c(g;n)$ be the coefficient of $q^n$ in its series expansion. We say that $g$ is of parity type $(a,1-a)$ if $c(g;n)$ takes even values with probability $a$ for $n\geq 0$. We show that among the 44 classical mock theta functions, 21 of them are of parity type $(1,0)$. We further conjecture that 19 mock theta functions are of parity type $(\frac{1}{2},\frac{1}{2})$ and 4 functions are of parity type $(\frac{3}{4},\frac{1}{4})$. We also give characterizations of $n$ such that $c(g;n)$ is odd for the mock theta functions of parity type $(1,0)$.
\end{abstract}

\maketitle

\section{Introduction}\label{sec-intro}
In 1920, Ramanujan introduced mock theta functions in his last letter to Hardy. He gave a list of 17 mock theta functions and defined each function as a $q$-series in Eulerian form, and he associated with each mock theta function an order. His list contains mock theta functions of orders 3, 5 and 7. For example,
\begin{align}
f^{(3)}(q):=\sum_{n=0}^{\infty}\frac{q^{n^2}}{(-q;q)_{n}^2} \quad \text{\rm{and}} \quad  f_0^{(5)}(q):=\sum_{n=0}^{\infty}\frac{q^{n^2}}{(-q;q)_{n}}
\end{align}
are two mock theta functions of orders 3 and 5, respectively.
Here we follow \cite{Chen-Wang} to add a superscript $(n)$ to indicate that a mock theta function is of order $n$. We also adopt the customary $q$-series notation:
\begin{align}
&(a;q)_\infty =\prod\limits_{n=0}^\infty (1-aq^n), \quad |q|<1, \\
&(a;q)_n=\frac{(a;q)_\infty}{(aq^n;q)_\infty}, \quad n\in \mathbb{N},\\
&(a_1,a_2,\cdots, a_m;q)_n =(a_1;q)_n(a_2;q)_n\cdots (a_m;q)_n, \quad n\in \mathbb{N}\cup \{\infty \}.
\end{align}
Meanwhile, Ramanujan presented some identities satisfied by mock theta functions. He also recorded  identities for mock theta functions of orders 6 and 10 in his lost notebook \cite{lostnotebook}. After their appearance, there  have been numerous studies of mock theta functions.

In the past, people are mainly focused on studying  Appell-Lerch and Hecke-type series representations of mock theta functions as well as identities satisfied by them. Watson \cite{Watson,Watson-2} found Appell-Lerch series representations for mock theta functions of order 3. For instance, he \cite{Watson}  provided the following Appell-Lerch series representation:
\begin{align}
f^{(3)}(q)=\frac{2}{(q;q)_{\infty}}\sum_{n=-\infty}^{\infty}\frac{(-1)^nq^{\frac{3}{2}n^2+\frac{1}{2}n}}{1+q^n}. \label{intro-f(q)}
\end{align}
Andrews \cite{Andrews-TAMS} found  Hecke-type series representations for mock theta functions of orders 5 and 7 such as
\begin{align}
f_0^{(5)}(q)=\frac{1}{(q;q)_{\infty}}\sum_{n=0}^{\infty}\sum_{|j|\leq n}(-1)^jq^{\frac{5}{2}n^2+\frac{1}{2}n-j^2}(1-q^{4n+2}). \label{intro-f0}
\end{align}
More Appell-Lerch and Hecke-type series representations can be found  in the works of  Watson \cite{Watson,Watson-2}, Andrews \cite{Andrews-TAMS}, Andrews and Hickerson \cite{Andrews-Hickerson}, Berndt and Chan \cite{Berndt-Chan}, Choi \cite{Choi-1,Choi-2}, Cui, Gu and Hao \cite{CGH}, Garvan \cite{Garvan-2015,Garvan-arXiv}, Gordon and McIntosh \cite{Gordon-McIntosh}, Hickerson \cite{Hickerson}, Mortenson \cite{Mortenson-2013}, Srivastava \cite{Srivastava} and Zwegers \cite{Zwegers-Rama}. Around 2002, based on Appell-Lerch and Hecke-type series representations of mock theta functions, Zwegers \cite{Zwegers} successfully explained the modular properties of mock theta functions. For a more comprehensive historical background on mock theta functions, see the survey of Gordon and McIntosh \cite{Gordon-McIntosh-Survey}, the paper of Hickerson and Mortenson \cite{Hickerson-Mortenson} or the recent book of Andrews and Berndt \cite{lost-notebook5}. We remark that in a recent work \cite{Chen-Wang}, Chen and the author provided a unified method for establishing Appell-Lerch and Hecke-type series representations for mock theta functions of orders 2, 3, 5, 6 and 8.

Along the process of understanding mock theta functions, people are also interested in arithmetic properties of their coefficients. This was again motivated by the work of Ramanujan, who found that the partition function $p(n)$, enumerating the number of partitions of $n$, satisfies beautiful congruences such as
\begin{align}
p(5n+4) &\equiv 0 \pmod{5}, \\
p(7n+5) &\equiv 0 \pmod{7}, \\
p(11n+6) &\equiv 0 \pmod{11}.
\end{align}
Since then  a rich theory on arithmetic properties of  partitions has been developed by many mathematicians. In contrast, arithmetic properties of coefficients of mock theta functions were less understood. Nevertheless, there have been a number of works which provide congruences satisfied by mock theta functions.

For any formal power series
\begin{align}\label{formal-eq}
g(q)=\sum_{n=0}^ \infty c(n)q^n,
\end{align}
we use $c(g;n)=c(n)$ to denote the coefficient of $q^n$ in the series expansion of $g(q)$. Garthwaite and Penniston \cite{GP} showed that $c(\omega^{(3)};n)$ satisfies infinitely many Ramanujan-type congruences.  Waldherr \cite{Waldherr} gave the first explicit examples of such congruences:
\begin{align}
c(\omega^{(3)};40n+27)\equiv c(\omega^{(3)};40n+35) \equiv 0 \pmod{5}.
\end{align}
Bruinier and Ono \cite{BO} proved some nice identities and congruences modulo 512 for $c(\omega^{(3)};n)$.
 Andrews, Passary, Sellers and Yee \ \cite{APSY} found more congruences like
\begin{align*}
c(\omega^{(3)};8n+3) &\equiv 0 \pmod{4}, \\
c(\omega^{(3)};8n+5) &\equiv 0 \pmod{8}.
\end{align*}
The author \cite{Wang2017} found some congruence for $\nu^{(3)}(q)$ such as
\begin{align}
c(\nu^{(3)};4n+2) &\equiv 0 \pmod{2}.
\end{align}
Several congruences for $c(\omega^{(3)};n)$ and $c(\nu^{(3)};n)$ modulo 11 were also presented in \cite{Wang2017}.  For more congruences satisfied by coefficients of mock theta functions, see the works of Berg et al. \cite{BCGKMW}, Brietzke, Silva and Sellers \cite{BSS}, Chan and Mao \cite{Chan-Mao}, Chern and Wang \cite{Chern-Wang},  Garthwaite \cite{Garthwaite}, Lin \cite{Lin}, Mao \cite{Mao,Mao-BAMS}, Qu, Wang and Yao \cite{Qu-Wang-Yao} and Xia \cite{Xia}, for example.

For any formal power series $g(q)$ as in \eqref{formal-eq}, we define its type modulo a positive integer as follows.
\begin{defn}
Given a positive integer $m$, if the following limits exist
\begin{align}
\lim\limits_{N\rightarrow \infty} \frac{\#\{0\leq n< N: c(g;n)\equiv i\,\, \text{ \rm{(mod $m$)}}\}}{N} =\alpha_i, \quad i=0,1,\dots, m-1.
\end{align}
Then we say that $g$ and the sequence $\{c(g;n): n\geq 0\}$ are \textit{of type $(\alpha_0,\alpha_1,\dots, \alpha_{m-1})$ modulo $m$}. In particular, when $m=2$, we also say that $g$ and the sequence $\{c(g;n): n\geq 0\}$ are of \textit{parity type} $(\alpha_0,\alpha_1)$.
\end{defn}
It is clear from definition that $\alpha_0+\alpha_1+\cdots +\alpha_{m-1}=1$.

In this paper, we will focus on the parity of coefficients of mock theta functions.  This is mainly motivated by research on the parity of the partition function $p(n)$. It has long been conjectured that $p(n)$ is of type $(\frac{1}{2},\frac{1}{2})$ modulo 2, i.e.,
\begin{align}\label{pn-half}
\#\{0\leq n\leq N: p(n)  \text{ is odd (even)}\} \sim \frac{1}{2}N.
\end{align}
By doing extensive computations, Parkin and Shanks \cite{Parkin-Shanks} provided numerical evidence indicating that \eqref{pn-half} is very likely to be true.
In 1959, Kolberg \cite{Kolberg} showed that $p(n)$ assumes either even or odd values infinitely often. Mirsky \cite{Mirsky} found the first quantitative result which states that
\begin{align}
\#\{0\leq n\leq N: p(n) \text{ is odd (even)}\} >\frac{\log\log N}{2\log 2}.
\end{align}
After then, this lower bound has been improved in a number of works. For example, In an appendix to \cite{NRS}, Serre proved that
\begin{align}
\lim_{N\rightarrow \infty} \frac{\#\{0\leq n\leq N: p(n) \text{ is even}\}}{\sqrt{N}} =\infty.
\end{align}
For a good historical account on this process, see \cite[Sec.\ 2.5]{Berndt-book}. To the best of our knowledge, so far the best known result for even values of $p(n)$ was given by Bella\" iche and Nicolas \cite{Nicolas}:
\begin{align}
\#\{0\leq n\leq N: p(n) \text{ is even}\} \geq 0.069\sqrt{N}\log\log N. \label{best-even}
\end{align}
While the best known results for odd values of $p(n)$ was given by Bella\" iche, Green and Soundararajan \cite{BGS}:
\begin{align}
\#\{0\leq n\leq N: p(n) \text{ is odd} \}\gg \frac{\sqrt{N}}{\log\log N}. \label{best-odd}
\end{align}

There have also been other research on the parity of certain coefficients defined by $q$-series in Eulerian form. For example, recall the famous Rogers-Ramanujan identities
\begin{align}
G(q):=\sum_{n=0}^\infty \frac{q^{n^2}}{(q;q)_n}=\frac{1}{(q;q^5)_\infty (q^4;q^5)_\infty}, \\
H(q):=\sum_{n=0}^\infty \frac{q^{n^2+n}}{(q;q)_n} =\frac{1}{(q^2;q^5)_\infty(q^3;q^5)_\infty}.
\end{align}
Gordon \cite{Gordon} proved that $c(G;n)$ is odd for $n$ odd if and only if $60n-1=p^{4a+1}m^2$ for prime $p$ and integer $m$ with $p\nmid m$. Similarly, he proved that $c(H;n)$ is odd for $n$ even if and only if $60n+11=p^{4a+1}m^2$ for prime $p$ and integer $m$ with $p\nmid m$. Based on these characterizations, Chen \cite{Chen} proved that for sufficiently large $N$,
\begin{align}
\#\{0\leq n\leq N: c(G;2n+1)\equiv 1\,\, \mathrm{ (mod \,\, 2)}\}
=\frac{2\pi^2}{5}\frac{N}{\log N}+O\left(\frac{N\log\log N}{\log^2N} \right),  \label{intro-Chen-G} \\
\#\{0\leq n\leq N: c(H;2n)\equiv 1 \,\, \mathrm{ (mod \,\, 2)}\}=\frac{2\pi^2}{5}\frac{N}{\log N}+O\left(\frac{N\log\log N}{\log^2N} \right). \label{intro-Chen-H}
\end{align}

Inspired by the above works, the main goal of this paper is to give a systematic study on the parity of coefficients of classical mock theta functions. Given a mock theta function $g(q)$, our aim is to find its type modulo 2 and give characterizations for $n$ such that $c(g;n)$ is odd.
Following the notations of mock theta functions in \cite{Chen-Wang}, we can state our main result as follows.
\begin{theorem}\label{thm-main}
The following $21$ mock theta functions
\begin{align*}
&A^{(2)}(q), B^{(2)}(q),  \psi^{(3)}(q), \omega^{(3)}(q), \nu^{(3)}(q),  \rho^{(3)}(q),  \psi_0^{(5)}(q), \psi_1^{(5)}(q), F_0^{(5)}(q), F_1^{(5)}(q), \rho^{(6)}(q), \\
&  \sigma^{(6)}(q),   \phi_{-}^{(6)}(q), \psi_{-}^{(6)}(q), T_0^{(8)}(q),   T_1^{(8)}(q), U_1^{(8)}(q),  V_0^{(8)}(q), V_1^{(8)}(q), \phi^{(10)}(q), \psi^{(10)}(q)
\end{align*}
are all of parity type $(1,0)$. Furthermore, for any mock theta function $g$ from the above list excluding $\rho^{(3)}(q)$, we can find all the $n$ such that $c(g;n)$ is odd.
\end{theorem}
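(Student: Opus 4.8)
The plan is to run every function through its Hecke-type series representation and then reduce modulo $2$. Following \cite{Chen-Wang} and the references collected in the introduction, each of the twenty-one functions admits a representation of the shape
\[
g(q)=\frac{1}{(q;q)_\infty}\sum(\pm1)\,q^{Q},
\]
where the sum is an explicit single or double sum with a quadratic exponent $Q$. The first step is to pass to $\mathbb{F}_2$-coefficients. On the series side all signs disappear, since $-1\equiv1\pmod{2}$, so only the exponents matter. On the product side the elementary congruence $(1-q^n)^2\equiv1-q^{2n}\pmod{2}$ yields $(q;q)_\infty^2\equiv(q^2;q^2)_\infty\pmod{2}$, and therefore
\[
\frac{1}{(q;q)_\infty}\equiv\frac{(q;q)_\infty}{(q^2;q^2)_\infty}=(q;q^2)_\infty\pmod{2}.
\]
Thus modulo $2$ every $g$ becomes $(q;q^2)_\infty$ times an unsigned theta sum, a genuinely computable object.

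The heart of the argument is to collapse each of these products to a single lacunary theta series. I would use the Jacobi triple product together with standard theta identities to absorb the factor $(q;q^2)_\infty$ and to telescope the inner summation; the expectation is that modulo $2$ the terms of the Hecke double sum pair up under its natural reflection symmetry and cancel, leaving only a diagonal contribution. The target outcome is, for each $g$, a congruence $g(q)\equiv\Theta_g(q)\pmod{2}$ in which $\Theta_g$ is either a unary theta $\sum_{n\ge0}q^{f(n)}$ with $f$ a quadratic polynomial, or a product of two such thetas, i.e.\ the generating function of a positive-definite binary quadratic form.

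From such a reduction both assertions follow. For the parity type, the exponent set of a positive-definite form is sparse: in the unary case there are only $O(\sqrt{N})$ values $f(n)$ below $N$, and in the binary case Landau's theorem shows the integers represented by a positive-definite binary form have density $0$. In either case the support of $\Theta_g$, hence the set of odd coefficients, has density zero, so $\alpha_0=1$ and $\alpha_1=0$ and $g$ is of parity type $(1,0)$. For the characterization of the odd $n$, the unary case is immediate: $f$ is strictly monotone on the relevant range, each exponent is attained exactly once, and $c(g;n)$ is odd precisely when $n=f(k)$, which after completing the square becomes an explicit perfect-square condition $4an+(b^2-4ac)=\square$ in a fixed residue class. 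In the binary case the parity of $c(g;n)$ equals the parity of a representation number, which is controlled by a multiplicative function and yields a Gordon-type description ``$An+B=p^{4a+1}m^2$'' exactly of the form appearing in \eqref{intro-Chen-G}.

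The main obstacle is not the conceptual framework but the case-by-case collapse of the twenty-one double sums: each representation carries its own quadratic form and its own summation range, so verifying the mod-$2$ cancellations and invoking the correct triple-product identity must be done individually, and this is where essentially all of the labour resides. The function $\rho^{(3)}(q)$ is set apart precisely here. Its mod-$2$ reduction is still supported on a density-zero set, so it remains of parity type $(1,0)$ and belongs in the list; but the quadratic form governing its odd coefficients does not lead to a closed-form arithmetic description of the $n$ with odd representation number, which is exactly why the final sentence restricts the explicit characterization of odd $n$ to the other twenty functions.
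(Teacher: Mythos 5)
Your overall framework (Hecke-type representation, reduce mod $2$, collapse to a theta object, read off density and the odd $n$) matches the paper's strategy in outline, but there is a genuine gap at the central step: you predict that each function collapses mod $2$ to a unary theta series or to the theta series of a \emph{positive-definite} binary quadratic form, and you propose to finish with Landau's theorem and representation numbers of definite forms. In fact, for $14$ of the $21$ functions (the group containing $A^{(2)}$, $\psi^{(3)}$, $\psi_0^{(5)}$, $\psi_1^{(5)}$, $\sigma^{(6)}$, $\phi_-^{(6)}$, $\psi_-^{(6)}$, $T_0^{(8)}$, $T_1^{(8)}$, etc.) the Hecke double sum reduces to an \emph{indefinite} form $x^2-dy^2$ with $d\in\{2,3,6,15\}$. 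For an indefinite form the naive representation number is infinite, so ``parity of a representation number controlled by a multiplicative function'' is not yet meaningful: one must count representations up to the action of the unit group of $\mathbb{Z}[\sqrt{d}]$. The paper's essential mechanism---absent from your proposal---is the Andrews--Dyson--Hickerson lemma giving a fundamental domain for that action, together with the observation that the restricted summation ranges $|j|\le n$, $-n-1\le j\le n$, etc.\ in the Hecke sums coincide \emph{exactly} with those fundamental domains; only then does $c(g;n)\equiv\tfrac12 H_{\mathbb{Z}[\sqrt d]}(An+B)\pmod 2$ follow, after which explicit multiplicative formulas for $H_{\mathbb{Z}[\sqrt d]}$ (derived from the ideal-counting function $\sum_{m\mid n}\chi_K(m)$, and for $d=15$ requiring an extra argument since $\mathbb{Z}[\sqrt{15}]$ is not a PID) yield both the density-zero statement and the Gordon-type criterion $An+B=p^{4a+1}m^2$. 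Your terms ``pair up under the natural reflection symmetry and cancel, leaving only a diagonal contribution'' describes what happens for the seven functions in the first group, but it is not what happens in the indefinite cases, where nothing cancels and every lattice point in the fundamental domain survives.

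A second, smaller gap concerns $\rho^{(3)}(q)$: you assert its mod-$2$ reduction ``is still supported on a density-zero set'' as if this were automatic. In the paper this is the hardest single case, requiring $2$-dissections via the Appell--Lerch $m$-function, a new theta-quotient identity, a recursion relating $c(\rho^{(3)};4n+2)$ to $c(\rho^{(3)};n)$ and the $3$-core function, and an induction over arithmetic progressions $2^kn+s_k$ to conclude that $c(\rho^{(3)};n)$ is almost always even. Also note that several functions in the list ($\nu^{(3)}$, $\omega^{(3)}$, $V_1^{(8)}$, and the subcase $\rho^{(3)};4n+1$) reduce not to theta series at all but to $3$-core counts, divisor functions, or $r_2(n)$, each handled by its own multiplicative formula. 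So while your skeleton is recognizable, the tools you name would not carry out the majority of the cases.
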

The functions listed in Theorem \ref{thm-main} can be further divided into three groups. The first group consists of $B^{(2)}(q)$, $\omega^{(3)}(q)$, $\nu^{(3)}(q)$,  $\rho^{(6)}(q)$, $V_0^{(8)}(q)$, $\phi^{(10)}(q)$ and $\psi^{(10)}(q)$. For $g(q)$ in the first group, the coefficient $c(g;n)$ is odd if and only if $n=P(k)$ for some quadratic polynomial $P(x)\in \mathbb{Q}[x]$, and hence the number of odd values can be precisely counted. For example, for the second order mock theta function $B^{(2)}(q)$ (see \eqref{mock-2-B-defn}), we show that (see Theorem \ref{thm-mock-2-B}) the coefficient $c(B^{(2)};n)$ is odd if and only if $n=2k^2+2k$ for some $k\geq 0$. As a consequence,
\begin{align}\label{intro-mock-2-B}
\#\left\{0\leq n\leq N|c(B^{(2)};n)\equiv 1 \,\, (\mathrm{mod \,\, 2}) \right\}=\left\lfloor \frac{\sqrt{2N+1}+1}{2}\right\rfloor,
\end{align}
where $\lfloor x\rfloor$ denotes the integer part of a real number $x$.

The second group consists of $A^{(2)}(q)$,  $\psi^{(3)}(q)$,  $\psi_0^{(5)}(q)$, $\psi_1^{(5)}(q)$, $F_0^{(5)}(q)$, $F_1^{(5)}(q)$, $\sigma^{(6)}(q)$, $\phi_{-}^{(6)}(q)$, $\psi_{-}^{(6)}(q)$,  $T_0^{(8)}(q)$,   $T_1^{(8)}(q)$, $U_1^{(8)}(q)$ and $V_1^{(8)}(q)$.
For any mock theta function $g(q)$ from this group, the parity of $c(g;n)$ is similar to $c(G;2n+1)$ and $c(H;2n)$. For instance, for the second order mock theta function $A^{(2)}(q)$ (see \eqref{mock-2-A-defn}),
we prove that (see Theorem \ref{thm-mock-2-A}) the number $c(A^{(2)};n)$ is odd if and only if $8n-1=m^2p^{4a+1}$, where $p$  is a prime, $m$ is a positive integer and $p\nmid m$. Furthermore, we find that
\begin{align}\label{intro-mock-2-A-estimate}
\#\left\{0\leq n\leq N|c(A^{(2)};n)\equiv 1  (\mathrm{mod \,\, 2}) \right\}=\frac{\pi^2}{4}\frac{N}{\log N}+O\left(\frac{N}{\log^2N} \right).
\end{align}

The third group consists of the single function $\rho^{(3)}(q)$. We have not been able to find all $n$ such that $c(\rho^{(3)};n)$ is odd. Nevertheless, we will show that $c(\rho^{(3)};4n)$ is odd if and only if $n=2k(3k+1)$ for some integer $k$, and $c(\rho^{(3)};2n+1)$ is odd if and only if $6n+5=p^{4a+1}m^2$ for some prime $p$ and integer $m$ with $p\nmid m$. As such, the function $\rho^{(3)}(q)$ mixes the parity properties of functions in the first two groups.

For the remaining 23 classical mock theta functions, we have not succeeded in giving characterizations of their odd values.
Based on computations of their coefficients and known properties, we make the following conjecture.
\begin{conj}\label{conj-mock}
The following $19$ mock theta functions
\begin{align*}
&\mu^{(2)}(q), f^{(3)}(q), \phi^{(3)}(q), \chi^{(3)}(q), \phi_0^{(5)}(q), \phi_1^{(5)}(q), \chi_0^{(5)}(q),  \chi_1^{(5)}(q),    \phi^{(6)}(q),
\psi^{(6)}(q),  \\
& \gamma^{(6)}(q), \mathcal{F}_0^{(7)}(q),  \mathcal{F}_1^{(7)}(q), \mathcal{F}_2^{(7)}(q),    S_0^{(8)}(q), S_1^{(8)}(q), U_0^{(8)}(q),  X^{(10)}(q), \chi^{(10)}(q).
\end{align*}
are all of parity type $(\frac{1}{2},\frac{1}{2})$.

The $4$ functions  $f_0^{(5)}(q)$, $f_1^{(5)}(q)$, $2\mu^{(6)}(q)$ and $\lambda^{(6)}(q)$ are all of parity type $(\frac{3}{4},\frac{1}{4})$.
\end{conj}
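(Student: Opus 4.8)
The plan is to attack both halves of this conjecture by first reducing each generating function modulo $2$ to a concrete $q$-series, and only then analysing the density of its odd coefficients. The first step is purely formal and uses three congruences: since $-1\equiv 1\pmod 2$ one has $(-q;q)_n\equiv(q;q)_n\pmod 2$; since $(1-q^k)^2\equiv 1-q^{2k}\pmod 2$ one has $(q;q)_n^2\equiv(q^2;q^2)_n\pmod 2$; and the same Freshman's-dream identity gives $(q;q)_\infty^2\equiv(q^2;q^2)_\infty\pmod 2$. Applying these to the Eulerian definitions collapses each mock theta function to a much simpler series. For instance
\begin{align}
f_0^{(5)}(q)\equiv\sum_{n=0}^{\infty}\frac{q^{n^2}}{(q;q)_n}=G(q),\qquad f_1^{(5)}(q)\equiv\sum_{n=0}^{\infty}\frac{q^{n^2+n}}{(q;q)_n}=H(q)\pmod 2,
\end{align}
while $f^{(3)}(q)\equiv\sum_{n\ge 0}q^{n^2}/(q^2;q^2)_n\pmod 2$. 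For each of the $23$ functions I would record the resulting reduction, expressing it wherever possible as an eta-quotient, a theta series, or a Rogers--Ramanujan-type series in $q$ or $q^2$.

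Second, for the four functions of conjectured type $(\frac34,\frac14)$ I would exploit the reductions to $G$ and $H$ above by splitting the coefficients according to the parity of the index. Gordon's characterizations, quoted in the introduction, together with Chen's estimates \eqref{intro-Chen-G} and \eqref{intro-Chen-H}, show that the odd coefficients of $G$ at odd indices, and of $H$ at even indices, form a set of density $0$. The conjectural type $(\frac34,\frac14)$ would then follow if one could show that in the complementary residue class the coefficients are equidistributed modulo $2$, that is, odd with density $\frac12$; combining the two classes yields $\tfrac12\cdot 0+\tfrac12\cdot\tfrac12=\tfrac14$. The analogous bookkeeping for $2\mu^{(6)}(q)$ and $\lambda^{(6)}(q)$ should follow once their mod-$2$ reductions are identified with analogous sparse-plus-equidistributed building blocks.

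Third, for the nineteen functions of conjectured type $(\frac12,\frac12)$ the target is pure equidistribution. Here I would try to match the reduced series, after multiplication by a suitable power of $(q;q)_\infty$, with the mod-$2$ reduction of a modular form (or of a modular form plus a false-theta correction) whose Fourier coefficients are governed by a residual Galois representation, and then invoke Serre's density theorems together with the Hecke-stability and nilpotency machinery of the kind used by Bella\"iche--Nicolas to force the odd support to have density $\frac12$.

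The decisive obstacle is precisely this last equidistribution step, and it is the reason the statement is only a conjecture. Proving that the odd coefficients of any of these series have density \emph{exactly} $\frac12$ (or exactly $\frac12$ within a fixed progression) is beyond current technology: the simplest representative, $f^{(3)}(q)\equiv\sum_{n\ge0}q^{n^2}/(q^2;q^2)_n\pmod 2$, is morally of the same difficulty as the long-standing conjecture \eqref{pn-half} that $p(n)$ is equidistributed modulo $2$, which remains open despite the refined lower bounds \eqref{best-even} and \eqref{best-odd}. What one can realistically establish with present methods is one-sided: that each such series takes odd and even values infinitely often, with quantitative lower bounds of the shape $\gg\sqrt N/\log\log N$, obtained by transplanting the arguments of Serre, Bella\"iche--Nicolas and Bella\"iche--Green--Soundararajan to the reduced series. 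A full proof of the exact densities $\frac12$ and $\frac14$ will very likely require a genuinely new input into the parity problem for partition-like generating functions.
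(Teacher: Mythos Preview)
Your proposal is appropriate in spirit: the statement is labelled a \emph{conjecture} in the paper and is not proved there either. The paper's ``treatment'' is exactly the programme you outline---reduce each function modulo $2$ to a simpler series, handle the sparse halves of the $(\tfrac34,\tfrac14)$ candidates via Gordon's characterizations, and then acknowledge that the remaining equidistribution step (density exactly $\tfrac12$) is of the same difficulty as the parity conjecture \eqref{pn-half} for $p(n)$ and is currently out of reach. You have correctly identified both the reductions and the obstruction.

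One methodological difference worth noting: where you reduce directly from the Eulerian definitions (e.g.\ $f^{(3)}(q)\equiv\sum_{n\ge 0}q^{n^2}/(q^2;q^2)_n\pmod 2$), the paper instead uses Appell--Lerch and Hecke-type representations to obtain sharper and cleaner congruences, for instance $f^{(3)}(q)\equiv 1/(q;q)_\infty\pmod 4$ from Watson's formula \eqref{mock-3-f-A}, which makes the link to $p(n)$ explicit. Similarly, for $2\mu^{(6)}$ and $\lambda^{(6)}$ the paper does not rely on ad hoc reductions but on Ramanujan's identities relating them to $\phi^{(6)}$ and $\psi^{(6)}$ (Entries 7.5.2), yielding exact formulas like \eqref{6-mu-2n1} and \eqref{6-lambda-psi-id-2} that isolate the ``sparse'' half. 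This buys the paper a reduction of the full conjecture to just $15$ independent functions and slightly stronger congruences, but it does not get any closer to the density $\tfrac12$ than your approach does; both stop at the same wall.
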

Here we consider $2\mu^{(6)}(q)$ instead of $\mu^{(6)}(q)$ because that $c(\mu^{(6)};n)$ is sometimes a half integer instead of an integer (see \eqref{6-mu-half}).

Now we illustrate some examples for Conjecture \ref{conj-mock}. In Theorem \ref{thm-mock-3-f-phi} we prove that $c(f^{(3)};n)\equiv p(n)$ (mod 4) and hence \eqref{pn-half} suggests that $f^{(3)}(q)$ is likely to be of parity type $(\frac{1}{2},\frac{1}{2})$.

As for $g$ being any of the mock theta functions  $f_0^{(5)}(q)$, $f_1^{(5)}(q)$, $2\mu^{(6)}(q)$ and $\lambda^{(6)}(q)$, we  show that there exists $r=0$ or 1 such that  $c(g;2n+r)$ is almost always even, and numerical evidence reveals that the sequence $c(g;2n+1-r)$ takes odd values half of the time. For instance, we show in Theorem \ref{thm-mock-6-lambda} that $c(\lambda^{(6)};2n)$ is odd if and only if $n=k(k+1)/2$ for some integer $k$. Moreover, we find that (see \eqref{6-lambda-psi})
\begin{align}\label{intro-6-lambda-psi}
c(\lambda^{(6)};2n-1) \equiv c(\psi^{(6)};n) \pmod{2}.
\end{align}
 Therefore, the conjecture that $\psi^{(6)}(q)$ is of parity type $(\frac{1}{2},\frac{1}{2})$ is equivalent to the conjecture that $\lambda^{(6)}(q)$ is of parity type $(\frac{3}{4},\frac{1}{4})$.

The paper is organized as follows. In Section \ref{sec-pre} we introduce some notations and collect some necessary results. In particular, we derive formulas for the number of inequivalent elements with fixed norms in the quadratic integer rings $\mathbb{Z}[\sqrt{d}]$ with $d\in \{2,3,6,15\}$. These formulas are fundamental in giving characterizations for odd values of coefficients of mock theta functions of parity type $(1,0)$. Then in Sections \ref{sec-mock-2}--\ref{sec-mock-10} we study parity of coefficients of mock theta functions of orders 2, 3, 5, 6, 7, 8 and 10, respectively. Theorem \ref{thm-main} will follow from the corresponding theorems for the mock theta functions in list. In Section \ref{sec-concluding} we point out  that to prove Conjecture \ref{conj-mock}, we only need to verify it for 15 functions instead of all of them. We also give numerical evidence that supports this conjecture.

\section{Preliminaries}\label{sec-pre}
We recall the Jacobi's triple product identity:
\begin{align*}
j(x;q):=(x)_\infty (q/x)_\infty (q)_\infty =\sum_{n=-\infty}^{\infty} (-1)^nq^{\binom{n}{2}}x^n.
\end{align*}
As some special cases, we let $a$ and $m$ be rational numbers with $m$ positive and define
\begin{align*}
J_{a,m}:=j(q^a;q^m), \quad \overline{J}_{a,m}:=j(-q^a;q^m) \,\, \textrm{and} \,\, J_{m}:=J_{m,3m}=(q^m;q^m)_\infty.
\end{align*}
The following identities will be used without mention (see \cite[Section 2]{Hickerson-Mortenson}):
\begin{align*}
&\overline{J}_{0,1}=2\overline{J}_{1,4}=2\frac{J_{2}^2}{J_{1}}, \quad \overline{J}_{1,2}=\frac{J_{2}^5}{J_{1}^2J_{4}^2}, \quad {J}_{1,2}=\frac{J_{1}^2}{J_{2}}, \quad \overline{J}_{1,3}=\frac{J_2J_{3}^2}{J_1J_6}, \\
& {J}_{1,4}=\frac{J_1J_4}{J_2}, \quad J_{1,6}=\frac{J_1J_6^2}{J_2J_3}, \quad \overline{J}_{1,6}=\frac{J_2^2J_3J_{12}}{J_1J_4J_6}.
\end{align*}
Following Hickerson and Mortenson \cite{Hickerson-Mortenson}, we define
\begin{align}\label{m-defn}
m(x,q,z):=\frac{1}{j(z;q)}\sum_{r=-\infty}^{\infty} \frac{(-1)^rq^{\binom{r}{2}}z^r}{1-q^{r-1}xz},
\end{align}
where $x,z\in \mathbb{C}^{*}:=\mathbb{C} \backslash \{0\}$ with neither $z$ nor $xz$ an integral power of $q$.
\begin{lemma}\label{lem-m-add}
(Cf.\ \cite[Corollary 3.7]{Hickerson-Mortenson}.) For generic $x,z\in \mathbb{C}^{*}$
\begin{align}
m(x,q,z)=m(-qx^2,q^4,z^4)-\frac{x}{q}m(-\frac{x^2}{q},q^4,z^4)-\frac{J_2J_4j(-xz^2;q)j(-xz^3;q)}{xj(xz;q)j(z^4;q^4)j(-qx^2z^4;q^2)}.
\end{align}
\end{lemma}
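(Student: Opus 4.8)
\emph{Proof proposal.} The plan is to treat both sides of the identity as functions of the single variable $z$ (with $x$ and $q$ generic and fixed) and to show that their difference is an elliptic function with no poles, hence a constant in $z$, which one then shows to vanish. The observation that makes this work is that both sides are invariant under $z\mapsto qz$. For the left-hand side this follows from a direct index shift in the defining series \eqref{m-defn}: replacing $r$ by $r+1$ and using $\binom{r+1}{2}=\binom{r}{2}+r$ together with the quasi-periodicity $j(qz;q)=-z^{-1}j(z;q)$ yields $m(x,q,qz)=m(x,q,z)$. For the two base-$q^{4}$ terms on the right, the same lemma applied with base $q^{4}$ gives $m(\cdot,q^{4},(qz)^{4})=m(\cdot,q^{4},z^{4})$, since $(qz)^{4}=q^{4}z^{4}$. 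Thus everything reduces to the theta quotient, and the first thing I would verify is that it too is invariant under $z\mapsto qz$.

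\medskip

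\noindent For the theta quotient this is a bookkeeping computation: under $z\mapsto qz$ each factor $j(-xz^{2};q)$, $j(-xz^{3};q)$, $j(xz;q)$, $j(z^{4};q^{4})$ and $j(-qx^{2}z^{4};q^{2})$ acquires an explicit monomial factor in $x,z,q$ coming from the general rule $j(q^{n}w;p)=(-1)^{n}p^{-\binom{n}{2}}w^{-n}j(w;p)$, and a short exponent count shows that these monomials cancel completely, leaving the quotient unchanged. Consequently the difference $D(z)$ of the two sides is invariant under $z\mapsto qz$, i.e.\ an elliptic function on $\mathbb{C}^{*}/q^{\mathbb{Z}}$. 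The heart of the argument is then to check that $D(z)$ has no poles: the left-hand side $m(x,q,z)$ has simple poles in $z$ only where $xz\in q^{\mathbb{Z}}$ (the apparent poles at $z\in q^{\mathbb{Z}}$ from the prefactor $1/j(z;q)$ being spurious), and these must be matched, residue for residue, by the pole of $1/j(xz;q)$ in the theta quotient, while the poles introduced by the base-$q^{4}$ sums (at $-qx^{2}z^{4},\,-x^{2}z^{4}/q\in q^{4\mathbb{Z}}$) and by $1/j(-qx^{2}z^{4};q^{2})$ must cancel among themselves. Establishing this residue cancellation is the step I expect to be the main obstacle.

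\medskip

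\noindent Once $D(z)$ is shown to be holomorphic and elliptic, it is constant in $z$; to conclude that the constant is $0$ it then suffices to evaluate the identity at one convenient value of $z$ (chosen so that one of the $m$-terms collapses to a finite theta expression) or to compare a single Laurent coefficient in a $q$-expansion. As an independent check, I would also derive the formula directly from the $n=2$ change-of-base of Hickerson and Mortenson \cite{Hickerson-Mortenson}, whose output is exactly two base-$q^{4}$ Appell--Lerch sums with arguments $-qx^{2}$ and $-x^{2}/q$ plus a theta term; applying their change-of-$z$ formula to normalise the companion variables to $z^{4}$ and then simplifying the resulting product of theta functions with the Jacobi triple product and the $J$-identities recorded in this section should reproduce the stated quotient $\frac{J_{2}J_{4}\,j(-xz^{2};q)\,j(-xz^{3};q)}{x\,j(xz;q)\,j(z^{4};q^{4})\,j(-qx^{2}z^{4};q^{2})}$.
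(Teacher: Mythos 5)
Your overall strategy---regard both sides as functions of $z$, check invariance under $z\mapsto qz$, match residues at the poles, and conclude by Liouville's theorem---is the standard one for identities of this type, and the two quasi-periodicity checks you describe (the index shift $r\mapsto r+1$ for $m(x,q,z)$, and the exponent count for the theta quotient via $j(q^nw;p)=(-1)^np^{-\binom{n}{2}}w^{-n}j(w;p)$) are both correct. Be aware, though, that the paper does not prove this lemma at all: it is quoted verbatim from Hickerson and Mortenson \cite[Corollary 3.7]{Hickerson-Mortenson}, so there is no internal argument to compare against and your sketch would have to stand on its own.

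As written it does not, for two reasons. First, there is a concrete error in your pole inventory: the poles of $m(x,q,z)$ at $z\in q^{\mathbb{Z}}$ are \emph{not} spurious. Writing $m(x,q,z)=j(z;q)^{-1}g(z)$ with $g(z)=\sum_r(-1)^rq^{\binom{r}{2}}z^r/(1-q^{r-1}xz)$, the value $g(1)=\sum_r(-1)^rq^{\binom{r}{2}}/(1-q^{r-1}x)$ is generically nonzero (a quick numerical check, or the change-of-$z$ formula of \cite{Hickerson-Mortenson}, whose right-hand side carries $j(z_1;q)$ in the denominator, confirms this), so $m(x,q,z)$ has simple poles on $z\in q^{\mathbb{Z}}$ as well as on $xz\in q^{\mathbb{Z}}$. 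Moreover, the right-hand side has candidate poles on the strictly larger set $z^4\in q^{4\mathbb{Z}}$, i.e.\ $z\in\{\pm1,\pm i\}\,q^{\mathbb{Z}}$, coming both from the prefactors $1/j(z^4;q^4)$ inside the two base-$q^4$ Appell--Lerch sums and from the factor $j(z^4;q^4)$ in the denominator of the theta quotient; the residues at the points of $\{-1,\pm i\}\,q^{\mathbb{Z}}$ must cancel among themselves, while those on $q^{\mathbb{Z}}$ must reproduce the pole of the left-hand side. None of this bookkeeping appears in your outline. Second, the two steps that actually carry the proof---the residue matching (which you yourself flag as the main obstacle) and the verification that the resulting constant vanishes identically in $x$---are left undone. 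Since the result is used here only as a quoted lemma, the efficient course is to cite \cite[Corollary 3.7]{Hickerson-Mortenson} as the paper does, or to execute your second suggested route (specializing their $n=2$ change-of-base theorem and normalizing the companion variable to $z^4$), which is essentially how that corollary is obtained in their paper.
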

We also use the notations
\begin{align*}
[z;q]_\infty&=(z;q)_\infty(q/z;q)_\infty, \\
[z_1,z_2,\dots,z_n;q]_\infty&=[z_1;q]_\infty[z_2;q]_\infty\cdots [z_n;q]_\infty.
\end{align*}

Let $d$ be a squarefree integer. Let $K$ be the quadratic field $\mathbb{Q}(\sqrt{d})$, and $\mathcal{O}_K$ be the ring of integers in $K$. We define
\begin{align}
D=\left\{\begin{array}{ll}
d & \text{if $d\equiv 1$ (mod 4)}; \\
4d & \text{if $d\equiv 2,3$ (mod 4)}.
\end{array}\right.
\end{align}
Following \cite[Chapter 3, Eq.\ (8.5)]{BS-book} we define the character of the field $K$ as follows.
\begin{defn}
Given a quadratic field $K=\mathbb{Q}(\sqrt{d})$. Let $\chi_K: \mathbb{Z}\rightarrow \mathbb{Z}$ be defined as follows:\\
(1) If $(x,D)>1$, then $\chi_K(x)=0$; \\
(2) If $(x,D)=1$, then
\begin{align}
\chi_K(x)=\left\{\begin{array}{ll}
\left(\frac{x}{|d|}\right) & \text{if $d\equiv 1$ (mod 4)}; \\
(-1)^{(x-1)/2}\left(\frac{x}{|d|}\right) & \text{if $d\equiv 3$ (mod 4)}; \\
(-1)^{(x^2-1)/8+(x-1)(d'-1)/4}\left(\frac{x}{|d'|}\right) & \text{if $d=2d'$}.
\end{array}\right.
\end{align}
Here $\left(\frac{a}{b}\right)$ is the Jacobi symbol.
\end{defn}
It is not difficult to see that $\chi_K$ is completely multiplicative \cite[p.\ 237]{BS-book}.
\begin{lemma}\label{lem-ideal}
(Cf.\ \cite[p.\ 249, Exercise 17]{BS-book}.) Let $T_{\mathcal{O}_K}(n)$ be the number of ideals in $\mathcal{O}_K$ of norm $n$. Then for $n\geq 1$,
\begin{align}
T_{\mathcal{O}_K}(n)=\sum_{m|n}\chi_K(m).
\end{align}
\end{lemma}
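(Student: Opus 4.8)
The plan is to show that both sides of the asserted identity are multiplicative arithmetic functions of $n$, and then to verify the identity on prime powers. For the right-hand side, since $\chi_K$ is completely multiplicative, the function $n\mapsto \sum_{m\mid n}\chi_K(m)$ is the Dirichlet convolution of $\chi_K$ with the constant function $1$, and is therefore multiplicative. For the left-hand side I would invoke unique factorization of ideals in the Dedekind domain $\mathcal{O}_K$ together with multiplicativity of the ideal norm, $N(\mathfrak{a}\mathfrak{b})=N(\mathfrak{a})N(\mathfrak{b})$: if $\gcd(m,n)=1$ then every ideal of norm $mn$ factors uniquely as a product of an ideal of norm $m$ and an ideal of norm $n$, so $T_{\mathcal{O}_K}(mn)=T_{\mathcal{O}_K}(m)\,T_{\mathcal{O}_K}(n)$. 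Consequently it suffices to prove the identity for $n=p^k$ with $p$ prime and $k\geq 1$.

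To handle prime powers I would use the decomposition law for quadratic fields, which governs the factorization of $p\mathcal{O}_K$ in terms of $\chi_K(p)$. The key point, which I would establish first, is that the character $\chi_K$ defined above coincides with the Kronecker symbol $\left(\frac{D}{\cdot}\right)$ attached to the discriminant $D$; the three cases in the definition of $\chi_K$ are precisely the reciprocity-type rewritings of $\left(\frac{D}{p}\right)$ according to whether $d\equiv 1\pmod 4$, $d\equiv 3\pmod 4$, or $d$ is even. With this identification the decomposition law reads: $p\mathcal{O}_K=\mathfrak{p}_1\mathfrak{p}_2$ with distinct prime ideals of norm $p$ when $\chi_K(p)=1$ (split); $p\mathcal{O}_K=\mathfrak{p}$ is prime of norm $p^2$ when $\chi_K(p)=-1$ (inert); and $p\mathcal{O}_K=\mathfrak{p}^2$ with $\mathfrak{p}$ of norm $p$ when $\chi_K(p)=0$ (ramified).

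It then remains to count, using that a prime ideal whose norm is a power of $p$ must lie above $p$, so that every ideal of norm $p^k$ is built from the prime ideals above $p$. In the split case the ideals of norm $p^k$ are exactly the $k+1$ products $\mathfrak{p}_1^{a}\mathfrak{p}_2^{k-a}$ with $0\leq a\leq k$, matching $\sum_{j=0}^{k}\chi_K(p^j)=k+1$. In the inert case an ideal of norm $p^k$ exists only for $k$ even, namely $(p\mathcal{O}_K)^{k/2}$, so $T_{\mathcal{O}_K}(p^k)=\frac{1+(-1)^k}{2}$, which agrees with $\sum_{j=0}^{k}(-1)^j$. In the ramified case $\mathfrak{p}^k$ is the unique ideal of norm $p^k$, so $T_{\mathcal{O}_K}(p^k)=1=1+\sum_{j=1}^{k}0=\sum_{j=0}^{k}\chi_K(p^j)$. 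This verifies the identity on all prime powers and hence, by multiplicativity, for every $n\geq 1$.

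The main obstacle is the identification in the second step: proving that the concretely defined character $\chi_K$ is exactly the Kronecker symbol of the discriminant $D$, so that it correctly records the splitting type of each rational prime. This requires a careful case analysis matching the explicit formulas in the definition against quadratic reciprocity and its supplementary laws, including the behavior at $p=2$ encoded by the factor $(-1)^{(x^2-1)/8}$ in the even case. Once this identification is in place, the decomposition law and the counting in each of the three splitting cases are routine, as is the reduction to prime powers.
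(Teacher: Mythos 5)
Your proof is correct, but note that the paper does not prove this lemma at all: it is quoted verbatim as an exercise from Borevich--Shafarevich, so there is nothing in the paper to compare against except the citation. Your argument is the standard one and all the steps check out. The reduction to prime powers is sound (for $(m,n)=1$, unique factorization of ideals and multiplicativity of the norm split any ideal of norm $mn$ uniquely into its parts supported over the prime divisors of $m$ and of $n$), and the case analysis on prime powers is exactly right: in the split, inert, and ramified cases the count of ideals of norm $p^k$ is $k+1$, $\tfrac{1+(-1)^k}{2}$, and $1$ respectively, matching $\sum_{j=0}^{k}\chi_K(p^j)$. One small remark: the paper deduces the multiplicativity of $T_{\mathcal{O}_K}$ \emph{from} the lemma (see the displayed equation following it), whereas you must establish it independently via ideal factorization, which you do. The only substantive step you defer rather than execute is the identification of the concretely defined $\chi_K$ with the Kronecker symbol $\left(\frac{D}{\cdot}\right)$ of the discriminant, which is what makes $\chi_K(p)$ detect the splitting type of $p$; you correctly flag this as the crux, and it is a routine but genuinely necessary verification via quadratic reciprocity and its supplements (the $(-1)^{(x-1)/2}$ and $(-1)^{(x^2-1)/8}$ factors in the definition are precisely the reciprocity corrections in the cases $d\equiv 3\pmod 4$ and $d$ even). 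With that verification written out, the proof is complete.
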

Since $\chi$ is multiplicative, it is clear that $T_{\mathcal{O}_K}(n)$ is also multiplicative, i.e., for $(m,n)=1$,
\begin{align}\label{T-mu}
T_{\mathcal{O}_K}(mn)=T_{\mathcal{O}_K}(m)T_{\mathcal{O}_K}(n).
\end{align}

For our purposes, from now on we will let $d\in \{2,3,6,15\}$. In these cases, we have $\mathcal{O}_K=\mathbb{Z}[\sqrt{d}]$. Recall that the norm of an element $x+y\sqrt{d} \in \mathbb{Q}[\sqrt{d}]$  is given by
\begin{align}
N(x+y\sqrt{d})=x^2-dy^2.
\end{align}
We say that two elements $a,b \in \mathbb{Z}[\sqrt{d}]$ are equivalent if $ab^{-1}$ is a unit in $\mathbb{Z}[\sqrt{d}]$. Let $H_{\mathbb{Z}[\sqrt{d}]}(n)$ be the number of equivalence classes of elements in $\mathbb{Z}[\sqrt{d}]$ with norm $n$. We are going to derive some formulas for $H_{\mathbb{Z}[\sqrt{d}]}(n)$. These formulas will be employed in studying the parity of mock theta functions of parity type $(1,0)$.

Finding elements with norm $m$ in the ring $\mathbb{Z}[\sqrt{d}]$ is equivalent to finding integer solutions of  Pell's equation
\begin{align}\label{Pell-eq}
u^2-dv^2=m.
\end{align}
In general, two integer solutions $(u_1,v_1)$ and $(u_2,v_2)$ of \eqref{Pell-eq} are called equivalent if there exists $(x,y)\in \mathbb{Z}^2$ such that
\begin{align}
x^2-dy^2=1
\end{align}
and
\begin{align}
u_1+v_1\sqrt{d}=(x+y\sqrt{d})(u_2+v_2\sqrt{d}).
\end{align}
In other words,  $(u_1,v_1)$ and $(u_2,v_2)$ are equivalent if and only if $u_1+v_1\sqrt{d}$ and $u_2+v_2\sqrt{d}$ are equivalent in $\mathbb{Z}[\sqrt{d}]$.

We recall the following result found by Andrews, Dyson and Hickerson \cite{ADH}.
\begin{lemma}\label{lem-Pell}
(Cf.\ \cite[Lemma 3]{ADH}.) Let $(x_1,y_1)$ be the fundamental solution of $x^2-dy^2=1$; i.e.\ the solution in which $x_1$ and $y_1$ are minimal positive. If $m>0$, then each equivalence class of solutions of $u^2-dv^2=m$ contains a unique $(u,v)$ with $u>0$ and
\begin{align*}
-\frac{y_1}{x_1+1}u<v\leq \frac{y_1}{x_1+1}u.
\end{align*}
If $m<0$, the corresponding conditions are $v>0$ and
\begin{align*}
-\frac{dy_1}{x_1+1}v<u\leq \frac{dy_1}{x_1+1}{v}.
\end{align*}
\end{lemma}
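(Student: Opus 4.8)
The plan is to realize the equivalence relation as the action of the group of norm-one units on solutions, pass to the real embedding $\mathbb{Z}[\sqrt{d}]\hookrightarrow\mathbb{R}$, and single out a fundamental domain using the ratio of an element to its conjugate. Write $\eta=x_1+y_1\sqrt{d}$ for the fundamental solution, so that $\eta>1$ and $\eta\bar\eta=N(\eta)=1$, whence $\bar\eta=1/\eta\in(0,1)$. By the classical theory of Pell's equation (Dirichlet's unit theorem for $\mathbb{Z}[\sqrt{d}]$), every integer solution of $x^2-dy^2=1$ has the form $\pm\eta^{k}$ with $k\in\mathbb{Z}$; thus two solutions $\alpha=u+v\sqrt{d}$ and $\alpha'=u'+v'\sqrt{d}$ of \eqref{Pell-eq} are equivalent precisely when $\alpha'=\pm\eta^{k}\alpha$ for some $k$. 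To each $\alpha$ I attach the ratio $\rho(\alpha)=\alpha/\bar\alpha$, and the key observation is that $\rho(\eta\alpha)=(\eta/\bar\eta)\rho(\alpha)=\eta^{2}\rho(\alpha)$, so multiplying $\alpha$ by the fundamental unit rescales $\rho$ by the fixed factor $\eta^{2}>1$.

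First I would treat $m>0$. Here $\alpha\bar\alpha=m>0$ forces $\alpha$ and $\bar\alpha$ to have the same sign, and since $u=(\alpha+\bar\alpha)/2$ this sign is positive exactly when $u>0$; replacing $\alpha$ by $-\alpha$ if necessary I may therefore assume $\alpha>0$ and $\bar\alpha>0$, i.e.\ $u>0$, a normalization preserved under multiplication by $\eta$ (a product of positive numbers). On the set of such $\alpha$ the ratio $\rho(\alpha)$ is positive, and since the half-open intervals $(\eta^{2k-1},\eta^{2k+1}]$, $k\in\mathbb{Z}$, partition $(0,\infty)$, there is a unique power $\eta^{k}$ carrying $\rho(\alpha)$ into $(1/\eta,\eta]$. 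This produces a unique representative of the class with $u>0$ and $\rho(\alpha)\in(1/\eta,\eta]$.

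It then remains to translate $\rho(\alpha)\in(1/\eta,\eta]$ into the stated inequality on $(u,v)$. Putting $t=v/u$, the constraints $\alpha,\bar\alpha>0$ read $t\in(-1/\sqrt{d},1/\sqrt{d})$, and on this interval $\rho=(1+t\sqrt{d})/(1-t\sqrt{d})$ is a strictly increasing bijection onto $(0,\infty)$. A short computation using $dy_1^{2}=x_1^{2}-1=(x_1-1)(x_1+1)$ evaluates the endpoints: rationalizing at $t=y_1/(x_1+1)$ gives
\begin{align*}
\rho=\frac{(x_1+1)+y_1\sqrt{d}}{(x_1+1)-y_1\sqrt{d}}=\frac{2(x_1+1)\,\eta}{2(x_1+1)}=\eta,
\end{align*}
and similarly $t=-y_1/(x_1+1)$ gives $\rho=1/\eta$ (note $y_1\sqrt{d}=\sqrt{x_1^{2}-1}<x_1+1$, so both endpoints lie inside $(-1/\sqrt{d},1/\sqrt{d})$). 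By monotonicity, $\rho\in(1/\eta,\eta]$ is equivalent to $-\frac{y_1}{x_1+1}<t\le\frac{y_1}{x_1+1}$, that is $-\frac{y_1}{x_1+1}u<v\le\frac{y_1}{x_1+1}u$; the half-open interval matches the strict-left, closed-right inequalities and thereby also secures uniqueness on the boundary.

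The case $m<0$ is entirely symmetric. Now $\alpha\bar\alpha=m<0$ forces opposite signs, and normalizing $\alpha>0>\bar\alpha$ (again preserved by $\eta$, since $\bar\eta>0$) is equivalent to $v>0$. Running the same argument with $\rho(\alpha)=-\alpha/\bar\alpha>0$ and the variable $s=u/v\in(-\sqrt{d},\sqrt{d})$, the endpoint computation gives $\rho=\eta$ at $s=dy_1/(x_1+1)$ and $\rho=1/\eta$ at $s=-dy_1/(x_1+1)$, yielding $-\frac{dy_1}{x_1+1}v<u\le\frac{dy_1}{x_1+1}v$. I expect the only genuine work to be bookkeeping: confirming that the automorph group is exactly $\{\pm\eta^{k}\}$ (standard Pell theory) and choosing the half-open normalizing interval so that each class is hit once and only once, including on its boundary. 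The endpoint evaluations are the short computations displayed above, and the scaling identity $\rho(\eta\alpha)=\eta^{2}\rho(\alpha)$ is what makes the whole reduction work.
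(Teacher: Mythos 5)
Your proof is correct: the identification of the equivalence relation with the action of $\{\pm\eta^k\}$, the scaling identity $\rho(\eta\alpha)=\eta^2\rho(\alpha)$, the choice of the half-open fundamental interval $(1/\eta,\eta]$, and the endpoint computations (using $(x_1+1)^2-dy_1^2=2(x_1+1)$) are all sound, and they correctly yield the strict-left/closed-right inequalities together with uniqueness. The paper itself states this lemma without proof, simply citing \cite[Lemma 3]{ADH}; your argument is essentially the standard fundamental-domain proof given there, so there is nothing to flag.
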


The following lemma seems to be a basic fact in algebraic number theory. Since we cannot find a reference, we provide a proof here.
\begin{lemma}\label{lem-norm-1}
Let $d$ be a square free integer. If $d$ has a prime divisor $p\equiv 3$ \text{\rm{(mod 4)}}, then $\mathbb{Q}(\sqrt{d})$ does not contain an element of norm $-1$.
\end{lemma}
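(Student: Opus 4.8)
The plan is to argue by contradiction using a congruence obstruction at the bad prime $p$. Suppose $\mathbb{Q}(\sqrt{d})$ contained an element $x+y\sqrt{d}$ with $x,y\in\mathbb{Q}$ and norm $x^2-dy^2=-1$. Writing $x=a/c$ and $y=b/c$ over a common denominator and dividing out any common factor, I may assume there are integers $a,b,c$ with $c>0$, $\gcd(a,b,c)=1$, and
\begin{align*}
a^2-db^2=-c^2.
\end{align*}
The whole argument then rests on reducing this relation modulo $p$ and exploiting that $-1$ is a quadratic nonresidue modulo a prime $p\equiv 3\pmod 4$.

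First I would reduce modulo $p$. Since $p\mid d$, the equation gives $a^2\equiv -c^2 \pmod{p}$. If $p\nmid c$, then $c$ is invertible modulo $p$ and $(ac^{-1})^2\equiv -1\pmod{p}$, which is impossible because $p\equiv 3\pmod 4$ forces $-1$ to be a nonresidue (by Euler's criterion, $(-1)^{(p-1)/2}=-1$). Hence $p\mid c$, and then $a^2\equiv 0\pmod p$ gives $p\mid a$ as well.

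Next I would feed this back into the integral equation to force $p\mid b$, producing the contradiction. Writing $a=pa_1$ and $c=pc_1$, the relation becomes $db^2=p^2(a_1^2+c_1^2)$. Here the squarefreeness of $d$ enters decisively: since $p\mid d$ but $p^2\nmid d$, I can write $d=pe$ with $p\nmid e$, and cancel one factor of $p$ to obtain $eb^2=p(a_1^2+c_1^2)$. As $p\nmid e$, this yields $p\mid b^2$, hence $p\mid b$. Thus $p$ divides each of $a,b,c$, contradicting $\gcd(a,b,c)=1$.

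The only step requiring genuine care — and the one I regard as the crux — is the passage that uses squarefreeness to guarantee $p\nmid e$; this is precisely what keeps the descent from stalling, and it is where the hypothesis that $d$ is squarefree (rather than merely divisible by $p$) is needed. Everything else is routine: the reduction to a primitive integral triple and the nonresidue property of $-1$ modulo $p\equiv 3\pmod 4$ are standard. (One could alternatively phrase the result as saying $-1$ fails to be a local norm from $\mathbb{Q}(\sqrt{d})$ at $p$, but the elementary congruence argument above is shorter and self-contained.)
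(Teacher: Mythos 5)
Your proof is correct and rests on the same obstruction as the paper's: since $p\equiv 3\pmod 4$, $-1$ is a quadratic nonresidue mod $p$, and squarefreeness of $d$ gives $p\,\|\,d$, which forces a contradiction after clearing denominators. The paper packages this as a parity comparison of $p$-adic valuations in $(a^2+b^2)s^2=pd_1b^2r^2$ rather than your primitive-triple descent, but the two arguments are essentially the same.
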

\begin{proof}
Let $d=pd_1$. Suppose we have some element $x+y\sqrt{pd_1}\in \mathbb{Q}(\sqrt{d})$ with $N(x+y\sqrt{pd_1})=-1$. Let $x=a/b, y=r/s$ where $b,s$ are positive integers and $(a,b)=(r,s)=1$. We have
\begin{align}
N(x+y\sqrt{pd_1})=\frac{a^2}{b^2}-pd_1\frac{r^2}{s^2}=-1.
\end{align}
This implies
\begin{align}
(a^2+b^2)s^2=pd_1b^2r^2. \label{abrs}
\end{align}
Since $\left(\frac{-1}{p}\right)=-1$ and $(a,b)=1$, we have $p\nmid (a^2+b^2)$. Therefore, the $p$-adic order of the left side of \eqref{abrs} must be even, while the $p$-adic order of the right side is odd. This is a contradiction. Thus $\mathbb{Q}(\sqrt{d})$ does not contain an element of norm $-1$.
\end{proof}

Now we present explicit formulas for $H_{\mathbb{Z}[\sqrt{d}]}(n)$ for $d\in \{2,3,6\}$. In each of these cases, $\mathbb{Z}[\sqrt{d}]$ is a principal ideal domain. We first study the relations between $H_{\mathbb{Z}[\sqrt{d}]}(n)$  and $T_{\mathbb{Z}[\sqrt{d}]}(n)$.

Since $\mathbb{Z}[\sqrt{2}]$ contains an element $1+\sqrt{2}$ of norm $-1$, we get
\begin{align}\label{H-Z2}
H_{\mathbb{Z}[\sqrt{2}]}(n)=H_{\mathbb{Z}[\sqrt{2}]}(-n)=T_{\mathbb{Z}[\sqrt{2}]}(n).
\end{align}
For $d\in \{3,6\}$, by Lemma \ref{lem-norm-1} we know that $\mathbb{Q}(\sqrt{d})$ does not contain elements of norm $-1$. Therefore, for any $n\geq 1$, we have $H_{\mathbb{Z}[\sqrt{d}]}(n)H_{\mathbb{Z}[\sqrt{d}]}(-n)=0$ and
\begin{align}\label{H-Z36}
H_{\mathbb{Z}[\sqrt{d}]}(n)+H_{\mathbb{Z}[\sqrt{d}]}(-n)=T_{\mathbb{Z}[\sqrt{d}]}(n), \quad d \in \{3,6\}.
\end{align}

\begin{lemma}\label{lem-Z2}
Let $|n|$ have the prime factorization $|n|=2^ap_1^{e_1}\cdots p_j^{e_j}q_1^{f_1}\cdots q_k^{f_k}$, where $p_i\equiv \pm 1$ \text{\rm{(mod 8)}} and $q_i \equiv \pm 3$ \text{\rm{(mod 8)}}. Then
\begin{align}
H_{\mathbb{Z}[\sqrt{2}]}(n)=\left\{\begin{array}{ll}
0 & \text{if some $f_i$ is odd}, \\
(e_1+1)\cdots (e_j+1) & \text{otherwise}.
\end{array}\right.
\end{align}
\end{lemma}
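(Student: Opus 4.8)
The plan is to reduce the count $H_{\mathbb{Z}[\sqrt 2]}(n)$ to the ideal-counting function and then evaluate the associated field character on prime powers. Since $1+\sqrt 2$ has norm $-1$, equation \eqref{H-Z2} already supplies $H_{\mathbb{Z}[\sqrt 2]}(n)=T_{\mathbb{Z}[\sqrt 2]}(|n|)$, so it suffices to compute $T_{\mathbb{Z}[\sqrt 2]}(|n|)$. By Lemma \ref{lem-ideal} (with $\mathcal{O}_K=\mathbb{Z}[\sqrt 2]$ for $d=2$) this equals $\sum_{m\mid |n|}\chi_K(m)$, and because $T_{\mathbb{Z}[\sqrt 2]}$ is multiplicative by \eqref{T-mu}, I only need to evaluate it on each prime-power factor of $|n|$ and take the product.

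First I would make the character $\chi_K$ explicit for $K=\mathbb{Q}(\sqrt 2)$. Here $d=2d'$ with $d'=1$, so the third branch of the definition collapses: the Jacobi symbol $\left(\frac{x}{1}\right)$ is $1$ and the term $(x-1)(d'-1)/4$ vanishes, leaving $\chi_K(x)=(-1)^{(x^2-1)/8}$ for odd $x$ and $\chi_K(x)=0$ for even $x$. A check of the residues modulo $8$ then gives $\chi_K(x)=1$ for $x\equiv\pm1\pmod 8$ and $\chi_K(x)=-1$ for $x\equiv\pm3\pmod 8$; in particular $\chi_K(2)=0$, while $\chi_K(p_i)=1$ and $\chi_K(q_i)=-1$ in the notation of the statement.

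Next, using complete multiplicativity of $\chi_K$, the divisor sum over a prime power becomes a geometric sum $T_{\mathbb{Z}[\sqrt 2]}(p^e)=\sum_{i=0}^{e}\chi_K(p)^i$. This yields $T_{\mathbb{Z}[\sqrt 2]}(2^a)=1$ (only the $i=0$ term survives), $T_{\mathbb{Z}[\sqrt 2]}(p_i^{e_i})=e_i+1$, and $T_{\mathbb{Z}[\sqrt 2]}(q_i^{f_i})=\sum_{i=0}^{f_i}(-1)^i$, which equals $1$ when $f_i$ is even and $0$ when $f_i$ is odd. Multiplying these contributions over the factorization $|n|=2^a p_1^{e_1}\cdots p_j^{e_j}q_1^{f_1}\cdots q_k^{f_k}$ gives exactly the claimed value: the product is $0$ as soon as some $f_i$ is odd, and is $(e_1+1)\cdots(e_j+1)$ otherwise.

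The computation has no serious obstacle; the only point needing care is the explicit evaluation of the Kronecker-type character $\chi_K$ and the verification that on an odd prime $p$ its value depends only on $p\bmod 8$ as above. Once that is settled, the prime-power evaluation and the multiplicativity step are entirely routine.
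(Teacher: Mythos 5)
Your proposal is correct and follows exactly the route the paper sketches: it reduces $H_{\mathbb{Z}[\sqrt 2]}(n)$ to $T_{\mathbb{Z}[\sqrt 2]}(|n|)$ via \eqref{H-Z2} (using the norm $-1$ unit $1+\sqrt 2$), then evaluates $T$ on prime powers through Lemma \ref{lem-ideal} and the explicit character $\chi_K(x)=(-1)^{(x^2-1)/8}$, which is precisely the method the paper carries out for Lemma \ref{lem-Z6} and declares analogous for Lemma \ref{lem-Z2}. The only blemish is the reuse of $i$ as both the prime index and the summation variable in $\sum_{i=0}^{f_i}(-1)^i$, which is purely notational.
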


\begin{lemma}\label{lem-Z3}
Let $|n|$ have the prime factorization $|n|=2^a3^bp_1^{e_1}\cdots p_j^{e_j} q_1^{f_1}\cdots q_k^{f_k}r_1^{g_1}\cdots r_{\ell}^{g_\ell}$, where $p_i \equiv 1$ \text{\rm{(mod 12)}}, $q_i \equiv \pm 5$ \text{\rm{(mod 12)}}, and $r_i \equiv 11$ \text{\rm{(mod 12)}}. \\
$(1)$ For $n>0$ we have
\begin{align}
&H_{\mathbb{Z}[\sqrt{3}]}(n)  \\
=&\left\{\begin{array}{ll}
0 & \text{if some $f_i$ is odd or $a+b+\sum g_i$ odd}, \\
(e_1+1)\cdots (e_j+1)(g_1+1)\cdots (g_\ell+1) & \text{otherwise}.
\end{array}\right. \nonumber
\end{align}
$(2)$ For $n<0$ we have
\begin{align}
&H_{\mathbb{Z}[\sqrt{3}]}(n) \\
=& \left\{\begin{array}{ll}
0 & \text{if some $f_i$ is odd or $a+b+\sum g_i$ even}, \\
(e_1+1)\cdots (e_j+1)(g_1+1)\cdots (g_\ell+1) & \text{otherwise}.
\end{array}\right. \nonumber
\end{align}
\end{lemma}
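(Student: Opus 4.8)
The plan is to exploit the decomposition recorded in \eqref{H-Z36}, which for $d=3$ states that $H_{\mathbb{Z}[\sqrt{3}]}(n)+H_{\mathbb{Z}[\sqrt{3}]}(-n)=T_{\mathbb{Z}[\sqrt{3}]}(|n|)$ while $H_{\mathbb{Z}[\sqrt{3}]}(n)H_{\mathbb{Z}[\sqrt{3}]}(-n)=0$. Consequently, for each value of $|n|$ exactly one of $H_{\mathbb{Z}[\sqrt{3}]}(|n|)$ and $H_{\mathbb{Z}[\sqrt{3}]}(-|n|)$ equals $T_{\mathbb{Z}[\sqrt{3}]}(|n|)$ and the other is zero (both are zero when $T_{\mathbb{Z}[\sqrt{3}]}(|n|)=0$). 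The proof thus reduces to two separate tasks: evaluating $T_{\mathbb{Z}[\sqrt{3}]}(|n|)$ explicitly, and then determining which sign of the norm is actually attained.

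For the first task I would evaluate $\chi_K$ on rational primes directly from its definition in the case $d\equiv 3\pmod 4$, namely $\chi_K(x)=(-1)^{(x-1)/2}\left(\frac{x}{3}\right)$ when $(x,12)=1$ and $\chi_K(x)=0$ otherwise. A short check of residues modulo $12$ gives $\chi_K(x)=1$ for $x\equiv 1,11$, $\chi_K(x)=-1$ for $x\equiv 5,7$, and $\chi_K(2)=\chi_K(3)=0$. Feeding this into Lemma \ref{lem-ideal} and using the multiplicativity \eqref{T-mu}, the local factor at a prime $p\equiv 1\pmod{12}$ is $e_i+1$, at $r\equiv 11\pmod{12}$ is $g_i+1$, at $q\equiv\pm5\pmod{12}$ is $1$ or $0$ according as $f_i$ is even or odd, and at $2$ and $3$ is $1$. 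This yields precisely the asserted value $(e_1+1)\cdots(e_j+1)(g_1+1)\cdots(g_\ell+1)$, vanishing exactly when some $f_i$ is odd.

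The harder task, and the main obstacle, is the sign determination. Here I would first observe that the non-existence of an element of norm $-1$ (Lemma \ref{lem-norm-1}) forces all elements of a given absolute norm to share one sign: if $\alpha,\beta$ had norms $|n|$ and $-|n|$, then $\alpha/\beta$ would have norm $-1$. This makes the attained sign a well-defined function $s$ of $|n|$, and since $N(\alpha\beta)=N(\alpha)N(\beta)$ this function is multiplicative; as $\mathbb{Z}[\sqrt{3}]$ is a principal ideal domain, $s$ is therefore determined by its values on the prime powers dividing $|n|$. I would then compute these values via elementary congruence arguments: $N(\sqrt{3})=-3$ and $N(1+\sqrt{3})=-2$ give sign $-1$ at $3$ and at $2$ (the equation $x^2-3y^2=2$ being insoluble modulo $3$); reducing modulo $3$ shows $x^2-3y^2=-p$ is impossible for $p\equiv 1\pmod{3}$ while $x^2-3y^2=p$ is impossible for $p\equiv 2\pmod 3$, giving sign $+1$ at the split primes $p\equiv 1\pmod{12}$ and sign $-1$ at the split primes $r\equiv 11\pmod{12}$; and an inert prime $q$ contributes a generator $q$ of norm $q^2>0$, hence sign $+1$. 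Multiplying these contributions over the factorization of $|n|$ yields the overall sign $(-1)^{a+b+\sum g_i}$. Combining with the first task completes the proof: for $n>0$ the norm $+|n|$ is attained exactly when $a+b+\sum g_i$ is even, and for $n<0$ exactly when it is odd.
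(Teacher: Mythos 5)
Your proposal is correct and follows essentially the same route as the paper, which proves the analogous Lemma \ref{lem-Z6} in detail (computing $T_{\mathcal{O}_K}$ via the character $\chi_K$ and Lemma \ref{lem-ideal}, then fixing the sign of the attained norm at each prime power by congruences and unique factorization) and states that Lemma \ref{lem-Z3} is proved the same way. Your explicit justification that the sign is well-defined and multiplicative (via Lemma \ref{lem-norm-1} and the PID property) is exactly the ``tracking the sign by unique factorization'' step of the paper, and your local sign computations modulo $3$ are correct.
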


\begin{lemma}\label{lem-Z6}
Let $|n|$ have the prime factorization $|n|=2^a3^bp_1^{e_1}\cdots p_j^{e_j}q_1^{f_1}\cdots q_k^{f_k}r_1^{g_1}\cdots r_{\ell}^{g_\ell}$, where  $p_i\equiv \pm 7, \pm 11$ \text{\rm{(mod 24)}}, $q_i\equiv 1, 19$ \text{\rm{(mod 24)}}, and $r_i \equiv 5, 23$ \text{\rm{(mod 24)}}. \\
$(1)$ For $n>0$ we have
\begin{align}
&H_{\mathbb{Z}[\sqrt{6}]}(n) \\
=&\left\{\begin{array}{ll}
0 & \text{if some $e_i$ is odd, or $a+\sum g_i$ is odd}, \\
(f_1+1)\cdots (f_k+1)(g_1+1)\cdots (g_\ell+1) & \text{otherwise}.
\end{array}\right. \nonumber
\end{align}
$(2)$ For $n<0$ we have
\begin{align}
&H_{\mathbb{Z}[\sqrt{6}]}(n)  \\
=&\left\{\begin{array}{ll}
0 & \text{if some $e_i$ is odd, or $a+\sum g_i$ is even}, \\
(f_1+1)\cdots (f_k+1)(g_1+1)\cdots (g_\ell+1) & \text{otherwise}.
\end{array}\right. \nonumber
\end{align}
\end{lemma}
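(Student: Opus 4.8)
The plan is to reduce the whole count to counting ideals and then tracking the sign of the norm. Since $6=2\cdot 3$ and $3\equiv 3\pmod 4$, Lemma \ref{lem-norm-1} guarantees that $\mathbb{Q}(\sqrt 6)$ has no element of norm $-1$; together with \eqref{H-Z36} this gives $H_{\mathbb{Z}[\sqrt 6]}(n)H_{\mathbb{Z}[\sqrt 6]}(-n)=0$ and $H_{\mathbb{Z}[\sqrt 6]}(n)+H_{\mathbb{Z}[\sqrt 6]}(-n)=T_{\mathbb{Z}[\sqrt 6]}(|n|)$ for every $n\neq 0$. So it suffices to (i) evaluate $T_{\mathbb{Z}[\sqrt 6]}(|n|)$ explicitly, and (ii) decide, when this common value is nonzero, which of the two terms $H(n)$, $H(-n)$ carries it.

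For (i) I would first compute the character $\chi_K$ of $K=\mathbb{Q}(\sqrt 6)$ (the case $d=2d'$ with $d'=3$ of the definition) and read off, via $\left(\frac{6}{p}\right)=\left(\frac 2p\right)\left(\frac 3p\right)$ sorted by residues mod $24$, that $2,3$ ramify ($\chi_K=0$), that $p$ is split ($\chi_K(p)=1$) exactly when $p\equiv 1,5,19,23\pmod{24}$, and inert ($\chi_K(p)=-1$) exactly when $p\equiv \pm 7,\pm 11\pmod{24}$. By Lemma \ref{lem-ideal} and the multiplicativity \eqref{T-mu}, $T_{\mathbb{Z}[\sqrt 6]}(|n|)$ factors over the prime divisors of $|n|$ into local terms $\sum_{i=0}^{e}\chi_K(p)^i$, where $e=v_p(|n|)$: this equals $1$ at $p\in\{2,3\}$, equals $f_i+1$ or $g_i+1$ at a split prime $q_i$ or $r_i$, and equals $1$ (resp.\ $0$) at an inert prime of even (resp.\ odd) exponent. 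This already yields the stated nonzero value $(f_1+1)\cdots(f_k+1)(g_1+1)\cdots(g_\ell+1)$ and the vanishing whenever some $e_i$ is odd.

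For (ii) I would introduce the norm-sign $\epsilon(I):=\operatorname{sign}N(\alpha)$ of a (necessarily principal, as $\mathbb{Z}[\sqrt 6]$ is a PID) ideal $I=(\alpha)$; this is well defined because, by Lemma \ref{lem-norm-1}, no unit has norm $-1$, so every unit has norm $+1$ and $\operatorname{sign}N(\alpha)$ depends only on $I$. Since $N$ is multiplicative, $\epsilon$ is completely multiplicative on ideals, and an ideal of norm $|n|$ contributes to $H(n)$ with $n>0$ exactly when $\epsilon=+1$ and to $H(-|n|)$ exactly when $\epsilon=-1$. I would then compute $\epsilon$ on prime ideals: $\epsilon=+1$ on every inert prime and on $\mathfrak p_3$ (e.g.\ $N(3+\sqrt 6)=3$), while $\epsilon(\mathfrak p_2)=-1$ (e.g.\ $N(2+\sqrt 6)=-2$). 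For a split prime $p$ the key observation is the mod-$3$ obstruction $u^2-6v^2\equiv u^2\pmod 3$, which represents only the residues $0,1$: hence a split $p\equiv 1\pmod 3$ (the classes $1,19\pmod{24}$, our $q_i$) has a generator of norm $+p$ and $\epsilon=+1$, whereas a split $p\equiv 2\pmod 3$ (the classes $5,23\pmod{24}$, our $r_i$) is forced to norm $-p$ and $\epsilon=-1$.

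The decisive step is then to observe that when $T_{\mathbb{Z}[\sqrt 6]}(|n|)\neq 0$ every ideal of norm $|n|$ carries the same sign. Indeed, all inert exponents are even and contribute $+1$; the factor $\mathfrak p_2^a$ contributes $(-1)^a$ and $\mathfrak p_3^b$ contributes $+1$; above each $q_i$ any of the $f_i+1$ products $\mathfrak q_i^{\,s}\overline{\mathfrak q}_i^{\,f_i-s}$ has $\epsilon=+1$; and above each $r_i$ any of the $g_i+1$ products $\mathfrak r_i^{\,s}\overline{\mathfrak r}_i^{\,g_i-s}$ has $\epsilon=(-1)^{s}(-1)^{g_i-s}=(-1)^{g_i}$, independent of $s$. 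Thus every ideal of norm $|n|$ has sign $(-1)^{a+\sum_i g_i}$, so the whole of $T_{\mathbb{Z}[\sqrt 6]}(|n|)$ sits in $H(n)$ when $a+\sum_i g_i$ is even and in $H(-n)$ when it is odd; reading off the cases $n>0$ and $n<0$ gives exactly the claimed formulas. I expect the main obstacle to be step (ii): correctly fixing $\epsilon(\mathfrak p_2),\epsilon(\mathfrak p_3)$ and the split-prime dichotomy, and verifying that the sign is genuinely independent of how each split prime is distributed between $\mathfrak p$ and $\overline{\mathfrak p}$. The congruence $x^2-6y^2\equiv x^2\pmod 3$ is what makes this bookkeeping clean.
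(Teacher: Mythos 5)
Your proposal is correct and follows essentially the same route as the paper: compute $T_{\mathbb{Z}[\sqrt{6}]}$ locally via the character $\chi_{\mathbb{Q}(\sqrt 6)}$ and Lemma \ref{lem-ideal}, then use the fact that $\mathbb{Z}[\sqrt 6]$ is a PID together with the mod~$3$ obstruction $x^2-6y^2\equiv x^2\pmod 3$ to pin down the sign of the norm at each prime and conclude that the total sign is $(-1)^{a+\sum g_i}$. Your norm-sign character $\epsilon$ on ideals is just a slightly more explicit packaging of the paper's "tracking the sign of $n$" via unique factorization.
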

The proofs of Lemmas \ref{lem-Z2}--\ref{lem-Z6} can be found in the work of Lovejoy \cite{Lovejoy},  where he linked certain $q$-series related with overpartitions to $\mathbb{Z}[\sqrt{d}]$ with $d\in \{2,3,6\}$. Note that Lovejoy stated almost all these results in the proofs of his Theorems 1-3 in \cite{Lovejoy}, except that we add the cases $n>0$ for $H_{\mathbb{Z}[\sqrt{3}]}(n)$ and $n<0$ for $H_{\mathbb{Z}[\sqrt{6}]}(n)$.

It is possible to prove Lemmas \ref{lem-Z2}--\ref{lem-Z6} in a way slightly different from \cite{Lovejoy}. The key ingredient in our proof is Lemma \ref{lem-ideal}. To save space, here we only present a proof for Lemma \ref{lem-Z6}. Lemmas \ref{lem-Z2} and \ref{lem-Z3} can be proved in an analogous way.
\begin{proof}[Proof of Lemma \ref{lem-Z6}]
Note that for $(x,24)=1$,
\begin{align}
\chi_{\mathbb{Q}(\sqrt{6})}(x)=(-1)^{(x^2-1)/6+(x-1)/2}\left(\frac{x}{3}\right)=\left\{\begin{array}{ll}
1 & \text{if $x\equiv 1,5, 19, 23$ \rm{(mod 24)}}, \\
-1 & \text{if $x\equiv 7,11, 13, 17$ \rm{(mod 24)}}.
\end{array}\right. \nonumber
\end{align}
Therefore, for prime $p>3$ we deduce that
\begin{align}\label{Z3-T}
T_{\mathbb{Z}[\sqrt{6}]}(p^e)=\sum_{i=0}^e \chi_{\mathbb{Q}(\sqrt{6})}(p^i)=\left\{\begin{array}{ll}
0 &\text{if $e$ is odd and $p\equiv \pm 7,\pm 11$ \rm{(mod 24)}}, \\
1 & \text{if $e$ is even and $p\equiv \pm 7,\pm 11$ \rm{(mod 24)}}, \\
e+1 &\text{ if $p\equiv \pm 1, \pm 5$ \rm{(mod 24)}}.
\end{array}\right.
\end{align}
Note that $T_{\mathbb{Z}[\sqrt{6}]}(2^e)=T_{\mathbb{Z}[\sqrt{6}]}(3^e)=1$. If $a$ is odd, then $2^a\equiv 2$ (mod 3) and $x^2-6y^2=2^a$ has no integer solutions. Thus $H_{\mathbb{Z}[\sqrt{6}]}(2^a)=0$ for odd $a$. Similarly, if $a$ is even, then $H_{\mathbb{Z}[\sqrt{6}]}(-2^a)=0$. By \eqref{H-Z36} we get $H_{\mathbb{Z}[\sqrt{6}]}((-2)^a)=1$. In the same way, we can prove that $H_{\mathbb{Z}[\sqrt{6}]}(3^b)=1$, $H_{\mathbb{Z}[\sqrt{6}]}(p^e)=1$ for prime $p\equiv \pm 7, \pm 11$ (mod 24) and $e$ even, $H_{\mathbb{Z}[\sqrt{6}]}(p^e)=0$ for prime $p\equiv \pm 7, \pm 11$ (mod 24) and $e$ odd, $H_{\mathbb{Z}[\sqrt{6}]}(q^f)=f+1$ for prime $q\equiv 1,19$ (mod 24), and $H_{\mathbb{Z}[\sqrt{6}]}((-r)^g)=g+1$ for prime $r\equiv 5,23$ (mod 24).

Therefore, by unique factorization, a number $n\neq 0$ can be the norm of an element in $\mathbb{Z}[\sqrt{6}]$ if and only if $n=(-2)^a3^bp_1^{e_1}\cdots p_j^{e_j} q_1^{f_1}\cdots q_k^{f_k}(-r_1)^{g_1}\cdots (-r_{\ell})^{g_\ell}$, where $p_i \equiv \pm 7, \pm 11$ \text{\rm{(mod 24)}}, $q_i \equiv 1, 19$ \text{\rm{(mod 24)}}, and $r_i \equiv 5, 23$ \text{\rm{(mod 24)}}. Tracking the sign of $n$, we get the desired conclusion.
\end{proof}

As for $d=15$, since $\mathbb{Z}[\sqrt{15}]$ is not a unique factorization domain, finding an explicit formula for $H_{\mathbb{Z}[\sqrt{15}]}(n)$ similar to Lemmas \ref{lem-Z2}--\ref{lem-Z6} is more difficult. Nevertheless, for $n$ in some special arithmetic progressions, we can find explicit formulas for $H_{\mathbb{Z}[\sqrt{15}]}(n)$.
\begin{lemma}\label{lem-Tn-15}
Let $n$ have the prime factorization $n=2^a3^b5^cp_1^{e_1}\cdots p_j^{e_j}q_1^{f_1}\cdots q_k^{f_k}$ where the primes $p_i\equiv \pm 1, \pm 7, \pm 11, \pm 17$ \text{\rm{(mod 60)}} and $q_i\equiv \pm 13, \pm 19, \pm 23,\pm 29$ \text{\rm{(mod 60)}}. Then
\begin{align}\label{Tn-15}
T_{\mathbb{Z}[\sqrt{15}]}(n)=\left\{\begin{array}{ll}
0 &\text{if some $f_i$ odd}, \\
(e_1+1)\cdots (e_k+1) & \text{otherwise}.
\end{array}\right.
\end{align}
\end{lemma}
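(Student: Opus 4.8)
The plan is to mimic the proof of Lemma \ref{lem-Z6}, but with one simplification: since $15 \equiv 3 \pmod 4$ we have $\mathcal{O}_K = \mathbb{Z}[\sqrt{15}]$ and $D = 60$, so $T_{\mathbb{Z}[\sqrt{15}]}(n)$ can be read off directly from Lemma \ref{lem-ideal} together with the multiplicativity \eqref{T-mu}, without ever referring to elements, norms of elements, or units. This is precisely why the statement is confined to $T_{\mathbb{Z}[\sqrt{15}]}(n)$ rather than $H_{\mathbb{Z}[\sqrt{15}]}(n)$: counting \emph{ideals} makes no use of unique factorization of elements, so the fact that $\mathbb{Z}[\sqrt{15}]$ is not a principal ideal domain is irrelevant to this lemma.

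First I would write down the character of $K = \mathbb{Q}(\sqrt{15})$. Since $15 \equiv 3 \pmod 4$, the definition gives, for $(x,60)=1$,
\[
\chi_K(x) = (-1)^{(x-1)/2}\left(\frac{x}{15}\right) = (-1)^{(x-1)/2}\left(\frac{x}{3}\right)\left(\frac{x}{5}\right),
\]
and $\chi_K(x)=0$ when $(x,60)>1$. As $(-1)^{(x-1)/2}$ has period $4$ and $\left(\frac{x}{15}\right)$ has period $15$, this $\chi_K$ is a Dirichlet character modulo $60=\mathrm{lcm}(4,15)$, so its value depends only on $x \bmod 60$. Evaluating on the $\phi(60)=16$ reduced residues, I would record that $\chi_K(x)=1$ exactly when $x \equiv \pm 1, \pm 7, \pm 11, \pm 17 \pmod{60}$ and $\chi_K(x)=-1$ exactly when $x \equiv \pm 13, \pm 19, \pm 23, \pm 29 \pmod{60}$. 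A few sample evaluations pin down the two sign patterns; for example $\chi_K(7) = -\left(\frac{7}{3}\right)\left(\frac{7}{5}\right) = 1$ while $\chi_K(13) = \left(\frac{13}{3}\right)\left(\frac{13}{5}\right) = -1$, and the remaining classes follow from the same computation.

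Next, using Lemma \ref{lem-ideal} in the form $T_{\mathbb{Z}[\sqrt{15}]}(n)=\sum_{m\mid n}\chi_K(m)$ and the complete multiplicativity of $\chi_K$, I would reduce the evaluation to prime powers via \eqref{T-mu}. Three cases occur. For $p \in \{2,3,5\}$ we have $\chi_K(p^i)=0$ for $i \geq 1$, hence $T_{\mathbb{Z}[\sqrt{15}]}(p^e)=\chi_K(1)=1$. For a prime $p$ with $\chi_K(p)=1$ the geometric sum gives $T_{\mathbb{Z}[\sqrt{15}]}(p^e)=\sum_{i=0}^{e}1 = e+1$. For a prime $q$ with $\chi_K(q)=-1$ the alternating sum $\sum_{i=0}^{f}(-1)^i$ gives $T_{\mathbb{Z}[\sqrt{15}]}(q^f)=1$ if $f$ is even and $0$ if $f$ is odd.

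Finally, for $n = 2^a 3^b 5^c p_1^{e_1}\cdots p_j^{e_j} q_1^{f_1}\cdots q_k^{f_k}$ I would multiply these local factors: the contributions of $2,3,5$ and of each $q_i^{f_i}$ with $f_i$ even are all $1$, each $p_i^{e_i}$ contributes $e_i+1$, and the presence of any odd $f_i$ forces a zero factor and hence a vanishing product. This yields exactly the stated formula. The computation is routine once Lemma \ref{lem-ideal} is available; the only step demanding genuine care is the correct determination of the sign of $\chi_K$ on each reduced residue class modulo $60$, which is where a transcription slip would most easily occur, so I would double-check the eight ``$+1$'' and eight ``$-1$'' classes directly against the Jacobi-symbol computation.
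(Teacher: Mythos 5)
Your proposal is correct and follows essentially the same route as the paper's proof: compute $\chi_{\mathbb{Q}(\sqrt{15})}$ explicitly on reduced residues modulo $60$, evaluate $T_{\mathbb{Z}[\sqrt{15}]}(p^e)$ on prime powers via Lemma \ref{lem-ideal} (with the primes $2,3,5$ contributing trivially), and conclude by multiplicativity \eqref{T-mu}. Your sample character evaluations and the three prime-power cases match the paper's computation exactly.
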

\begin{proof}
By definition we have
\begin{align}
\chi_{\mathbb{Q}(\sqrt{15})}(x)=\left\{\begin{array}{ll}
0 &\text{if $(x,60)>1$}, \\
(-1)^{(x-1)/2}\left(\frac{x}{15}\right) & \text{if $(x,60)=1$}.
\end{array}\right.
\end{align}
Hence for prime $p>5$ we have
\begin{align}
\chi_{\mathbb{Q}(\sqrt{15})}(p)=\left\{\begin{array}{ll}
1 &\text{if $p\equiv \pm 1, \pm 7, \pm 11, \pm 17$ \rm{(mod 60)}}, \\
-1 & \text{if $p\equiv \pm 13, \pm 19, \pm 23, \pm 29$ \rm{(mod 60)}}.
\end{array}\right.
\end{align}
This implies
\begin{align}
T_{\mathbb{Z}[\sqrt{15}]}(p^e)&=\sum_{i=0}^e \chi_{\mathbb{Q}(\sqrt{15})}(p^i) \nonumber \\
&=\left\{\begin{array}{ll}
0 &\text{if $e$ is odd and $p\equiv \pm 13, \pm 19, \pm 23, \pm 29$ \rm{(mod 60)}},\\
1 & \text{if $e$ is even and $p\equiv \pm 13, \pm 19, \pm 23, \pm 29$ \rm{(mod 60)}}, \\
e+1 &\text{if $p\equiv \pm 1, \pm 7, \pm 11, \pm 17$ \rm{(mod 60)}}.
\end{array}\right.
\end{align}
It is obvious that $T_{\mathbb{Z}[\sqrt{15}]}(2^e)=T_{\mathbb{Z}[\sqrt{15}]}(3^e)=T_{\mathbb{Z}[\sqrt{15}]}(5^e)=1$. The assertion then follows from the multiplicativity of $T_{\mathbb{Z}[\sqrt{15}]}(n)$.
\end{proof}

\begin{lemma}\label{lem-PID}
Let $I$ be an ideal of $\mathbb{Z}[\sqrt{15}]$. If $N(I)\equiv \pm 1$ \text{\rm{(mod 5)}}, then $I$ must be a principal ideal. If $N(I)\equiv \pm 2$ \text{\rm{(mod 5)}}, then $I$ is not a principal ideal.
\end{lemma}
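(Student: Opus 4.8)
The plan is to combine two ingredients: the elementary observation that the norm of a principal ideal is a quadratic residue modulo $5$, and the fact that $K=\mathbb{Q}(\sqrt{15})$ has class number $2$. First I would record the residue facts. Since $5\equiv 1\pmod 4$ we have $\left(\frac{-1}{5}\right)=1$, so the nonzero quadratic residues modulo $5$ are exactly $\{\pm 1 \pmod 5\}$, while $\pm 2$ are non-residues. Next, for any principal ideal $(\beta)$ with $\beta=x+y\sqrt{15}$, recall $N((\beta))=|N(\beta)|=|x^2-15y^2|$; reducing modulo $5$ gives $N((\beta))\equiv \pm x^2\pmod 5$, which, being $\pm$ a square, is always a residue. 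Hence $N((\beta))\equiv 0,\ \pm 1\pmod 5$ for every principal ideal $(\beta)$. In particular no principal ideal can have norm $\equiv\pm 2\pmod 5$, and this already settles the second assertion: if $N(I)\equiv\pm 2\pmod 5$, then $I$ is not principal.

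For the first assertion I would invoke the structure of the ideal class group. It is classical that $h(\mathbb{Q}(\sqrt{15}))=2$; this can be verified from the Minkowski bound $\tfrac12\sqrt{60}<4$, which reduces the computation to the ramified primes above $2$ and $3$, both of which turn out to lie in a single nontrivial class of order $2$. Let $\mathfrak{p}_2=(2,1+\sqrt{15})$ be the prime above $2$, of norm $2$; by the residue condition above $\mathfrak{p}_2$ is non-principal, so it represents the unique nontrivial ideal class. Now suppose $N(I)\equiv\pm 1\pmod 5$ but, for contradiction, that $I$ is not principal. Then $[I]=[\mathfrak{p}_2]$, so $[I\mathfrak{p}_2]=[\mathfrak{p}_2]^2$ is trivial and $I\mathfrak{p}_2=(\beta)$ is principal. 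Taking norms and using multiplicativity gives $N(I\mathfrak{p}_2)=2N(I)\equiv\pm 2\pmod 5$, contradicting the fact that the norm of the principal ideal $(\beta)$ must be a residue modulo $5$. Therefore $I$ is principal, as claimed.

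The only genuinely nontrivial input is the determination of the class group, namely that $h(\mathbb{Q}(\sqrt{15}))=2$ and that its single nontrivial class is represented by an ideal (here $\mathfrak{p}_2$) whose norm is $\equiv\pm 2\pmod 5$; granting this, the remainder is a short Legendre-symbol computation. The main point to handle with care is the sign of $N(\beta)$: because $15$ has the prime factor $3\equiv 3\pmod 4$, Lemma \ref{lem-norm-1} guarantees there is no element of norm $-1$, so $N(I)=|N(\beta)|$ rather than $N(\beta)$ itself, but since $\left(\frac{-1}{5}\right)=1$ this sign ambiguity is harmless for the residue argument. This approach is essentially genus theory made explicit, with the norm-residue symbol at $5$ serving as the genus character distinguishing the two classes.
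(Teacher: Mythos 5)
Your proof is correct, and for the harder half (norm $\equiv\pm1\pmod 5$ implies principal) it takes a genuinely different route from the paper. Both arguments share the same two inputs — the observation that a principal ideal has norm $|x^2-15y^2|\equiv 0,\pm1\pmod 5$, and the fact that $h(\mathbb{Q}(\sqrt{15}))=2$ — but they exploit the class group differently. The paper picks the non-principal ideal $(3,\sqrt{15})$, writes $(\alpha)I=(\beta)(3,\sqrt{15})$, takes norms to get $|x^2-15y^2|\,N(I)=3|a^2-15b^2|$, and then runs an infinite $5$-adic descent on the element equation to reach a contradiction; the descent is needed precisely because the auxiliary norms $N(\alpha),N(\beta)$ may be divisible by $5$ (note $5$ ramifies in $\mathbb{Q}(\sqrt{15})$), so a one-step residue comparison does not immediately close the argument. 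You instead multiply $I$ by the non-principal prime $\mathfrak{p}_2=(2,1+\sqrt{15})$ of norm $2$, use $[\mathfrak{p}_2]^2=1$ to conclude $I\mathfrak{p}_2$ is principal, and apply the residue obstruction directly to $N(I\mathfrak{p}_2)=2N(I)\equiv\pm2\pmod 5$. This avoids the auxiliary elements $\alpha,\beta$ and the descent entirely, at the cost of invoking multiplicativity of the ideal norm and the explicit class-group representative; it is shorter and makes the genus-theoretic content (the norm-residue character at $5$ separating the two classes) transparent. Your side remarks are also sound: the sign ambiguity $N(I)=|N(\beta)|$ is indeed harmless because $\left(\frac{-1}{5}\right)=1$, and the Minkowski-bound verification of $h=2$ is the same unproved external fact the paper relies on.
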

\begin{proof}
If $I=(x+y\sqrt{15})$ is a principal ideal, then we have $N(I)=|N(x+y\sqrt{15})|=|x^2-15y^2|$. This implies that $N(I)\equiv 0, 1, 4$ (mod 5). Now it suffices to show that if $N(I)\equiv \pm 1$ (mod 5), then $I$ must be a principal ideal. We prove this by contradiction.

Suppose that $I$ is not a principal ideal. Note that the ideal $(3,\sqrt{15})$ is not principal. Since the class number of $\mathbb{Q}(\sqrt{15})$ is 2, there exist some nonzero elements $\alpha,\beta\in \mathbb{Z}[\sqrt{15}]$ such that $(\alpha)I=(\beta)(3,\sqrt{15})$. Taking ideal norms on both sides, we get
\begin{align}
|N(\alpha)|N(I)=3|N(\beta)|.
\end{align}
Let $\alpha=x+y\sqrt{15}$ and $\beta=a+b\sqrt{15}$. Then we get
\begin{align}\label{inf-1}
|x^2-15y^2|N(I)=3|a^2-15b^2|.
\end{align}
This implies $x^2\equiv \pm 2 a^2$ (mod 5). Since $\left(\frac{2}{5}\right)=\left(\frac{-2}{5}\right)=-1$, we must have $5|x$ and $5|a$. Let $x=5x_1$ and $a=5a_1$. We get
\begin{align}
|5x_1^2-3y^2|N(I)=3|5a_1^2-3b^2|.
\end{align}
This implies $y^2\equiv \pm 2 b^2$ (mod 5). Again we deduce that $5|y$ and $5|b$. Let $y=5y_1$ and $b=5b_1$. We get
\begin{align}\label{inf-2}
|x_1^2-3y_1^2|N(I)=3|a_1^2-15b_1^2|.
\end{align}
This has the same form with \eqref{inf-1}. We can continue the above process infinitely many times. This leads to a contradiction. Therefore, when $N(I)\equiv \pm 1$ (mod 5), $I$ must be principal.
\end{proof}

\begin{lemma}\label{lem-15-HT}
For $n\geq 0$ we have
\begin{align}
H_{\mathbb{Z}[\sqrt{15}]}(20n+r) &=T_{\mathbb{Z}[\sqrt{15}]}(20n+r), \quad r\in \{1,9\}, \label{15-HT-1}\\
H_{\mathbb{Z}[\sqrt{15}]}(40n+r) &=T_{\mathbb{Z}[\sqrt{15}]}(40n+r), \quad r\in \{26,34\}. \label{15-HT-2}
\end{align}
\end{lemma}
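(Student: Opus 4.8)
The plan is to compare the ideal-counting function $T_{\mathbb{Z}[\sqrt{15}]}(n)$ with the element-counting function $H_{\mathbb{Z}[\sqrt{15}]}(n)$ by routing both through the \emph{principal} ideals of norm $n$. Since $15=3\cdot 5$ has the prime divisor $3\equiv 3$ (mod 4), Lemma \ref{lem-norm-1} guarantees that $\mathbb{Z}[\sqrt{15}]$ contains no element, and in particular no unit, of norm $-1$; hence every unit has norm $+1$. Consequently multiplication by a unit preserves the sign of the norm, so the principal ideals of norm $n$ are in bijection with the unit-equivalence classes of elements $\alpha$ with $|N(\alpha)|=n$. Splitting these classes according to whether $N(\alpha)=n$ or $N(\alpha)=-n$, I would record the key identity
\begin{align*}
\#\{\text{principal ideals of norm } n\}=H_{\mathbb{Z}[\sqrt{15}]}(n)+H_{\mathbb{Z}[\sqrt{15}]}(-n).
\end{align*}

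Next I would argue that, for the four residue classes in question, $T_{\mathbb{Z}[\sqrt{15}]}(n)$ in fact counts only principal ideals. A direct check gives $1,9,26,34\equiv \pm 1$ (mod 5), so in every case $n\equiv \pm 1$ (mod 5). By Lemma \ref{lem-PID} every ideal of norm $n$ is then principal, whence $T_{\mathbb{Z}[\sqrt{15}]}(n)=H_{\mathbb{Z}[\sqrt{15}]}(n)+H_{\mathbb{Z}[\sqrt{15}]}(-n)$.

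It then remains to show $H_{\mathbb{Z}[\sqrt{15}]}(-n)=0$, i.e.\ that $-n$ is not represented by the form $x^2-15y^2$. For this I would use a congruence obstruction modulo $8$: since $-15\equiv 1$ (mod 8) we have $x^2-15y^2\equiv x^2+y^2$ (mod 8), and a sum of two squares lies in $\{0,1,2,4,5\}$ (mod 8), so the residues $3,6,7$ (mod 8) are never represented. For $r\in\{1,9\}$ one has $n\equiv 1$ (mod 4), hence $-n\equiv 3$ (mod 4), i.e.\ $-n\equiv 3$ or $7$ (mod 8); for $r\in\{26,34\}$ one has $n\equiv 2$ (mod 8), hence $-n\equiv 6$ (mod 8). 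In each case $-n$ falls into a non-represented residue class, so $H_{\mathbb{Z}[\sqrt{15}]}(-n)=0$, and combining with the previous step yields $T_{\mathbb{Z}[\sqrt{15}]}(n)=H_{\mathbb{Z}[\sqrt{15}]}(n)$ in all four progressions.

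The computations themselves are routine; the real content, and the place to be careful, is recognizing that the two auxiliary conditions encoded in these arithmetic progressions are exactly the complementary ones needed. The condition $n\equiv \pm 1$ (mod 5) forces every ideal of norm $n$ to be principal, eliminating the discrepancy that arises because the class number of $\mathbb{Q}(\sqrt{15})$ is $2$; the sign obstruction modulo $8$ forces $H_{\mathbb{Z}[\sqrt{15}]}(-n)=0$, eliminating the discrepancy between $|N(\alpha)|=n$ and $N(\alpha)=n$. If either condition failed, $T$ and $H$ would genuinely differ, so the essential step is verifying that both hold simultaneously for $r\in\{1,9\}$ modulo $20$ and $r\in\{26,34\}$ modulo $40$.
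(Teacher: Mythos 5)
Your proposal is correct and follows essentially the same route as the paper: Lemma \ref{lem-PID} forces every ideal in these residue classes to be principal, and a congruence obstruction (mod $4$ for $r\in\{1,9\}$, mod $8$ for $r\in\{26,34\}$) rules out generators of negative norm, so the ideal count coincides with the element count. The only cosmetic difference is that you phrase the obstruction as $-n$ not being represented by $x^2-15y^2$ (via $x^2+y^2$ mod $8$) and you make explicit the use of Lemma \ref{lem-norm-1} to see that units have norm $+1$, whereas the paper states the equivalent fact that $15y^2-x^2$ cannot be $\equiv 1 \pmod 4$ or $\equiv 2 \pmod 8$.
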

\begin{proof}
We have already seen from Lemma \ref{lem-PID} that if an ideal $I$ of $\mathbb{Z}[\sqrt{15}]$ has norm $N(I)\equiv \pm 1$ (mod 5), then $I=(\alpha)$ for some $\alpha\in \mathbb{Z}[\sqrt{15}]$. We then have $N(\alpha)=\pm N(I)$. Note that the residue of $15y^2-x^2$ modulo 4 cannot be 1, and its residue modulo 8 cannot be 2. Therefore, if $I=(\alpha)$ and $N(I)\equiv 1$ (mod 4) or $N(I)\equiv 2$ (mod 8), then we must have $N(\alpha)=N(I)$. These facts imply that the number of inequivalent elements with norm congruent to 1 or 9 modulo 20, or congruent to 26 or 34 modulo 40, is equal to the number of ideals with the same norm. This gives \eqref{15-HT-1} and \eqref{15-HT-2}.
\end{proof}

Finally, to give estimates like \eqref{intro-mock-2-A-estimate} for mock theta functions of parity type $(1,0)$, we establish the following result. This generalizes and improves Chen's estimate in \cite[p.\ 1083]{Chen}, which he used for deriving \eqref{intro-Chen-G} and \eqref{intro-Chen-H}.
\begin{lemma}\label{lem-estimate}
Let $A,B$ be positive integers with $A>B$ and $(A,B)=1$.  Let
\begin{align*}
\gamma(N)=\sum_{\begin{smallmatrix}
0\leq n \leq N \\ An+B=m^2p^{4a+1}\\ p \text{ is prime and $p\nmid m$}
\end{smallmatrix}} 1.
\end{align*}
Then we have
\begin{align}\label{gamma-result}
\gamma(N)=\frac{\pi^2}{6}\prod\limits_{p|A}(1+p^{-1})\frac{N}{\log N}+O\left(\frac{N}{\log^2N}  \right).
\end{align}
\end{lemma}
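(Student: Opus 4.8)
The plan is to read the counting condition multiplicatively and then reduce everything to the prime number theorem in arithmetic progressions. Write $X=AN+B$, so that $\gamma(N)$ counts the integers $x$ with $B\le x\le X$, $x\equiv B\pmod{A}$, and $x=m^2p^{4a+1}$ for some prime $p$, some integer $m$ with $p\nmid m$, and some $a\ge 0$. Since $p\nmid m$, such an $x$ has $v_p(x)=4a+1$ odd while $v_q(x)$ is even for every other prime $q$; hence $p$, $a$ and $m$ are uniquely recovered from $x$, so the map $(m,p,a)\mapsto m^2p^{4a+1}$ is injective and there is no overcounting. I would first dispose of the terms with $a\ge 1$: ignoring the congruence, their number is at most
$$\sum_{a\ge 1}\sum_{p}\sqrt{X/p^{4a+1}}\le \sqrt{X}\sum_{p}p^{-1/2}\sum_{a\ge 1}p^{-2a}=\sqrt{X}\sum_p\frac{p^{-5/2}}{1-p^{-2}}=O(\sqrt{X})=O(\sqrt{N}),$$
which is absorbed into the error term. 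Thus it suffices to count $x=m^2p$ with $p\nmid m$, $x\equiv B\pmod{A}$, and $x\le X$.

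For the main term I would parametrize by the square part $m$. Because $(A,B)=1$, the congruence $m^2p\equiv B\pmod{A}$ forces $(m,A)=1$ and then pins $p$ to a single residue class $b_m\equiv Bm^{-2}\pmod{A}$ with $(b_m,A)=1$. Counting the admissible primes (and discarding the $O(\omega(m))=O(\log m)$ primes dividing $m$) gives, with $\pi(Y;A,b)$ denoting the number of primes $p\le Y$ with $p\equiv b\pmod A$,
$$\sum_{\substack{m\le\sqrt{X}\\(m,A)=1}}\pi(X/m^2;A,b_m)+O(\sqrt{X}\log X).$$
Since $A$ is \emph{fixed}, the classical prime number theorem for arithmetic progressions (de la Vallée Poussin, or Siegel--Walfisz) yields $\pi(Y;A,b_m)=\mathrm{li}(Y)/\phi(A)+O(Y\exp(-c\sqrt{\log Y}))$ uniformly in the residue. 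Truncating at $m\le X^{1/4}$ (the range $m>X^{1/4}$ contributes $O(X^{3/4})$ via the trivial bound $\pi(Y)\le Y$) and inserting this estimate, the accumulated error is $O(X\exp(-c'\sqrt{\log X}))=O(N/\log^2 N)$.

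It then remains to evaluate $\tfrac{1}{\phi(A)}\sum_{m\le X^{1/4},\,(m,A)=1}\mathrm{li}(X/m^2)$. Using $\mathrm{li}(Y)=Y/\log Y+O(Y/\log^2 Y)$ and, for $m\le X^{1/4}$, the expansion $1/\log(X/m^2)=(\log X)^{-1}\big(1+O(\log m/\log X)\big)$, every correction contributes at most $O\!\big(\tfrac{X}{\log^2 X}\sum_m\tfrac{\log m}{m^2}\big)=O(X/\log^2 X)=O(N/\log^2 N)$, while the completed main sum is
$$\frac{1}{\phi(A)}\frac{X}{\log X}\sum_{\substack{m\ge 1\\(m,A)=1}}\frac{1}{m^2}=\frac{X}{\phi(A)\log X}\,\zeta(2)\prod_{p\mid A}(1-p^{-2})+O(X/\log^2 X),$$
the tail $m>X^{1/4}$ again being negligible. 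Finally I would simplify the constant using $\phi(A)=A\prod_{p\mid A}(1-p^{-1})$ and $\zeta(2)=\pi^2/6$, which collapses $\zeta(2)\prod_{p\mid A}(1-p^{-2})/\phi(A)$ to $\tfrac{\pi^2}{6A}\prod_{p\mid A}(1+p^{-1})$, and convert $X=AN+B$ through $X/\log X=A\,N/\log N+O(N/\log^2 N)$, giving exactly \eqref{gamma-result}.

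The main obstacle will be bookkeeping the error at the sharp level $O(N/\log^2 N)$: one must use $\mathrm{li}$ rather than $Y/\log Y$ in the prime number theorem, control the nonuniformity of $\log(X/m^2)$ across the range of $m$, and choose the truncation so that both the tail of the $m$-sum and the tail of $\sum_{(m,A)=1}m^{-2}$ stay below this threshold. By contrast, because $A$ and $B$ are fixed, no uniformity in the modulus is required, so the crude classical error term for primes in progressions already suffices.
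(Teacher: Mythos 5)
Your proposal is correct and follows essentially the same route as the paper: split off the $a\ge 1$ terms as $O(\sqrt{N})$, reduce the $a=0$ count to primes $p\equiv Bm^{-2}\pmod{A}$ with $m$ coprime to $A$, apply the prime number theorem in arithmetic progressions, and complete the sum $\sum_{(m,A)=1}m^{-2}=\zeta(2)\prod_{p\mid A}(1-p^{-2})$ to extract the constant. The only differences are minor technical choices (truncating at $m\le X^{1/4}$ with $\mathrm{li}$ and the classical exponential error term, versus the paper's truncation at $m\le\log^2 M$ with the relative-error form of the PNT), and both handle the error at the required $O(N/\log^2 N)$ level.
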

\begin{proof}
Here we follow and  refine the arguments of Chen \cite{Chen}.
Let $\pi(x)$ be the number of primes no greater than $x$ and we denote $M=AN+B$ for convenience. Now we split the above sum into two parts according to $a=0$ and $a\geq 1$, respectively.

The sum over $a\geq 1$ is bounded by
\begin{align*}
\ll\sum_{1\leq a  \ll \log N} \sum_{m\leq \sqrt{M}} \pi \left( \left(\frac{M}{m^2}\right)^{\frac{1}{4a+1}}\right) \ll \log N \sum_{m\leq \sqrt{M}} \left(\frac{N}{m^2} \right)^{\frac{1}{5}} \ll N^{\frac{1}{2}}\log N,
\end{align*}
which is negligible.

The sum over $a=0$ is
\begin{align*}
\sum_{\begin{smallmatrix} pm^2\leq M, pm^2\equiv B \, \text{\rm{(mod $A$)}},  \\p \text{ is prime and $p\nmid m$} \end{smallmatrix}} 1=\sum_{\begin{smallmatrix}
m\leq \sqrt{M} \\ (m,A)=1
\end{smallmatrix}} \sum_{\begin{smallmatrix}
p\leq M/m^2 \\ p\equiv B/m^2 \, \text{\rm{(mod $A$)}}
\end{smallmatrix}} 1
-\sum_{\begin{smallmatrix}
m\leq \sqrt{M} \\ (m,A)=1
\end{smallmatrix}} \sum_{\begin{smallmatrix}
p\leq M/m^2, p|m \\  p\equiv B/m^2 \, \text{\rm{(mod $A$)}}
\end{smallmatrix}} 1.
\end{align*}
Let $\omega(n)$ denote the number of distinct prime factors of $n$. Note that
\begin{align*}
\sum_{\begin{smallmatrix}
m\leq \sqrt{M} \\ (m,A)=1
\end{smallmatrix}} \sum_{\begin{smallmatrix}
p\leq M/m^2, p|m \\  p\equiv B/m^2 \, \text{\rm{(mod $A$)}}
\end{smallmatrix}} 1 \leq \sum_{\begin{smallmatrix}
m\leq \sqrt{M} \\ (m,A)=1
\end{smallmatrix}} \sum_{p|m} 1 \leq \sum_{\begin{smallmatrix}
m\leq \sqrt{M}
\end{smallmatrix}} \omega(m) \ll N^{\frac{1}{2}}\log\log N,
\end{align*}
where in the last inequality we used the fact that \cite[Eq.\ (22.10.3)]{Hardy-Wright-book}
\begin{align}
\sum_{n\leq x}\omega(n)=x\log\log x+O(x).
\end{align}
Next, we focus on the term
\begin{align}\label{main-term}
\sum_{\begin{smallmatrix}
m\leq \sqrt{M} \\ (m,A)=1
\end{smallmatrix}} \sum_{\begin{smallmatrix}
p\leq M/m^2 \\ p\equiv B/m^2 \, \text{\rm{(mod $A$)}}
\end{smallmatrix}} 1.
\end{align}
The contribution for $\log^2 M<m$ is absorbed into the error term since
\begin{align*}
\sum_{\begin{smallmatrix}
\log^2 M<m\leq \sqrt{M} \\ (m,2)=1
\end{smallmatrix}} \frac{M}{m^2} \ll \frac{M}{\log^2 M} \ll \frac{N}{\log^2N}.
\end{align*}
Let $\phi(k)$ be Euler's totient function. The sum over $m\leq \log^2M$ is
\begin{align}
&\sum_{\begin{smallmatrix}
m\leq \log^2 M \\ (m,A)=1
\end{smallmatrix}} \frac{M}{\phi(A)m^2\log(\frac{M}{m^2})}\Bigg(1+O\Big(\frac{1}{\log(\frac{M}{m^2})} \Big) \Bigg) \label{main-estimate} \\
=&\frac{1}{\phi(A)}\sum_{\begin{smallmatrix}
m\leq \log^2M \\ (m,A)=1
\end{smallmatrix}}\frac{M}{m^2\log M}\left(1+O\left(\frac{\log m}{\log M} \right) \right) \nonumber \\
=&\frac{1}{\phi(A)} \sum_{(m,A)=1}\frac{M}{m^2\log M}+O\Big(\sum_{m>\log^2 M}\frac{M}{m^2\log M} \Big)+O\left(\frac{M}{\log^2M}  \right)\\
=&\frac{1}{\phi(A)} \sum_{(m,A)=1}\frac{M}{m^2\log M}+O\Big(\frac{M}{\log^2M}\Big). \nonumber
\end{align}
Here for the third term of the last second line we used the fact that $\sum_{m=1}^\infty \frac{\log m}{m^2}$ converges.

Thus
\begin{align*}
\gamma(N)&=\frac{1}{\phi(A)}\sum_{\begin{smallmatrix}
m=1 \\ (m,A)=1
\end{smallmatrix}}^\infty \frac{AN+B}{m^2\log (AN+B)} +O\left(\frac{N}{\log^2N} \right) \\
&=\Big(\frac{A}{\phi(A)}\sum_{\begin{smallmatrix}
m=1 \\ (m,A)=1
\end{smallmatrix}}^\infty \frac{1}{m^2}  \Big)\frac{N}{\log N}+O\left( \frac{N}{\log^2N} \right).
\end{align*}
Note that
\begin{align*}
\sum_{\begin{smallmatrix}
m=1 \\ (m,A)=1
\end{smallmatrix}}^\infty \frac{1}{m^2}=\prod\limits_{p\nmid A}(1-p^{-2})^{-1}=\prod\limits_{p|A}(1-p^{-2})\zeta(2),
\end{align*}
where $\zeta(s)=\sum_{n=1}^\infty \frac{1}{n^s}$ ($\mathrm{Re} (s)>1$) is the Riemann zeta function. Next, using the fact that $\zeta(2)=\frac{\pi^2}{6}$ and $\phi(A)=A\prod\limits_{p|A}(1-p^{-1})$, we get  \eqref{gamma-result}.
\end{proof}
\section{Mock theta functions of order 2}\label{sec-mock-2}
There are three classical mock theta functions of order 2:
\begin{align}
&A^{(2)}(q):=\sum_{n=0}^{\infty}\frac{q^{n+1}(-q^2;q^2)_{n}}{(q;q^2)_{n+1}}=\sum_{n=0}^{\infty}\frac{q^{(n+1)^2}(-q;q^2)_{n}}{(q;q^2)_{n+1}^2}, \label{mock-2-A-defn} \\
&B^{(2)}(q):=\sum_{n=0}^{\infty}\frac{q^{n}(-q;q^2)_{n}}{(q;q^2)_{n+1}}=\sum_{n=0}^{\infty}\frac{q^{n^2+n}(-q^2;q^2)_{n}}{(q;q^2)_{n+1}^2}, \label{mock-2-B-defn} \\
&\mu^{(2)}(q):=\sum_{n=0}^{\infty}\frac{(-1)^n(q;q^2)_{n}q^{n^2}}{(-q^2;q^2)_{n}^2}. \label{mock-2-mu-defn}
\end{align}
These functions appeared in  Ramanujan's Lost Notebook \cite{lostnotebook}. They were recognized as mock theta functions for the first time in the work of Gordon and McIntosh \cite[p.\ 120, Eq. (5.1)]{Gordon-McIntosh-Survey} and  McIntosh's paper \cite{McIntosh-2007Canad}. 
\begin{theorem}\label{thm-mock-2-A}
The number $c(A^{(2)};n)$ is odd if and only if $8n-1=p^{4a+1}m^2$, where $p$ is a prime, $m$ is a positive integer and $p\nmid m$. Furthermore,
\begin{align}\label{mock-2-A-estimate}
\#\left\{0\leq n\leq N: c(A^{(2)};n)\equiv 1 \,\, (\mathrm{mod \,\, 2}) \right\}=\frac{\pi^2}{4}\frac{N}{\log N}+O\left(\frac{N}{\log^2N} \right).
\end{align}
\end{theorem}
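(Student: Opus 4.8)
The plan is to pass through a Hecke-type (indefinite theta) representation of $A^{(2)}(q)$ and reduce it modulo $2$. First I would record, starting either from the Eulerian form \eqref{mock-2-A-defn} or from a Hecke-type series (obtainable by the theta-function machinery behind Lemma \ref{lem-m-add} and the $J_{a,m}$ identities), a representation of $A^{(2)}(q)$ as a theta quotient. Reducing modulo $2$ is then mechanical: over the integers mod $2$ one has $(-z;q)_\infty \equiv (z;q)_\infty$ and the Frobenius collapse $(z;q)_\infty^2 \equiv (z^2;q^2)_\infty \pmod 2$, which turns the squared denominator $(q;q^2)_{n+1}^2$ in \eqref{mock-2-A-defn} into $(q^2;q^4)_{n+1}$ and lets the remaining prefactors be absorbed into theta quotients. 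The aim of this step is to arrive at a congruence of the form
\[
A^{(2)}(q) \equiv \sum_{n\ge 0}\Big(\#\{(u,v): u^2-2v^2 = 8n-1,\ \text{sign/range conditions}\}\Big)q^n \pmod 2,
\]
so that $c(A^{(2)};n)$ is congruent modulo $2$ to a representation number of $8n-1$ by the norm form of $\mathbb{Z}[\sqrt{2}]$.

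Next I would convert this representation count into the quantities controlled by Lemma \ref{lem-Z2}. Applying Lemma \ref{lem-Pell} with the fundamental solution $(x_1,y_1)=(3,2)$ of $x^2-2y^2=1$, each equivalence class of solutions of $u^2-2v^2=8n-1$ has a unique representative with $u>0$ and $-\tfrac12 u<v\le \tfrac12 u$; since $8n-1\equiv 7\pmod 8$ forces $u,v$ both odd, the conjugation $v\mapsto -v$ pairs the classes two at a time with no fixed class, and the sign condition produced by the reduction should select exactly one class from each conjugate pair. This is expected to give $c(A^{(2)};n) \equiv \tfrac12 H_{\mathbb{Z}[\sqrt{2}]}(8n-1) \pmod 2$, which by \eqref{H-Z2} equals $\tfrac12 T_{\mathbb{Z}[\sqrt{2}]}(8n-1)$. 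It then remains to decide when this is odd, i.e.\ when $T_{\mathbb{Z}[\sqrt{2}]}(8n-1)\equiv 2 \pmod 4$. Using multiplicativity together with $T_{\mathbb{Z}[\sqrt{2}]}(p^e)=e+1$ for $p\equiv \pm 1\pmod 8$ and $T_{\mathbb{Z}[\sqrt{2}]}(p^e)\in\{0,1\}$ for $p\equiv \pm 3\pmod 8$, a short $2$-adic valuation count shows the $2$-adic valuation equals $1$ exactly when $8n-1$ has a single prime to an exponent $\equiv 1\pmod 4$ and all other primes to even exponents. The congruence $8n-1\equiv 7\pmod 8$ forces that distinguished prime to satisfy $p\equiv 7\pmod 8$ (hence split), which is precisely the condition $8n-1=p^{4a+1}m^2$ with $p\nmid m$, yielding the stated characterization.

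The asymptotic \eqref{mock-2-A-estimate} is then immediate from Lemma \ref{lem-estimate}. Writing $8n-1 = 8(n-1)+7$ and applying the lemma with $A=8$ and $B=7$ (so that $(A,B)=1$ and $A>B$), the shift in the summation index alters the count only by $O(1)$, and
\[
\frac{\pi^2}{6}\prod_{p\mid 8}(1+p^{-1})=\frac{\pi^2}{6}\cdot\frac{3}{2}=\frac{\pi^2}{4},
\]
which produces the main term $\frac{\pi^2}{4}\frac{N}{\log N}$ with error $O\!\left(N/\log^2 N\right)$.

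I expect the main obstacle to be the first two steps: producing the exact modulo-$2$ theta identity and, above all, pinning down the factor of $\tfrac12$ in the representation count. Because $T_{\mathbb{Z}[\sqrt{2}]}(8n-1)$ and $H_{\mathbb{Z}[\sqrt{2}]}(8n-1)$ are themselves always \emph{even} on this progression (a perfect square is impossible when $8n-1\equiv 7\pmod 8$), all of the parity information lives in the conjugation-halved count. The reduction must therefore be executed carefully enough to verify that exactly one class from each conjugate pair survives modulo $2$; getting this bookkeeping right is the crux of the argument.
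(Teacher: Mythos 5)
Your proposal follows essentially the same route as the paper's proof: reduce a Hecke-type series for $A^{(2)}(q)$ modulo $2$, identify $c(A^{(2)};n)$ with $\tfrac{1}{2}H_{\mathbb{Z}[\sqrt{2}]}(8n-1)$ via Lemma \ref{lem-Pell} with fundamental solution $(3,2)$, read off the parity condition from Lemma \ref{lem-Z2}, and apply Lemma \ref{lem-estimate} with $(A,B)=(8,7)$ to get the constant $\frac{\pi^2}{6}\cdot\frac{3}{2}=\frac{\pi^2}{4}$. The one ingredient you leave unspecified, the explicit Hecke-type representation, is supplied in the paper by quoting the formula \eqref{Hecke-mock-2-A} of Cui, Gu and Hao rather than by manipulating the Eulerian form, and the factor of $\tfrac{1}{2}$ arises there by symmetrizing the inner sum under $j\mapsto -1-j$, which is exactly the pairing of conjugate classes you describe.
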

\begin{proof}
We utilize the following Hecke-type series representation found by Cui, Gu and Hao \cite{CGH}:
\begin{align}
A^{(2)}(q)=q\frac{(q^2;q^2)_\infty}{(q;q)_\infty^2}\sum_{n=0}^\infty (-1)^nq^{2n^2+3n}(1-q^{2n+2})\sum_{j=0}^nq^{-j^2-j}. \label{Hecke-mock-2-A}
\end{align}
By the binomial theorem, we have
\begin{align}
A^{(2)}(q)\equiv \sum_{n=0}^\infty q^{2n^2+3n+1}(1+q^{2n+2})\sum_{j=0}^nq^{-j^2-j} \pmod{2}.
\end{align}
It follows that
\begin{align}
&\sum_{n=0}^\infty c(A^{(2)};n)q^{8n-1} \nonumber \\
\equiv & \sum_{n=0}^\infty \sum_{j=0}^n \left(q^{(4n+3)^2-2(2j+1)^2}+q^{(4n+5)^2-2(2j+1)^2}\right) \nonumber \\
\equiv  & \frac{1}{2} \left(\sum_{n=0}^\infty \sum_{j=-n-1}^n q^{(4n+3)^2-2(2j+1)^2}+\sum_{n=0}^\infty \sum_{j=-n-1}^nq^{(4n+5)^2-2(2j+1)^2} \right) \pmod{2}. \label{mock-2-A-reduce}
\end{align}
Note that the fundamental solution of $x^2-2y^2=1$ is $(x_1,y_1)=(3,2)$. If $m>0$, by Lemma \ref{lem-Pell} we know that each equivalence class of solutions of $u^2-2v^2=m$ contains a unique $(u,v)$ with $u>0$ and
\begin{align}\label{mock-2-A-ineq}
-\frac{1}{2}u<v\leq \frac{1}{2}u.
\end{align}
Let $m=8n-1$. Then such $(u,v)$ must satisfy $u\equiv v\equiv 1$ (mod 2). Moreover, as $n\geq 1$, we have $u\geq 3$. If $u\equiv 1$ (mod 4), we write $u=4n+5$ and $v=2j+1$. Then the condition \eqref{mock-2-A-ineq} is equivalent to
\begin{align*}
-n-1\leq j \leq n.
\end{align*}
Similarly, if $u\equiv 3$ (mod 4), we write $u=4n+3$ and $v=2j+1$. Then the condition \eqref{mock-2-A-ineq} is again equivalent to $-n-1\leq j \leq n$.

Hence by \eqref{mock-2-A-reduce} we conclude that
\begin{align}\label{mock-2-A-equiv}
c(A^{(2)};n)\equiv \frac{1}{2}{H}_{\mathbb{Z}[\sqrt{2}]}(8n-1) \pmod{2}.
\end{align}
Let $8n-1$ have the prime factorization $p_1^{e_1}\cdots p_j^{e_j}q_1^{f_1}\cdots q_k^{f_k}$ where $p_i\equiv \pm 1$ (mod 8) and $q_j\equiv \pm 3$ (mod 8). By Lemma \ref{lem-Z2}, we see that $\frac{1}{2}H_{\mathbb{Z}[\sqrt{2}]}(8n-1)$ is odd if and only if $f_i$ are all even, and exactly one of $e_i$ is congruent to 1 modulo 4 while others are all even. In other words, $c(A^{(2)};n)$ is odd if and only if $8n-1=p^{4a+1}m^2$ where $p\equiv -1$ (mod 8) is a prime and $p\nmid m$.

Let
\begin{align*}
\gamma(N)=\#\left\{0\leq n < N: 8n+7=p^{4a+1}m^2, \text{$p$ is a prime and $p\nmid m$} \right\}.
\end{align*}
Then from the above we have
\begin{align*}
\#\left\{0\leq n\leq N: c(A^{(2)};n)\equiv 1 \,  (\mathrm{mod \,\, 2}) \right\}=\gamma(N).
\end{align*}
Now applying Lemma \ref{lem-estimate} with $(A,B)=(8,7)$ we get \eqref{mock-2-A-estimate}.
\end{proof}

\begin{theorem}\label{thm-mock-2-B}
The coefficient $c(B^{(2)};n)$ is odd if and only if $n=2k^2+2k$ for some $k\geq 0$. Moreover,
\begin{align}
\#\left\{0\leq n\leq N:c(B^{(2)};n)\equiv 1 \, (\mathrm{mod \,\, 2}) \right\}=\left\lfloor \frac{\sqrt{2N+1}+1}{2}\right\rfloor.
\end{align}
\end{theorem}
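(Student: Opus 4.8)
The plan is to work directly from the first Eulerian form \eqref{mock-2-B-defn}, which turns out to be much simpler than the Hecke-type route needed for $A^{(2)}(q)$ and avoids the ring $\mathbb{Z}[\sqrt{2}]$ entirely. First I would reduce the summand modulo $2$. Since $(-q;q^2)_n=\prod_{i=0}^{n-1}(1+q^{2i+1})\equiv \prod_{i=0}^{n-1}(1-q^{2i+1})=(q;q^2)_n \pmod 2$, the quotient telescopes, because $(q;q^2)_{n+1}=(q;q^2)_n(1-q^{2n+1})$. Hence
\begin{align*}
B^{(2)}(q)\equiv \sum_{n=0}^\infty \frac{q^n(q;q^2)_n}{(q;q^2)_{n+1}}=\sum_{n=0}^\infty \frac{q^n}{1-q^{2n+1}}=\sum_{n=0}^\infty\sum_{m=0}^\infty q^{n(2m+1)+m}\pmod 2,
\end{align*}
where the geometric-series expansion is legitimate as a formal power series, since the coefficient of each $q^N$ receives only finitely many contributions (forcing $n\leq N$ and $m\leq N$).

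Next I would extract the coefficient of $q^N$. The crucial algebraic observation, and the one step I expect to carry the whole argument, is the identity $2\big(n(2m+1)+m\big)+1=(2n+1)(2m+1)$. Through this substitution the coefficient of $q^N$ in the double sum becomes the number of ordered pairs $(n,m)$ of nonnegative integers with $(2n+1)(2m+1)=2N+1$. As $2N+1$ is odd, \emph{every} factorization of $2N+1$ into two positive factors is automatically a factorization into two odd factors, so this count is exactly $d(2N+1)$, the number of positive divisors of $2N+1$. Thus $c(B^{(2)};N)\equiv d(2N+1)\pmod 2$.

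The characterization then follows from the classical fact that $d(M)$ is odd precisely when $M$ is a perfect square. Applying this with the odd number $M=2N+1$ shows that $c(B^{(2)};N)$ is odd if and only if $2N+1=(2k+1)^2$ for some integer $k\geq 0$, which rearranges to $N=2k^2+2k$. Finally, the enumeration in the displayed formula is elementary: one counts the integers $k\geq 0$ with $2k^2+2k\leq N$, i.e.\ $k\leq \frac{-1+\sqrt{2N+1}}{2}$, of which there are
\begin{align*}
\left\lfloor \frac{-1+\sqrt{2N+1}}{2}\right\rfloor+1=\left\lfloor \frac{\sqrt{2N+1}+1}{2}\right\rfloor.
\end{align*}
The only points requiring care are the verification that the factorization identity gives a genuine bijection between solution pairs and divisors, and the bookkeeping in the formal power series; both are routine, so I anticipate no serious obstacle once the factorization identity is spotted.
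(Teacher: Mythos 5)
Your proof is correct, but it takes a genuinely different route from the paper. The paper quotes the Hecke-type representation of Cui, Gu and Hao, $B^{(2)}(q)=\frac{J_2}{J_1^2}\sum_{n\geq 0}(-1)^nq^{2n^2+2n}\sum_{j=-n}^nq^{-j^2}$, and reduces it modulo $2$ (the prefactor is $\equiv 1$ by the binomial theorem and the inner sum collapses to its $j=0$ term by the $j\leftrightarrow -j$ symmetry), landing immediately on $B^{(2)}(q)\equiv\sum_{k\geq 0}q^{2k^2+2k}\pmod 2$. You instead stay with the Eulerian form, telescope the quotient after replacing $(-q;q^2)_n$ by $(q;q^2)_n$ modulo $2$ (a legitimate step: the difference of the two summands is $2$ times an integral power series), and arrive at the Lambert-type series $\sum_{n\geq 0}q^n/(1-q^{2n+1})$, whence $c(B^{(2)};N)\equiv d(2N+1)\pmod 2$ via the factorization $(2n+1)(2m+1)=2N+1$; the parity criterion then comes from the classical fact that $d(M)$ is odd exactly when $M$ is a square. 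All steps check out, including the final floor-function count. Your argument is more self-contained, avoiding the external Hecke-type identity entirely, and the intermediate congruence $c(B^{(2)};n)\equiv d(2n+1)\pmod 2$ is a nice by-product; the paper's route is shorter given the quoted representation and is uniform with how the other $20$ parity-type-$(1,0)$ functions are handled throughout the paper.
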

\begin{proof}
We use the following Hece-type series representation found by Cui, Gu and Hao \cite{CGH}:
\begin{align}
B^{(2)}(q)=\frac{J_2}{J_1^2}\sum_{n=0}^\infty (-1)^nq^{2n^2+2n}\sum_{j=-n}^n q^{-j^2}.
\end{align}
It follows that
\begin{align*}
B^{(2)}(q)\equiv \sum_{n=0}^\infty q^{2n^2+2n} \pmod{2}.
\end{align*}
The theorem then follows immediately.
\end{proof}

Let $p_{-k}(n)$ be defined as
\begin{align}
\sum_{n=0}^\infty p_{-k}(n)q^n=\frac{1}{(q;q)_\infty^k}.
\end{align}
\begin{theorem}
For any $n\geq 0$, we have $c(\mu^{(2)};n)\equiv p_{-3}(n)$ \text{\rm{(mod 4)}}.
\end{theorem}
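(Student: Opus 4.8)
The plan is to reduce $\mu^{(2)}(q)$ modulo $4$ to a genuine infinite product and then match that product against $(q;q)_\infty^{-3}$. The whole proof hinges on the observation that the denominator of $\mu^{(2)}$ is a perfect square, which behaves very simply mod $4$.

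First I would use $(1+q^{2k})^2=1+2q^{2k}+q^{4k}\equiv 1-2q^{2k}+q^{4k}=(1-q^{2k})^2 \pmod 4$ to conclude $(-q^2;q^2)_n^2\equiv (q^2;q^2)_n^2\pmod 4$ as polynomials. Both sides have constant term $1$, hence are units in $\mathbb{Z}[[q]]$, and for units a congruence is inherited by the inverses (from $u^{-1}-v^{-1}=u^{-1}(v-u)v^{-1}$). Multiplying by the integral series $(-1)^nq^{n^2}(q;q^2)_n$ and summing (each term starts at $q^{n^2}$, so the sum converges $q$-adically) gives
\begin{align*}
\mu^{(2)}(q)\equiv \sum_{n=0}^\infty \frac{(-1)^n q^{n^2}(q;q^2)_n}{(q^2;q^2)_n^2}\pmod 4.
\end{align*}

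The decisive point is that, whereas $\mu^{(2)}$ with its mock denominator $(-q^2;q^2)_n^2$ admits no product evaluation, the reduced series sums in closed form. Setting $Q=q^2$ and using $q^{n^2}=q^n Q^{\binom n2}$ together with $(q;q^2)_n=(q;Q)_n$, the series is exactly the confluent basic hypergeometric series ${}_1\phi_1(q;q^2;q^2,q)$. Its argument $z=q$ equals $c/a=q^2/q$, so the summation ${}_1\phi_1(a;c;Q,c/a)=(c/a;Q)_\infty/(c;Q)_\infty$ applies and yields
\begin{align*}
\sum_{n=0}^\infty \frac{(-1)^n q^{n^2}(q;q^2)_n}{(q^2;q^2)_n^2}=\frac{(q;q^2)_\infty}{(q^2;q^2)_\infty}=\frac{(q;q)_\infty}{(q^2;q^2)_\infty^2}.
\end{align*}
I would either cite this summation or derive it by letting $b\to\infty$ in the $q$-Gauss sum ${}_2\phi_1(a,b;c;q,c/(ab))$, where the argument degenerates to $c/a$ and the two vanishing factors disappear.

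Finally I would pass from this product to $(q;q)_\infty^{-3}$ mod $4$. The elementary fact $(1-q^k)^2\equiv 1-q^{2k}\pmod 2$ gives $(q;q)_\infty^2\equiv (q^2;q^2)_\infty\pmod 2$; squaring a mod-$2$ congruence upgrades it to a mod-$4$ one (the cross term carries a factor $4$), so $(q;q)_\infty^4\equiv (q^2;q^2)_\infty^2\pmod 4$. Dividing by the unit $(q;q)_\infty^3(q^2;q^2)_\infty^2$ turns this into $(q;q)_\infty/(q^2;q^2)_\infty^2\equiv (q;q)_\infty^{-3}\pmod 4$, and combining the three displays gives $\mu^{(2)}(q)\equiv (q;q)_\infty^{-3}=\sum_{n\ge 0}p_{-3}(n)q^n\pmod 4$, i.e.\ $c(\mu^{(2)};n)\equiv p_{-3}(n)\pmod 4$. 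The main obstacle is the middle step: seeing that the square-denominator reduction modulo $4$ converts the mock theta function into a \emph{classical}, summable ${}_1\phi_1$, and justifying that summation — once this is in hand the remaining congruences are routine manipulations of eta-quotients modulo $2$ and $4$.
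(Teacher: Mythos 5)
Your proof is correct, but it follows a genuinely different route from the paper. The paper starts from the Appell--Lerch representation $\mu^{(2)}(q)=2\frac{(q;q^2)_\infty}{(q^2;q^2)_\infty}\sum_{n=-\infty}^\infty \frac{q^{2n^2+n}}{1+q^{2n}}$ (equation \eqref{mu-H}, from the lost notebook): pairing $n$ with $-n$ exhibits the bilateral sum as $1+4(\text{integral series})$, so modulo $4$ only the theta-quotient prefactor $\frac{(q;q^2)_\infty}{(q^2;q^2)_\infty}$ survives, and the binomial-theorem step $(q;q)_\infty^4\equiv(q^2;q^2)_\infty^2\pmod 4$ finishes exactly as in your last paragraph. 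You instead work directly from the Eulerian definition, replace $(-q^2;q^2)_n^2$ by $(q^2;q^2)_n^2$ modulo $4$ (your unit-inverse argument for passing the congruence through the denominators and the $q$-adic convergence remark are both sound), and then evaluate the resulting classical series by the ${}_1\phi_1$ summation ${}_1\phi_1(a;c;q,c/a)=(c/a;q)_\infty/(c;q)_\infty$ (Gasper--Rahman (II.5)) with $(a,c,q,z)=(q,q^2,q^2,q)$; I checked that the base-$q^2$ factor $(-1)^nQ^{\binom n2}z^n$ indeed reproduces $(-1)^nq^{n^2}$, so the identification and the evaluation $\frac{(q;q^2)_\infty}{(q^2;q^2)_\infty}$ are right. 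What your route buys is self-containedness: it needs only the $q$-Gauss sum rather than the nontrivial Appell--Lerch identity for $\mu^{(2)}$, and it makes transparent that the mod-$4$ degeneration of the mock denominator is what converts the mock theta function into a summable classical series. What the paper's route buys is brevity and uniformity: the same Appell--Lerch template ($1+4\sum\cdots$ or $1+2\sum\cdots$) is reused verbatim for $f^{(3)}$ and $\phi^{(3)}$ in Theorem \ref{thm-mock-3-f-phi}. Both proofs converge to the identical final congruence $\mu^{(2)}(q)\equiv(q;q)_\infty^{-3}\pmod 4$.
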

\begin{proof}
We use the following Appell-Lerch series representation:
\begin{align}
\mu^{(2)}(q)=2\frac{(q;q^2)_\infty}{(q^2;q^2)_\infty}\sum_{n=-\infty}^\infty \frac{q^{2n^2+n}}{1+q^{2n}}. \label{mu-H}
\end{align}
This formula appeared on pages 8 and 29 in \cite{lostnotebook} and can also be found in \cite{McIntosh-2007Canad}.

From \eqref{mu-H} we deduce that
\begin{align*}
\mu^{(2)}(q)=\frac{(q;q^2)_\infty}{(q^2;q^2)_\infty}\left(1+4\sum_{n=1}^\infty \frac{q^{2n^2+n}}{1+q^{2n}}\right).
\end{align*}
Therefore, by the binomial theorem we get
\begin{align*}
\mu^{(2)}(q)\equiv \frac{1}{(q;q)_\infty^3} \pmod{4}.
\end{align*}
As a consequence, we have $c(\mu^{(2)};n)\equiv p_{-3}(n)$ (mod 4).
\end{proof}
From \cite[Theorems 3.5 and 3.6]{BYZ} we get the following consequence.
\begin{corollary}
 For each fixed $c>2\log 2$ and for $N$ large enough,
\begin{align}
\#\left\{0\leq n\leq N: c(\mu^{(2)};n)\equiv 1  \, (\mathrm{mod \,\, 2}) \right\}\geq N^{\frac{1}{2}-\frac{c}{\log\log N}}.
\end{align}
For each fixed $c$ with $c<\frac{1}{\sqrt{2}}$, and for $N$ large enough,
\begin{align}
\#\left\{0\leq n\leq N: c(\mu^{(2)};n)\equiv 0 \, (\mathrm{mod \,\, 2}) \right\}\geq c\sqrt{N}.
\end{align}
\end{corollary}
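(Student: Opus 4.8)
The plan is to reduce the corollary to a pure parity statement about $p_{-3}(n)$ and then quote the cited estimates. The theorem just established gives $c(\mu^{(2)};n)\equiv p_{-3}(n)\pmod 4$ for every $n\ge 0$; reducing this congruence modulo $2$ shows that $c(\mu^{(2)};n)$ and $p_{-3}(n)$ have the same parity for each $n$. Consequently, for every $N$,
\begin{align*}
\#\{0\le n\le N: c(\mu^{(2)};n)\equiv 1 \pmod 2\}&=\#\{0\le n\le N: p_{-3}(n)\equiv 1 \pmod 2\},\\
\#\{0\le n\le N: c(\mu^{(2)};n)\equiv 0 \pmod 2\}&=\#\{0\le n\le N: p_{-3}(n)\equiv 0 \pmod 2\}.
\end{align*}
Thus it suffices to bound the number of odd (respectively even) values of $p_{-3}(n)$, where $\sum_{n}p_{-3}(n)q^n=1/(q;q)_\infty^3$.

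Next I would invoke \cite[Theorems 3.5 and 3.6]{BYZ}, which are precisely lower bounds for the count of odd and even values among the coefficients of $1/(q;q)_\infty^k$. Specializing to $k=3$ produces the two constants appearing in the statement: the odd-value bound of Theorem 3.5 yields $N^{1/2-c/\log\log N}$ for any fixed $c>2\log 2$ and $N$ large, while the even-value bound of Theorem 3.6 yields $c\sqrt N$ for any fixed $c<1/\sqrt 2$ and $N$ large. Substituting these into the two displayed equalities gives exactly the claimed inequalities for $c(\mu^{(2)};n)$.

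Since the analytic heart of the argument---the Serre-type lower bound for even values and the sieve/modular argument for odd values of $p_{-3}(n)$---is entirely contained in \cite{BYZ}, there is essentially no genuine obstacle beyond bookkeeping. The only point that requires care is to confirm that the normalization of the generating function $1/(q;q)_\infty^k$ in \cite{BYZ} at $k=3$ matches our $p_{-3}(n)$, and that the numerical constants $2\log 2$ and $1/\sqrt 2$ are exactly those output by their theorems when $k=3$. Once this identification is checked, the corollary follows immediately from the termwise parity coincidence above.
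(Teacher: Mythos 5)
Your proposal is correct and matches the paper exactly: the paper derives this corollary by combining the preceding theorem's congruence $c(\mu^{(2)};n)\equiv p_{-3}(n)\pmod 4$ (reduced mod $2$) with Theorems 3.5 and 3.6 of \cite{BYZ} applied to the coefficients of $1/(q;q)_\infty^3$. Nothing further is needed.
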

However, since such estimates are still far from proving Conjecture \ref{conj-mock}, we will not state them in our theorems.

\section{Mock theta functions of order 3}\label{sec-mock-3}
Ramanujan gave seven mock theta functions of order 3 in his last letter to Hardy and lost notebook \cite{lostnotebook}:
\begin{align*}
&f^{(3)}(q):=\sum_{n=0}^{\infty}\frac{q^{n^2}}{(-q;q)_{n}^2}, 
\quad \phi^{(3)}(q):=\sum_{n=0}^{\infty}\frac{q^{n^2}}{(-q^2;q^2)_{n}},   
\quad \psi^{(3)}(q):=\sum_{n=1}^{\infty}\frac{q^{n^2}}{(q;q^2)_{n}},\\ 
&\chi^{(3)}(q):=\sum_{n=0}^{\infty}\frac{q^{n^2}(-q;q)_{n}}{(-q^3;q^3)_{n}},  
\quad \omega^{(3)}(q):=\sum_{n=0}^{\infty}\frac{q^{2n(n+1)}}{(q;q^2)_{n+1}^2}, \\ 
& \nu^{(3)}(q):=\sum_{n=0}^{\infty}\frac{q^{n(n+1)}}{(-q;q^2)_{n+1}}, 
\quad \rho^{(3)}(q):=\sum_{n=0}^{\infty}\frac{q^{2n(n+1)}(q;q^2)_{n+1}}{(q^3;q^6)_{n+1}}. 
\end{align*}
\begin{theorem}\label{thm-mock-3-f-phi}
For $n\geq 0$ we have $c(f^{(3)};n)\equiv p(n)$ \text{\rm{(mod 4)}} and  $c(\phi^{(3)};n)\equiv p(n)$ \text{\rm{(mod 2)}}.
\end{theorem}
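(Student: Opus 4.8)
The plan is to prove the two congruences separately, in each case peeling off a single copy of $\frac{1}{(q;q)_\infty}$ and showing that the remaining correction vanishes to the required modulus. For $f^{(3)}$ the natural input is Watson's Appell-Lerch representation \eqref{intro-f(q)}. First I would fold the bilateral sum onto $n\ge 0$: writing the summand exponent as $\frac{n(3n+1)}{2}$ and using $\frac{1}{1+q^{-n}}=\frac{q^n}{1+q^n}$ together with $\frac{n(3n-1)}{2}+n=\frac{n(3n+1)}{2}$, one checks that the $n$-th and $(-n)$-th summands coincide for every $n\ge 1$, while the $n=0$ term equals $\frac12$. This gives
\[
f^{(3)}(q)=\frac{1}{(q;q)_\infty}+\frac{4}{(q;q)_\infty}\sum_{n=1}^\infty \frac{(-1)^n q^{n(3n+1)/2}}{1+q^n}.
\]
Since $\frac{1}{(q;q)_\infty}$, each factor $\frac{1}{1+q^n}=\sum_{k\ge 0}(-1)^k q^{nk}$, and all powers $q^{n(3n+1)/2}$ have integer coefficients, the second term is $4$ times an element of $\mathbb{Z}[[q]]$; hence $f^{(3)}(q)\equiv \frac{1}{(q;q)_\infty}\pmod 4$, which is the assertion $c(f^{(3)};n)\equiv p(n)\pmod 4$.

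For $\phi^{(3)}$ I would work directly from the Eulerian form. Modulo $2$ we have $1+q^{2j}\equiv 1-q^{2j}$, so $(-q^2;q^2)_n\equiv (q^2;q^2)_n$; as both are units in $\mathbb{Z}/2[[q]]$ their reciprocals also agree, giving $\phi^{(3)}(q)\equiv \sum_{n\ge 0}\frac{q^{n^2}}{(q^2;q^2)_n}\pmod 2$. The key step is to recognize this last sum as a specialization of Euler's identity $\sum_{n\ge 0}\frac{q^{\binom n2}z^n}{(q;q)_n}=(-z;q)_\infty$: replacing $q$ by $q^2$ turns the exponent into $n(n-1)$, and setting $z=q$ produces $n(n-1)+n=n^2$, yielding the exact evaluation
\[
\sum_{n=0}^\infty \frac{q^{n^2}}{(q^2;q^2)_n}=(-q;q^2)_\infty .
\]
It then remains to reduce this product modulo $2$. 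Using $(-q;q^2)_\infty\equiv (q;q^2)_\infty=\frac{(q;q)_\infty}{(q^2;q^2)_\infty}$ and the congruence $(q;q)_\infty^2\equiv (q^2;q^2)_\infty\pmod 2$ (the ``freshman's dream'' $A(q)^2\equiv A(q^2)$), I get $(q;q^2)_\infty(q;q)_\infty=\frac{(q;q)_\infty^2}{(q^2;q^2)_\infty}\equiv 1\pmod 2$, so $(-q;q^2)_\infty\equiv \frac{1}{(q;q)_\infty}\pmod 2$ and therefore $c(\phi^{(3)};n)\equiv p(n)\pmod 2$.

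The only place demanding genuine care is the $f^{(3)}$ computation: one must verify the $n\leftrightarrow -n$ symmetry of the Appell-Lerch summand and confirm that $\frac{n(3n+1)}{2}\in\mathbb{Z}$ for all $n$, so that the surviving correction is an honest multiple of $4$ in $\mathbb{Z}[[q]]$ rather than merely a formal expression with half-integer exponents. For $\phi^{(3)}$ the substantive insight is spotting the Euler specialization that collapses $\sum_{n\ge 0}\frac{q^{n^2}}{(q^2;q^2)_n}$ to the product $(-q;q^2)_\infty$; once that product form is available, the passage to $\frac{1}{(q;q)_\infty}$ modulo $2$ is routine. I expect no obstruction beyond these bookkeeping points, since both reductions are exact rather than asymptotic.
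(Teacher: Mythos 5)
Your proof of the $f^{(3)}$ congruence is exactly the paper's argument: fold Watson's bilateral Appell--Lerch sum \eqref{intro-f(q)} onto $n\ge 1$, observe the $n=0$ term contributes $\tfrac{1}{(q;q)_\infty}$ after the prefactor $2$, and note the remainder is $4$ times an integral power series. Your treatment of $\phi^{(3)}$, however, takes a genuinely different route. The paper again works from an Appell--Lerch representation,
\[
\phi^{(3)}(q)=\frac{1}{(q;q)_\infty}\sum_{n=-\infty}^\infty \frac{(-1)^nq^{n(3n+1)/2}(1+q^n)}{1+q^{2n}},
\]
pairs the $n$ and $-n$ terms, and reads off $\phi^{(3)}(q)\equiv \frac{1}{(q;q)_\infty}\pmod 2$ directly. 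You instead stay with the Eulerian form, reduce $(-q^2;q^2)_n\equiv(q^2;q^2)_n\pmod 2$, and invoke Euler's identity $\sum_{n\ge 0} q^{\binom n2}z^n/(q;q)_n=(-z;q)_\infty$ at $(q,z)\mapsto(q^2,q)$ to evaluate the resulting sum exactly as $(-q;q^2)_\infty$, after which the reduction to $\frac{1}{(q;q)_\infty}$ via $(q;q)_\infty^2\equiv(q^2;q^2)_\infty\pmod 2$ is routine. Both arguments are complete and correct; yours has the advantage of being self-contained and elementary (no mock-modular input at all for $\phi^{(3)}$, only a classical Euler identity), at the cost of being special to modulus $2$, whereas the Appell--Lerch route is the one that generalizes across the paper (it is the same template used for $f^{(3)}$ mod $4$, $\mu^{(2)}$ mod $4$, $\nu^{(3)}$, $\omega^{(3)}$, etc.). The bookkeeping points you flag are the right ones and all check out: $n(3n+1)/2$ is always an integer, the $n\leftrightarrow -n$ symmetry holds, and $(-q^2;q^2)_n$ and $(q^2;q^2)_n$ are units in $\mathbb{Z}/2[[q]]$ so their reciprocals agree mod $2$.
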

\begin{proof}
We use the following Appell-Lerch series representation due to Watson \cite{Watson}:
\begin{align}
f^{(3)}(q)&=\frac{2}{(q;q)_\infty}\sum_{n=-\infty}^\infty \frac{(-1)^nq^{n(3n+1)/2}}{1+q^n}, \label{mock-3-f-A} \\
\phi^{(3)}(q)&=\frac{1}{(q;q)_\infty}\sum_{n=-\infty}^\infty \frac{(-1)^nq^{n(3n+1)/2}(1+q^n)}{1+q^{2n}}. \label{mock-3-phi-A}
\end{align}
It follows that
\begin{align*}
f^{(3)}(q)=\frac{1}{(q;q)_\infty}\left(1+4\sum_{n=1}^\infty \frac{(-1)^nq^{n(3n+1)/2}}{1+q^n}\right) \equiv \frac{1}{(q;q)_\infty} \pmod{4}, \\
\phi^{(3)}(q)=\frac{1}{(q;q)_\infty}\left(1+2\sum_{n\neq 0} \frac{(-1)^nq^{n(3n+1)/2}}{1+q^{2n}}\right)\equiv \frac{1}{(q;q)_\infty} \pmod{2}.
\end{align*}
This implies the desired congruences.
\end{proof}

\begin{rem}
We can then use \eqref{best-even} and \eqref{best-odd} to derive some estimates for the number of even/odd values of $c(f^{(3)};n)$ and $c(\phi^{(3)};n)$.
Since these estimations follow directly from known results for $p(n)$ and are far from proving Conjecture \ref{conj-mock}, we will not state them in our theorems.
\end{rem}

\begin{theorem}\label{thm-mock-3-psi}
The coefficient $c(\psi^{(3)};n)$ is odd if and only if $24n-1=p^{4a+1}m^2$ for some prime $p$ and integer $m$ with $p\nmid m$. We have
\begin{align}\label{mock-3-psi-estimate}
\#\left\{0\leq n\leq N: c(\psi^{(3)};n) \equiv 1 \text{ \rm{(mod  2)}} \right\}=\frac{\pi^2}{3}\frac{N}{\log N}+O\left( \frac{N}{\log^2 N} \right).
\end{align}
\end{theorem}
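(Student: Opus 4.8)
The plan is to follow the template of the proof of Theorem \ref{thm-mock-2-A}, with the ring $\mathbb{Z}[\sqrt 2]$ replaced by $\mathbb{Z}[\sqrt 6]$. First I would invoke a Hecke-type series representation of $\psi^{(3)}(q)$, of the form $\psi^{(3)}(q)=\frac{J_2}{J_1^2}\sum_{n,j}\varepsilon(n,j)\,q^{E(n,j)}$ with signs $\varepsilon(n,j)\in\{\pm1\}$ and an explicit indefinite quadratic exponent $E(n,j)$, as provided by Cui, Gu and Hao \cite{CGH} or obtainable by the methods of \cite{Chen-Wang}. Reducing modulo $2$, the prefactor disappears because $J_1^2=(q;q)_\infty^2\equiv(q^2;q^2)_\infty=J_2\pmod 2$, so $J_2/J_1^2\equiv 1$, and all signs become $+1$. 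Hence $\psi^{(3)}(q)$ is congruent modulo $2$ to an indefinite binary theta series $\sum_{n,j}q^{E(n,j)}$ of discriminant $24$.

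Next I would extract the arithmetic progression by forming $\sum_n c(\psi^{(3)};n)q^{24n-1}$ and completing the square in the summation variables, so that each value $24E(n,j)-1$ is written as $6v^2-u^2$ with $u,v$ integer-linear (of prescribed parity) in $n,j$. The contributing terms are thereby indexed by the solutions of the negative Pell equation $u^2-6v^2=-(24n-1)$. Since the fundamental solution of $x^2-6y^2=1$ is $(x_1,y_1)=(5,2)$, Lemma \ref{lem-Pell} in the case $m<0$ (with the bound $-2v<u\leq 2v$) shows that each equivalence class of solutions has a unique representative in the range produced by the completed squares; after symmetrizing the inner sum this gives
\[
c(\psi^{(3)};n)\equiv \tfrac12\, H_{\mathbb{Z}[\sqrt 6]}\bigl(-(24n-1)\bigr)\pmod 2.
\]

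The parity of the right-hand side is then read off from Lemma \ref{lem-Z6}(2). As $24n-1\equiv 23\pmod{24}$ is coprime to $6$, we have $a=b=0$ in its factorization; the vanishing conditions force every prime $\equiv\pm7,\pm11\pmod{24}$ to occur to an even exponent and require $\sum g_i$ to be odd, while $\tfrac12 H_{\mathbb{Z}[\sqrt 6]}(-(24n-1))$ is odd precisely when $H\equiv 2\pmod 4$, i.e.\ when exactly one factor of $\prod(f_i+1)\prod(g_i+1)$ is $\equiv 2\pmod 4$ and the others are odd. The requirement that $\sum g_i$ be odd excludes the split primes $\equiv 1,19\pmod{24}$ as the distinguished factor and leaves exactly one prime $p\equiv 5,23\pmod{24}$ to an exponent $\equiv 1\pmod 4$, all remaining exponents being even; thus $24n-1=p^{4a+1}m^2$ with $p\nmid m$. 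A one-line residue computation modulo $24$ shows that such a $p$ is automatically $\equiv 23\pmod{24}$, which is why the statement need not record a congruence condition on $p$.

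Finally, the estimate \eqref{mock-3-psi-estimate} follows from Lemma \ref{lem-estimate} applied with $(A,B)=(24,23)$ (after the harmless reindexing $24n-1=24(n-1)+23$): here $\prod_{p\mid 24}(1+p^{-1})=(1+\tfrac12)(1+\tfrac13)=2$, so the leading constant is $\tfrac{\pi^2}{6}\cdot 2=\tfrac{\pi^2}{3}$. I expect the main obstacle to be the second step: securing the precise Hecke-type representation and, above all, tracking the parities, residues and the exact inequalities of Lemma \ref{lem-Pell} carefully enough to land on the \emph{negative}-norm branch $H_{\mathbb{Z}[\sqrt 6]}(-(24n-1))$. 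This is essential, since the positive-norm branch $H_{\mathbb{Z}[\sqrt6]}(24n-1)$ would force $\sum g_i$ even and hence make every $c(\psi^{(3)};n)$ even, contradicting the data; once the correct branch is fixed, the remaining steps are routine given Lemmas \ref{lem-Pell}, \ref{lem-Z6} and \ref{lem-estimate}.
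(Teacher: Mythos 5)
Your proposal is correct and follows essentially the same route as the paper: a Hecke-type representation reduced modulo $2$ (the paper uses Liu's identity for $\sum_{n\ge0}(-1)^nq^{n^2}/(q;q^2)_n$ rather than a CGH-style expansion with eta-quotient prefactor), Lemma \ref{lem-Pell} with fundamental solution $(5,2)$ to pick out one representative per equivalence class, Lemma \ref{lem-Z6} to read off the parity of the class count, and Lemma \ref{lem-estimate} with $(A,B)=(24,23)$ for the asymptotic. One clarification on the point you single out as the main obstacle: the squares only complete over the integers in the progression $q^{48n-2}$, where the exponents take the form $(12n+2)^2-6(4j\pm1)^2$ and $(12n+10)^2-6(4j\pm1)^2$, so the paper arrives at the \emph{positive}-norm count $\tfrac12 H_{\mathbb{Z}[\sqrt{6}]}(48n-2)$ rather than your $\tfrac12 H_{\mathbb{Z}[\sqrt{6}]}(-(24n-1))$; these agree because $2+\sqrt{6}$ has norm $-2$ and $2$ is ramified, and Lemma \ref{lem-Z6} imposes the same nonvanishing condition ($\sum g_i$ odd) in either normalization, so your subsequent parity analysis, the automatic congruence $p\equiv 23\pmod{24}$, and the constant $\tfrac{\pi^2}{3}$ all stand.
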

\begin{proof}
From \cite[Eq.\ (4.3)]{Liu2013IJNT} we find
\begin{align}
\sum_{n=0}^\infty \frac{(-1)^nq^{n^2}}{(q;q^2)_n}=\sum_{n=0}^\infty \sum_{j=-n}^n(-1)^n(1-q^{4n+2})q^{3n^2+n-2j^2-j}. \label{Liu-id}
\end{align}
By definition and \eqref{Liu-id} we deduce that
\begin{align}
\psi^{(3)}(q)&=\sum_{n=0}^\infty \frac{q^{n^2}}{(q;q^2)_n}-1 \nonumber \\
& \equiv \sum_{n=0}^\infty \sum_{j=-n}^n\left(q^{3n^2+n-2j^2-j}+q^{3n^2+5n+2-2j^2-j} \right)-1 \pmod{2}.
\end{align}
This implies
\begin{align}\label{3-psi-key}
& \sum_{n=1}^\infty c(\psi^{(3)};n)q^{48n-2} \equiv \frac{1}{2}\left(\sum_{n=0}^\infty \sum_{j=-n}^{n-1} q^{(12n+2)^2-6(4j+1)^2}+\sum_{n=0}^\infty \sum_{j=-n+1}^nq^{(12n+2)^2-6(4j-1)^2} \right. \nonumber \\
&\quad \left.+\sum_{n=0}^\infty \sum_{j=-n-1}^n q^{(12n+10)^2-6(4j+1)^2}+\sum_{n=0}^\infty \sum_{j=-n}^{n+1} q^{(12n+10)^2-6(4j-1)^2}  \right) \pmod{2}.
\end{align}
The fundamental solution of $x^2-6y^2=1$ is $(x_1,y_1)=(5,2)$. By Lemma \ref{lem-Pell} and \eqref{3-psi-key} and arguing in a way similar to the proof of Theorem \ref{thm-mock-2-A}, we conclude that
\begin{align}\label{3-psi-H}
c(\psi^{(3)};n)\equiv \frac{1}{2}H_{\mathbb{Z}[\sqrt{6}]}(48n-2) \pmod{2}.
\end{align}
Let $48n-2$ have the prime factorization $48n-2=2p_1^{e_1}\cdots p_j^{e_j}q_1^{f_1}\cdots q_k^{f_k}r_1^{g_1}\cdots r_\ell^{g_\ell}$ where the primes $p_i\equiv \pm 7, \pm 11$ (mod 24), $q_j\equiv 1, 19$ (mod 24), and $r_i\equiv 5,23$ (mod 24). By Lemma \ref{lem-Z6}, we know that $\frac{1}{2}H_{\mathbb{Z}[\sqrt{6}]}(48n-2)$ is odd if and only if all of $f_i$ and $g_i$ are even except for one $g_s\equiv 1$ (mod 4). Thus the condition is equivalent to that $24n-1=p^{4a+1}m^2$ for some prime $p$ and integer $m$ with $p\nmid m$.

Applying Lemma \ref{lem-estimate} with $(A,B)=(24,23)$, we get \eqref{mock-3-psi-estimate}.
\end{proof}
We remark here that the first statement of Theorem \label{thm-mock-3-psi} also follows from Theorem 1.3 (ii) and Theorem 1.5 in the work of Andrews, Garvan and Liang \cite{AGL}.

\begin{theorem}\label{thm-mock-3-nu}
We have $c(\nu^{(3)};n)\equiv a_3(n)$ \text{\rm{(mod 2)}} where $a_3(n)$ is the number of $3$-core partitions of $n$. Moreover, $c(\nu^{(3)};n)$ is odd if and only if $3n+1=k^2$ for some integer $k$. We have
\begin{align}
\#\left\{0\leq n\leq N:c(\nu^{(3)};n)\equiv 1 \, (\mathrm{mod \,\, 2}) \right\}=\lfloor \sqrt{3N+1} \rfloor-\left\lfloor \frac{\sqrt{3N+1}}{3}\right\rfloor.
\end{align}
\end{theorem}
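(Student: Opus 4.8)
The plan is to reduce $\nu^{(3)}(q)$ modulo $2$ to a single theta series and then read off both the congruence and the characterization. First I would observe that, since $1+q^{2i+1}\equiv 1-q^{2i+1}\pmod 2$, we have $(-q;q^2)_{n+1}\equiv (q;q^2)_{n+1}\pmod 2$, so that
\begin{align*}
\nu^{(3)}(q)\equiv \sum_{n=0}^\infty \frac{q^{n^2+n}}{(q;q^2)_{n+1}} \pmod 2.
\end{align*}
Next I would feed the right-hand series into a Hecke-type double-sum identity of the same shape as \eqref{Liu-id} (such identities being available through the unified method of Chen and Wang \cite{Chen-Wang}) and reduce modulo $2$ exactly as in the proofs of Theorems \ref{thm-mock-2-A} and \ref{thm-mock-3-psi}: the $(-1)^n$ signs disappear, the inner sum over $j$ collapses because the terms for $\pm j$ cancel in pairs and only the central term survives, and any theta-quotient prefactor reduces to $1$ via $J_1^2\equiv J_2\pmod 2$. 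As with the first-group function $B^{(2)}(q)$ treated in Theorem \ref{thm-mock-2-B}, this should collapse the double sum into a single bilateral theta series, the target being
\begin{align*}
\nu^{(3)}(q)\equiv \sum_{m=-\infty}^{\infty} q^{3m^2+2m} \pmod 2,
\end{align*}
which by Jacobi's triple product equals the genuine theta function $(-q;q^6)_\infty(-q^5;q^6)_\infty(q^6;q^6)_\infty$.

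Granting this reduction, the characterization and the count follow quickly. The exponents $3m^2+2m$ are pairwise distinct as $m$ ranges over $\mathbb{Z}$, and $3(3m^2+2m)+1=(3m+1)^2$; moreover $m\mapsto 3m+1$ is a bijection from $\mathbb{Z}$ onto the integers congruent to $1\pmod 3$, whose squares are exactly the squares coprime to $3$. Since $3n+1=k^2$ forces $3\nmid k$, it follows that $c(\nu^{(3)};n)$ is odd if and only if $3n+1$ is a perfect square, each such $n$ arising from a unique $k>0$ with $3\nmid k$. Counting then reduces to counting $k$ with $1\le k\le \sqrt{3N+1}$ and $3\nmid k$, of which there are $\lfloor \sqrt{3N+1}\rfloor-\lfloor \sqrt{3N+1}/3\rfloor$, giving the stated formula.

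To identify the left-hand side with $3$-core partitions, I would recall the classical facts that $\sum_{n\ge 0}a_3(n)q^n=J_3^3/J_1$ and that $a_3(n)=\sum_{d\mid 3n+1}\left(\frac{d}{3}\right)$, a function multiplicative in $3n+1$ whose local factor at a prime $p\equiv 1\pmod 3$ is $e+1$ and at a prime $p\equiv 2\pmod 3$ is $1$ or $0$ according as the exponent $e$ is even or odd. Hence $a_3(n)$ is odd precisely when every exponent in $3n+1$ is even, i.e.\ when $3n+1$ is a perfect square, which matches the condition above and yields $c(\nu^{(3)};n)\equiv a_3(n)\pmod 2$. The main obstacle is the first paragraph: locating (or deriving) the correct Hecke-type representation of $\nu^{(3)}(q)$ and verifying that, modulo $2$, the prefactor and the inner $j$-sum collapse cleanly and that the leftover $n$-sum symmetrizes into the full bilateral theta $\sum_{m\in\mathbb{Z}}q^{3m^2+2m}$ with each coefficient exactly $1$. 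Pinning down the precise exponent $3m^2+2m$ (rather than some other single-square form) and matching the lowest-order terms is where the careful bookkeeping lies; everything afterward is formal.
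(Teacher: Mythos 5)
Your endgame is sound: the target congruence $\nu^{(3)}(q)\equiv\sum_{m\in\mathbb{Z}}q^{3m^2+2m}\pmod 2$ is indeed correct and equivalent to what the paper proves (its coefficients are $0$ or $1$, with a $1$ exactly when $3n+1$ is a perfect square, which matches the parity of $a_3(n)$ via the Hirschhorn--Sellers formula \eqref{3-core-formula}), and your derivation of the characterization and of the count $\lfloor\sqrt{3N+1}\rfloor-\lfloor\sqrt{3N+1}/3\rfloor$ from that congruence is complete and correct. The problem is that the one step carrying all the content --- the reduction of $\nu^{(3)}(q)$ modulo $2$ to that theta series --- is never carried out. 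You observe (correctly, but trivially) that $\nu^{(3)}(q)\equiv\nu^{(3)}(-q)\pmod 2$, and then appeal to an unspecified Hecke-type double sum ``of the same shape as \eqref{Liu-id}'' together with a heuristic about the $j$-sum collapsing to its central term and the prefactor reducing to $1$. No such identity is exhibited, and the heuristic does not describe what actually happens: in the representation the paper uses, the prefactor is $1/(q;q)_\infty=1/J_1$, which does \emph{not} reduce to $1$ modulo $2$, and the final answer $J_3^3/J_1$ only becomes a single bilateral theta series after a further application of the triple product ($J_3^3/J_1\equiv J_{1,6}\pmod 2$). So as written the proposal assumes its key lemma.

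For comparison, the paper's route makes this step immediate: Watson's Appell--Lerch representation
\begin{align*}
\nu^{(3)}(q)=\frac{1}{(q;q)_\infty}\sum_{n=0}^\infty (-1)^nq^{3n(n+1)/2}\frac{1-q^{2n+1}}{1+q^{2n+1}}
\end{align*}
reduces modulo $2$ to $\frac{1}{J_1}\sum_{n\geq 0}q^{3n(n+1)/2}=\frac{1}{J_1}\cdot\frac{J_6^2}{J_3}\equiv\frac{J_3^3}{J_1}$, which is precisely the generating function \eqref{3-core-gen} of $3$-cores; the characterization then falls out of \eqref{3-core-formula}. If you want to salvage your plan, you must either quote this Appell--Lerch formula (or the relation between $\nu^{(3)}$ and $\omega^{(3)}$) or actually produce and verify the Hecke-type identity you are invoking; until then the proof has a genuine gap at its central step.
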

\begin{proof}
Recall the following Appell-Lerch series representation of $\nu^{(3)}(q)$ in \cite{Watson}:
\begin{align}
\nu^{(3)}(q)=\frac{1}{(q;q)_\infty}\sum_{n=0}^\infty (-1)^nq^{3n(n+1)/2}\frac{1-q^{2n+1}}{1+q^{2n+1}}.
\end{align}
We deduce that
\begin{align}\label{3-nu-reduction}
\nu^{(3)}(q) \equiv \frac{1}{(q;q)_\infty}\sum_{n=0}^\infty q^{3n(n+1)/2}=\frac{1}{(q;q)_\infty}\cdot \frac{(q^6;q^6)_\infty^2}{(q^3;q^3)_\infty}\equiv \frac{(q^3;q^3)_\infty^3}{(q;q)_\infty} \pmod{2}.
\end{align}
From \cite[Eq.\ (2.1)]{t-core} we know that
\begin{align}\label{3-core-gen}
\sum_{n=0}^\infty a_3(n)q^n=\frac{(q^3;q^3)_\infty^3}{(q;q)_\infty}.
\end{align}
Therefore, \eqref{3-nu-reduction} implies $c(\nu^{(3)};n)\equiv a_3(n)$ (mod 2).

Next, we recall an explicit formula for $a_3(n)$. Let $3n+1$ have the prime factorization $3n+1=p_1^{e_1}\cdots p_j^{e_j}q_1^{f_1}\cdots q_k^{f_k}$ where $p_i\equiv 1$ (mod 3), and $q_i\equiv 2$ (mod 3).  Hirschhorn and Sellers \cite{HS} found that
\begin{align}\label{3-core-formula}
a_3(n)=\left\{\begin{array}{ll}
(e_1+1)\cdots (e_j+1) & \text{if all $f_i$ are even}; \\
0 & \text{otherwise}.
\end{array}\right.
\end{align}
Therefore, $a_3(n)$ is odd if and only if all of $e_i$ and $f_i$ are even, i.e., $3n+1=k^2$. This proves the first half of the theorem. The second half of the theorem follows immediately.
\end{proof}

\begin{theorem}\label{thm-mock-3-omega}
We have $c(\omega^{(3)};2n+1)\equiv 0$ \text{\rm{(mod 2)}} and $c(\omega^{(3)};2n)\equiv a_3(n)$ \text{\rm{(mod 2)}}. Moreover, $c(\omega^{(3)};2n)$ is odd if and only if $3n+1=k^2$ for some integer $k$. We have
\begin{align}
\#\left\{0\leq n\leq N:c(\omega^{(3)};2n)\equiv 1 \, (\mathrm{mod \,\, 2}) \right\}=\lfloor \sqrt{3N+1} \rfloor-\left\lfloor \frac{\sqrt{3N+1}}{3}\right\rfloor.
\end{align}
\end{theorem}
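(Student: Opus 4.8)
The plan is to reduce the entire statement to the already-established Theorem \ref{thm-mock-3-nu} by proving the single congruence
\[
\omega^{(3)}(q) \equiv \nu^{(3)}(q^2) \pmod 2 .
\]
Granting this, writing $\nu^{(3)}(q^2)=\sum_{m\ge 0}c(\nu^{(3)};m)q^{2m}$ immediately gives $c(\omega^{(3)};2m+1)\equiv 0$ and $c(\omega^{(3)};2m)\equiv c(\nu^{(3)};m)\equiv a_3(m)\pmod 2$. The characterization ``$c(\omega^{(3)};2n)$ is odd iff $3n+1=k^2$'' and the counting formula are then verbatim transcriptions of the corresponding statements for $\nu^{(3)}$, since for each $0\le n\le N$ we have $c(\omega^{(3)};2n)$ odd exactly when $c(\nu^{(3)};n)$ is odd, and Theorem \ref{thm-mock-3-nu} already evaluates the number of $n$ with $3n+1$ a perfect square as $\lfloor\sqrt{3N+1}\rfloor-\lfloor\sqrt{3N+1}/3\rfloor$.

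To prove the congruence I would work directly from the Eulerian definitions rather than invoke any Appell--Lerch representation. The key tool is the Frobenius identity in $\mathbb{F}_2[[q]]$: for any $g\in\mathbb{Z}[[q]]$ one has $g(q)^2\equiv g(q^2)\pmod 2$, and when $g(0)=1$ this passes to reciprocals, $g(q)^{-2}\equiv g(q^2)^{-1}\pmod 2$. Applying this with $g=(q;q^2)_{n+1}$ turns the squared denominator in $\omega^{(3)}(q)=\sum_{n\ge 0}q^{2n(n+1)}/(q;q^2)_{n+1}^2$ into $(q^2;q^4)_{n+1}$ modulo $2$, giving
\[
\omega^{(3)}(q)\equiv \sum_{n=0}^\infty \frac{q^{2n(n+1)}}{(q^2;q^4)_{n+1}}\pmod 2 ,
\]
which is manifestly a power series in $q^2$; setting $q^2=Q$ it becomes $\sum_{n\ge 0}Q^{n(n+1)}/(Q;Q^2)_{n+1}$. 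On the other side, in $\nu^{(3)}(Q)=\sum_{n\ge 0}Q^{n(n+1)}/(-Q;Q^2)_{n+1}$ I would use $1+Q^{2k+1}\equiv 1-Q^{2k+1}\pmod 2$ termwise, i.e.\ $(-Q;Q^2)_{n+1}\equiv (Q;Q^2)_{n+1}\pmod 2$, and again invert modulo $2$, to obtain $\nu^{(3)}(Q)\equiv \sum_{n\ge 0}Q^{n(n+1)}/(Q;Q^2)_{n+1}\pmod 2$. Comparing the two expressions with $Q=q^2$ yields $\omega^{(3)}(q)\equiv \nu^{(3)}(q^2)\pmod 2$.

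The argument is essentially mechanical, and I do not expect a genuine obstacle. The one point requiring care is that the congruences must survive inversion of the $q$-Pochhammer denominators: here one uses that every such denominator is a unit in $\mathbb{Z}[[q]]$ (constant term $1$), so that $A\equiv B\pmod 2$ forces $A^{-1}\equiv B^{-1}\pmod 2$ via $A^{-1}-B^{-1}=A^{-1}(B-A)B^{-1}$. Once $\omega^{(3)}(q)\equiv \nu^{(3)}(q^2)\pmod 2$ is in hand, the theorem is immediate from Theorem \ref{thm-mock-3-nu}. As an alternative to the Eulerian manipulation, one could instead reduce a Watson-type Appell--Lerch representation of $\omega^{(3)}$ modulo $2$ and simplify the resulting theta quotient to $(q^6;q^6)_\infty^3/(q^2;q^2)_\infty$, which equals $\nu^{(3)}(q^2)$ modulo $2$ by \eqref{3-nu-reduction}; but the direct route above is shorter.
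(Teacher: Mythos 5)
Your proposal is correct, and it takes a genuinely different route from the paper. The paper starts from Watson's Appell--Lerch representation $\omega^{(3)}(q)=\frac{1}{(q^2;q^2)_\infty}\sum_{n\ge 0}(-1)^nq^{3n(n+1)}\frac{1+q^{2n+1}}{1-q^{2n+1}}$, reduces it modulo $2$ to the eta-quotient $\frac{(q^6;q^6)_\infty^3}{(q^2;q^2)_\infty}$, and then reruns the argument of Theorem \ref{thm-mock-3-nu} (the $3$-core generating function \eqref{3-core-gen} together with the Hirschhorn--Sellers formula \eqref{3-core-formula}) in the variable $q^2$. You instead work purely from the Eulerian definitions, using the Frobenius congruence $(q;q^2)_{n+1}^2\equiv(q^2;q^4)_{n+1}\pmod 2$ and the sign change $(-Q;Q^2)_{n+1}\equiv(Q;Q^2)_{n+1}\pmod 2$, each step surviving inversion because the Pochhammer factors are units in $\mathbb{Z}[[q]]$; this yields the cleaner structural statement $\omega^{(3)}(q)\equiv\nu^{(3)}(q^2)\pmod 2$, from which every claim of the theorem is inherited verbatim from Theorem \ref{thm-mock-3-nu}. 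All the individual steps check out (termwise congruence of the infinite sums is legitimate since the $n$-th summand has order $2n(n+1)$, so each coefficient involves only finitely many terms), and your intermediate congruence is consistent with the paper's two reductions, since \eqref{3-nu-reduction} evaluated at $q^2$ is exactly the paper's mod-$2$ form of $\omega^{(3)}$. What your route buys is independence from the Appell--Lerch machinery and an explicit mod-$2$ link between the two mock theta functions; what the paper's route buys is uniformity with its treatment of the other functions, where Eulerian forms do not reduce so transparently and Hecke-type or Appell--Lerch representations are genuinely needed.
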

\begin{proof}
Recall the following Appell-Lerch series representations of $\omega^{(3)}(q)$ \cite{Watson}:
\begin{align}
\omega^{(3)}(q)=\frac{1}{(q^2;q^2)_\infty}\sum_{n=0}^\infty (-1)^nq^{3n(n+1)}\frac{1+q^{2n+1}}{1-q^{2n+1}}.
\end{align}
It follows that
\begin{align}
\omega^{(3)}(q)\equiv \frac{1}{(q^2;q^2)}\sum_{n=0}^\infty q^{3n(n+1)} \equiv \frac{(q^6;q^6)_\infty^3}{(q^2;q^2)_\infty}\pmod{2}.
\end{align}
The theorem follows by applying the arguments in the proof of Theorem \ref{thm-mock-3-nu}.
\end{proof}

To study the parity of the mock theta function $\rho^{(3)}(q)$, we need some preparations.
\begin{lemma}\label{known-id}
We have
\begin{align}
\frac{1}{J_1^2}&=\frac{J_8^5}{J_2^5J_{16}^2}+2q\frac{J_4^2J_{16}^2}{J_2^5J_8}, \label{J1-square} \\
\frac{J_1}{J_3^3}&=\frac{J_2J_4^2J_{12}^2}{J_6^7}-q\frac{J_2^3J_{12}^6}{J_4^2J_6^9}, \label{J1J33} \\
\frac{J_3^3}{J_1}&=\frac{J_4^3J_6^2}{J_2^2J_{12}}+q\frac{J_{12}^3}{J_4}, \label{J33J1} \\
\frac{J_3}{J_1^3}&=\frac{J_4^6J_6^3}{J_2^9J_{12}^2}+3q\frac{J_4^2J_6J_{12}^2}{J_2^7}. \label{J3J13}
\end{align}
\end{lemma}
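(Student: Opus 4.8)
The plan is to recognize all four identities as $2$-dissections: in each, the first term on the right is a power series in $q^2$ while the second is $q$ times a power series in $q^2$, so the identity merely records the even and odd parts of the eta quotient on the left. The toolbox I would assemble first consists of the product evaluations $\varphi(q)=J_2^5/(J_1^2J_4^2)$, $\psi(q)=J_2^2/J_1$, $\varphi(-q)=J_1^2/J_2$, the Borwein cubic forms $b(q)=J_1^3/J_3$ and $c(q)=3J_3^3/J_1$, together with the elementary facts $\varphi(q)=\varphi(q^4)+2q\psi(q^8)$ and $\varphi(q)\varphi(-q)=\varphi(-q^2)^2$, and the substitution rules under $q\mapsto -q$, namely $J_1\mapsto J_2^3/(J_1J_4)$, $J_3\mapsto J_6^3/(J_3J_{12})$, and $J_{2k}\mapsto J_{2k}$.

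For \eqref{J1-square} I would argue directly. From $\varphi(-q)=J_1^2/J_2$ we get $1/J_1^2=1/(J_2\varphi(-q))$, and from $\varphi(q)\varphi(-q)=\varphi(-q^2)^2$ this equals $\varphi(q)/(J_2\varphi(-q^2)^2)$. Substituting the $2$-dissection $\varphi(q)=\varphi(q^4)+2q\psi(q^8)$ and the evaluations $\varphi(q^4)=J_8^5/(J_4^2J_{16}^2)$, $\psi(q^8)=J_{16}^2/J_8$, $\varphi(-q^2)^2=J_2^4/J_4^2$, everything collapses to
\begin{align*}
\frac{1}{J_1^2}=\frac{J_4^2}{J_2^5}\left(\frac{J_8^5}{J_4^2J_{16}^2}+2q\frac{J_{16}^2}{J_8}\right)=\frac{J_8^5}{J_2^5J_{16}^2}+2q\frac{J_4^2J_{16}^2}{J_2^5J_8},
\end{align*}
which is exactly \eqref{J1-square}.

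For the three cubic identities I would exploit a reciprocal shortcut. Writing \eqref{J33J1} as $J_3^3/J_1=X+qY$ with $X=J_4^3J_6^2/(J_2^2J_{12})$ and $Y=J_{12}^3/J_4$ both even in $q$, rationalization gives $J_1/J_3^3=(X-qY)/(X^2-q^2Y^2)$, so \eqref{J1J33} follows once one verifies the single even-index identity $X^2-q^2Y^2=J_4J_6^9/(J_2^3J_{12}^3)$ (indeed $X/Z=J_2J_4^2J_{12}^2/J_6^7$ and $Y/Z=J_2^3J_{12}^6/(J_4^2J_6^9)$ with $Z$ that quotient, matching the minus sign in \eqref{J1J33}). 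The same reciprocation applied to the $2$-dissection of $b(q)=J_1^3/J_3$ produces \eqref{J3J13}, the factor $3$ and the sign being precisely what the cubic form contributes. Thus everything reduces to the base dissection \eqref{J33J1} of $c(q)/3=J_3^3/J_1$ and to two auxiliary even-index eta identities, each of which one can attack by extracting even and odd parts through the $q\mapsto -q$ rules above.

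The hard part is precisely these cubic-level simplifications: after the $q\mapsto -q$ split one is left with eta-quotient identities (such as the displayed $X^2-q^2Y^2$ relation and the base dissection itself) that do not reduce to the quadratic toolbox and must be proved in their own right. The cleanest uniform safeguard is modular: after clearing denominators, each identity becomes an equality of holomorphic modular forms on $\Gamma_0(48)$ (every modulus appearing divides $48$) of a common weight, so by the Sturm bound it suffices to check that the $q$-expansions agree up to an explicit finite order, a finite and mechanical verification. The only delicate point there is computing the orders of the eta quotients at all cusps of $\Gamma_0(48)$ to confirm holomorphy and the common weight before invoking the bound; once that bookkeeping is done, the remaining coefficient comparison is routine.
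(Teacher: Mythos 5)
The paper does not actually prove this lemma: it simply cites Xia--Yao for \eqref{J1-square}, \eqref{J33J1}, \eqref{J3J13} and Hirschhorn--Garvan--Borwein for \eqref{J1J33}. Your self-contained derivation is therefore a genuinely different route, and it is essentially correct. Your proof of \eqref{J1-square} via $1/J_1^2=\varphi(q)/\bigl(J_2\varphi(-q^2)^2\bigr)$ together with the classical dissection $\varphi(q)=\varphi(q^4)+2q\psi(q^8)$ is complete as written. The reciprocation trick for the cubic identities is also sound, and in fact works better than you give it credit for: since $X+qY=f(q)$ with $X,Y$ even, one has $X^2-q^2Y^2=f(q)f(-q)$, which your own $q\mapsto-q$ substitution rules evaluate in one line (for $f=J_3^3/J_1$ this gives $J_4J_6^9/(J_2^3J_{12}^3)$, exactly the $Z$ you need), so the ``auxiliary even-index identities'' are not independent facts requiring separate proof. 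What genuinely remains unproved in your sketch are the two base dissections: \eqref{J33J1} itself, and the $2$-dissection of $J_1^3/J_3$ --- note that \eqref{J3J13} reciprocates to the latter, not to \eqref{J33J1}, so there are two independent base cases rather than one. For these your only argument is the Sturm-bound/valence-formula verification on $\Gamma_0(48)$; that is a legitimate and standard finishing move for eta-quotient identities whose moduli all divide $48$, provided the cusp-order and weight bookkeeping you flag is actually carried out. In short: your approach buys a self-contained, largely elementary proof (fully so for \eqref{J1-square}), at the price of one finite modular computation; the paper buys brevity by outsourcing everything to the literature.
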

\begin{proof}
The identities \eqref{J1-square}, \eqref{J33J1} and \eqref{J3J13} can be found in  \cite[Eqs.\ (2.15), (3.75) and (3.78)]{Xia-Yao}. The identity \eqref{J1J33} can be found in  \cite{HGB}.
\end{proof}
We also need another identity, which appears to be new and we state it separately.
\begin{lemma}\label{lem-J13}
We have
\begin{align}
\frac{J_1^2}{J_3^2}=\frac{J_2J_4^2J_{12}^4}{J_6^5J_8J_{24}}-2q\frac{J_2^2J_8J_{12}J_{24}}{J_4J_6^4}. \label{J13-2dissection}
\end{align}
\end{lemma}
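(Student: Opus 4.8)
The plan is to prove \eqref{J13-2dissection} as a $2$-dissection of the eta-quotient $J_1^2/J_3^2$. First I would observe that the right-hand side is already split into its even and odd parts: the term $\frac{J_2J_4^2J_{12}^4}{J_6^5J_8J_{24}}$ is a power series in $q^2$, while $-2q\frac{J_2^2J_8J_{12}J_{24}}{J_4J_6^4}$ is $q$ times a power series in $q^2$. Hence proving \eqref{J13-2dissection} is equivalent to matching separately the even-index and odd-index coefficients of $J_1^2/J_3^2$. A crucial structural remark is that the factors $J_8$ and $J_{24}$ occur on the right but in none of the four identities of Lemma \ref{known-id} (all of which involve only $J_2,J_4,J_6,J_{12}$ and, in \eqref{J1-square}, $J_8,J_{16}$). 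In particular $J_{24}$ appears nowhere in Lemma \ref{known-id}, so the four stated identities cannot by themselves yield \eqref{J13-2dissection}; a genuinely new ingredient carrying level-$24$ theta factors is required.

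The natural new ingredient is the $2$-dissection of the product $J_1J_3$. Multiplying \eqref{J1J33} by $J_1J_3$ gives the exact factorization
\[
\frac{J_1^2}{J_3^2}=\Big(\frac{J_1}{J_3^3}\Big)\,J_1J_3=\big(\alpha-q\beta\big)\,J_1J_3,\qquad \alpha=\frac{J_2J_4^2J_{12}^2}{J_6^7},\quad \beta=\frac{J_2^3J_{12}^6}{J_4^2J_6^9},
\]
where both $\alpha$ and $\beta$ are power series in $q^2$. I would then establish a $2$-dissection $J_1J_3=P+qQ$ with $P,Q$ power series in $q^2$ expressible as even-index eta-quotients; this is exactly the step that introduces $J_8$ and $J_{24}$. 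Such a dissection can be produced from the Jacobi triple product: writing $J_1=\sum_n(-1)^nq^{n(3n-1)/2}$ and $J_3=\sum_m(-1)^mq^{3m(3m-1)/2}$ and separating the resulting double sum according to the parity of the exponent yields $P$ and $Q$ as theta functions, which are then put into eta-quotient form using the identities listed at the start of Section \ref{sec-pre}. Substituting $J_1J_3=P+qQ$ into the factorization and collecting powers gives even part $\alpha P-q^2\beta Q$ and odd part $\alpha Q-\beta P$; it remains to simplify these two eta-quotients and check that they equal $\frac{J_2J_4^2J_{12}^4}{J_6^5J_8J_{24}}$ and $-2\frac{J_2^2J_8J_{12}J_{24}}{J_4J_6^4}$ respectively.

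The main obstacle is precisely the determination and simplification of the $J_1J_3$ dissection: identifying the closed eta-quotient forms of $P$ and $Q$ (which must carry the factors $J_8,J_{24}$) and then reducing $\alpha P-q^2\beta Q$ and $\alpha Q-\beta P$ to the compact shapes on the right of \eqref{J13-2dissection}. These reductions are mechanical but lengthy and error-prone. As a rigorous safeguard I would in parallel verify \eqref{J13-2dissection} via modular forms: each term is a weight-$0$ eta-quotient on $\Gamma_0(24)$, so after clearing denominators to obtain holomorphic modular forms one checks the orders at the cusps and compares $q$-expansions up to the Sturm bound for $\Gamma_0(24)$, thereby reducing the entire claim to a finite coefficient computation.
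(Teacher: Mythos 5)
Your route is genuinely different from the paper's. The paper proves \eqref{J13-2dissection} by computing $F(q)\pm F(-q)$ for $F(q)=J_1^2/J_3^2$ directly: it writes $F$ as a product of theta factors $[\,\cdot\,;q^6]_\infty$ (resp.\ $[\,\cdot\,;q^{12}]_\infty$) and applies the Weierstrass-type addition formula \eqref{key-dissection} from Gasper--Rahman twice, once to produce the odd part and once to produce the even part, each as a \emph{single} infinite product. That addition formula is exactly the ``new ingredient carrying level-$24$ factors'' whose necessity you correctly identified. Your factorization $J_1^2/J_3^2=(J_1/J_3^3)\cdot J_1J_3$ combined with \eqref{J1J33} and a $2$-dissection $J_1J_3=P+qQ$ is a legitimate alternative skeleton, and the dissection of $J_1J_3$ you need is classical (it is the reciprocal analogue of \eqref{J1J3}, which the paper itself quotes from Xia--Yao later on).

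The gap is in the step you describe as ``mechanical but lengthy.'' After substituting, the even and odd parts come out as $\alpha P-q^2\beta Q$ and $\alpha Q-\beta P$, i.e.\ as \emph{differences of two} eta-quotients, and the claim that each such difference collapses to a \emph{single} eta-quotient (with the factor $2$ appearing in the odd part) is itself a nontrivial theta identity of the same depth as \eqref{J13-2dissection}; it cannot be obtained by algebraic simplification of the products. So the primary route as written defers the essential content rather than discharging it. Your Sturm-bound fallback would rescue the proof, since every term is an explicit eta-quotient and the identity can be verified by a finite coefficient check once it is recast as an identity between holomorphic modular forms; but there too you should be careful with the level: eta-quotients built from $\eta(\delta\tau)$ with $\delta\in\{1,2,3,4,6,8,12,24\}$ and these exponents are not modular on $\Gamma_0(24)$ in general --- Ligozat's conditions force a larger level (a multiple of $24$, typically $48$ or $576$ here), which changes the Sturm bound. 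With the correct level and the cusp analysis carried out, that computation does constitute a complete proof, though one of a rather different character from the paper's two-line application of \eqref{key-dissection}.
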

\begin{proof}
From \cite[p.\ 61, Exercise 2.16]{Gasper-Rahman} we find
\begin{align}
[x\lambda, x/\lambda, \mu v, \mu/v;q]_\infty =[xv,x/v,\lambda \mu, \mu/\lambda;q]_\infty +\frac{\mu}{\lambda}[x\mu,x/\mu,\lambda v,\lambda/v;q]_\infty. \label{key-dissection}
\end{align}
Taking $(x,\lambda, \mu, v, q)\rightarrow (q,1,-q,i,q^6)$ in \eqref{key-dissection}, we deduce that
\begin{align}
[q,q,-iq,iq;q^6]_\infty =[iq,-iq,-q,-q;q^6]_\infty -q[-q^2,-1,i,-i;q^6]_\infty. \label{dis-1}
\end{align}
Note that
\begin{align}
F(q):=\frac{J_1^2}{J_3^2}=(q,q^2;q^3)_\infty^2=[q,q;q^6]_\infty (q^2,q^4;q^6)_\infty^2. \label{dis-1-start}
\end{align}
By \eqref{dis-1} and \eqref{dis-1-start} we deduce that
\begin{align}
F(q)-F(-q)=-4q\frac{J_2^2J_8J_{12}J_{24}}{J_4J_6^4}. \label{F-odd}
\end{align}
Similarly, taking $(x,\lambda,\mu, v, q)\rightarrow (q^8,q^3,-q^3,-q^2,q^{12})$ in \eqref{key-dissection}, we obtain
\begin{align}
[q^{11},q^5,q^5,q;q^{12}]_\infty =[-q^{10},-q^6,-q^6,-1;q^{12}]_\infty -[-q^{11},-q^5,-q^5,-q;q^{12}]_\infty. \label{dis-2}
\end{align}
Note that
\begin{align}
\frac{J_1^2}{J_3^2}=(q,q^2;q^3)_\infty^2=[q^{11},q^5,q^5,q;q^{12}]_\infty[q^2,q^4;q^{12}]_\infty^2. \label{dis-2-start}
\end{align}
By \eqref{dis-2} and \eqref{dis-2-start} we deduce that
\begin{align}
F(q)+F(-q)=2\frac{J_2J_4^2J_{12}^4}{J_6^5J_8J_{24}}. \label{F-even}
\end{align}
From \eqref{F-odd} and \eqref{F-even} we get \eqref{J13-2dissection} immediately.
\end{proof}
\begin{theorem}
We have
\begin{align}
\sum_{n=0}^\infty c(\rho^{(3)};4n)q^n&=\frac{J_4J_6^3}{J_{3}^2J_{12}},  \label{rho-4n}\\
\sum_{n=0}^\infty c(\rho^{(3)};4n+1)q^n&=-\frac{J_2^3J_{12}}{J_1J_3J_4}, \label{rho-4n1} \\
\sum_{n=0}^\infty c(\rho^{(3)};4n+2)q^n&=q^{-1}m(-q,q^6,q)+\frac{J_2^6J_{12}^3}{J_1^2J_4^3J_6^3}, \label{rho-4n2} \\
\sum_{n=0}^\infty c(\rho^{(3)};4n+3)q^n&=q^{-1}m(-q^2,q^6,q)+2\frac{J_4J_{12}^3}{J_6^3}. \label{rho-4n3}
\end{align}
\end{theorem}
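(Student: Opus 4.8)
The plan is to start from an Appell--Lerch representation of $\rho^{(3)}(q)$ of the shape $\rho^{(3)}(q)=\Theta(q)+\alpha\,m(x,q^6,z)$, where $\Theta(q)$ is a quotient of theta functions and $\alpha,x,z$ are explicit monomials in $q$ (possibly a short combination of such Appell--Lerch terms); such a representation can be produced from the defining $q$-series by the standard Appell--Lerch machinery, as in \cite{Chen-Wang,Hickerson-Mortenson}. Granting this, the whole theorem becomes a $4$-dissection: I must split both $\Theta(q)$ and the Appell--Lerch piece according to the residue of the exponent modulo $4$, and then match each residue class against the stated right-hand sides after the substitution $q^4\mapsto q$. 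The reason the base is taken to be $q^6$ is that this is exactly the base that, after one change of base and the final reduction $q^4\mapsto q$, produces the base-$q^6$ sums appearing in \eqref{rho-4n2} and \eqref{rho-4n3}.

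First I would dissect the mock piece. Applying Lemma \ref{lem-m-add} with $q$ replaced by $q^6$ rewrites $m(x,q^6,z)$ as a combination of two Appell--Lerch sums of base $q^{24}$ together with one theta quotient. The two base-$q^{24}$ sums carry the exponents $\equiv 2,3 \pmod 4$, so after $q^4\mapsto q$ they are precisely the terms surviving in those two residue classes; using the standard change-of-argument and inversion symmetries of $m(\cdot,\cdot,\cdot)$ (see \cite{Hickerson-Mortenson}) I would normalize them to $m(-q,q^6,q)$ and $m(-q^2,q^6,q)$, which accounts for the Appell--Lerch terms in \eqref{rho-4n2} and \eqref{rho-4n3}. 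The residue classes $0$ and $1$ receive no mock contribution, in agreement with the purely modular right-hand sides \eqref{rho-4n} and \eqref{rho-4n1}.

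Next I would dissect every theta quotient in sight---namely $\Theta(q)$ together with the extra theta quotient produced by Lemma \ref{lem-m-add}---into residues modulo $4$. Here the inputs are precisely the $2$-dissections collected in Lemma \ref{known-id} (for the factors $1/J_1^2$, $J_1/J_3^3$, $J_3^3/J_1$ and $J_3/J_1^3$) and the new Lemma \ref{lem-J13} (for the factor $J_1^2/J_3^2=(q,q^2;q^3)_\infty^2$, forced on us by the $(q;q^2)_{n+1}/(q^3;q^6)_{n+1}$ shape of $\rho^{(3)}$). Applying these once separates even from odd powers, and a second application separates each half into the two residues it contains; collecting the four pieces and simplifying the resulting eta-quotients with the product identities from Section \ref{sec-pre} should reproduce $\frac{J_4J_6^3}{J_3^2J_{12}}$, $-\frac{J_2^3J_{12}}{J_1J_3J_4}$, $\frac{J_2^6J_{12}^3}{J_1^2J_4^3J_6^3}$ and $2\frac{J_4J_{12}^3}{J_6^3}$ after the substitution $q^4\mapsto q$.

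The main obstacle is the bookkeeping in the two dissection steps rather than any single clever idea: one must correctly route each monomial of $m(x,q^6,z)$ and of the spawned theta quotient into its residue class, keep track of the constants and signs generated by the $m$-symmetries, and verify that the many eta-quotients collapse to the compact stated forms. I expect the normalization of the two base-$q^{24}$ sums to $m(-q,q^6,q)$ and $m(-q^2,q^6,q)$---where reciprocal first arguments of the form $-q^{\pm2}$ must be reconciled through the inversion symmetry of $m$---to be the most delicate point, and the reason Lemma \ref{lem-J13} was isolated in advance is that it is exactly the $2$-dissection of the $(q,q^2;q^3)_\infty^2$ factor that the identities of Lemma \ref{known-id} do not cover.
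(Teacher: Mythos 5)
Your proposal follows essentially the same route as the paper: it starts from the known Appell--Lerch representation $\rho^{(3)}(q)=q^{-1}m(q,q^6,-q)$ (so $\Theta=0$), applies Lemma \ref{lem-m-add} to split the mock part into the two base-$q^{24}$ sums landing in residue classes $2$ and $3$ modulo $4$, dissects the spawned theta quotient in two stages via \eqref{J1-square} and Lemma \ref{lem-J13}, and normalizes the residue-$2$ sum through the inversion symmetry $m(-q^{-1},q^6,q)=-qm(-q,q^6,q^5)$ followed by the change-of-$z$ formula \cite[Eq.\ (3.7)]{Hickerson-Mortenson}. You correctly identified all the key ingredients, including the delicate normalization step and the role of Lemma \ref{lem-J13}, so this is the paper's argument in outline form.
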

\begin{proof}
It is known that \cite[Eq.\ (5.10)]{Hickerson-Mortenson}
\begin{align}\label{rho3-m}
\rho^{(3)}(q)=q^{-1}m(q,q^6,-q).
\end{align}
Taking $(x,q,z)\rightarrow (q,q^6,-q)$ in Lemma \ref{lem-m-add}, we deduce that
\begin{align}\label{rho-start}
\rho^{(3)}(q)=q^{-1}m(-q^8,q^{24},q^4)-q^{-6}m(-q^{-4},q^{24},q^4)-\frac{1}{2}q^{-2}\frac{J_2^2J_6^3J_8J_{12}^2}{J_3^2J_4^2J_{24}^3}.
\end{align}

Substituting \eqref{J1-square} with $q$ replaced by $q^3$ into \eqref{rho-start}, we deduce that
\begin{align}
\sum_{n=0}^\infty c(\rho^{(3)};2n)q^n=-q^{-3}m(-q^{-2},q^{12},q^2)-\frac{1}{2}q^{-1}\frac{J_1^2J_4J_6^2J_{12}^2}{J_2^2J_3^2J_{24}^2}, \label{rho-2n} \\
\sum_{n=0}^\infty c(\rho^{(3)};2n+1)q^n=q^{-1}m(-q^4,q^{12},q^2)-\frac{J_1^2J_4J_6^4J_{24}^2}{J_2^2J_3^2J_{12}^4}. \label{rho-2n1}
\end{align}
Now we substitute \eqref{J13-2dissection} into \eqref{rho-2n}. If we extract the terms in which the power of $q$ is even,  we get \eqref{rho-4n}. If we extract the terms in which the power of $q$ is odd, we get
\begin{align}
\sum_{n=0}^\infty c(\rho^{(3)};4n+2)q^n=q^{-1}m(-q,q^6,q^5)-\frac{1}{2}q^{-1}\frac{J_2^3J_6^6}{J_1J_3^3J_4J_{12}^3}. \label{add-rho4n2}
\end{align}
Here we used the fact that $m(-q^{-1},q^6,q)=-qm(-q,q^6,q^5)$, which follows from \cite[Eqs.\ (3.2a) and (3.2b)]{Hickerson-Mortenson}. Next, taking $(x,q,z_0,z_1)\rightarrow (-q,q^6,q,q^5)$ in \cite[Eq.\ (3.7)]{Hickerson-Mortenson}, we deduce that
\begin{align}
m(-q,q^6,q^5)-m(-q,q^6,q)=\frac{1}{2}\frac{J_2^6J_3^3}{J_1^3J_4^2J_6^3}. \label{add-rho-4n2-m}
\end{align}
Substituting this into \eqref{add-rho4n2}, we obtain
\begin{align}
\sum_{n=0}^\infty c(\rho^{(3)};4n+2)q^n=q^{-1}m(-q,q^6,q)+\frac{1}{2}q^{-1}\frac{1}{J_1^2}\left(\frac{J_2^6J_3^3}{J_1J_4^2J_6^3}-\frac{J_1J_2^3J_6^6}{J_3^3J_4J_{12}^3} \right). \label{rho-new-help}
\end{align}
Substituting \eqref{J1J33} and \eqref{J33J1} into \eqref{rho-new-help}, after simplifications, we arrive at \eqref{rho-4n2}.

Similarly, substituting \eqref{J13-2dissection} into \eqref{rho-2n1} and extracting the terms in which the power of $q$ is even (resp.\ odd), we get \eqref{rho-4n1} (resp.\ \eqref{rho-4n3}).
\end{proof}

\begin{theorem}\label{thm-rho3-rec}
We have
\begin{align}\label{rho3-rec}
c(\rho^{(3)};4n+2)+c(\rho^{(3)};n)\equiv a_3(n/2) \pmod{2}
\end{align}
and
\begin{align}\label{rho3-16n2}
\sum_{n=0}^\infty c(\rho^{(3)};16n+2)q^n=2q\frac{J_1J_6^3J_{12}^2}{J_3^5}.
\end{align}
\end{theorem}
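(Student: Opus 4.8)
The plan is to deduce both assertions from the dissection \eqref{rho-4n2} together with a single reflection identity. First I would observe that replacing $q$ by $-q$ in the Appell--Lerch representation \eqref{rho3-m} gives $\rho^{(3)}(-q)=-q^{-1}m(-q,q^6,q)$, since $(-q)^6=q^6$ and $-(-q)=q$; equivalently $q^{-1}m(-q,q^6,q)=-\rho^{(3)}(-q)$. Feeding this into \eqref{rho-4n2} turns that dissection into the clean relation
\begin{align*}
\sum_{n=0}^\infty c(\rho^{(3)};4n+2)q^n=-\rho^{(3)}(-q)+\frac{J_2^6J_{12}^3}{J_1^2J_4^3J_6^3},
\end{align*}
which will serve as the common starting point for both parts of the theorem.

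For the congruence \eqref{rho3-rec} I would add $\rho^{(3)}(q)$ to both sides, obtaining
\begin{align*}
\sum_{n=0}^\infty \left( c(\rho^{(3)};4n+2)+c(\rho^{(3)};n) \right)q^n=\rho^{(3)}(q)-\rho^{(3)}(-q)+\frac{J_2^6J_{12}^3}{J_1^2J_4^3J_6^3}.
\end{align*}
The difference $\rho^{(3)}(q)-\rho^{(3)}(-q)=2\sum_{n\text{ odd}}c(\rho^{(3)};n)q^n$ is visibly $\equiv 0\pmod 2$, so it suffices to reduce the eta quotient modulo $2$. Repeatedly applying $J_k^2\equiv J_{2k}\pmod 2$ collapses $J_2^6J_{12}^3/(J_1^2J_4^3J_6^3)$ to $J_{24}/(J_2J_6)$ and then, using $J_{24}\equiv J_6^4$, to $J_6^3/J_2$. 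Finally I would recognize $J_6^3/J_2$ as the image of \eqref{3-core-gen} under $q\mapsto q^2$, namely $J_6^3/J_2=\sum_{m\ge 0}a_3(m)q^{2m}=\sum_{n\ge 0}a_3(n/2)q^n$, and compare coefficients to obtain \eqref{rho3-rec}.

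For the exact formula \eqref{rho3-16n2} I would extract from the displayed relation the part whose $q$-exponent is divisible by $4$ and then replace $q^4$ by $q$, since $c(\rho^{(3)};16n+2)$ is precisely the coefficient of $q^{4n}$ in $\sum_n c(\rho^{(3)};4n+2)q^n$. The contribution of $-\rho^{(3)}(-q)$ is immediate: its exponent-divisible-by-$4$ part is $-\sum_n c(\rho^{(3)};4n)q^{4n}$, which by \eqref{rho-4n} becomes $-J_4J_6^3/(J_3^2J_{12})$ after $q^4\mapsto q$. The real work is the $4$-dissection of $E(q):=J_2^6J_{12}^3/(J_1^2J_4^3J_6^3)$. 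Here I would split $1/J_1^2$ via \eqref{J1-square} into its even and odd parts; only the even part contributes, producing the even part of $E(q)$ as $J_2J_8^5J_{12}^3/(J_4^3J_6^3J_{16}^2)$. Viewing $J_2J_{12}^3/J_6^3$ as a series in $Q=q^2$ (equal to $J_1(Q)J_6(Q)^3/J_3(Q)^3$) and applying \eqref{J1J33} to isolate its even-in-$Q$ part, then returning to $q$ and replacing $q^4$ by $q$, yields $J_2^7J_6^2/(J_1^2J_3^4J_4^2)$. Collecting the two contributions gives
\begin{align*}
\sum_{n=0}^\infty c(\rho^{(3)};16n+2)q^n=\frac{J_2^7J_6^2}{J_1^2J_3^4J_4^2}-\frac{J_4J_6^3}{J_3^2J_{12}}.
\end{align*}

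The main obstacle is the final step, namely the eta-quotient identity
\begin{align*}
\frac{J_2^7J_6^2}{J_1^2J_3^4J_4^2}-\frac{J_4J_6^3}{J_3^2J_{12}}=2q\,\frac{J_1J_6^3J_{12}^2}{J_3^5}.
\end{align*}
I would prove this either by clearing denominators so as to reduce it to an identity between holomorphic modular forms on $\Gamma_0(N)$ with $N$ a multiple of $24$, then verifying it up to the Sturm bound, or, staying within the present toolkit, by a further manipulation with the dissection identities of Lemma \ref{known-id}; for instance, dividing through by $J_4J_6^3/(J_3^2J_{12})$ reduces the claim to the equivalent form $J_2^7J_{12}/(J_1^2J_3^2J_4^3J_6)=1+2q\,J_1J_{12}^3/(J_3^3J_4)$. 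A low-order series expansion confirms that both sides of \eqref{rho3-16n2} agree through $q^2$, which makes the modular-forms verification routine once the weight and level are pinned down.
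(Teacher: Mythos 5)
Your proposal is correct and follows essentially the same route as the paper: you start from the same relation $\sum_{n\ge 0}\bigl(c(\rho^{(3)};4n+2)+(-1)^nc(\rho^{(3)};n)\bigr)q^n=J_2^6J_{12}^3/(J_1^2J_4^3J_6^3)$ obtained from \eqref{rho3-m} and \eqref{rho-4n2}, reduce the eta quotient modulo $2$ to $J_6^3/J_2$ for \eqref{rho3-rec}, and use the dissections \eqref{J1-square} and \eqref{J1J33} to reach the same identity $\sum_{n\ge 0}c(\rho^{(3)};16n+2)q^n=J_2^7J_6^2/(J_1^2J_3^4J_4^2)-J_4J_6^3/(J_3^2J_{12})$. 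The one step you leave unfinished, the final eta-quotient identity, is exactly what the paper closes by substituting \eqref{J33J1} and \eqref{J3J13} from Lemma \ref{known-id} (the second of your two suggested finishing routes), so you should carry out that substitution explicitly rather than defer to a Sturm-bound verification.
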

\begin{proof}
From \eqref{rho3-m}, \eqref{rho-4n2} and the binomial theorem, we deduce that
\begin{align}
&\sum_{n=0}^\infty \left(c(\rho^{(3)};4n+2)+(-1)^nc(\rho^{(3)};n)\right)q^n=\frac{J_2^6J_{12}^3}{J_1^2J_4^3J_6^3} \label{rho3-add} \\
\equiv  & \frac{J_6^3}{J_2}=\sum_{n=0}^\infty a_3(n)q^{2n} \pmod{2}. \nonumber
\end{align}
This gives \eqref{rho3-rec}.

Substituting \eqref{J1-square} into \eqref{rho3-add} and extracting the terms in which the power of $q$ is even, we obtain
\begin{align}
\sum_{n=0}^\infty \left(c(\rho^{(3)};8n+2)+c(\rho^{(3)};2n)q^n \right)q^n=\frac{J_1J_4^5J_6^3}{J_2^3J_3^3J_8^2}. \label{rho-add-middle}
\end{align}

Substituting \eqref{J1J33} into \eqref{rho-add-middle} and extracting the terms in which the power of $q$ is even, we obtain
\begin{align}
\sum_{n=0}^\infty \left(c(\rho^{(3)};16n+2)+c(\rho^{(3)};4n)\right)q^n=\frac{J_2^7J_6^2}{J_1^2J_3^4J_4^2}. \label{rho-add-last}
\end{align}
Substituting \eqref{rho-4n} into \eqref{rho-add-last}, we see that to prove \eqref{rho3-16n2}, it suffices to show that
\begin{align}
\frac{J_2^7J_3}{J_1^3J_4^2}-\frac{J_4J_6J_3^3}{J_1J_{12}}=2qJ_6J_{12}^2. \label{rho-last}
\end{align}
Substituting \eqref{J33J1} and \eqref{J3J13} into the left side of \eqref{rho-last}, we get its right side immediately. Thus \eqref{rho-last} holds and we proved \eqref{rho3-16n2}.
\end{proof}

\begin{theorem}\label{thm-mock-3-rho-4n}
The coefficient $c(\rho^{(3)};4n)$ is odd if and only if $n=2k(3k+1)$ for some integer $k$.
\end{theorem}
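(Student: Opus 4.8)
The plan is to read off the generating function for $c(\rho^{(3)};4n)$ from \eqref{rho-4n} and reduce the eta-quotient on its right-hand side modulo $2$ to a single infinite product, whose coefficients are then governed by Euler's pentagonal number theorem. Concretely, I would start from
\begin{align}
\sum_{n=0}^\infty c(\rho^{(3)};4n)q^n=\frac{J_4J_6^3}{J_3^2J_{12}} \nonumber
\end{align}
and simplify the right side modulo $2$ using the elementary congruence $J_m^2\equiv J_{2m}\pmod 2$, which follows at once from $(1-x)^2\equiv 1-x^2\pmod 2$ applied factorwise to $(q^m;q^m)_\infty^2$.

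The key observation is that all the quotient manipulations are legitimate modulo $2$ because every product involved is a power series with constant term $1$, so its formal inverse has integer coefficients; consequently $A\equiv B\pmod 2$ implies $1/A\equiv 1/B\pmod 2$. With this in hand I would apply $J_3^2\equiv J_6$ to the denominator, obtaining $\frac{J_4J_6^3}{J_3^2J_{12}}\equiv \frac{J_4J_6^2}{J_{12}}\pmod 2$, and then apply $J_6^2\equiv J_{12}$ to conclude
\begin{align}
\sum_{n=0}^\infty c(\rho^{(3)};4n)q^n\equiv J_4=(q^4;q^4)_\infty \pmod 2. \nonumber
\end{align}

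It then remains to identify the odd coefficients of $(q^4;q^4)_\infty$. By Euler's pentagonal number theorem, $(q^4;q^4)_\infty=\sum_{k=-\infty}^\infty(-1)^kq^{2k(3k-1)}$, so modulo $2$ the coefficient of $q^n$ is odd precisely when $n=2k(3k-1)$ for some integer $k$. Finally I would note the reindexing $k\mapsto -k$, which shows $\{2k(3k-1):k\in\mathbb{Z}\}=\{2k(3k+1):k\in\mathbb{Z}\}$, thereby matching the stated form $n=2k(3k+1)$. There is essentially no serious obstacle here: the only points requiring care are the justification that the mod-$2$ reduction passes through the division by $J_3^2$ and $J_{12}$ (handled by the constant-term-$1$ remark), and the elementary but easily overlooked reindexing identity converting the $3k-1$ of the pentagonal theorem into the $3k+1$ of the theorem statement.
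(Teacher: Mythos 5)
Your proposal is correct and is essentially the paper's own proof: the paper likewise reduces the right-hand side of \eqref{rho-4n} to $J_4$ modulo $2$ via the binomial theorem (i.e.\ $J_m^2\equiv J_{2m}\pmod 2$) and then reads off the exponents $2k(3k+1)$ from the pentagonal number theorem. Your extra remarks on invertibility of constant-term-$1$ series and the $k\mapsto -k$ reindexing are just the details the paper leaves implicit.
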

\begin{proof}
By \eqref{rho-4n} and the binomial theorem, we deduce that
\begin{align}
\sum_{n=0}^\infty c(\rho^{(3)};4n)q^n\equiv J_4 =\sum_{k=-\infty}^\infty q^{2k(3k+1)} \pmod{2}.
\end{align}
The theorem follows immediately.
\end{proof}
\begin{theorem}\label{thm-mock-3-rho-2n1}
The coefficient $c(\rho^{(3)};2n+1)$ is odd if and only if $6n+5=p^{4a+1}m^2$ for some prime $p$ and integer $m$ with $p\nmid m$. Moreover,
\begin{align}\label{rho-2n1-estimate}
\#\left\{0\leq n\leq N:c(\rho^{(3)};2n+1)\equiv 1 \text{ \rm{(mod  2)}} \right\}=\frac{\pi^2}{3}\frac{N}{\log N}+O\left( \frac{N}{\log^2 N} \right).
\end{align}
\end{theorem}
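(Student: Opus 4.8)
The plan is to separate the odd-index coefficients by the residue of the index modulo $4$, exploiting the fact that \eqref{rho-4n1} and \eqref{rho-4n3} already give closed forms for $\sum_n c(\rho^{(3)};4n+1)q^n$ and $\sum_n c(\rho^{(3)};4n+3)q^n$. Since $2n+1\equiv 1\pmod 4$ means $n=2k$ and $6n+5=12k+5$, while $2n+1\equiv 3\pmod 4$ means $n=2k+1$ and $6n+5=12k+11$, the two subsequences correspond precisely to the residues $5$ and $11$ modulo $12$ that constitute $5$ modulo $6$. I would analyse each, obtain a characterization for each residue class, and then merge them into the single condition $6n+5=p^{4a+1}m^2$.

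For the $4n+1$ part I reduce \eqref{rho-4n1} modulo $2$. Using $J_{2k}\equiv J_k^2\pmod 2$ repeatedly,
\[
\sum_{n=0}^\infty c(\rho^{(3)};4n+1)q^n\equiv-\frac{J_2^3J_{12}}{J_1J_3J_4}\equiv J_1J_3^3\pmod 2 .
\]
By the pentagonal number theorem and Jacobi's identity, $J_1J_3^3$ is modulo $2$ a double theta series whose exponents obey $24n+10=(6a-1)^2+(6b+3)^2$; thus $c(\rho^{(3)};4n+1)$ is odd iff $24n+10$ has an odd number of representations as a sum of two squares satisfying the relevant sign and congruence constraints. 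Because $24n+10\equiv 1\pmod 3$ forces exactly one of the two squares to be divisible by $3$, this reduces to counting unordered positive representations, i.e.\ to $\tfrac12 T_{\mathbb Z[i]}(24n+10)=\tfrac12 T_{\mathbb Z[i]}(12n+5)$, where $T_{\mathbb Z[i]}$ is obtained from Lemma \ref{lem-ideal} with $K=\mathbb Q(i)$. Exactly as in Lemmas \ref{lem-Z2}--\ref{lem-Z6}, but now for the imaginary field $\mathbb Q(i)$, one finds $\tfrac12 T_{\mathbb Z[i]}(12n+5)$ is odd iff $T_{\mathbb Z[i]}(12n+5)\equiv 2\pmod 4$, i.e.\ iff $12n+5=p^{4a+1}m^2$; note $12n+5\equiv 1\pmod 4$ automatically forces $p\equiv 1\pmod 4$, so no prime has to be excluded.

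The $4n+3$ part is where the real work lies. Here \eqref{rho-4n3} yields $\sum_n c(\rho^{(3)};4n+3)q^n\equiv q^{-1}m(-q^2,q^6,q)\pmod 2$, the companion eta-quotient being even. The surviving term is a genuine Appell--Lerch sum, so I would first recast it as a Hecke-type double sum: split the geometric factor $1/(1+q^{6r-3})$ according to the sign of $6r-3$ and incorporate the theta denominator $j(q;q^6)$, mirroring the role of Liu's identity \eqref{Liu-id} in the treatment of $\psi^{(3)}(q)$. Modulo $2$ this should collapse to a double theta sum governed by the indefinite form $3y^2-x^2$; invoking Lemma \ref{lem-Pell} with the fundamental solution $(x_1,y_1)=(2,1)$ of $x^2-3y^2=1$ to pick one representative per equivalence class, the coefficient of $q^n$ equals $\tfrac12 H_{\mathbb Z[\sqrt 3]}(-(12n+11))$. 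Lemma \ref{lem-Z3}(2) then gives oddness iff $12n+11=p^{4a+1}m^2$, with $12n+11\equiv 11\pmod{12}$ forcing the distinguished prime to satisfy $p\equiv 11\pmod{12}$, which is precisely a negative-norm prime for $\mathbb Z[\sqrt 3]$. Carrying out this conversion of the mock term cleanly, and checking that the denominator $j(q;q^6)$ together with the prefactor $q^{-1}$ produce exactly the norm $-(12n+11)$ rather than some spurious multiple, is the step I expect to be the main obstacle.

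Merging the two cases, $c(\rho^{(3)};2n+1)$ is odd iff $6n+5=p^{4a+1}m^2$ for a prime $p$ and integer $m$ with $p\nmid m$. The estimate \eqref{rho-2n1-estimate} then follows from Lemma \ref{lem-estimate} with $(A,B)=(6,5)$, since $\prod_{p\mid 6}(1+p^{-1})=(1+\tfrac12)(1+\tfrac13)=2$ gives main term $\tfrac{\pi^2}{6}\cdot 2\cdot\tfrac{N}{\log N}=\tfrac{\pi^2}{3}\tfrac{N}{\log N}$.
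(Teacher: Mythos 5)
Your decomposition into the residue classes $1$ and $3$ modulo $4$, your treatment of the $4n+1$ subsequence, and your final merging step with Lemma \ref{lem-estimate} all match the paper. For $4n+1$ the paper likewise reduces \eqref{rho-4n1} to $J_1J_3^3$ modulo $2$, writes $24n+10=(6k+1)^2+(6m+3)^2$, and observes that every representation of $24n+10$ as a sum of two squares has both squares odd with exactly one divisible by $3$; it then phrases the count as $\tfrac18 r_2(24n+10)$ and invokes the classical formula for $r_2$, which is the same computation as your $\tfrac12 T_{\mathbb{Z}[i]}(12n+5)$ in different clothing. That part of your argument is sound.

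The genuine gap is in the $4n+3$ case, and you have flagged it yourself: you never actually convert $q^{-1}m(-q^2,q^6,q)$ into a Hecke-type double sum, you only assert that it ``should collapse'' to one governed by $x^2-3y^2$ and call the verification the main obstacle. As written, the characterization of the odd values of $c(\rho^{(3)};4n+3)$ — and hence the full theorem — is not established. The paper sidesteps this entirely: by \eqref{sigma-6-m} one has $\sigma^{(6)}(q)=-m(q^2,q^6,q)$, and since $\tfrac{1}{1-u}\equiv\tfrac{1}{1+u}\pmod 2$ the sign change in the first argument is invisible modulo $2$, so \eqref{rho-4n3} gives $c(\rho^{(3)};4n+3)\equiv c(\sigma^{(6)};n+1)\pmod 2$. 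The needed Hecke-type expansion is then not derived from the Appell--Lerch sum at all but imported from the Andrews--Hickerson representation of $\sigma^{(6)}$, already exploited in Theorem \ref{thm-mock-6-sigma}, which yields $c(\sigma^{(6)};n)\equiv\tfrac12 H_{\mathbb{Z}[\sqrt3]}(24n-2)\pmod 2$ and hence oddness exactly when $12n-1=p^{4a+1}m^2$. (Your proposed target $\tfrac12 H_{\mathbb{Z}[\sqrt3]}(-(12n+11))$ would, by Lemma \ref{lem-Z3}, give the same final criterion as the paper's $\tfrac12 H_{\mathbb{Z}[\sqrt3]}(2(12n+11))$, so your endpoint is consistent; but the bridge to it is missing, and recognizing the $\sigma^{(6)}$ identity is the idea that makes the bridge unnecessary.)
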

\begin{proof}
From \eqref{rho-4n1} we deduce that
\begin{align}
\sum_{n=0}^\infty c(\rho^{(3)};4n+1)q^n\equiv J_1J_3^3&=\sum_{k=-\infty}^\infty \sum_{m=0}^\infty (-1)^{k+m}(2m+1)q^{k(3k+1)/2+3m(m+1)/2} \nonumber \\
&\equiv \sum_{k=-\infty}^\infty \sum_{m=0}^\infty q^{k(3k+1)/2+3m(m+1)/2} \pmod{2}.
\end{align}
Therefore, we have
\begin{align}
\sum_{n=0}^\infty c(\rho^{(3)};4n+1)q^{24n+10} \equiv \sum_{k=-\infty}^\infty \sum_{m=0}^\infty q^{(6k+1)^2+9(2m+1)^2} \pmod{2}. \label{rho-4n1-mod2}
\end{align}
For any integer solution $(x,y)$ of $24n+10=x^2+y^2$, both $x$ and $y$ must be odd and exactly one of them is divisible by 3. For example, if $3|y$, then we can write $x=6k\pm 1$ and $y=3(2m+1)$ for $k,m\in \mathbb{Z}$. This fact together with \eqref{rho-4n1-mod2} implies
\begin{align}\label{rho-4n1-reduce}
c(\rho^{(3)};4n+1)\equiv \frac{1}{8} r_2(24n+10) \pmod{2},
\end{align}
where $r_2(n)$ denotes the number of representations of $n$ as $x^2+y^2$ with $x,y \in \mathbb{Z}$. It is known that if $n$ has the prime factorization $n=2^ap_1^{e_1}\cdots p_j^{e_j}q_1^{f_1}\cdots q_k^{f_k}$, where the primes $p_i\equiv 1$ (mod 4) and $q_i\equiv 3$ (mod 4), then (see \cite[Theorem 4.12]{Sally}, for example)
\begin{align}\label{r2n}
r_2(n)=\left\{\begin{array}{ll}
0 & \text{if some $f_i$ is odd}; \\
4(e_1+1)\cdots (e_j+1) & \text{otherwise}.
\end{array}\right.
\end{align}
Now let  $12n+5$ have the prime factorization $12n+5=2^ap_1^{e_1}\cdots p_j^{e_j}q_1^{f_1}\cdots q_k^{f_k}$ where the primes $p_i\equiv 1$ (mod 4) and $q_i\equiv 3$ (mod 4). From \eqref{rho-4n1-reduce} and \eqref{r2n} we conclude that  $c(\rho^{(3)};4n+1)$ is odd if and only if all the $e_i$ and $f_i$ are even except for one $e_s\equiv 1$ (mod 4). This proves that $c(\rho^{(3)};4n+1)$ is odd if and only if $12n+5=p^{4a+1}m^2$ for some prime $p$ and integer $m$ with $p\nmid m$.

Next, we recall from \cite[Eq.\ (5.26)]{Hickerson-Mortenson} that the sixth order mock theta function $\sigma^{(6)}(q)$ (see Section \ref{sec-mock-6}) satisfies
\begin{align}\label{sigma-6-m}
\sigma^{(6)}(q)=-m(q^2,q^6,q).
\end{align}
Comparing \eqref{rho-4n3} with \eqref{sigma-6-m} we conclude that
\begin{align}\label{rho3-sigma6}
c(\rho^{(3)};4n+3)\equiv c(\sigma^{(6)};n+1) \pmod{2}.
\end{align}
By Theorem \ref{thm-mock-6-sigma}, which we will prove in Section \ref{sec-mock-6}, we deduce that $c(\rho^{(3)};4n+3)$ is odd if and only if $12n+11=p^{4a+1}m^2$ for some prime $p$ and integer $m$ with $p\nmid m$.

From the above discussions we get the first assertion of the theorem. Taking $(A,B)=(6,5)$ in Lemma \ref{lem-estimate}, we get \eqref{rho-2n1-estimate}.
\end{proof}

\begin{theorem}\label{thm-3-rho-almost}
The coefficient $c(\rho^{(3)};n)$ takes even values almost all the time.
\end{theorem}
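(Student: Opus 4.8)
The plan is to bound the counting function
\[
T(N):=\#\{0\le n\le N: c(\rho^{(3)};n)\equiv 1\ (\mathrm{mod}\ 2)\}
\]
and show that $T(N)=o(N)$; in fact I expect to extract the stronger estimate $T(N)=O(N/\log N)$. The idea is to split the range according to the residue of $n$ modulo $4$ and to observe that three of the four classes are already pinned down by earlier results, while the fourth is governed by a self-reducing recursion. For the odd indices, Theorem \ref{thm-mock-3-rho-2n1} shows that the number of odd $n\le N$ with $c(\rho^{(3)};n)$ odd is $O(N/\log N)$. For the class $n\equiv 0\ (\mathrm{mod}\ 4)$, Theorem \ref{thm-mock-3-rho-4n} shows that $c(\rho^{(3)};n)$ is odd only when $n=8k(3k+1)$, so these contribute just $O(\sqrt N)$ indices up to $N$.

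The only remaining and genuinely recursive case is $n\equiv 2\ (\mathrm{mod}\ 4)$, and this is where the main difficulty lies. Writing $n=4m+2$, the recursion \eqref{rho3-rec} gives
\[
c(\rho^{(3)};4m+2)\equiv c(\rho^{(3)};m)+a_3(m/2)\pmod 2,
\]
where $a_3(m/2)$ is understood to vanish when $m$ is odd. By the characterization of odd values of $a_3$ recalled in the proof of Theorem \ref{thm-mock-3-nu}, the term $a_3(m/2)$ is odd precisely when $m$ is even and $3m/2+1$ is a perfect square, i.e.\ $3m+2=2k^2$; there are only $O(\sqrt N)$ such $m\le N$. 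Hence, outside an exceptional set of size $O(\sqrt N)$, one has $c(\rho^{(3)};4m+2)\equiv c(\rho^{(3)};m)\pmod 2$, so the number of $n=4m+2\le N$ with $c(\rho^{(3)};n)$ odd is at most $T(N/4)+O(\sqrt N)$.

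Combining the four residue classes yields the self-reducing inequality $T(N)\le T(N/4)+O(N/\log N)$ for all large $N$. To finish, I would iterate this bound roughly $\tfrac12\log_4 N$ times, stopping once the argument has dropped to about $\sqrt N$; throughout this range $\log(N/4^j)\ge\tfrac12\log N$, so the accumulated error is dominated by a geometric series
\[
\sum_{j\ge 0}\frac{N/4^j}{\tfrac12\log N}=O\!\left(\frac{N}{\log N}\right),
\]
while the final term is bounded trivially by $T(N/4^k)\le N/4^k+1=O(\sqrt N)$. This gives $T(N)=O(N/\log N)=o(N)$, which is exactly the assertion that $c(\rho^{(3)};n)$ is even almost all the time. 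The one point requiring care—and the main obstacle—is this error analysis of the iterated recursion: one must verify that the logarithmic savings coming from the non-recursive residue classes survive the dyadic descent, which is guaranteed by the uniform lower bound $\log(N/4^j)\ge\tfrac12\log N$ over the iteration range.
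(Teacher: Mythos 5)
Your proof is correct, and it takes a genuinely different route from the paper's. The paper proves the theorem by induction on $k$: it exhibits an explicit sequence $s_k=(2^{2\lfloor k/2\rfloor+1}-2)/3$ and shows that for every residue $s\not\equiv s_k \pmod{2^k}$ the subsequence $c(\rho^{(3)};2^kn+s)$ is almost always even, the inductive step being driven by the same recursion \eqref{rho3-rec} together with the fact that $a_3(n)$ is almost always even; this pins the density of odd values below $2^{-k}$ for every $k$ and hence gives $o(N)$, but no rate. You instead set up the counting-function inequality $T(N)\le T(N/4)+O(N/\log N)$ directly, using Theorem \ref{thm-mock-3-rho-2n1} for odd indices, Theorem \ref{thm-mock-3-rho-4n} for $n\equiv 0\pmod 4$, and \eqref{rho3-rec} plus the square-characterization of odd values of $a_3$ for $n\equiv 2\pmod 4$, and then telescope. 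This avoids the bookkeeping of tracking the exceptional residue class at each level, and it buys a quantitative improvement: your argument yields $T(N)=O(N/\log N)$, which is strictly stronger than the $o(N)$ statement the paper proves. The only points requiring care are exactly the ones you flag — the uniformity of the implied constant in the recursive inequality (which holds since it comes from fixed asymptotics) and the lower bound $\log(N/4^j)\ge\tfrac12\log N$ over the iteration range $j\le \tfrac12\log_4 N$ — and both check out.
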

\begin{proof}
Let $s_{2k}=s_{2k+1}=(2^{2k+1}-2)/3$ for $k\geq 0$. We claim that given $k\geq 1$ and $0\leq s <2^k$, if $s\neq s_k$, then $c(\rho^{(3)};2^kn+s_k)$ is even almost all the time.

Indeed, from Theorems \ref{thm-mock-3-rho-4n} and \ref{thm-mock-3-rho-2n1} we know that both $c(\rho^{(3)};2n+1)$ and $c(\rho^{(3)};4n)$ are even almost all the time. Hence the claim holds for $k=1$ and $k=2$. Now suppose that the claim holds for all $k\leq K$ with $K\geq 2$. We are going to show that the claim also holds for $k=K+1$.

If the sequence $c(\rho^{(3)};2^{K+1}n+s)$ is not almost always even, then by assumption we must have $s=s_K$ or $s=s_K+2^K$.

{\bf Case 1:} $K=2m$ is even.
From \eqref{3-core-formula} we know that $a_3(n)$ is even almost all the time. Since $c(\rho^{(3)}; 2^{2m-1}n+(5\cdot 2^{2m-2}-2)/3)$ is even almost all the time, replacing $n$ by $2^{2m-1}n+(5\cdot 2^{2m-2}-2)/3$ in \eqref{rho3-rec}, we deduce that  $c(\rho^{(3)}; 2^{2m+1}n+(5\cdot 2^{2m}-2)/3)$ is even almost all the time. Hence we must have $s=s_K=s_{K+1}$.

{\bf Case 2:} $K=2m-1$ is odd. Since $c(\rho^{(3)}; 2^{2m-2}n+(2^{2m-3}-2)/3)$ is even almost all the time, replacing $n$ by $2^{2m-2}n+(2^{2m-3}-2)/3$ in \eqref{rho3-rec}, we deduce that  $c(\rho^{(3)}; 2^{2m}n+(2^{2m-1}-2)/3)$ is even almost all the time. Hence we must have $s=s_K+2^K=s_{K+1}$.

By mathematical induction, we have proved our claim.
Note that the claim shows that for any $k\geq 1$,
\begin{align}
\frac{\#\left\{0\leq n< N:c(\rho^{(3)};n)\equiv 1 \text{ \rm{(mod  2)}} \right\}}{N} \leq \frac{1}{2^k}+\frac{1}{N}.
\end{align}
Letting $k,N\rightarrow \infty$, we see that $c(\rho^{(3)};n)$ is even almost all the time.
\end{proof}

There is one mock theta function of order 3 left: $\chi^{(3)}(q)$. Numerical evidence suggests that $c(\chi^{(3)};n)$ takes odd values half of the time (see Table \ref{tab-density} in Section \ref{sec-concluding}).

\section{Mock theta functions of order 5}\label{sec-mock-5}
In his last letter to Hardy, Ramanujan gave ten mock theta functions of order 5:
\begin{align*}
&f_0^{(5)}(q):=\sum_{n= 0}^{\infty} \frac{q^{n^2}}{(-q;q)_{n}}, 
\quad \phi_{0}^{(5)}(q):=\sum_{n=0}^{\infty}q^{n^2}(-q;q^2)_{n}, \\ 
& \psi_0^{(5)}(q):=\sum_{n=0}^\infty q^{(n+2)(n+1)/2}(-q;q)_n,  
\quad F_0^{(5)}(q):=\sum_{n= 0}^{\infty} \frac{q^{2n^2}}{(q;q^2)_{n}},  
\quad f_1^{(5)}(q):=\sum_{n=0}^{\infty} \frac{q^{n(n+1)}}{(-q;q)_{n}},\\ 
& \phi_1^{(5)}(q):=\sum_{n=0}^{\infty} q^{(n+1)^2}(-q;q^2)_{n},  
\quad \psi_1^{(5)}(q):=\sum_{n=0}^{\infty}q^{\binom{n+1}{2}}(-q;q)_{n}, \\ 
& F_1^{(5)}(q):=\sum_{n=0}^{\infty} \frac{q^{2n(n+1)}}{(q;q^2)_{n+1}},  
\quad \chi_0^{(5)}(q):=\sum_{n=0}^{\infty} \frac{q^n}{(q^{n+1};q)_{n}}, 
\quad \chi_1^{(5)}(q)=\sum_{n=0}^{\infty} \frac{q^n}{(q^{n+1};q)_{n+1}}.   
\end{align*}

\begin{theorem}\label{thm-mock-5-f0-f1}
The coefficient $c(f_0^{(5)};2n+1)$ is odd if and only if $120n+59=p^{4a+1}m^2$, where $a\geq 0$, $p$ is a prime and $p\nmid m$. The coefficient $c(f_1^{(5)};2n)$ is odd if and only if $120n+11=p^{4a+1}m^2$, where $a\geq 0$, $p$ is a prime and $p\nmid m$. Moreover,
\begin{align}
\#\left\{0\leq n\leq N: c(f_0^{(5)};2n+1)\equiv 1 \text{ \rm{(mod  2)}} \right\}=\frac{2\pi^2}{5}\frac{N}{\log N}+O\left( \frac{N}{\log^2 N} \right), \label{mock-5-f0-1} \\
\#\left\{n\leq N: c(f_1^{(5)};2n)\equiv 1 \, (\mathrm{mod \,\, 2}) \right\}=\frac{2\pi^2}{5}\frac{N}{\log N}+O\left( \frac{N}{\log^2 N} \right). \label{mock-5-f1-1}
\end{align}
\end{theorem}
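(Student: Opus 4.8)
The plan is to mirror the proofs of Theorems \ref{thm-mock-2-A} and \ref{thm-mock-3-psi}, now working inside the ring $\mathbb{Z}[\sqrt{15}]$ rather than $\mathbb{Z}[\sqrt{2}]$ or $\mathbb{Z}[\sqrt{6}]$. First I would take the Hecke-type double-sum representation of $f_0^{(5)}(q)$ due to Andrews (recorded in \eqref{intro-f0}) together with the companion representation for $f_1^{(5)}(q)$, reduce every coefficient modulo $2$ so that the signs $(-1)^j$ and the minus sign in $1-q^{4n+2}$ both become $+$, and then carry out a $2$-dissection to isolate $\sum_n c(f_0^{(5)};2n+1)q^{2n+1}$ and $\sum_n c(f_1^{(5)};2n)q^{2n}$. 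After clearing the resulting exponents (multiplying the index by $60$ and shifting, since $120n+59=60(2n+1)-1$ and $120n+11=60(2n)+11$), the exponents are governed by the binary quadratic form $u^2-15v^2$, whose values are exactly the norms of elements of $\mathbb{Z}[\sqrt{15}]$.

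The heart of the argument is then the Pell-equation bookkeeping used before. The fundamental solution of $x^2-15y^2=1$ is $(x_1,y_1)=(4,1)$, so Lemma \ref{lem-Pell} gives each equivalence class of solutions of $u^2-15v^2=m$ a unique representative in an explicit fundamental domain ($-\tfrac15 u<v\le\tfrac15 u$ when $m>0$, and $-3v<u\le 3v$ when $m<0$). Matching this fundamental domain against the ranges of the index $j$ in the dissected double sum, I expect congruences of the shape $c(f_0^{(5)};2n+1)\equiv \tfrac12 H_{\mathbb{Z}[\sqrt{15}]}(-(120n+59))$ and $c(f_1^{(5)};2n)\equiv \tfrac12 H_{\mathbb{Z}[\sqrt{15}]}(-(120n+11))$ modulo $2$. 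Since $120n+59\equiv 120n+11\equiv 3\pmod 4$ and $u^2-15v^2\equiv u^2+v^2\pmod 4$ never equals $3$, the form cannot take these as positive values, so only the \emph{negative} norm survives; following the same principal-ideal and sign analysis as in the proof of Lemma \ref{lem-15-HT}, this forces $H_{\mathbb{Z}[\sqrt{15}]}(-(120n+59))=T_{\mathbb{Z}[\sqrt{15}]}(120n+59)$ (and likewise for $120n+11$), the relevant ideals being principal by Lemma \ref{lem-PID} because $120n+59\equiv -1$ and $120n+11\equiv 1\pmod 5$. Feeding this into the divisor formula of Lemma \ref{lem-Tn-15}, $\tfrac12 T_{\mathbb{Z}[\sqrt{15}]}(M)$ is odd exactly when every inert prime occurs to an even power and exactly one split prime occurs to a power $\equiv 1\pmod 4$, i.e. when $M=p^{4a+1}m^2$ with $p\nmid m$. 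One then checks that the congruences $M\equiv 3\pmod 4$, $M\equiv \pm1\pmod 5$ and $M\equiv 2\pmod 3$ automatically force such a $p$ to split in $\mathbb{Q}(\sqrt{15})$, so no extra condition on $p$ need be imposed; this yields the two claimed characterizations.

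For the counting statements \eqref{mock-5-f0-1} and \eqref{mock-5-f1-1}, I would apply Lemma \ref{lem-estimate} with $(A,B)=(120,59)$ and $(A,B)=(120,11)$; both satisfy $(A,B)=1$, and since $120=2^3\cdot 3\cdot 5$ we obtain $\frac{\pi^2}{6}\prod_{p\mid 120}(1+p^{-1})=\frac{\pi^2}{6}\cdot\tfrac32\cdot\tfrac43\cdot\tfrac65=\frac{2\pi^2}{5}$, which gives the stated main term $\frac{2\pi^2}{5}\frac{N}{\log N}$ in each case.

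The main obstacle I anticipate is the reduction step itself: unlike the order-$2$ case, where the prefactor $\frac{(q^2;q^2)_\infty}{(q;q)_\infty^2}\equiv 1\pmod 2$, the Andrews representation carries the prefactor $\frac{1}{(q;q)_\infty}$, which is \emph{not} trivial modulo $2$. I would handle this either through the congruence $\frac{1}{(q;q)_\infty}\equiv (q;q^2)_\infty\pmod 2$, carefully tracking the $2$-dissection of the resulting product of theta-type series into a binary form of discriminant $60$, or by first invoking the mock theta conjectures to rewrite $f_0^{(5)}$ and $f_1^{(5)}$ as a Rogers--Ramanujan-type theta quotient plus an Appell--Lerch piece whose contribution to the target residue class is even. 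The other delicate point is that $\mathbb{Z}[\sqrt{15}]$ has class number $2$, so the passage from $H$ to $T$ is valid only on the special residue classes of Lemma \ref{lem-15-HT}; verifying that the norms $-(120n+59)$ and $-(120n+11)$ genuinely land in those classes, through the mod $4$ and mod $5$ analysis above, is the step that must be executed with care.
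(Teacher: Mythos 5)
Your plan has a genuine gap at its central step, and it also misses the short route the paper actually takes. The paper's proof is essentially one line: since $(-q;q)_n\equiv(q;q)_n\pmod 2$ termwise in the Eulerian definitions, $f_0^{(5)}(q)\equiv G(q)$ and $f_1^{(5)}(q)\equiv H(q)\pmod 2$, where $G$ and $H$ are the Rogers--Ramanujan functions; the two characterizations are then exactly Gordon's theorem as quoted in Section \ref{sec-intro} (note $60(2n+1)-1=120n+59$ and $60\cdot 2n+11=120n+11$), and the asymptotics follow from Lemma \ref{lem-estimate} with $(A,B)=(120,59)$ and $(120,11)$. Your computation of the constant $\frac{\pi^2}{6}\prod_{p\mid 120}(1+p^{-1})=\frac{2\pi^2}{5}$ in that last step is correct and coincides with the paper's.

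The gap in your route is the reduction step that you yourself flag as the ``main obstacle.'' Unlike the Hecke-type representations used for $\psi_0^{(5)}$ and $\psi_1^{(5)}$, whose prefactor $(-q;q)_\infty/(q;q)_\infty\equiv 1\pmod 2$, Andrews' representation \eqref{intro-f0} carries the prefactor $1/(q;q)_\infty$, which is \emph{not} trivial modulo $2$. Replacing it by $(q;q)_\infty$ or $(q;q^2)_\infty$ turns the double sum into a triple sum, and the $2$-dissection and matching against the Pell fundamental domain of Lemma \ref{lem-Pell} is neither carried out nor obviously feasible in the clean form $c(\cdot)\equiv\frac{1}{2}H_{\mathbb{Z}[\sqrt{15}]}(\cdot)\pmod 2$ that the rest of your argument requires; your alternative (invoking the mock theta conjectures) is likewise only a pointer. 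The downstream steps are plausible but would also need work: Lemma \ref{lem-15-HT} as stated covers only the positive residue classes $1,9\pmod{20}$ and $26,34\pmod{40}$, so you must genuinely extend its sign analysis to the negative norms $-(120n+59)$ and $-(120n+11)$, and your observation that $\chi_{\mathbb{Q}(\sqrt{15})}(M)=1$ forces any $p$ in $M=p^{4a+1}m^2$ to split is the right way to see that no extra condition on $p$ is needed. If you want to salvage your approach with minimal effort, reduce the Eulerian series rather than the Hecke-type series: that removes the problematic prefactor entirely and lands you directly on Gordon's theorem.
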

\begin{proof}
From definition we have
\begin{align}
f_0^{(5)}(q)=\sum_{n=0}^\infty \frac{q^{n^2}}{(-q;q)_n}\equiv \sum_{n=0}^\infty \frac{q^{n^2}}{(q;q)_n}\equiv G(q) \pmod{2}, \\
f_1^{(5)}(q)=\sum_{n=0}^\infty \frac{q^{n(n+1)}}{(-q;q)_n} \equiv \sum_{n=0}^\infty \frac{q^{n(n+1)}}{(q;q)_n} \equiv H(q) \pmod{2}.
\end{align}
The first half of the theorem then follows from the work of Gordon \cite{Gordon}, which we have mentioned in Section \ref{sec-intro}. Applying Lemma \ref{lem-estimate} with $(A,B)=(120,59)$ and $(120,11)$, we get \eqref{mock-5-f0-1} and \eqref{mock-5-f1-1}, respectively.
\end{proof}
Using \cite[Theorem 1.3]{Chen} we can deduce that
\begin{align}
\#\left\{n\leq N: c(f_0^{(5)};2n)\equiv 1 \text{ \rm{(mod 2)}} \right\}\gg \frac{\sqrt{N}}{\log\log N},
\label{mock-5-f0-2} \\
\#\left\{n\leq N: c(f_1^{(5)};2n+1)\equiv 1 \, (\mathrm{mod \,\, 2}) \right\}\gg \frac{\sqrt{N}}{\log\log N}. \label{mock-5-f1-2}
\end{align}
As before, since these estimates are direct consequence of known results and are far from proving Conjecture \ref{conj-mock}, we do not state them in our theorems.

We remak that we have improved the error terms in Chen's results \eqref{intro-Chen-G} and \eqref{intro-Chen-H}.
\begin{corollary}\label{cor-Chen}
For sufficiently large $N$,
\begin{align}
\#\{n\leq N:  c(G;2n+1)\equiv 1 \,\, \mathrm{(mod \,\, 2)}\}
=\frac{2\pi^2}{5}\frac{N}{\log N}+O\left(\frac{N}{\log^2N} \right),  \label{Chen-G-improve} \\
\#\{n\leq N: c(H;2n)\equiv 1 \,\, \mathrm{(mod \,\, 2)}\}=\frac{2\pi^2}{5}\frac{N}{\log N}+O\left(\frac{N}{\log^2N} \right). \label{Chen-H-improve}
\end{align}
\end{corollary}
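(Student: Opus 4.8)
The plan is to reduce the corollary directly to the estimate already obtained for the fifth order mock theta functions $f_0^{(5)}$ and $f_1^{(5)}$ in Theorem \ref{thm-mock-5-f0-f1}. Indeed, the proof of that theorem establishes the termwise congruences $f_0^{(5)}(q)\equiv G(q)$ and $f_1^{(5)}(q)\equiv H(q)\pmod 2$, so that $c(G;m)\equiv c(f_0^{(5)};m)$ and $c(H;m)\equiv c(f_1^{(5)};m)\pmod 2$ for every $m\ge 0$. Consequently the two counting functions in \eqref{Chen-G-improve} and \eqref{Chen-H-improve} are \emph{identical} to those appearing in \eqref{mock-5-f0-1} and \eqref{mock-5-f1-1}, and the corollary follows at once.

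To make the improvement over Chen transparent, I would instead argue directly from Gordon's characterizations recalled in Section \ref{sec-intro}. Gordon showed that $c(G;n)$ is odd for odd $n$ precisely when $60n-1=p^{4a+1}m^2$ with $p$ prime and $p\nmid m$; the substitution $n\mapsto 2n+1$ turns this into $120n+59=p^{4a+1}m^2$. Likewise $c(H;n)$ is odd for even $n$ exactly when $60n+11=p^{4a+1}m^2$, and $n\mapsto 2n$ gives $120n+11=p^{4a+1}m^2$. Thus both counts are instances of the quantity $\gamma(N)$ of Lemma \ref{lem-estimate}, with $(A,B)=(120,59)$ in the first case and $(A,B)=(120,11)$ in the second. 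In both cases the hypotheses of the lemma hold: $A>B$, and since $59$ and $11$ are primes not dividing $120=2^3\cdot 3\cdot 5$, one has $(A,B)=1$.

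It then remains only to evaluate the leading constant produced by \eqref{gamma-result}. Since the primes dividing $A=120$ are $2,3,5$, the Euler product is
\begin{align*}
\frac{\pi^2}{6}\prod_{p\mid 120}\left(1+p^{-1}\right)=\frac{\pi^2}{6}\cdot\frac{3}{2}\cdot\frac{4}{3}\cdot\frac{6}{5}=\frac{\pi^2}{6}\cdot\frac{12}{5}=\frac{2\pi^2}{5},
\end{align*}
which is the same for both $(A,B)=(120,59)$ and $(A,B)=(120,11)$ because the constant depends only on $A$. This yields the main term $\frac{2\pi^2}{5}\frac{N}{\log N}$ in both \eqref{Chen-G-improve} and \eqref{Chen-H-improve}, with error $O(N/\log^2 N)$.

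There is essentially no hard step: all the analytic work is packaged inside Lemma \ref{lem-estimate}. The only point worth flagging is the nature of the improvement over Chen's original estimates \eqref{intro-Chen-G}--\eqref{intro-Chen-H}, whose error term was the weaker $O(N\log\log N/\log^2 N)$. That sharpening is inherited directly from the refined treatment of the $a=0$ sum in the proof of Lemma \ref{lem-estimate}, where the $\log\log N$ factor from Chen's handling of the sum over $m$ is absorbed into a convergent series $\sum_m (\log m)/m^2$, while the crude bound $\sum_{m\le x}\omega(m)\ll x\log\log x$ is applied only to a negligible $O(N^{1/2}\log\log N)$ remainder. Hence the only thing to verify is that the specialization to $(A,B)=(120,59)$ and $(120,11)$ reintroduces no $\log\log N$ loss, which it does not.
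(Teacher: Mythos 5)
Your proposal is correct and matches the paper's (implicit) proof: the corollary is exactly the restatement of \eqref{mock-5-f0-1} and \eqref{mock-5-f1-1} via the congruences $f_0^{(5)}(q)\equiv G(q)$ and $f_1^{(5)}(q)\equiv H(q)\pmod 2$, which in turn rest on Gordon's characterizations and Lemma \ref{lem-estimate} with $(A,B)=(120,59)$ and $(120,11)$. Your verification of the constant $\frac{\pi^2}{6}\prod_{p\mid 120}(1+p^{-1})=\frac{2\pi^2}{5}$ and of the hypotheses of the lemma is accurate, so there is nothing to add.
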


\begin{theorem}\label{thm-mock-5-psi0}
The coefficient $c(\psi_0^{(5)};n)$ is odd if and only if $60n-1=p^{4a+1}m^2$ for some prime $p$ and integer $m$ with $p\nmid m$. Moreover,
\begin{align}\label{mock-5-psi0-estimate}
\#\{n\leq N: c(\psi_0^{(5)};n)\equiv 1 \,\, \mathrm{(mod \,\, 2)}\}
=\frac{2\pi^2}{5}\frac{N}{\log N}+O\left(\frac{N}{\log^2N} \right).
\end{align}
\end{theorem}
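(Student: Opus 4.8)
The plan is to follow the template of the proofs of Theorems \ref{thm-mock-2-A} and \ref{thm-mock-3-psi}, but now working in the ring $\mathbb{Z}[\sqrt{15}]$. First I would invoke a Hecke-type double-sum representation of $\psi_0^{(5)}(q)$ of the sort recorded for the fifth order functions by Andrews \cite{Andrews-TAMS} (compare \eqref{intro-f0}), and reduce it modulo $2$ with the binomial theorem, so that the signs $(-1)^j$ collapse and the factors $1-q^{k}$ become $1+q^{k}$. This should yield a congruence of the form
\begin{align*}
\sum_{n} c(\psi_0^{(5)};n)\, q^{L(n)} \equiv \tfrac12 \sum_{\text{finite ranges}} q^{\,u^2-15v^2} \pmod 2,
\end{align*}
in which the exponent $u^2-15v^2$ takes the value $-(60n-1)$ as $n$ varies and $L(n)$ is the corresponding linear function. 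Since the fundamental solution of $x^2-15y^2=1$ is $(x_1,y_1)=(4,1)$, Lemma \ref{lem-Pell} in the regime $m<0$ (with range $-3v<u\le 3v$) then matches each equivalence class of solutions of $u^2-15v^2=-(60n-1)$ to exactly one lattice point in the summation range, giving
\begin{align*}
c(\psi_0^{(5)};n)\equiv \tfrac12\, H_{\mathbb{Z}[\sqrt{15}]}\big(-(60n-1)\big)\pmod 2.
\end{align*}

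The crux is to convert this count of inequivalent elements into the arithmetic function $T_{\mathbb{Z}[\sqrt{15}]}$. This is delicate because $\mathbb{Z}[\sqrt{15}]$ is not a principal ideal domain and the residue class $60n-1\equiv 19\pmod{20}$ is \emph{not} among those treated in Lemma \ref{lem-15-HT}. Here I would argue directly. Since $60n-1\equiv -1\pmod 5$, Lemma \ref{lem-PID} shows that every ideal of norm $60n-1$ is principal; as $\mathbb{Q}(\sqrt{15})$ has no element of norm $-1$ (Lemma \ref{lem-norm-1}), each such principal ideal has a generator of well-defined norm sign, so the analogue of \eqref{H-Z36} gives $H_{\mathbb{Z}[\sqrt{15}]}(60n-1)+H_{\mathbb{Z}[\sqrt{15}]}(-(60n-1))=T_{\mathbb{Z}[\sqrt{15}]}(60n-1)$. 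On the other hand $u^2-15v^2=60n-1$ is insoluble, because $60n-1\equiv 3\pmod 4$ while $u^2-15v^2\not\equiv 3\pmod 4$; hence $H_{\mathbb{Z}[\sqrt{15}]}(60n-1)=0$ and therefore $H_{\mathbb{Z}[\sqrt{15}]}(-(60n-1))=T_{\mathbb{Z}[\sqrt{15}]}(60n-1)$. This yields $c(\psi_0^{(5)};n)\equiv \tfrac12\, T_{\mathbb{Z}[\sqrt{15}]}(60n-1)\pmod 2$.

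With Lemma \ref{lem-Tn-15} the characterization is then immediate. Writing $60n-1=\prod_i p_i^{e_i}\prod_i q_i^{f_i}$ (note $(60n-1,60)=1$, so the primes $2,3,5$ do not occur), we have $T_{\mathbb{Z}[\sqrt{15}]}(60n-1)=\prod_i(e_i+1)$ when every $f_i$ is even and $0$ otherwise. Because $60n-1\equiv 3\pmod 4$ is never a perfect square, whenever $T\neq 0$ some $e_i$ is odd, so $\tfrac12 T$ is a genuine integer; it is odd precisely when all $f_i$ are even and exactly one $e_s\equiv 1\pmod 4$ with the remaining $e_i$ even, i.e.\ when $60n-1=p^{4a+1}m^2$ with $p\nmid m$. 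Finally, to obtain \eqref{mock-5-psi0-estimate} I would apply Lemma \ref{lem-estimate} with $(A,B)=(60,59)$ after the shift $n\mapsto n-1$ (so that $60n-1=60(n-1)+59$); since $\frac{\pi^2}{6}\prod_{p\mid 60}(1+p^{-1})=\frac{\pi^2}{6}\cdot\frac32\cdot\frac43\cdot\frac65=\frac{2\pi^2}{5}$, this gives the stated asymptotic.

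The main obstacle I anticipate is twofold. On the $q$-series side, I must locate (or derive) a sufficiently clean Hecke-type representation of $\psi_0^{(5)}$ whose mod-$2$ reduction is a single indefinite binary form of discriminant $60$, and pin down the exact linear shift $L(n)$ and the factor $\tfrac12$. On the arithmetic side, the genuinely new ingredient relative to the principal-ideal-domain cases $\mathbb{Z}[\sqrt2]$ and $\mathbb{Z}[\sqrt6]$ used earlier is the passage from $H_{\mathbb{Z}[\sqrt{15}]}$ to $T_{\mathbb{Z}[\sqrt{15}]}$ in a residue class not covered by Lemma \ref{lem-15-HT}, which I handle through Lemma \ref{lem-PID} together with the $\pmod 4$ obstruction to representability.
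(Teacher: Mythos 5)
Your overall plan coincides with the paper's proof (Andrews' Hecke-type double sum, reduction mod $2$, Lemma \ref{lem-Pell} with fundamental solution $(4,1)$, then Lemmas \ref{lem-15-HT}, \ref{lem-Tn-15} and \ref{lem-estimate} with $(A,B)=(60,59)$), and your endgame --- the deduction of the characterization from $T_{\mathbb{Z}[\sqrt{15}]}$ and the computation $\frac{\pi^2}{6}\prod_{p\mid 60}(1+p^{-1})=\frac{2\pi^2}{5}$ --- is exactly right. The one step that would fail as written is your normalization of the norm form. Completing the square in the exponent $\frac{n(5n-1)}{2}-\frac{j(3j+1)}{2}$ gives $120E=3(10n-1)^2-5(6j+1)^2+2$, so the lattice points of the double sum correspond to solutions of $3a^2-5b^2=2(60N-1)$; converting this to the norm form of $\mathbb{Z}[\sqrt{15}]$ forces an extra multiplier, yielding either $(3a)^2-15b^2=6(60N-1)$ or $(5b)^2-15a^2=-10(60N-1)$. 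The value $-(60N-1)$ simply cannot arise, so your intended application of Lemma \ref{lem-Pell} in the $m<0$ regime to the equation $u^2-15v^2=-(60n-1)$ has nothing to match against. The paper works with the positive norm $6(60n-1)$ (see \eqref{5-psi0-key}), uses the $m>0$ regime of Lemma \ref{lem-Pell}, and obtains $c(\psi_0^{(5)};n)\equiv\frac12 H_{\mathbb{Z}[\sqrt{15}]}(6(60n-1))\pmod 2$.

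This misstep also makes your second paragraph unnecessary: since $6(60n-1)\equiv 34\pmod{40}$, the conversion $H_{\mathbb{Z}[\sqrt{15}]}=T_{\mathbb{Z}[\sqrt{15}]}$ is covered verbatim by \eqref{15-HT-2}, and the paper cites it in one line. That said, your substitute argument --- Lemma \ref{lem-PID} to force principality when the norm is $\equiv\pm1\pmod 5$, Lemma \ref{lem-norm-1} to pin the sign of a generator's norm, and a congruence obstruction to kill one sign --- is sound in structure and is essentially how Lemma \ref{lem-15-HT} is proved in the paper; it would salvage your route if transported to the correct value $6(60n-1)$, replacing your mod-$4$ obstruction by the mod-$8$ fact that $u^2-15v^2\not\equiv 6\pmod 8$. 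Since $H_{\mathbb{Z}[\sqrt{15}]}(-(60n-1))$ and $H_{\mathbb{Z}[\sqrt{15}]}(6(60n-1))$ both ultimately equal $T_{\mathbb{Z}[\sqrt{15}]}(60n-1)$, your final characterization is correct; but the derivation linking the $q$-series to the quantity you analyze is the part that needs to be repaired.
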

\begin{proof}
Andrews \cite{Andrews-TAMS} found the following Hecke-type series representation:
\begin{align}
\psi_0^{(5)}(q)=-\frac{(-q;q)_\infty}{(q;q)_\infty}\sum_{n=1}^\infty \sum_{j=-n}^{n-1} (-1)^j(1-q^n)q^{n(5n-1)/2-j(3j+1)/2}.
\end{align}
We deduce that
\begin{align}
\psi_0^{(5)}(q)\equiv \sum_{n=1}^\infty \sum_{j=-n}^{n-1}(1+q^n)q^{n(5n-1)/2-j(3j+1)/2} \pmod{2}.
\end{align}
This implies
\begin{align}
&\sum_{n=1}^\infty c(\psi_0^{(5)};n)q^{6(60n-1)}\equiv \sum_{n=1}^\infty \sum_{j=-n}^{n-1} \left(q^{(30n-3)^2-15(6j+1)^2} +q^{(30n+3)^2-15(6j+1)^2}\right) \nonumber \\
\equiv & \frac{1}{2}\sum_{n=1}^\infty \sum_{j=-n}^{n-1}  \left(q^{(30n-3)^2-15(6j+1)^2}+ q^{(30n-3)^2-15(6j+5)^2} \right. \nonumber \\ &\quad \left. +q^{(30n+3)^2-15(6j+1)^2}+q^{(30n+3)^2-15(6j+5)^2}\right) \pmod{2}. \label{5-psi0-key}
\end{align}
The fundamental solution of $x^2-15y^2=1$ is $(x_1,y_1)=(4,1)$. Therefore, by Lemma \ref{lem-Pell} and \eqref{5-psi0-key} we conclude that
\begin{align}
c(\psi_0^{(5)};n)\equiv \frac{1}{2}H_{\mathbb{Z}[\sqrt{15}]}(6(60n-1)) \pmod{2}.
\end{align}
By Lemma \ref{lem-15-HT} we have $H_{\mathbb{Z}[\sqrt{15}]}(6(60n-1))=T_{\mathbb{Z}[\sqrt{15}]}(6(60n-1))$. By Lemma \ref{lem-Tn-15} it is easy to see that $\frac{1}{2}H_{\mathbb{Z}[\sqrt{15}]}(6(60n-1))$ is odd if and only if $60n-1=p^{4a+1}m^2$ for some prime $p$ and integer $m$ with $p\nmid m$.

Applying Lemma \ref{lem-estimate} with $(A,B)=(60,59)$, we get \eqref{mock-5-psi0-estimate}.
\end{proof}
\begin{theorem}\label{thm-mock-5-psi1}
The coefficient $c(\psi_1^{(5)};n)$ is odd if and only if $60n+11=p^{4a+1}m^2$ for some prime $p$ and integer $m$ with $p\nmid m$. Moreover,
\begin{align}\label{mock-5-psi1-estimate}
\#\{n\leq N: c(\psi_1^{(5)};n)\equiv 1 \,\, \mathrm{(mod \,\, 2)}\}
=\frac{2\pi^2}{5}\frac{N}{\log N}+O\left(\frac{N}{\log^2N} \right).
\end{align}
\end{theorem}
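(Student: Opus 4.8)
The plan is to mirror the proof of Theorem~\ref{thm-mock-5-psi0}, since $\psi_1^{(5)}$ is the natural companion of $\psi_0^{(5)}$ and the residue $60n+11$ here plays the role that $60n-1$ played there. First I would invoke a Hecke-type double-sum representation of $\psi_1^{(5)}(q)$, of a shape analogous to the one Andrews~\cite{Andrews-TAMS} supplies for $\psi_0^{(5)}$, namely
\[
\psi_1^{(5)}(q)=\frac{(-q;q)_\infty}{(q;q)_\infty}\sum_{n}\sum_{j}(-1)^j(1-q^n)\,q^{n(5n+1)/2-j(3j+1)/2},
\]
with appropriate summation ranges for $j$. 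Reducing modulo $2$, the prefactor $(-q;q)_\infty/(q;q)_\infty\equiv 1$ by the binomial theorem and the sign $(-1)^j$ disappears, so that $\psi_1^{(5)}(q)$ becomes congruent modulo $2$ to a sum of the powers $q^{n(5n+1)/2-j(3j+1)/2}$ together with their $q^n$-translates.

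The decisive step is to complete the square. After multiplying the exponent through by a suitable constant and shifting $n$ and $j$, I expect every term to assume the form $q^{X^2-15Y^2}$, producing a congruence
\[
\sum_{n}c(\psi_1^{(5)};n)\,q^{6(60n+11)}\equiv\frac12\sum q^{X^2-15Y^2}\pmod 2,
\]
in which $X$ and $Y$ range over the residue classes dictated by the quadratic form, exactly as the four families are arranged in \eqref{5-psi0-key}. The fundamental solution of $x^2-15y^2=1$ is $(x_1,y_1)=(4,1)$, so Lemma~\ref{lem-Pell} applies; grouping the terms into inequivalent solutions of $u^2-15v^2=6(60n+11)$ should yield
\[
c(\psi_1^{(5)};n)\equiv\frac12 H_{\mathbb{Z}[\sqrt{15}]}\bigl(6(60n+11)\bigr)\pmod 2.
\]

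The remaining arithmetic is routine. Since $6(60n+11)=360n+66\equiv 26\pmod{40}$, Lemma~\ref{lem-15-HT} gives $H_{\mathbb{Z}[\sqrt{15}]}(6(60n+11))=T_{\mathbb{Z}[\sqrt{15}]}(6(60n+11))$, and then Lemma~\ref{lem-Tn-15} together with the multiplicativity of $T_{\mathbb{Z}[\sqrt{15}]}$ shows that $\tfrac12 T_{\mathbb{Z}[\sqrt{15}]}(6(60n+11))$ is odd exactly when $60n+11=p^{4a+1}m^2$ with $p\nmid m$. For the count I would apply Lemma~\ref{lem-estimate} with $(A,B)=(60,11)$: because $\prod_{p\mid 60}(1+p^{-1})=\tfrac32\cdot\tfrac43\cdot\tfrac65=\tfrac{12}{5}$, the leading constant is $\frac{\pi^2}{6}\cdot\frac{12}{5}=\frac{2\pi^2}{5}$, which is \eqref{mock-5-psi1-estimate}.

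The main obstacle will be the bookkeeping in the middle step, that is, pinning down the exact Hecke representation and completing the square so that the exponent is precisely $6(60n+11)$, and then matching the residue classes of $X$ and $Y$ against Lemma~\ref{lem-Pell} so that each inequivalent representation of $6(60n+11)$ by $u^2-15v^2$ is counted exactly once and the factor $\tfrac12$ comes out correctly, just as in the derivation leading to \eqref{5-psi0-key}.
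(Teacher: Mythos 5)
Your proposal follows the paper's proof essentially verbatim: Andrews's Hecke-type representation, reduction mod $2$, completion of the square to $X^2-15Y^2$ with exponent $6(60n+11)$, Lemma \ref{lem-Pell} with fundamental solution $(4,1)$ to get $c(\psi_1^{(5)};n)\equiv\frac12 H_{\mathbb{Z}[\sqrt{15}]}(6(60n+11))\pmod 2$, then Lemmas \ref{lem-15-HT} and \ref{lem-Tn-15}, and finally Lemma \ref{lem-estimate} with $(A,B)=(60,11)$. The only detail to correct is the Hecke representation itself, which in the paper reads $\frac{(-q;q)_\infty}{(q;q)_\infty}\sum_{n\ge 0}\sum_{j=-n}^{n}(-1)^j(1-q^{2n+1})q^{n(5n+3)/2-j(3j+1)/2}$ rather than your guessed exponent $n(5n+1)/2$ and factor $(1-q^n)$ — precisely the bookkeeping you flagged.
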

\begin{proof}
Andrews \cite{Andrews-TAMS} found the following Hecke-type series representation:
\begin{align}
\psi_1^{(5)}(q)=\frac{(-q;q)_\infty}{(q;q)_\infty}\sum_{n=0}^\infty \sum_{j=-n}^{n} (-1)^j(1-q^{2n+1})q^{n(5n+3)/2-j(3j+1)/2}.
\end{align}
We deduce that
\begin{align}
\psi_1^{(5)}(q)\equiv \sum_{n=0}^\infty \sum_{j=-n}^{n}(1+q^{2n+1})q^{n(5n+3)/2-j(3j+1)/2} \pmod{2}.
\end{align}
This implies
\begin{align}
&\sum_{n=0}^\infty c(\psi_1^{(5)};n)q^{6(60n+11)}\equiv \sum_{n=0}^\infty \sum_{j=-n}^{n} \left(q^{(30n+9)^2-15(6j+1)^2} +q^{(30n+21)^2-15(6j+1)^2}\right) \nonumber \\
\equiv & \frac{1}{2}\sum_{n=0}^\infty \sum_{j=-n}^{n}  \left(q^{(30n+9)^2-15(6j+1)^2}+ q^{(30n+9)^2-15(6j-1)^2} \right. \nonumber \\ &\quad \left. +q^{(30n+21)^2-15(6j+1)^2}+q^{(30n+21)^2-15(6j-1)^2}\right) \pmod{2}. \label{5-psi1-key}
\end{align}
By Lemma \ref{lem-Pell} and \eqref{5-psi1-key} we conclude that
\begin{align}
c(\psi_1^{(5)};n)\equiv \frac{1}{2}H_{\mathbb{Z}[\sqrt{15}]}(6(60n+11)) \pmod{2}.
\end{align}
By Lemma \ref{lem-15-HT} we have $H_{\mathbb{Z}[\sqrt{15}]}(6(60n+11))=T_{\mathbb{Z}[\sqrt{15}]}(6(60n+11))$. By Lemma \ref{lem-Tn-15} it is easy to see that $\frac{1}{2}H_{\mathbb{Z}[\sqrt{15}]}(6(60n+11))$ is odd if and only if $60n+11=p^{4a+1}m^2$ for some prime $p$ and integer $m$ with $p\nmid m$.

Applying Lemma \ref{lem-estimate} with $(A,B)=(60,11)$, we get \eqref{mock-5-psi1-estimate}.
\end{proof}
The parity properties for the fifth order mock theta functions $F_0^{(5)}(q)$ and $F_1^{(5)}(q)$ follow as direct consequences of Theorems \ref{thm-mock-5-psi0} and \ref{thm-mock-5-psi1}. This is because of the following result.
\begin{theorem}\label{thm-mock-5-F0}
For $n\geq 1$, we have
\begin{align}
c(F_0^{(5)};n)&=c(\psi_0^{(5)};2n), \label{psi0-F0-equal}\\
c(F_1^{(5)};n)&=c(\psi_1^{(5)};2n+1). \label{psi1-F1-equal}
\end{align}
\end{theorem}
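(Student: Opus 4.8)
The plan is to convert the two coefficient identities into generating-function identities and prove those. Writing $\tfrac12(g(q)\pm g(-q))$ for the even/odd parts, the claim $c(F_0^{(5)};n)=c(\psi_0^{(5)};2n)$ for $n\ge 1$ is exactly the statement that the even part of $\psi_0^{(5)}$ equals $F_0^{(5)}(q^2)$ up to its constant term, and likewise the odd part in the $\psi_1^{(5)}$ case. More precisely, since $c(F_0^{(5)};0)=1$ while $c(\psi_0^{(5)};0)=0$, the two assertions are equivalent to
\begin{align*}
\tfrac12\big(\psi_0^{(5)}(q)+\psi_0^{(5)}(-q)\big)=F_0^{(5)}(q^2)-1,\qquad
\tfrac12\big(\psi_1^{(5)}(q)-\psi_1^{(5)}(-q)\big)=qF_1^{(5)}(q^2).
\end{align*}
Thus it suffices to establish these two identities; comparing coefficients of $q^{2n}$ and of $q^{2n+1}$ then yields the theorem, and the slightly awkward restriction $n\ge 1$ enters only because of the constant $-1$ in the first identity.

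To prove the two identities I would dissect Hecke-type representations. The proofs of Theorems \ref{thm-mock-5-psi0} and \ref{thm-mock-5-psi1} already express $\psi_0^{(5)}(q)$ and $\psi_1^{(5)}(q)$ as $\tfrac{(-q;q)_\infty}{(q;q)_\infty}$ times an indefinite (Hecke-type) theta sum, and Andrews \cite{Andrews-TAMS} provides the companion representations of $F_0^{(5)}$ and $F_1^{(5)}$. The idea is to compute the $2$-dissection of each $\psi$-series: split the indefinite double sum according to the parity of its exponent, and combine this with the standard $2$-dissection of the quotient $\tfrac{(-q;q)_\infty}{(q;q)_\infty}=\tfrac{1}{(q;q^2)_\infty (q;q)_\infty}$. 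After collecting the even-power part of $\psi_0^{(5)}$ and the odd-power part of $\psi_1^{(5)}$, one should recognize the results as precisely the Hecke-type representations of $F_0^{(5)}(q^2)$ and $qF_1^{(5)}(q^2)$, which finishes the proof.

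The main obstacle will be the parity bookkeeping inside the indefinite theta sums: one must show that the unwanted-parity terms cancel and that the surviving terms reassemble exactly into the double-sum shape of the target $F$-function after $q\mapsto q^2$, while simultaneously matching the $2$-dissection of the theta quotient and recovering the additive constant $-1$. A cleaner but more computational alternative, which avoids the prefactor altogether, is to argue purely at the level of Eulerian series: introduce $\Psi(z)=\sum_{n\ge 0}z^nq^{\binom{n+2}{2}}(-q;q)_n$, derive the functional equation $\Psi(z)=q+zq^2\Psi(zq)+zq^3\Psi(zq^2)$ together with its $\psi_1^{(5)}$-analogue, and use these to pin down the even and odd parts directly; the difficulty there is solving the resulting $q$-difference system. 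I expect the Hecke-dissection route to be the most reliable, with the parity separation of the double sum being the delicate step.
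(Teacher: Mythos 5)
Your reduction of the theorem to the two generating-function identities
\begin{align*}
\tfrac12\bigl(\psi_0^{(5)}(q)+\psi_0^{(5)}(-q)\bigr)=F_0^{(5)}(q^2)-1,\qquad
\tfrac12\bigl(\psi_1^{(5)}(q)-\psi_1^{(5)}(-q)\bigr)=qF_1^{(5)}(q^2)
\end{align*}
is correct, and it is exactly the content of what the paper uses: Entries 3.4.4 and 3.4.8 of the Lost Notebook (as recorded in Andrews--Berndt, Part V), namely $\psi_0^{(5)}(q)-F_0^{(5)}(q^2)+1=q\psi(q^2)H(q^4)$ and $\psi_1^{(5)}(q)-qF_1^{(5)}(q^2)=\psi(q^2)G(q^4)$, whose right-hand sides are purely odd, respectively purely even, in $q$. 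Comparing coefficients of $q^{2n}$ and $q^{2n+1}$ then gives \eqref{psi0-F0-equal} and \eqref{psi1-F1-equal}, with the $n\ge 1$ restriction accounted for by the constant term exactly as you say.

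The gap is that you never actually prove these two identities. The paper does not prove them either --- it cites them --- but you instead propose to re-derive them by $2$-dissecting Hecke-type representations, and the step you yourself flag as delicate (showing that the unwanted-parity pieces of the product of the theta quotient $\tfrac{(-q;q)_\infty}{(q;q)_\infty}$ with the indefinite double sum cancel, and that the survivors reassemble into the Hecke shape of $F_0^{(5)}(q^2)$ or $qF_1^{(5)}(q^2)$) is left entirely unexecuted. Neither factor is an even series, so this is not a matter of sorting the double sum by exponent parity; it requires the full $2$-dissection of both factors plus control of the cross terms, and these relations between fifth-order mock theta functions are genuinely nontrivial (they do not follow from formal series rearrangement). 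As written, the core of the argument is a plan rather than a proof. The repair is easy, however: replace the proposed dissection by a citation of the two Lost Notebook entries, after which your coefficient comparison completes the proof exactly as in the paper.
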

\begin{proof}
Recall one of Ramanujan's theta functions:
\begin{align}
\psi(q)=\sum_{n=0}^\infty q^{n(n+1)/2}=\frac{(q^2;q^2)_\infty^2}{(q;q)_\infty}. \label{psi-defn}
\end{align}
From \cite[Entries 3.4.4 and 3.4.8]{lost-notebook5} we know that
\begin{align}
\psi_0^{(5)}(q)-F_0(q^2)+1=q\psi(q^2)H(q^4), \label{psi0-F0} \\
\psi_1^{(5)}(q)-qF_1^{(5)}(q^2)=\psi(q^2)G(q^4). \label{psi1-F1}
\end{align}
Comparing the coefficients of $q^{2n}$ (resp.\ $q^{2n+1}$) on both sides of \eqref{psi0-F0} (resp.\ \eqref{psi1-F1}), we obtain  \eqref{psi0-F0-equal} (resp.\ \eqref{psi1-F1-equal}).
\end{proof}

There are four mock theta functions of order 5 left: $\phi_0^{(5)}(q)$, $\phi_1^{(5)}(q)$, $\chi_0^{(5)}(q)$ and $\chi_1^{(5)}(q)$.  From \cite[Entries 3.4.9 and 3.4.11]{lost-notebook5} we know that
\begin{align}
\chi_0^{(5)}(q)=2F_0^{(5)}(q)-\phi_0^{(5)}(-q), \quad \chi_1^{(5)}(q)=2F_1^{(5)}(q)+q^{-1}\phi_1^{(5)}(-q).
\end{align}
Therefore, we have
\begin{align}\label{5-chi-phi}
c(\chi_0^{(5)};n)\equiv c(\phi_0^{(5)};n) \pmod{2}, \quad c(\chi_1^{(5)};n)\equiv c(\phi_1^{(5)};n+1) \pmod{2}.
\end{align}
From the above congruence relations, we know that there are essentially two functions left: $\phi_0^{(5)}(q)$ and $\phi_1^{(5)}(q)$. Numerical evidence suggests that they are all of type $(\frac{1}{2},\frac{1}{2})$ modulo 2 (see Table \ref{tab-density} in Section \ref{sec-concluding}).

\section{Mock theta function of order 6}\label{sec-mock-6}
Ramanujan recorded seven mock theta functions of order 6 in his lost notebook \cite{lostnotebook}:
\begin{align*}
&\phi^{(6)}(q):=\sum_{n=0}^{\infty}\frac{(-1)^n(q;q^2)_{n}q^{n^2}}{(-q;q)_{2n}},  
\quad \psi^{(6)}(q):=\sum_{n=0}^{\infty}\frac{(-1)^nq^{(n+1)^2}(q;q^2)_{n}}{(-q;q)_{2n+1}}, \\  
& \rho^{(6)}(q):=\sum_{n=0}^{\infty}\frac{q^{\binom{n+1}{2}}(-q;q)_{n}}{(q;q^2)_{n+1}},  
\quad \sigma^{(6)}(q):=\sum_{n=0}^{\infty}\frac{q^{\binom{n+2}{2}}(-q;q)_{n}}{(q;q^2)_{n+1}}, \\  
& \lambda^{(6)}(q):=\sum_{n=0}^{\infty}\frac{(-1)^nq^n(q;q^2)_{n}}{(-q;q)_{n}},  
\quad \mu^{(6)}(q):=\frac{1}{2}+\frac{1}{2}\sum_{n=0}^{\infty}\frac{(-1)^nq^{n+1}(1+q^n)(q;q^2)_{n}}{(-q;q)_{n+1}}, \\ 
& \gamma^{(6)}(q):=\sum_{n=0}^{\infty}\frac{q^{n^2}(q;q)_{n}}{(q^3;q^3)_{n}}.   
\end{align*}
The definition of $\mu^{(6)}(q)$ has been corrected since in Ramanujan's original definition, the series does not converge.

In 2007, after an examination of the summands of $\phi^{(6)}(q)$ and $\psi^{(6)}(q)$ over the negative and non-positive integers, Berndt and Chan \cite{Berndt-Chan} introduced two new mock theta functions of order 6:
\begin{align*}
\phi_{-}^{(6)}(q):=\sum_{n=1}^{\infty}\frac{q^n(-q;q)_{2n-1}}{(q;q^2)_{n}}, 
\quad \psi_{-}^{(6)}(q):=\sum_{n=1}^{\infty}\frac{q^{n}(-q;q)_{2n-2}}{(q;q^2)_{n}}.
\end{align*}

\begin{theorem}\label{thm-mock-6-phi}
For $n\geq 0$ we have $c(\phi^{(6)};n)\equiv p(n)$ \text{\rm{(mod 2)}}.
\end{theorem}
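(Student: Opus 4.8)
The plan is to follow the template already used for $\phi^{(3)}(q)$ in the proof of Theorem \ref{thm-mock-3-f-phi}, where the congruence $c(\phi^{(3)};n)\equiv p(n)\pmod 2$ was obtained by writing the mock theta function as $1/(q;q)_\infty$ times an Appell-Lerch sum whose off-diagonal terms all carry an explicit factor of $2$. First I would invoke an Appell-Lerch series representation of $\phi^{(6)}(q)$ of the shape
\[
\phi^{(6)}(q)=\frac{1}{(q;q)_\infty}\left(1+2\sum_{n\neq 0}\frac{(-1)^n q^{an^2+bn}(1+q^{cn})}{1+q^{dn}}\right),
\]
which can be extracted from the works of Berndt and Chan \cite{Berndt-Chan} (who studied $\phi^{(6)}$ and its companions in detail) or Hickerson and Mortenson \cite{Hickerson-Mortenson}. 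The structural feature I need is precisely that, after the factor $1/(q;q)_\infty$ is pulled out, the surviving Appell-Lerch sum has constant term $1$ and every other summand appears with an explicit coefficient $2$.

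Granting such a representation, the reduction modulo $2$ is immediate. Since $\phi^{(6)}(q)$ and $1/(q;q)_\infty$ both have integer coefficients, their difference has integer coefficients; the displayed formula exhibits it as $2/(q;q)_\infty$ times a power series with integer coefficients, so all of its coefficients are even. Hence
\[
\phi^{(6)}(q)\equiv\frac{1}{(q;q)_\infty}=\sum_{n=0}^\infty p(n)q^n \pmod 2,
\]
which is exactly the assertion $c(\phi^{(6)};n)\equiv p(n)\pmod 2$. This mirrors the final lines of the $\phi^{(3)}$ argument verbatim, with $2$ in place of $4$ because we only need a congruence modulo $2$.

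The main obstacle is the first step: producing, or verifying, the Appell-Lerch representation in precisely this $1+2(\cdots)$ form. Representations in the literature are frequently stated in terms of $m(x,q,z)$ and theta quotients, and it is not immediate that, after clearing the denominator $1/(q;q)_\infty$, the non-constant terms organize into pairs $(n,-n)$ that combine with an even coefficient and yield an honest integer power series (the summands with $n<0$ must first be rewritten so that $1/(1+q^{dn})$ expands in nonnegative powers of $q$). I would settle this exactly as in the $\phi^{(3)}$ computation, by symmetrizing the Appell-Lerch sum under $n\mapsto -n$: the $n=0$ term should contribute $1$, while the factor $(1+q^{cn})/(1+q^{dn})$ forces the $\pm n$ contributions to double up. Should a ready-made representation not already display this symmetry, I would instead derive the required form directly from the definition of $\phi^{(6)}(q)$ by applying Lemma \ref{lem-m-add} together with the theta-function identities collected in Section \ref{sec-pre}, in the same spirit as the treatment of $\rho^{(3)}(q)$.
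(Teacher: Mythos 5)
Your proposal has a genuine gap at precisely the point you flag yourself: the entire argument rests on an Appell--Lerch representation of $\phi^{(6)}(q)$ of the form $\frac{1}{(q;q)_\infty}\bigl(1+2\sum(\cdots)\bigr)$, and you never produce one. This is not a cosmetic omission. The representations of $\phi^{(6)}(q)$ that are actually available in the literature (Andrews--Hickerson, Berndt--Chan, Hickerson--Mortenson) carry the prefactor $\frac{(q;q^2)_\infty}{(q^2;q^2)_\infty}\equiv \frac{1}{(q;q)_\infty^3}\pmod 2$ rather than $\frac{1}{(q;q)_\infty}$ --- this is exactly what happens for the companion function $\psi^{(6)}(q)$ in Theorem \ref{thm-mock-6-psi}, whose mod $2$ reduction comes out as $q J_3^6/J_1^3$, not as $1/J_1$. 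So following your plan with the off-the-shelf representations would land you at a congruence of a different shape, and the fallback of manufacturing the desired $1+2(\cdots)$ form via Lemma \ref{lem-m-add} is a nontrivial computation you have not carried out. As written, the proof does not close.

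The paper's actual proof needs none of this machinery: it works directly from the Eulerian definition. Modulo $2$ signs are irrelevant and $(-q;q)_{2n}\equiv(q;q)_{2n}=(q;q^2)_n(q^2;q^2)_n$, so the summand reduces to $\frac{q^{n^2}}{(q^2;q^2)_n}\equiv\frac{q^{n^2}}{(q;q)_n^2}\pmod 2$, and the classical Durfee-square identity $\sum_{n\ge 0}\frac{q^{n^2}}{(q;q)_n^2}=\frac{1}{(q;q)_\infty}$ gives $c(\phi^{(6)};n)\equiv p(n)\pmod 2$ at once. If you want to salvage your approach you must either exhibit the claimed Appell--Lerch form explicitly or switch to this elementary reduction.
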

\begin{proof}
By definition we have
\begin{align}
\phi^{(6)}(q)=\sum_{n=0}^\infty \frac{(-1)^n(q;q^2)_nq^{n^2}}{(-q;q)_{2n}}\equiv \sum_{n=0}^\infty \frac{q^{n^2}}{(q;q)_n^2}=\sum_{n=0}^\infty p(n)q^n \pmod{2}.
\end{align}
Therefore, $c(\phi^{(6)};n)\equiv p(n)$ (mod 2).
\end{proof}

\begin{theorem}\label{thm-mock-6-psi}
We have
\begin{align}\label{mock-6-psi}
\psi^{(6)}(q)\equiv q \frac{(q^3;q^3)_\infty^6}{(q;q)_\infty^3} \pmod{2}.
\end{align}
\end{theorem}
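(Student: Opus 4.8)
The plan is to work directly from the Eulerian definition and reduce it modulo $2$, then identify the result with the stated eta quotient. First I would discard the sign $(-1)^n$ and replace $(-q;q)_{2n+1}$ by $(q;q)_{2n+1}$, using $1+q^k\equiv 1-q^k\pmod 2$. Splitting $(q;q)_{2n+1}$ into its odd- and even-indexed factors gives $(q;q)_{2n+1}=(q;q^2)_{n+1}(q^2;q^2)_n$, and after cancelling the numerator factor $(q;q^2)_n$ against $(q;q^2)_{n+1}=(q;q^2)_n(1-q^{2n+1})$ one is left with
\begin{align}
\psi^{(6)}(q)\equiv \sum_{n=0}^\infty \frac{q^{(n+1)^2}}{(1-q^{2n+1})(q^2;q^2)_n}\pmod 2. \nonumber
\end{align}
Since $1-q^{2k}\equiv(1-q^k)^2$ gives $(q^2;q^2)_n\equiv (q;q)_n^2\pmod 2$, this single-fold series is the object to be matched against $q(q^3;q^3)_\infty^6/(q;q)_\infty^3$.

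For the target side I would first simplify it modulo $2$. By Jacobi's identity $(q;q)_\infty^3=\sum_{n\ge0}(-1)^n(2n+1)q^{n(n+1)/2}$, and since $2n+1$ is always odd, $(q;q)_\infty^3\equiv \psi(q)\pmod 2$ with $\psi$ as in \eqref{psi-defn}; likewise, using $f(q)^2\equiv f(q^2)\pmod 2$, one gets $(q^3;q^3)_\infty^6\equiv\psi(q^3)^2\equiv\psi(q^6)\pmod 2$. Hence the right-hand side is $\equiv q\,\psi(q^6)/\psi(q)\pmod 2$, a well-defined power series since $\psi(q)$ has constant term $1$. The decisive step is then the $q$-series identity $\sum_{n\ge0}q^{(n+1)^2}/\bigl((1-q^{2n+1})(q^2;q^2)_n\bigr)\equiv q\,\psi(q^6)/\psi(q)\pmod 2$; as a preliminary sanity check one verifies that both sides agree through low order (the coefficients of $q^1,\dots,q^8$ are $1,1,1,0,1,0,0,0$).

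The main obstacle is precisely this identity. Unlike the parity-type-$(1,0)$ functions, where a Hecke-type representation reduces at once to a sum of theta functions, here the surviving factor $1/(q^2;q^2)_n$ keeps the series genuinely hypergeometric, so expanding $1/(1-q^{2n+1})$ as a geometric series only produces a double sum still weighted by $1/(q^2;q^2)_n$, which Jacobi's triple product cannot collapse directly. I therefore expect the cleanest route is to replace the Eulerian sum by a Hecke-type or Appell--Lerch representation of $\psi^{(6)}(q)$ in the spirit of \eqref{sigma-6-m}, obtained through Lemma \ref{lem-m-add} and the machinery of Hickerson and Mortenson already used for $\sigma^{(6)}(q)$ and $\rho^{(3)}(q)$. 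In such a representation the non-theta correction terms typically carry an even coefficient, so reducing modulo $2$ leaves exactly a theta quotient, which one then simplifies to $q(q^3;q^3)_\infty^6/(q;q)_\infty^3$ by Jacobi's triple product together with the eta-quotient identities recorded in Section \ref{sec-pre} and Lemma \ref{known-id}. Locating or deriving the right representation, and carrying out the final theta simplification, is where the real work lies.
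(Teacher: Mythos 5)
There is a genuine gap: your proposal correctly identifies the strategy the paper uses --- pass from the Eulerian sum to a Hecke-type representation and reduce modulo $2$ --- but it never actually supplies that representation, and you concede as much when you write that ``locating or deriving the right representation \dots is where the real work lies.'' That is precisely the content of the proof, so what you have is a plan rather than an argument. The missing ingredient is the Andrews--Hickerson identity
\begin{align}
\psi^{(6)}(q)=q\frac{(q;q^2)_\infty}{(q^2;q^2)_\infty}\sum_{n=0}^\infty \sum_{j=-n}^n (-1)^{n+j} q^{3n^2+3n-j^2},
\end{align}
which the paper quotes from \cite{Andrews-Hickerson}. Once this is in hand the proof is three lines: modulo $2$ the terms with $j$ and $-j$ in the inner sum cancel in pairs, leaving only $j=0$, so the double sum collapses to $\sum_{n\ge 0}q^{3n^2+3n}=(q^{12};q^{12})_\infty^2/(q^6;q^6)_\infty$; the prefactor satisfies $(q;q^2)_\infty/(q^2;q^2)_\infty=(q;q)_\infty/(q^2;q^2)_\infty^2\equiv 1/(q;q)_\infty^3 \pmod 2$ by the binomial theorem, and $(q^{12};q^{12})_\infty^2/(q^6;q^6)_\infty\equiv (q^3;q^3)_\infty^6\pmod 2$, giving \eqref{mock-6-psi}. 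Note that your guess that the ``non-theta correction terms carry an even coefficient'' is not quite how it works here: the whole double sum is a Hecke-type theta object, and the collapse comes from the $j\leftrightarrow -j$ symmetry modulo $2$, not from discarding an even correction term.

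The parts of your proposal that are carried out are correct and are a genuinely different (and worthwhile) preliminary reduction: the congruence
\begin{align}
\psi^{(6)}(q)\equiv \sum_{n=0}^\infty \frac{q^{(n+1)^2}}{(1-q^{2n+1})(q^2;q^2)_n}\pmod 2
\end{align}
and the identification of the right-hand side of \eqref{mock-6-psi} with $q\,\psi(q^6)/\psi(q)$ modulo $2$ are both valid, and your low-order numerical check is accurate. But as you yourself observe, the surviving Eulerian factor $1/(q^2;q^2)_n$ prevents a direct triple-product collapse, so this route stalls exactly at the point where the Hecke-type representation is needed. To repair the proof, cite or derive the identity above and perform the $j\leftrightarrow -j$ cancellation.
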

\begin{proof}
Andrews and Hickerson \cite{Andrews-Hickerson} found that:
\begin{align}
\psi^{(6)}(q)=q\frac{(q;q^2)_\infty}{(q^2;q^2)_\infty}\sum_{n=0}^\infty \sum_{j=-n}^n (-1)^{n+j} q^{3n^2+3n-j^2}.
\end{align}
Therefore, we have
\begin{align*}
\psi^{(6)}(q)\equiv q\frac{(q;q^2)_\infty}{(q^2;q^2)_\infty} \sum_{n=0}^\infty q^{3n^2+3n} \equiv q\frac{(q;q)_\infty}{(q^2;q^2)_\infty^2}\cdot \frac{(q^{12};q^{12})_\infty^2}{(q^6;q^6)_\infty} \equiv q\frac{(q^3;q^3)_\infty^6}{(q;q)_\infty^3} \pmod{2}.
\end{align*}
\end{proof}
Numerical evidence suggests that $c(\psi^{(6)};n)$ takes odd values half of the time (see Table \ref{tab-density} in Section \ref{sec-concluding}).

\begin{theorem}\label{thm-mock-6-rho}
The coefficient $c(\rho^{(6)};n)$ is odd if and only if $n=k(k+1)$ for some integer $k\geq 0$. Moreover,
\begin{align}
\#\left\{0\leq n\leq N:c(\rho^{(6)};n)\equiv 1 \, (\mathrm{mod \,\, 2}) \right\} = \left\lfloor  \frac{\sqrt{4N+1}+1}{2}\right\rfloor.
\end{align}
\end{theorem}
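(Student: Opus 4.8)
The plan is to follow the template already used for the other first–group functions such as $B^{(2)}(q)$: reduce $\rho^{(6)}(q)$ modulo $2$ to the single theta series $\psi(q^2)=\sum_{k\geq 0}q^{k(k+1)}=1+q^2+q^6+q^{12}+\cdots$, from which both assertions are immediate. A quick numerical check of the first few coefficients (the odd ones occurring at $n=0,2,6,\dots$) already pins down the correct target, so the entire problem reduces to proving the congruence $\rho^{(6)}(q)\equiv \psi(q^2)\pmod 2$.

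First I would start from a Hecke-type double-sum representation of $\rho^{(6)}(q)$, of the shape
\begin{align*}
\rho^{(6)}(q)=\frac{(q;q^2)_\infty}{(q^2;q^2)_\infty}\sum_{n\geq 0}\sum_{|j|\leq n}(-1)^{\epsilon(n,j)}q^{Q(n)-j^2},
\end{align*}
taken from Andrews and Hickerson \cite{Andrews-Hickerson} (or equivalently from the unified treatment in \cite{Chen-Wang}); here $Q$ is a fixed quadratic and $\epsilon(n,j)$ a sign, exactly analogous to the representation of $\psi^{(6)}(q)$ used in Theorem \ref{thm-mock-6-psi}. Reducing modulo $2$ the signs disappear (by the binomial theorem, equivalently $(-q;q)_\infty\equiv(q;q)_\infty$), and in the inner sum the terms indexed by $j$ and $-j$ coincide, so they cancel in pairs except for $j=0$. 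This collapses the double sum to the single series $\sum_{n\geq 0}q^{Q(n)}$, leaving a theta quotient times a unary theta function.

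The remaining, genuinely computational, step is to simplify this theta quotient modulo $2$ and identify it with $\psi(q^2)=J_4^2/J_2$. Here I would use the congruence $(q;q)_\infty^2\equiv(q^2;q^2)_\infty\pmod 2$ together with the Jacobi-triple-product evaluations collected in Section \ref{sec-pre}; this is the delicate point, since (as the contrast with $\psi^{(6)}(q)$ shows, where the analogous reduction lands on $qJ_3^6/J_1^3$) a slightly different quadratic $Q$ or prefactor would fail to produce a single theta. Once $\rho^{(6)}(q)\equiv\psi(q^2)\pmod 2$ is established, $c(\rho^{(6)};n)$ is odd precisely when $n=k(k+1)$ for some $k\geq 0$. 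For the count, $k(k+1)\leq N$ is equivalent to $k\leq(\sqrt{4N+1}-1)/2$, so the number of admissible $k\geq 0$ is $\lfloor(\sqrt{4N+1}-1)/2\rfloor+1=\lfloor(\sqrt{4N+1}+1)/2\rfloor$, which is the stated formula. The main obstacle is thus twofold: recalling/verifying the precise Hecke representation in the first step, and carrying out the modulo-$2$ theta simplification in the last so that it lands exactly on $\psi(q^2)$.
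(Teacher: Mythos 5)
Your strategy is the same as the paper's: quote the Andrews--Hickerson Hecke-type representation, reduce modulo $2$, collapse the double sum to $\sum_{k\ge 0}q^{k(k+1)}$, and count. Two of the details you flag as uncertain do, however, differ from the actual formula in ways that matter. The representation is
\begin{align*}
\rho^{(6)}(q)=\frac{(-q;q)_\infty}{(q;q)_\infty}\sum_{n=0}^\infty \sum_{j=-n}^n (-1)^nq^{3n(n+1)/2-j(j+1)/2},
\end{align*}
so the inner exponent is $-j(j+1)/2$ rather than $-j^2$; the relevant involution is $j\mapsto -1-j$, which pairs off $\{-n,\dots,n-1\}$ and leaves $j=n$ (not $j=0$) as the survivor, and it is precisely this that turns the outer exponent $3n(n+1)/2$ into $3n(n+1)/2-n(n+1)/2=n(n+1)$. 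Also the prefactor is $(-q;q)_\infty/(q;q)_\infty=(q^2;q^2)_\infty/(q;q)_\infty^2\equiv 1\pmod 2$, so the ``delicate'' theta-quotient simplification you anticipate is vacuous --- no identification with $J_4^2/J_2$ is needed, and the congruence $\rho^{(6)}(q)\equiv\sum_{k\ge0}q^{k(k+1)}\pmod 2$ drops out in one line. Your counting step at the end is correct as written.
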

\begin{proof}
 Andrews and Hickerson \cite{Andrews-Hickerson} found the following Hecke-type series representation:
\begin{align}
\rho^{(6)}(q)=\frac{(-q;q)_\infty}{(q;q)_\infty}\sum_{n=0}^\infty \sum_{j=-n}^n (-1)^nq^{3n(n+1)/2-j(j+1)/2}.
\end{align}
This implies
\begin{align}
\rho^{(6)}(q)\equiv \sum_{n=0}^\infty q^{n(n+1)} \pmod{2}.
\end{align}
The theorem follows immediately.
\end{proof}

\begin{theorem}\label{thm-mock-6-sigma}
The coefficient $c(\sigma^{(6)};n)$ is odd if and only if $12n-1=p^{4a+1}m^2$ where $p$ is a prime and $m$ is an integer with $p\nmid m$. Moreover,
\begin{align}\label{mock-6-sigma-estimate}
\#\left\{0\leq n\leq N:c(\sigma^{(6)};n)\equiv 1 \, (\mathrm{mod \,\, 2}) \right\} =\frac{\pi^2}{3} \frac{N}{\log N}+O\left(\frac{N}{\log^2 N} \right).
\end{align}
\end{theorem}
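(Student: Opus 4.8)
The plan is to follow the template already used for Theorems \ref{thm-mock-2-A}, \ref{thm-mock-3-psi} and \ref{thm-mock-5-psi0}, now anchored to the ring $\mathbb{Z}[\sqrt{3}]$ (the appearance of $12n-1$ and the coefficient $\tfrac{\pi^2}{3}$ both point to $d=3$). First I would start from a Hecke-type double-sum representation of $\sigma^{(6)}(q)$ due to Andrews and Hickerson \cite{Andrews-Hickerson}, of the shape $\sigma^{(6)}(q)=\frac{(-q;q)_\infty}{(q;q)_\infty}\sum_{n\ge 0}\sum_{|j|\le n}(-1)^{n}q^{Q(n,j)}$ for a quadratic $Q(n,j)$ whose leading part is $3n^2-j^2$ (the same shape as the representation of $\psi^{(6)}(q)$ used in Theorem \ref{thm-mock-6-psi}; the precise signs are immaterial modulo $2$). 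Reducing modulo $2$, the prefactor is harmless since
\[
\frac{(-q;q)_\infty}{(q;q)_\infty}=\frac{(q^2;q^2)_\infty}{(q;q)_\infty^2}\equiv 1\pmod 2,
\]
and all signs collapse, so $\sigma^{(6)}(q)\equiv\sum_{n\ge 0}\sum_{|j|\le n}q^{Q(n,j)}\pmod 2$.

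Next I would isolate the coefficient of $q^N$ and rescale the exponents: completing the square turns $Q(n,j)$ into a value of the norm form $x^2-3y^2$ evaluated at fixed residue classes of $(n,j)$. Concretely one aims at a congruence $\sum_N c(\sigma^{(6)};N)\,q^{\,\kappa(12N-1)}\equiv\frac12(\cdots)\pmod 2$, where the right side counts the representations of $\kappa(12N-1)$ by $x^2-3y^2$ for a fixed small constant $\kappa$ coming from the completion of the square, and the sign of the relevant norm is fixed by Lemma \ref{lem-norm-1} (since $\mathbb{Q}(\sqrt3)$ has no element of norm $-1$). The decisive step is to invoke Lemma \ref{lem-Pell} with the fundamental solution $(x_1,y_1)=(2,1)$ of $x^2-3y^2=1$: the ratio $y_1/(x_1+1)=\tfrac13$ makes the inequality of Lemma \ref{lem-Pell} read $-\tfrac13 u<v\le\tfrac13 u$ (equivalently $-v<u\le v$ in the negative-norm case), which must be shown to match exactly the summation range $|j|\le n$ after the substitution, so that each equivalence class of solutions is counted precisely once. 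This should yield
\[
c(\sigma^{(6)};N)\equiv\tfrac12\,H_{\mathbb{Z}[\sqrt3]}\!\big(\kappa(12N-1)\big)\pmod 2.
\]

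Finally I would read off the parity from Lemma \ref{lem-Z3}. Writing out the prime factorization of $12N-1\equiv 11\pmod{12}$, the quantity $\tfrac12 H_{\mathbb{Z}[\sqrt3]}(\kappa(12N-1))$ is odd exactly when every exponent is even apart from a single one congruent to $1$ modulo $4$; since $12N-1\equiv-1\pmod{12}$ forces the total exponent of the relevant inert/ramified primes to have fixed parity, this special prime is automatically $\equiv 11\pmod{12}$, and the condition collapses to $12N-1=p^{4a+1}m^2$ with $p$ prime and $p\nmid m$. The asymptotic formula then follows verbatim from Lemma \ref{lem-estimate} applied with $(A,B)=(12,11)$, using $\frac{\pi^2}{6}(1+\tfrac12)(1+\tfrac13)=\frac{\pi^2}{3}$. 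The main obstacle I anticipate is the bookkeeping in the rescaling and range-matching step: pinning down the exact constant $\kappa$ together with the residue classes of $(n,j)$, so that Lemma \ref{lem-Pell} gives a clean one-to-one correspondence, while correctly tracking the sign of the norm throughout.
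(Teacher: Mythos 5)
Your plan coincides with the paper's proof: it starts from the Andrews--Hickerson Hecke-type representation of $\sigma^{(6)}(q)$ (whose inner sum is actually $\sum_{j=0}^{n}q^{-j(j+1)/2}$ with a factor $(1-q^{n+1})$, so that after symmetrizing $j\mapsto -1-j$ one lands on $\sum_{n}c(\sigma^{(6)};n)q^{24n-2}\equiv\frac12\sum\big(q^{(6n+5)^2-3(2j+1)^2}+q^{(6n+7)^2-3(2j+1)^2}\big)$, i.e.\ your $\kappa=2$), then applies Lemma \ref{lem-Pell} with $(x_1,y_1)=(2,1)$ to get $c(\sigma^{(6)};n)\equiv\frac12 H_{\mathbb{Z}[\sqrt3]}(24n-2)\pmod 2$, reads off the parity from Lemma \ref{lem-Z3}, and finishes with Lemma \ref{lem-estimate} at $(A,B)=(12,11)$. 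The range-matching you flag as the main obstacle does work out exactly as you describe ($-\tfrac13 u<2j+1\le\tfrac13 u$ becomes $-n-1\le j\le n$ for both $u=6n+5$ and $u=6n+7$), so the proposal is correct and essentially identical to the paper's argument.
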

\begin{proof}
 Andrews and Hickerson \cite{Andrews-Hickerson} found the following Hecke-type series representation:
\begin{align}
\sigma^{(6)}(q)=q\frac{(-q;q)_\infty}{(q;q)_\infty}\sum_{n=0}^\infty (-1)^n(1-q^{n+1})q^{(3n^2+5n)/2}\sum_{j=0}^nq^{-j(j+1)/2}.
\end{align}
We deduce that
\begin{align}
\sum_{n=0}^\infty c(\sigma^{(6)};n)q^{24n-2}\equiv \frac{1}{2}\sum_{n=0}^\infty \sum_{j=-n-1}^n\left(q^{(6n+5)^2-3(2j+1)^2}+q^{(6n+7)^2-3(2j+1)^2}  \right) \pmod{2}.
\end{align}
The fundamental solution of $x^2-3y^2=1$ is $(x_1,y_1)=(2,1)$. By Lemma \ref{lem-Pell} we deduce that
\begin{align}
c(\sigma^{(6)};n)\equiv \frac{1}{2}H_{\mathbb{Z}[\sqrt{3}]}(24n-2) \pmod{2}.
\end{align}
Let $24n-2$ have the prime factorization  $24n-2=2p_1^{e_1}\cdots p_j^{e_j}q_1^{f_1}\cdots q_{1}^{f_1}\cdots q_k^{f_k}r_1^{g_1}\cdots r_{\ell}^{g_\ell}$ where the primes $p_i\equiv 1$ (mod 12), $q_i\equiv \pm 5$ (mod 12), $r_i\equiv 11$ (mod 12). By Lemma \ref{lem-Z3} we see that $c(\sigma^{(6)};n)$ is odd if and only if all of $e_i,f_i$ and $g_i$ are even except for one $g_s\equiv 1$ (mod 4). Therefore, $c(\sigma^{(6)};n)$ is odd if and only if $12n-1=p^{4a+1}m^2$ where $p$ is a prime and $p\nmid m$.

Applying Lemma \ref{lem-estimate} with $(A,B)=(12,11)$, we get \eqref{mock-6-sigma-estimate}.
\end{proof}

\begin{theorem}\label{thm-mock-6-lambda}
We have
\begin{align}
\sum_{n=0}^\infty c(\lambda^{(6)};2n)q^n =\frac{J_2^3J_3^2}{J_1^3J_6}, \label{6-lambda-psi-id-1} \\
\sum_{n=1}^\infty \left(2c(\psi^{(6)};n)-c(\lambda^{(6)};2n-1) \right)q^n=3q\frac{J_6^3}{J_1J_2}. \label{6-lambda-psi-id-2}
\end{align}
Furthermore, we have
\begin{align}\label{6-lambda-psi}
c(\lambda^{(6)};2n-1)\equiv c(\psi^{(6)};n) \pmod{2}.
\end{align}
The coefficient $c(\lambda^{(6)};2n)$ is odd if and only if $n=k(k+1)/2$ for some integer $k$.
\end{theorem}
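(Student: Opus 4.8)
The plan is to first establish the two exact theta-function identities \eqref{6-lambda-psi-id-1} and \eqref{6-lambda-psi-id-2}, and then to read off the congruence \eqref{6-lambda-psi} and the characterization of odd values by reducing these identities modulo $2$. The two exact identities together amount to a complete $2$-dissection of $\lambda^{(6)}(q)$: the even-indexed coefficients assemble into the pure eta-quotient $J_2^3J_3^2/(J_1^3J_6)$, while the odd-indexed coefficients carry the genuinely mock part and hence must be expressed through another mock theta function, namely $\psi^{(6)}(q)$.

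To produce this dissection I would start from an Appell-Lerch representation of $\lambda^{(6)}(q)$ in terms of an $m(x,q,z)$-function together with a theta quotient, in the same spirit as the representations $\rho^{(3)}(q)=q^{-1}m(q,q^6,-q)$ and $\sigma^{(6)}(q)=-m(q^2,q^6,q)$ already used in the paper. Applying Lemma \ref{lem-m-add} splits the relevant $m$-function into $m(\cdot,q^4,\cdot)$-pieces together with an explicit theta correction, which is exactly the mechanism that separates even and odd powers of $q$; this mirrors the computation carried out for $\rho^{(3)}(q)$ in \eqref{rho-start}--\eqref{rho-2n1}. Extracting the even part and simplifying the resulting eta-quotients with the dissection identities of Lemmas \ref{known-id} and \ref{lem-J13} should yield \eqref{6-lambda-psi-id-1}. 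Extracting the odd part produces a surviving $m$-function together with a theta quotient; to identify the $m$-function with twice the mock theta function $\psi^{(6)}(q)$ I would invoke the Appell-Lerch representation of $\psi^{(6)}(q)$ and a change-of-$z$ identity for $m(x,q,z)$ analogous to \eqref{add-rho-4n2-m}, after which the leftover theta terms collapse, via the identities of Lemma \ref{known-id}, to the single quotient $3qJ_6^3/(J_1J_2)$, giving \eqref{6-lambda-psi-id-2}.

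Granting the two exact identities, the congruence \eqref{6-lambda-psi} is immediate. Reducing \eqref{6-lambda-psi-id-2} modulo $2$ kills the term $2c(\psi^{(6)};n)$, so $\sum_{n\geq 1}c(\lambda^{(6)};2n-1)q^n\equiv 3qJ_6^3/(J_1J_2)\pmod 2$. Using $J_2\equiv J_1^2$ and $J_6\equiv J_3^2\pmod 2$ (which follow from $(1-q^{2k})\equiv(1-q^k)^2\pmod 2$), this right-hand side becomes $qJ_3^6/J_1^3$, which by Theorem \ref{thm-mock-6-psi} is congruent to $\psi^{(6)}(q)$; comparing coefficients of $q^n$ gives $c(\lambda^{(6)};2n-1)\equiv c(\psi^{(6)};n)\pmod 2$.

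Finally, for the odd values of $c(\lambda^{(6)};2n)$ I would reduce \eqref{6-lambda-psi-id-1} modulo $2$. The same substitutions $J_2\equiv J_1^2$ and $J_6\equiv J_3^2$ turn $J_2^3J_3^2/(J_1^3J_6)$ into $J_1^3$, and by Jacobi's identity $J_1^3=(q;q)_\infty^3=\sum_{k\geq 0}(-1)^k(2k+1)q^{k(k+1)/2}\equiv\sum_{k\geq 0}q^{k(k+1)/2}\pmod 2$, since $2k+1$ is always odd. Hence $c(\lambda^{(6)};2n)$ is odd precisely when $n$ is a triangular number $k(k+1)/2$, which is the asserted characterization. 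The main obstacle is the opening step: locating a workable Appell-Lerch representation of $\lambda^{(6)}(q)$ and carrying out the $2$-dissection cleanly, in particular taming the residual theta quotients in the odd part so that they telescope to the compact form $3qJ_6^3/(J_1J_2)$ rather than to a more unwieldy expression.
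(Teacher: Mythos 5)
Your derivation of the congruence \eqref{6-lambda-psi} and of the triangular-number characterization of the odd values of $c(\lambda^{(6)};2n)$ from the two display identities is correct and coincides with the paper's: reducing \eqref{6-lambda-psi-id-2} modulo $2$ and invoking Theorem \ref{thm-mock-6-psi} gives \eqref{6-lambda-psi}, and reducing \eqref{6-lambda-psi-id-1} modulo $2$ gives $J_1^3\equiv\psi(q)\pmod 2$, whose exponents are exactly the triangular numbers. The problem is that the heart of the theorem --- the two exact identities \eqref{6-lambda-psi-id-1} and \eqref{6-lambda-psi-id-2} themselves --- is not actually proved in your proposal; it is only a program, with the decisive steps phrased as ``should yield'' and ``I would invoke.'' In particular, the claim that after applying Lemma \ref{lem-m-add} the surviving $m$-function in the odd part can be identified with $2\psi^{(6)}(q)$ and that the residual theta quotients ``collapse'' to the single term $3q J_6^3/(J_1J_2)$ is precisely the hard content of the theorem, and nothing in your outline forces it. You have not specified which Appell--Lerch representation of $\lambda^{(6)}(q)$ you start from, which change-of-$z$ identity you use, or why the correction terms telescope; the factor $3$ in \eqref{6-lambda-psi-id-2} is a warning sign that a specific nontrivial theta identity is being consumed at that point.

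The paper's route bypasses all of this by quoting Ramanujan's identity (Entry 7.5.2 of the Lost Notebook, \eqref{6-psi-lambda-Rama-id})
\begin{align*}
2q^{-1}\psi^{(6)}(q^2)+\lambda^{(6)}(-q)=\frac{J_2^6J_3J_{12}}{J_1^3J_4^3J_6},
\end{align*}
which already packages the relation between $\lambda^{(6)}$ and $\psi^{(6)}$ as a single eta-quotient. Substituting the $2$-dissection \eqref{J3J13} of $J_3/J_1^3$ and separating even and odd powers of $q$ then yields \eqref{6-lambda-psi-id-1} and \eqref{6-lambda-psi-id-2} in one step (the $3q$-term of \eqref{J3J13} is the source of the coefficient $3$). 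If you want to salvage your Appell--Lerch approach you would in effect have to re-derive this identity, which is a substantial piece of work that your proposal leaves entirely unexecuted. As written, the proof has a genuine gap at its central step.
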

%
\begin{proof}
From \cite[p.\ 140, Entry 7.5.2]{lost-notebook5} we find
\begin{align}\label{6-psi-lambda-Rama-id}
2q^{-1}\psi^{(6)}(q^2)+\lambda^{(6)}(-q)=\frac{J_2^6J_3J_{12}}{J_1^3J_4^3J_6}.
\end{align}
Substituting \eqref{J3J13} into \eqref{6-psi-lambda-Rama-id} and extracting the terms in which the power of $q$ is even (resp.\ odd), we obtain \eqref{6-lambda-psi-id-1} (resp.\ \eqref{6-lambda-psi-id-2}).

From \eqref{6-lambda-psi-id-2} we deduce that
\begin{align}
\sum_{n=1}^\infty c(\lambda^{(6)};2n-1)q^n\equiv q\frac{J_3^6}{J_1^3} \pmod{2}.
\end{align}
Comparing this with \eqref{mock-6-psi}, we get \eqref{6-lambda-psi}.

From \eqref{6-lambda-psi-id-1} we deduce that
\begin{align}
\sum_{n=0}^\infty c(\lambda^{(6)};2n)q^n\equiv \frac{(q^2;q^2)_\infty^2}{(q;q)_\infty} \pmod{2}.
\end{align}
Recall the definition of $\psi(q)$ in \eqref{psi-defn}, we get the last assertion of the theorem.
\end{proof}

Recall that numerical evidence indicates that $c(\psi^{(6)};n)$ takes odd values half of the time. If this is indeed true, then Theorem \ref{thm-mock-6-lambda} shows that $\lambda^{(6)}(q)$ is of type $(\frac{3}{4},\frac{1}{4})$ modulo 2.

\begin{theorem}\label{thm-mock-6-phi-minus}
We have
\begin{align}\label{36-psi-phi-minus}
c(\psi^{(3)};n)\equiv c(\phi_{-}^{(6)};n) \pmod{2}.
\end{align}
The coefficient $c(\phi_{-}^{(6)};n)$ is odd if and only if $24n-1=p^{4a+1}m^2$ for some prime $p$ and integer $m$ with $p\nmid m$. Moreover,
\begin{align}
\#\left\{0\leq n\leq N:c(\phi_{-}^{(6)};n)\equiv 1  \, (\mathrm{mod \,\, 2}) \right\} =\frac{\pi^2}{3} \frac{N}{\log N}+O\left(\frac{N}{\log^2 N} \right).
\end{align}
\end{theorem}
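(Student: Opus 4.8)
The plan is to reduce the entire statement to the single congruence \eqref{36-psi-phi-minus}. Once $c(\phi_{-}^{(6)};n)\equiv c(\psi^{(3)};n)\pmod 2$ is known, the characterization of the odd values of $c(\phi_{-}^{(6)};n)$ and the asymptotic count are literally those of Theorem \ref{thm-mock-3-psi}: that theorem already shows $c(\psi^{(3)};n)$ is odd exactly when $24n-1=p^{4a+1}m^2$ with $p$ prime and $p\nmid m$, and its estimate (obtained from Lemma \ref{lem-estimate} with $(A,B)=(24,23)$) has main term $\frac{\pi^2}{3}\frac{N}{\log N}$. So no new analytic input is required beyond the congruence, and the whole proof hinges on \eqref{36-psi-phi-minus}.

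To prove \eqref{36-psi-phi-minus} I would first reduce $\phi_{-}^{(6)}$ modulo $2$ at the level of its defining Eulerian series. Since $1+q^k\equiv 1-q^k\pmod 2$ we have $(-q;q)_{2n-1}\equiv (q;q)_{2n-1}\pmod 2$, and the elementary factorization $(q;q)_{2n-1}=(q;q^2)_n(q^2;q^2)_{n-1}$ (split the product over odd and even indices) cancels the denominator $(q;q^2)_n$ of each summand. This yields the clean reduction
\begin{align*}
\phi_{-}^{(6)}(q)\equiv\sum_{n=1}^{\infty}q^{n}(q^2;q^2)_{n-1}\pmod 2 .
\end{align*}
A short numerical check (say through $q^{9}$) confirms that this partial-theta-type series agrees modulo $2$ with $\psi^{(3)}(q)$, whose odd coefficients are indexed by $24n-1=p^{4a+1}m^2$.

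The remaining, and genuinely nontrivial, step is therefore to establish
\begin{align*}
\psi^{(3)}(q)=\sum_{n=1}^{\infty}\frac{q^{n^2}}{(q;q^2)_n}\equiv\sum_{n=1}^{\infty}q^{n}(q^2;q^2)_{n-1}\pmod 2 .
\end{align*}
These two Eulerian series are not equal over $\mathbb{Z}$ (the left side has nonnegative, growing coefficients, the right side has cancelling signs), so a term-by-term manipulation cannot work and one needs a genuine transformation. My approach would be to produce a Hecke-type (false theta) representation for $\sum_{n\ge 1}q^{n}(q^2;q^2)_{n-1}$, for instance via a Bailey-pair argument or a suitable indefinite theta transformation, reduce it modulo $2$, and match it against the Hecke-type form of $\psi^{(3)}$ already extracted in the proof of Theorem \ref{thm-mock-3-psi} from Liu's identity \eqref{Liu-id}. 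Equivalently, one may invoke a $q$-series identity of Berndt and Chan \cite{Berndt-Chan} relating $\phi_{-}^{(6)}$ to an indefinite theta series. I expect this reconciliation of the two indefinite/Hecke representations modulo $2$ to be the main obstacle, since it is exactly here that the apparently unrelated shapes $q^{n^2}/(q;q^2)_n$ and $q^{n}(q^2;q^2)_{n-1}$ must be shown to coincide mod $2$.

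Finally, with \eqref{36-psi-phi-minus} in hand, the assertion that $c(\phi_{-}^{(6)};n)$ is odd if and only if $24n-1=p^{4a+1}m^2$, together with the stated asymptotic, follows immediately by quoting Theorem \ref{thm-mock-3-psi}; in particular the count rests on the same application of Lemma \ref{lem-Pell} and Lemma \ref{lem-Z6}, through the norm-form machinery for $\mathbb{Z}[\sqrt{6}]$, that underlies the $\psi^{(3)}$ case.
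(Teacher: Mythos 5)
Your overall architecture is sound and matches the paper's: once \eqref{36-psi-phi-minus} is established, the characterization of odd values and the asymptotic count do follow by quoting Theorem \ref{thm-mock-3-psi} verbatim. Your elementary reduction of $\phi_{-}^{(6)}$ is also correct: indeed $(-q;q)_{2n-1}\equiv(q;q)_{2n-1}=(q;q^2)_n(q^2;q^2)_{n-1}\pmod 2$, so $\phi_{-}^{(6)}(q)\equiv\sum_{n\ge 1}q^n(q^2;q^2)_{n-1}\pmod 2$. The problem is that the one step carrying all the content --- proving that this series is congruent to $\psi^{(3)}(q)=\sum_{n\ge 1}q^{n^2}/(q;q^2)_n$ modulo $2$ --- is never actually carried out. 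You state that you would ``produce a Hecke-type representation \dots via a Bailey-pair argument or a suitable indefinite theta transformation'' and ``match it'' against Liu's identity, but no such identity is exhibited, and a two-coefficient numerical check through $q^9$ is not a proof. As it stands the proposal reduces the theorem to an unproved $q$-series congruence that is at least as hard as the theorem itself.

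For comparison, the paper avoids the Eulerian forms entirely. It starts from the Berndt--Chan Hecke-type representation
\begin{align*}
\phi_{-}^{(6)}(q)=\frac{(-q;q)_\infty}{(q;q)_\infty}\sum_{n=0}^\infty (-1)^n(1-q^{2n+2})q^{3n^2+5n+1}\sum_{j=0}^n(1+q^{2j+1})q^{-2j^2-3j},
\end{align*}
reduces it modulo $2$, rewrites the resulting double sum in the form $\sum q^{(12n+a)^2-6(4j+b)^2}$ dilated to exponent $48n-2$, and then uses Lemma \ref{lem-Pell} (with the fundamental solution $(5,2)$ of $x^2-6y^2=1$) to recognize the count as $\tfrac12 H_{\mathbb{Z}[\sqrt6]}(48n-2)$; the congruence \eqref{36-psi-phi-minus} then drops out because the proof of Theorem \ref{thm-mock-3-psi} produces exactly the same expression $c(\psi^{(3)};n)\equiv\tfrac12 H_{\mathbb{Z}[\sqrt6]}(48n-2)\pmod 2$. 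The delicate part is verifying that the ranges of $j$ in the Hecke double sum match the fundamental-domain inequalities of Lemma \ref{lem-Pell}, so that each equivalence class of solutions of $u^2-6v^2=48n-2$ is counted exactly once (up to the factor $\tfrac12$ from symmetrizing). That bookkeeping is precisely what is missing from your sketch; if you supply it, your route and the paper's coincide, since the Berndt--Chan identity you allude to at the end is the one the paper uses.
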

\begin{proof}
Berndt and Chan \cite{Berndt-Chan} gave the following Hecke-type series representation:
\begin{align}
\phi_{-}^{(6)}(q)=\frac{(-q;q)_\infty}{(q;q)_\infty}\sum_{n=0}^\infty (-1)^n(1-q^{2n+2})q^{3n^2+5n+1}\sum_{j=0}^n(1+q^{2j+1})q^{-2j^2-3j}.
\end{align}
We deduce that
\begin{align}
\phi_{-}^{(6)}(q)&\equiv \sum_{n=0}^\infty (1+q^{2n+2})q^{3n^2+5n+1}\sum_{j=0}^n\left(q^{-2j^2-3j}+q^{-2j^2-j+1} \right) \nonumber \\
& \equiv \sum_{n=0}^\infty \left(q^{3n^2+5n+1}+q^{3n^2+7n+3} \right)\sum_{j=-n-1}^nq^{-2j^2-3j} \pmod{2}.
\end{align}
Therefore,
\begin{align}
&\sum_{n=1}^\infty c(\phi_{-}^{(6)};n)q^{48n-2}\equiv \sum_{n=0}^\infty \sum_{j=-n-1}^n q^{(12n+10)^2-6(4j+3)^2}+q^{(12n+14)^2-6(4j+3)^2} \nonumber \\
\equiv & \frac{1}{2}\sum_{n=0}^\infty \sum_{j=-n-1}^n\left(q^{(12n+10)^2-6(4j+3)^2}+q^{(12n+14)^2-6(4j+3)^2} \right. \nonumber \\
& \quad \left. +q^{(12n+10)^2-6(4j+1)^2}+q^{(12n+14)^2-6(4j+1)^2} \right) \pmod{2}. \label{6-phi-minus-key}
\end{align}
The fundamental solution of $x^2-6y^2=1$ is $(x_1,y_1)=(5,2)$. By Lemma \ref{lem-Pell} and \eqref{6-phi-minus-key} we conclude that
\begin{align}\label{6-phi-minus-H}
c(\phi_{-}^{(6)};n)\equiv \frac{1}{2}H_{\mathbb{Z}[\sqrt{6}]}(48n-2) \pmod{2}.
\end{align}
Comparing \eqref{3-psi-H} with \eqref{6-phi-minus-H}, we obtain \eqref{36-psi-phi-minus}. The rest of the theorem follows from Theorem \ref{thm-mock-3-psi}.
\end{proof}

\begin{theorem}\label{thm-mock-6-psi-minus}
The coefficient $c(\psi_{-}^{(6)};n)$ is odd if and only if $8n-3=p^{4a+1}m^2$ for some prime $p$ and integer $m$ with $p\equiv 5$ \text{\rm{(mod 24)}} and $p\nmid m$, or $8n-3=3p^{4a+1}m^2$ for some prime $p\equiv 23$ \text{\rm{(mod 24)}} and integer $m$ with $p\nmid m$. Moreover,
\begin{align}\label{mock-6-psi-minus-estimate}
\#\left\{0\leq n\leq N|c(\psi_{-}^{(6)};n)\equiv 1 \,  (\mathrm{mod \,\, 2}) \right\} =\frac{\pi^2}{6} \frac{N}{\log N}+O\left(\frac{N}{\log^2 N} \right).
\end{align}
\end{theorem}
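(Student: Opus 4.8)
The plan is to imitate the proofs of Theorems \ref{thm-mock-6-sigma} and \ref{thm-mock-6-phi-minus}. First I would invoke the Hecke-type series representation of $\psi_{-}^{(6)}(q)$ due to Berndt and Chan \cite{Berndt-Chan}, which writes $\psi_{-}^{(6)}(q)$ as $\frac{(-q;q)_\infty}{(q;q)_\infty}$ times a double sum over an outer index $n$ and an inner index $j$. Since $\frac{(-q;q)_\infty}{(q;q)_\infty}=\frac{(q^2;q^2)_\infty}{(q;q)_\infty^2}\equiv 1 \pmod 2$, the binomial theorem lets me discard all signs together with this prefactor, leaving a genuine theta-type double sum modulo $2$. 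Completing the square in the two summation variables turns the exponent into a difference $u^2-6v^2$, so that, after collecting the appropriate power of $q$, the series becomes a sum of $q$-powers indexed by solutions of a Pell-type equation $u^2-6v^2=-(8n-3)$ (up to a fixed square factor that plays no role in the prime-by-prime analysis). The sign is negative here, which is forced because $\mathbb{Z}[\sqrt 6]$ has no element of norm $-1$ (Lemma \ref{lem-norm-1}), so by \eqref{H-Z36} only one of $8n-3$, $-(8n-3)$ can be a norm. Using the fundamental solution $(x_1,y_1)=(5,2)$ of $x^2-6y^2=1$ with Lemma \ref{lem-Pell}, each equivalence class is counted once exactly as in \eqref{6-phi-minus-H}, yielding a congruence of the shape
\begin{align*}
c(\psi_{-}^{(6)};n)\equiv \tfrac12 H_{\mathbb{Z}[\sqrt 6]}\big(-(8n-3)\big) \pmod 2 .
\end{align*}

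Next I would read off the odd values from Lemma \ref{lem-Z6}(2), the $n<0$ formula. Writing $H_{\mathbb{Z}[\sqrt 6]}(-(8n-3))=\prod_i(f_i+1)\prod_i(g_i+1)$, the value $\tfrac12 H$ is odd precisely when this product is $\equiv 2 \pmod 4$, that is, when every inert prime ($\equiv \pm 7,\pm 11 \pmod{24}$) occurs to an even power and exactly one split prime occurs to an exponent $\equiv 1\pmod 4$ while all remaining split primes occur to even powers. The decisive point is the sign condition in Lemma \ref{lem-Z6}(2): because $8n-3$ is odd the power of $2$ is $a=0$, so the requirement that $a+\sum g_i$ be odd forces the distinguished prime to be an $r$-type prime $\equiv 5,23 \pmod{24}$, since a $q$-type prime would leave $\sum g_i$ even and kill $H$. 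Combining this with the fact that $8n-3\equiv 5,13,21\pmod{24}$ and with the behaviour of the ramified prime $3$ (which contributes a factor $1$ to $H$ but shifts residues mod $24$), the single condition ``$8n-3$ equals an $r$-prime to an odd power times a square'' splits according to the parity of the exponent of $3$: the even case gives $8n-3=p^{4a+1}m^2$ with $p\equiv 5 \pmod{24}$, and the odd case gives $8n-3=3p^{4a+1}m^2$ with $p\equiv 23 \pmod{24}$. This yields the stated characterization.

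For the counting formula I would apply Lemma \ref{lem-estimate} to the two families separately. In the first, the congruence $8n-3\equiv 5 \pmod{24}$ together with $3\nmid m$ forces $p\equiv 5\pmod{24}$, and writing $n=3k+1$ reduces the count to $\#\{k\le N/3:24k+5=p^{4a+1}m^2\}$, to which Lemma \ref{lem-estimate} with $(A,B)=(24,5)$ applies; the second family with $n=9j$ reduces to $\#\{j\le N/9:24j-1=p^{4a+1}m^2\}$ with $(A,B)=(24,23)$. The step I expect to be the main obstacle is that the integer $m$ in the characterization may itself be divisible by $3$, so each family carries an additional geometric cascade coming from the substitution $m\mapsto 3m$ (equivalently $8n-3\mapsto 9(8n-3)$); summing this cascade multiplies each base contribution by $\tfrac{9}{8}=\sum_{t\ge0}9^{-t}$, and adding the two pieces produces the clean constant, since $\tfrac98\cdot\frac{\pi^2}{9}+\tfrac98\cdot\frac{\pi^2}{27}=\frac{\pi^2}{8}+\frac{\pi^2}{24}=\frac{\pi^2}{6}$. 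Keeping careful track of these factor-of-$3$ contributions, rather than the routine Hecke-series and Pell manipulations, is where the real work lies.
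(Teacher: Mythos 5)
Your characterization of the odd coefficients follows the paper's proof essentially verbatim: the same Berndt--Chan Hecke-type series, reduction modulo $2$ to a theta-type double sum, identification of the exponent with $-(8n-3)=u^2-6v^2$, Lemma \ref{lem-Pell} with fundamental solution $(5,2)$ to get $c(\psi_{-}^{(6)};n)\equiv \tfrac12 H_{\mathbb{Z}[\sqrt 6]}(3-8n)\pmod 2$, and then Lemma \ref{lem-Z6}(2) with $a=0$ forcing the distinguished prime to be of $r$-type and the parity of the exponent of $3$ producing the two cases $p\equiv 5$ and $p\equiv 23\pmod{24}$; this is all correct and matches the paper (the paper pins down the residue of $p$ via $8n-3\equiv 5\pmod 8$ rather than via residues mod $24$, an immaterial difference). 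Where you genuinely diverge is the asymptotic count. The paper keeps the sum over all odd $m$ (including multiples of $3$) and instead re-enters the proof of Lemma \ref{lem-estimate}, replacing the condition $p\equiv B/m^2\pmod 8$ by $p\equiv 5$ (resp.\ $23$) $\pmod{24}$ and $\phi(8)$ by $\phi(24)$, obtaining $\gamma_1(N)=\tfrac{\pi^2}{8}\tfrac{N}{\log N}$ and $\gamma_2(N)=\tfrac{\pi^2}{24}\tfrac{N}{\log N}$ directly. You instead stratify by the $3$-adic valuation of $m$, apply the \emph{unmodified} Lemma \ref{lem-estimate} with $(A,B)=(24,5)$ and $(24,23)$ at the base level (where the congruence on $p$ is automatic because $(m,24)=1$ forces $m^2\equiv 1\pmod{24}$), and sum the resulting geometric cascade $\sum_{t\ge 0}9^{-t}=\tfrac98$; the arithmetic $\tfrac98(\tfrac{\pi^2}{9}+\tfrac{\pi^2}{27})=\tfrac{\pi^2}{6}$ checks out. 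Your route has the advantage of using the lemma as a black box; its cost is the cascade, which you correctly flag as the delicate point --- to make it rigorous one should note that the error term $O(N9^{-t}/\log^2(N9^{-t}))$ is uniform in $t$, sum it for $9^t\le \sqrt N$, and bound the remaining scales trivially by $O(\sqrt N\log N)$. With that remark supplied, your argument is complete and equivalent to the paper's.
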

\begin{proof}
We need the following Hecke-type series representation found by Berndt and Chan \cite{Berndt-Chan}:
\begin{align}
\psi_{-}^{(6)}(q)=q\frac{(-q;q)_\infty}{(q;q)_\infty}\sum_{n=0}^\infty (-1)^nq^{3n^2+3n}\sum_{j=-n}^nq^{-2j^2-j}.
\end{align}
By the binomial theorem, we deduce that
\begin{align}
\psi_{-}^{(6)}(q)\equiv q\sum_{n=0}^\infty \sum_{j=-n}^nq^{3n^2+3n-2j^2-j} \pmod{2}.
\end{align}
This implies
\begin{align}
\sum_{n=0}^\infty c(\psi_{-}^{(6)};n)q^{8n-3}\equiv \frac{1}{2}\sum_{n=0}^\infty \sum_{j=-n}^n\left(q^{6(2n+1)^2-(4j+1)^2} +q^{6(2n+1)^2-(4j-1)^2} \right) \pmod{2}.
\end{align}
By Lemma \ref{lem-Pell} we deduce that
\begin{align}
c(\psi_{-}^{(6)};n)\equiv \frac{1}{2}H_{\mathbb{Z}[\sqrt{6}]}(-8n+3) \pmod{2}.
\end{align}
Let $8n-3$ have the prime factorization $8n-3=3^bp_1^{e_1}\cdots p_j^{e_j}q_1^{f_1}\cdots q_k^{f_k}r_1^{g_1}\cdots r_\ell^{g_\ell}$ where the primes $p_i\equiv \pm 7, \pm 11$ (mod 24), $q_i\equiv  1, 19$ (mod 24), and $r_i\equiv 5,23$ (mod 24). By Lemma \ref{lem-Z6}, we know that $\frac{1}{2}H_{\mathbb{Z}[\sqrt{6}]}(-8n+3)\equiv 1$ (mod 2) if and only if all of $e_i$, $f_i$ and $g_i$ are even except for one $g_s\equiv 1$ (mod 4). Note that $8n-3\equiv 5$ (mod 8).  If $b \equiv 1 $ (mod 2), then the condition becomes $8n-3=3p^{4a+1}m^2$ for some prime $p\equiv 23$ (mod 24) and integer $m$ with $p\nmid m$. If $b\equiv 0$ (mod 2), then the condition becomes $8n-3=p^{4a+1}m^2$ for some prime $p\equiv 5$ (mod 24) and integer $m$ with $p\nmid m$.

Let
\begin{align*}
\gamma_1(N)=\#\left\{0\leq n<N: 8n+5=p^{4a+1}m^2, \, \text{ $p\equiv 5$ \rm{(mod 24)} is a prime and $p\nmid m$}\right\}.
\end{align*}
To give an estimate to $\gamma_1(N)$, we just need to replace the condition $p\equiv B/m^2$ (mod 8) by $p\equiv 5$ (mod 24) in \eqref{main-term} and accordingly replace $\phi(A)$ by $\phi(24)$ in \eqref{main-estimate}. We deduce that
\begin{align}\label{gamma1}
\gamma_1(N)=\frac{1}{\phi(24)}\left(1-\frac{1}{2^2}\right)\cdot \frac{\pi^2}{6}\frac{8N}{\log N}+O\left(\frac{N}{\log^2N}\right) =\frac{\pi^2}{8}\frac{N}{\log N}+O\left(\frac{N}{\log^2N}\right).
\end{align}
Next, note that $8n-3=3p^{4a+1}m^2$ implies $3|n$. Let $n=3(n'+1)$. Then we get $8n'+7=p^{4a+1}m^2$. Let
\begin{align*}
\gamma_2(N)=\#\left\{0\leq n'<\frac{N}{3}: 8n'+7=p^{4a+1}m^2, \, \text{ $p\equiv 23$ \rm{(mod 24)} is a prime and $p\nmid m$}\right\}.
\end{align*}
Similarly as above, we deduce that
\begin{align}\label{gamma2}
\gamma_2(N)=\frac{1}{\phi(24)}\left(1-\frac{1}{2^2}\right)\cdot \frac{\pi^2}{6}\frac{8N/3}{\log N}+O\left(\frac{N}{\log^2N}\right) =\frac{\pi^2}{24}\frac{N}{\log N}+O\left(\frac{N}{\log^2N}\right).
\end{align}
Adding \eqref{gamma1} and \eqref{gamma2} up, we get \eqref{mock-6-psi-minus-estimate}.
\end{proof}

Note that
\begin{align}\label{6-mu-half}
\mu^{(6)}(q)=\frac{1}{2}+q-\frac{3}{2}q^2+2q^3-2q^4+3q^5-\frac{11}{2}q^6+7q^7-\frac{15}{2}q^8+\cdots.
\end{align}
Since $c(\mu^{(6)};n)$ is half integer,  we consider $2\mu^{(6)}(q)$ instead.
\begin{theorem}\label{thm-mock-6-mu}
We have
\begin{align}\label{6-mu-2n1}
\sum_{n=0}^\infty c(2\mu^{(6)};2n+1)q^n=2\frac{J_2^2J_6^2}{J_1^2J_3}.
\end{align}
Moreover, $c(2\mu^{(6)};2n+1)\equiv 0$  \text{\rm{(mod 2)}} and $c(2\mu^{(6)};2n)\equiv p(n)$ \text{\rm{(mod 2)}} for any $n\geq 0$.
\end{theorem}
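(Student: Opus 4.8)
The plan is to derive the full $2$-dissection of $2\mu^{(6)}(q)$ from an Appell--Lerch representation, read off the odd part as the exact identity \eqref{6-mu-2n1}, and then extract the two congruences from it. First I would start from a known Appell--Lerch representation of $2\mu^{(6)}(q)$ written in terms of the function $m(x,q,z)$ of \eqref{m-defn}; such a representation can be read off from Hickerson and Mortenson \cite{Hickerson-Mortenson} or produced directly from the defining Eulerian sum. Applying Lemma \ref{lem-m-add} to the relevant $m$-term --- exactly as in the treatment of $\rho^{(3)}(q)$ leading to \eqref{rho-start} --- converts the base $q^{6}$ into $q^{24}$ and produces two Appell--Lerch pieces in base $q^{24}$ together with an explicit theta quotient. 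This is the analogue of \eqref{rho-start} and sets up a clean separation of even and odd powers of $q$.

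Next I would insert the dissection identities of Lemma \ref{known-id} and Lemma \ref{lem-J13} (with $q$ replaced by $q^{3}$ where needed) and collect terms according to the parity of the exponent, mirroring the passage from \eqref{rho-2n} and \eqref{rho-2n1} to \eqref{rho-4n}--\eqref{rho-4n3}. The odd part should then reduce to a combination of an $m$-function and theta quotients; I expect the $m$-contributions either to cancel or to combine via the change-of-$z$ formulas of Hickerson and Mortenson, as used in \eqref{add-rho-4n2-m}, leaving a pure theta quotient. Establishing that this theta quotient equals $2\,J_2^2J_6^2/(J_1^2J_3)$ then amounts to verifying a single eta-quotient identity, the analogue of \eqref{rho-last}, which can be checked by substituting the dissections \eqref{J1J33}, \eqref{J33J1}, \eqref{J3J13}. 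This yields \eqref{6-mu-2n1}.

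The two congruences follow quickly once the dissection is in hand. Since the right-hand side of \eqref{6-mu-2n1} carries an explicit factor $2$ and $J_2^2J_6^2/(J_1^2J_3)$ is a power series with integer coefficients, every coefficient $c(2\mu^{(6)};2n+1)$ is even, giving $c(2\mu^{(6)};2n+1)\equiv 0\pmod 2$. For the even part I would take the exact expression obtained in the dissection and reduce it modulo $2$, using $J_2\equiv J_1^2$ and $J_6\equiv J_3^2\pmod 2$ to simplify the theta quotients and relying on the mod-$2$ collapse of any surviving Appell--Lerch term to a theta function, as happens throughout this paper. The target is to show $\sum_{n\geq 0}c(2\mu^{(6)};2n)q^n\equiv 1/(q;q)_\infty\pmod 2$; Theorem \ref{thm-mock-6-phi}, where $c(\phi^{(6)};n)\equiv p(n)$, serves as a natural consistency check and gives $c(2\mu^{(6)};2n)\equiv p(n)\pmod 2$.

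The main obstacle will be the exact odd-part computation in the second step: keeping track of the several Appell--Lerch pieces produced by Lemma \ref{lem-m-add}, showing that their odd-power contributions combine into a single theta quotient, and then proving the resulting eta-quotient identity, the analogue of \eqref{rho-last}. The even-part congruence is comparatively routine once the exact even part is known, since it requires only the reductions $J_2\equiv J_1^2$, $J_6\equiv J_3^2\pmod 2$ together with the mod-$2$ collapse of the remaining Appell--Lerch contribution.
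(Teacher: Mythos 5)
Your plan follows the $\rho^{(3)}$ template, but the two steps on which everything hinges are left as hopes rather than arguments, and that is where the gap lies. First, the claim that after applying Lemma \ref{lem-m-add} the Appell--Lerch contributions to the odd part ``either cancel or combine \dots leaving a pure theta quotient'' is precisely the nontrivial content of \eqref{6-mu-2n1}: $\mu^{(6)}(q)$ is genuinely mock, so a priori both halves of its $2$-dissection carry mock parts, and nothing in your outline forces the odd half to be modular. Second, for the even part you invoke ``the mod-$2$ collapse of any surviving Appell--Lerch term,'' but the collapses used in this paper occur only when the Appell--Lerch or Hecke-type series carries an explicit factor $2$ or $4$ (as in \eqref{mu-H} and \eqref{mock-3-f-A}) or when a sign $(-1)^j$ can be dropped inside a Hecke double sum; a bare $m(x,q,z)$ term admits no such automatic reduction, so this step fails as stated. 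Finally, citing Theorem \ref{thm-mock-6-phi} as ``giving'' $c(2\mu^{(6)};2n)\equiv p(n)$ is circular unless you already possess a relation between $\mu^{(6)}$ and $\phi^{(6)}$.

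The missing ingredient is exactly such a relation: Ramanujan's identity (Entry 7.5.2 of the Lost Notebook, recorded as \eqref{6-phi-mu}), namely $2\phi^{(6)}(q^2)-2\mu^{(6)}(-q)=\frac{J_2^4J_6^5}{J_1^2J_3^2J_4^2J_{12}^2}$. Since $\phi^{(6)}(q^2)$ contains only even powers of $q$, this shows at once that the odd part of $2\mu^{(6)}(q)$ is a pure theta quotient; extracting it by squaring the $2$-dissection \eqref{J1J3} of $1/(J_1J_3)$ yields \eqref{6-mu-2n1} with no Appell--Lerch bookkeeping at all. The same identity reduced modulo $2$ gives $2\mu^{(6)}(q)\equiv 1/J_2 \pmod{2}$, from which both congruences follow immediately. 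Without \eqref{6-phi-mu} or an equivalent substitute, your outline does not close.
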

\begin{proof}
We recall from \cite[p.\ 140, Entry 7.5.2]{lost-notebook5} that
\begin{align}\label{6-phi-mu}
2\phi^{(6)}(q^2)-2\mu^{(6)}(-q)=\frac{J_2^4J_6^5}{J_1^2J_3^2J_4^2J_{12}^2}.
\end{align}
From \cite[Eq.\ (3.12)]{Xia-Yao} we find
\begin{align}
\frac{1}{J_1J_3}=\frac{J_8^2J_{12}^5}{J_2^2J_4J_6^4J_{24}^2}+q\frac{J_4^5J_{24}^2}{J_2^4J_6^2J_8^2J_{12}}. \label{J1J3}
\end{align}
Taking square on both sides and substituting it into \eqref{6-phi-mu}, extracting the terms in which the power of $q$ is odd, we get \eqref{6-mu-2n1}.

On the other hand, it is clear from \eqref{6-phi-mu} that
\begin{align}
2\mu^{(6)}(q)\equiv \frac{1}{J_2} \pmod{2}.
\end{align}
Thus we have $c(2\mu^{(6)};2n+1)\equiv 0$ (mod 2) and $c(2\mu^{(6)};2n)\equiv p(n)$ (mod 2).
\end{proof}

There is one function left: $\gamma^{(6)}(q)$. Numerical evidence suggests that it is of type $(\frac{1}{2},\frac{1}{2})$ modulo 2 (see Table \ref{tab-density} in Section \ref{sec-concluding}).

\section{Mock theta functions of order 7}\label{sec-mock-7}
In his last letter to Hardy, Ramanujan gave three mock theta functions of order 7:
\begin{align*}
\mathcal{F}_0^{(7)}(q):=\sum_{n=0}^\infty \frac{q^{n^2}}{(q^{n+1};q)_n}, 
\quad \mathcal{F}_1^{(7)}(q):=\sum_{n=1}^\infty \frac{q^{n^2}}{(q^n;q)_n}, 
\quad \mathcal{F}_2^{(7)}(q):=\sum_{n=0}^\infty \frac{q^{n^2+n}}{(q^{n+1};q)_{n+1}}. 
\end{align*}
\begin{theorem}\label{thm-mock-7}
We have
\begin{align}
\mathcal{F}_0^{(7)}(q) \equiv \frac{(q^6,q^8,q^{14};q^{14})_\infty}{(q;q)_\infty} \pmod{2}, \label{7-F0-mod2} \\
\mathcal{F}_1^{(7)}(q)\equiv q\frac{(q^2,q^{12},q^{14};q^{14})_\infty}{(q;q)_\infty} \pmod{2}, \label{7-F1-mod2} \\
\mathcal{F}_2^{(7)}(q)\equiv \frac{(q^4,q^{10},q^{14};q^{14})_\infty}{(q;q)_\infty} \pmod{2}. \label{7-F2-mod2}
\end{align}
\end{theorem}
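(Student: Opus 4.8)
The plan is to derive each congruence from a Hecke-type double-sum representation of the corresponding seventh-order function, in exactly the spirit of the fifth-order identity \eqref{intro-f0} quoted in the introduction. Andrews \cite{Andrews-TAMS} (see also Hickerson \cite{Hickerson}) expresses each $\mathcal{F}_i^{(7)}(q)$ in the form
\begin{align*}
\mathcal{F}_i^{(7)}(q)=\frac{1}{(q;q)_\infty}\sum_{n\geq 0}\sum_{|j|\leq n}(-1)^{j}q^{Q_i(n)-j^2}\bigl(1-q^{L_i(n)}\bigr),
\end{align*}
where $Q_i$ is quadratic and $L_i$ linear in $n$, and where the inner variable $j$ enters only through the even term $-j^2$ over the symmetric range $|j|\le n$ (this is the feature that distinguishes these functions from, say, $\psi_0^{(5)}$, whose inner exponent is pentagonal and leads instead to a norm count). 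Writing $D_i(q):=(q;q)_\infty\,\mathcal{F}_i^{(7)}(q)$, it suffices to prove that modulo $2$ each $D_i(q)$ collapses to a single theta series. Indeed, by the Jacobi triple product the asserted right-hand sides are $j(q^{6};q^{14})/(q;q)_\infty$, $q\,j(q^{2};q^{14})/(q;q)_\infty$ and $j(q^{4};q^{14})/(q;q)_\infty$, and
\begin{align*}
j(q^{6};q^{14})\equiv\sum_{m}q^{7m^2-m},\quad j(q^{2};q^{14})\equiv\sum_{m}q^{7m^2-5m},\quad j(q^{4};q^{14})\equiv\sum_{m}q^{7m^2-3m}\pmod 2,
\end{align*}
so the whole content is to show $D_0\equiv\sum_m q^{7m^2-m}$, $D_1\equiv\sum_m q^{7m^2-5m+1}$ and $D_2\equiv\sum_m q^{7m^2-3m}$ modulo $2$.

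The reduction itself is short. Modulo $2$ the signs disappear and $1-q^{L_i(n)}\equiv 1+q^{L_i(n)}$, so
\begin{align*}
D_i(q)\equiv\sum_{n\geq 0}q^{Q_i(n)}\Bigl(\sum_{|j|\leq n}q^{-j^2}\Bigr)\bigl(1+q^{L_i(n)}\bigr)\pmod 2.
\end{align*}
Here is the key point: $q^{-j^2}$ is even in $j$ and the range $|j|\le n$ is symmetric, so the terms indexed by $j$ and $-j$ coincide and cancel in pairs modulo $2$ for every $j\neq 0$. Hence $\sum_{|j|\le n}q^{-j^2}\equiv 1\pmod 2$ and the double sum collapses to the single sum
\begin{align*}
D_i(q)\equiv\sum_{n\geq 0}\bigl(q^{Q_i(n)}+q^{Q_i(n)+L_i(n)}\bigr)\pmod 2.
\end{align*}
This is precisely the mechanism that turns \eqref{intro-f0} into $f_0^{(5)}(q)\equiv G(q)\pmod 2$.

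It remains to reassemble this one-sided sum into a two-sided theta series. With the exponents coming from Andrews's representation, $Q_i(n)$ for $n\geq 0$ supplies the $m\geq 0$ branch of the relevant quadratic $f_i(m)=7m^2-(\cdots)m+\delta_i$, while $Q_i(n)+L_i(n)=f_i(-n-1)$ supplies, under the reindexing $m\mapsto -m-1$, the $m<0$ branch. Combining the two branches gives $\sum_{m\in\mathbb{Z}}q^{f_i(m)}$, and a single application of the Jacobi triple product rewrites this as $j(q^{6};q^{14})$, $q\,j(q^{2};q^{14})$ and $j(q^{4};q^{14})$ for $i=0,1,2$; dividing by $(q;q)_\infty$ yields \eqref{7-F0-mod2}, \eqref{7-F1-mod2} and \eqref{7-F2-mod2}. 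The three cases are entirely parallel, differing only in the linear term $L_i$ and the normalization, which is what produces the three residues $6,2,4\pmod{14}$ and the extra factor $q$ in \eqref{7-F1-mod2}.

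The main obstacle I anticipate is not the modulo-$2$ collapse, which is routine, but fixing the precise Hecke-type representations: one must record the exact quadratic and linear exponents (and the sign region) for each $\mathcal{F}_i^{(7)}$ and then verify that the surviving pair $q^{Q_i(n)},\,q^{Q_i(n)+L_i(n)}$ interleaves correctly into the complete two-sided theta, with no value counted twice and none omitted. In particular the boundary contributions (the endpoints $j=\pm n$ together with the shift by $q^{L_i(n)}$) must be tracked carefully so that the identification $Q_i(n)+L_i(n)=f_i(-n-1)$ holds exactly; this bookkeeping is where the residue classes $6,2,4$ and the normalizations must be confirmed, and it is the only place where the distinct features of the three functions genuinely enter.
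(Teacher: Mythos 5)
Your proposal follows essentially the same route as the paper's proof: it invokes Andrews's Hecke-type double sums for the $\mathcal{F}_i^{(7)}$, collapses the inner sum over $|j|\le n$ modulo $2$ by pairing $j$ with $-j$, and reassembles the surviving one-sided sum $\sum_{n\ge 0}\bigl(q^{Q_i(n)}+q^{Q_i(n)+L_i(n)}\bigr)$ into a two-sided theta series via the Jacobi triple product, exactly as in \eqref{mock-7-F0}--\eqref{mock-7-F2}. The only imprecision is that Andrews's actual representations contain a second double sum carrying an explicit factor $2$ (the $-2q\sum\cdots$ terms), which your stated form omits; since that piece vanishes modulo $2$ the omission is harmless and your argument is correct.
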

\begin{proof}
We use the following Hecke-type series representations found by Andrews \cite{Andrews-TAMS}:
\begin{align}
\mathcal{F}_0^{(7)}(q)=& \frac{1}{(q;q)_\infty} \left( \sum_{n=0}^\infty \sum_{|j|\leq n} q^{7n^2+n-j^2}(1-q^{12n+6}) \right. \nonumber \\
&\quad \left. -2q\sum_{n=0}^\infty \sum_{j=0}^nq^{7n^2+8n-j^2-j}(1-q^{12n+13})\right), \label{mock-7-F0}\\
\mathcal{F}_1^{(7)}(q)=& \frac{1}{(q;q)_\infty} \left(\sum_{n=0}^\infty \sum_{j=-n}^n q^{7n^2+3n-j^2}(1-q^{8n+4})  \right. \nonumber \\ &\quad \left. -2q^3\sum_{n=0}^\infty \sum_{j=0}^n q^{7n^2+10n-j^2-j}(1-q^{8n+8})  \right), \label{mock-7-F1} \\
\mathcal{F}_2^{(7)}(q)=& \frac{1}{(q;q)_\infty} \left(\sum_{n=0}^\infty \sum_{|j|\leq n}q^{7n^2+n-j^2}(1-q^{8n+3})  \right. \nonumber \\
&\quad \left. -2q^2\sum_{n=0}^\infty \sum_{j=0}^nq^{7n^2+8n-j^2-j}(1-q^{8n+7})  \right). \label{mock-7-F2}
\end{align}
From \eqref{mock-7-F0} we deduce that
\begin{align}
\mathcal{F}_0^{(7)}(q) &\equiv \frac{1}{(q;q)_\infty}\sum_{n=0}^\infty \sum_{j=-n}^n q^{7n^2+n-j^2}(1+q^{12n+6}) \nonumber \\
&\equiv \frac{1}{(q;q)_\infty}\sum_{n=0}^\infty q^{7n^2+n}(1+q^{12n+6}) \nonumber \\
&\equiv \frac{1}{(q;q)_\infty}\sum_{n=-\infty}^\infty q^{7n^2+n} \nonumber \\
&\equiv \frac{(q^6,q^8,q^{14};q^{14})_\infty}{(q;q)_\infty} \pmod{2}.
\end{align}
This proves \eqref{7-F0-mod2}. Similarly, using \eqref{mock-7-F1} and \eqref{mock-7-F2} we can prove \eqref{7-F1-mod2} and \eqref{7-F2-mod2}.
\end{proof}
If $S$ is a subset of positive integers, we denote by $p_S(n)$  the number of partitions of $n$ with parts in $S$.
\begin{corollary}
We have
\begin{align}
c(\mathcal{F}_0^{(7)};n)\equiv p_{S_0}(n) \pmod{2}, \\
c(\mathcal{F}_1^{(7)};n)\equiv p_{S_1}(n-1) \pmod{2}, \\
c(\mathcal{F}_2^{(7)};n)\equiv p_{S_2}(n) \pmod{2}.
\end{align}
where $S_0$ denotes the set of positive integers not congruent to $0$, $6$ and $8$ modulo $14$, $S_1$ denotes the set of positive integers not congruent to $0$, $2$ and $12$ modulo $14$, and $S_2$ denotes the set of positive integers not congruent to $0, 4$ and $10$ modulo $14$.
\end{corollary}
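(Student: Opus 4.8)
The plan is to read each right-hand side in Theorem~\ref{thm-mock-7} as the generating function of a restricted partition function, after which the corollary is immediate. The key observation is the exact identity in $\mathbb{Z}[[q]]$
\begin{align*}
\frac{(q^{a},q^{b},q^{14};q^{14})_\infty}{(q;q)_\infty}=\sum_{n\geq 0}p_{S}(n)q^n,
\end{align*}
where $S$ is the set of positive integers not congruent to $0$, $a$ or $b$ modulo $14$, valid whenever $0,a,b$ are distinct modulo $14$. Indeed, $(q^{a},q^{b},q^{14};q^{14})_\infty$ is precisely the product of all factors $1-q^k$ with $k\equiv 0,a,b\pmod{14}$, so dividing it into $(q;q)_\infty^{-1}=\prod_{k\geq 1}(1-q^k)^{-1}$ cancels exactly the parts in those three residue classes and retains the factors $(1-q^k)^{-1}$ with $k\in S$, which is the generating function $\sum_{n\geq 0}p_S(n)q^n$ for partitions into parts from $S$.

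With this identity in hand, for $\mathcal{F}_0^{(7)}(q)$ I would take $(a,b)=(6,8)$, so that $S=S_0$; the identity gives $\frac{(q^6,q^8,q^{14};q^{14})_\infty}{(q;q)_\infty}=\sum_{n\geq 0}p_{S_0}(n)q^n$, and combining this with \eqref{7-F0-mod2} yields $c(\mathcal{F}_0^{(7)};n)\equiv p_{S_0}(n)\pmod{2}$. The function $\mathcal{F}_2^{(7)}(q)$ is handled identically with $(a,b)=(4,10)$, using \eqref{7-F2-mod2}, giving $c(\mathcal{F}_2^{(7)};n)\equiv p_{S_2}(n)\pmod{2}$. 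For $\mathcal{F}_1^{(7)}(q)$ I would apply the same manipulation with $(a,b)=(2,12)$ to obtain $\frac{(q^2,q^{12},q^{14};q^{14})_\infty}{(q;q)_\infty}=\sum_{n\geq 0}p_{S_1}(n)q^n$; the extra factor $q$ on the right-hand side of \eqref{7-F1-mod2} then shifts the index by one, so that the coefficient of $q^n$ equals $p_{S_1}(n-1)$, which is the asserted congruence.

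There is no analytic difficulty here, since the product identity above is an exact equality of power series and parity enters only through Theorem~\ref{thm-mock-7}. The one point requiring care is bookkeeping: one must verify that in each case the three residues $\{0,a,b\}$ are genuinely distinct modulo $14$ (they are, being $\{0,6,8\}$, $\{0,2,12\}$ and $\{0,4,10\}$ respectively), so that no factor is cancelled twice and the complementary part set is described correctly, and one must track the unit shift correctly in the $\mathcal{F}_1^{(7)}$ case, where multiplication by $q$ sends the coefficient of $q^n$ to $p_{S_1}(n-1)$. These are the only places where an error could creep in, and I expect the entire argument to be short.
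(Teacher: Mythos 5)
Your argument is correct and is precisely the (omitted) reasoning the paper intends: the corollary is stated without proof as an immediate consequence of Theorem \ref{thm-mock-7}, the products $(q^a,q^b,q^{14};q^{14})_\infty$ exactly cancelling the factors of $(q;q)_\infty$ indexed by parts in the residue classes $0,a,b$ modulo $14$ and leaving $\prod_{k\in S}(1-q^k)^{-1}=\sum_{n\geq 0}p_S(n)q^n$, with the extra factor $q$ accounting for the shift $n\mapsto n-1$ in the $\mathcal{F}_1^{(7)}$ case. Nothing is missing.
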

\begin{rem}
We can then apply  \cite[Theorems 5.1 and 5.2]{BYZ} to give lower bounds for the quantity $\#\{0\leq n\leq N|c(\mathcal{F}_i^{(7)};n)\equiv r\pmod{2}\}$ for $i\in \{0,1,2\}$ and $r\in \{0,1\}$. Again, since this is far from proving Conjecture \ref{conj-mock}, we will not pursue it here.
\end{rem}

\section{Mock theta functions of order 8}\label{sec-mock-8}
 Gordon and McIntosh \cite{Gordon-McIntosh} found eight mock theta functions of order 8:
\begin{align*}
&S_0^{(8)}(q):=\sum_{n=0}^{\infty}\frac{q^{n^2}(-q;q^2)_{n}}{(-q^2;q^2)_{n}},  
\quad S_1^{(8)}(q):=\sum_{n=0}^{\infty}\frac{q^{n(n+2)}(-q;q^2)_{n}}{(-q^2;q^2)_{n}},  \\ 
&T_0^{(8)}(q):=\sum_{n=0}^{\infty}\frac{q^{(n+1)(n+2)}(-q^2;q^2)_{n}}{(-q;q^2)_{n+1}},  
\quad T_1^{(8)}(q):=\sum_{n=0}^{\infty}\frac{q^{n(n+1)}(-q^2;q^2)_{n}}{(-q;q^2)_{n+1}},   \\ 
&U_0^{(8)}(q):=\sum_{n=0}^{\infty}\frac{q^{n^2}(-q;q^2)_{n}}{(-q^4;q^4)_n}=S_0^{(8)}(q^2)+qS_1^{(8)}(q^2),    \\
&U_1^{(8)}(q):=\sum_{n=0}^{\infty}\frac{q^{(n+1)^2}(-q;q^2)_n}{(-q^2;q^4)_{n+1}}=T_0^{(8)}(q^2)+qT_1^{(8)}(q^2),   \\
&V_0^{(8)}(q):=-1+2\sum_{n=0}^{\infty}\frac{q^{n^2}(-q;q^2)_{n}}{(q;q^2)_n}=-1+2\sum_{n=0}^{\infty}\frac{q^{2n^2}(-q^2;q^4)_{n}}{(q;q^2)_{2n+1}}, \\ 
&V_1^{(8)}(q):=\sum_{n=0}^{\infty}\frac{q^{(n+1)^2}(-q;q^2)_n}{(q;q^2)_{n+1}}=\sum_{n=0}^{\infty}\frac{q^{2n^2+2n+1}(-q^4;q^4)_n}{(q;q^2)_{2n+2}}=\sum_{n=0}^{\infty}\frac{q^{n+1}(-q;q)_{2n}}{(-q^2;q^4)_{n+1}}. 
\end{align*}
\begin{theorem}\label{thm-mock-8-T0}
The coefficient $c(T_0^{(8)};n)$ is odd if and only if $16n-1=p^{4a+1}m^2$ for some prime $p$ and integer $m$ with $p\nmid m$. Moreover,
\begin{align}\label{mock-8-T0-estimate}
\#\left\{n\leq N:c(T_0^{(8)};n)\equiv 1 \,  (\mathrm{mod \,\, 2}) \right\} =\frac{\pi^2}{4} \frac{N}{\log N}+O\left(\frac{N}{\log^2 N} \right).
\end{align}
\end{theorem}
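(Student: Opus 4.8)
The plan is to follow the template established in the proof of Theorem~\ref{thm-mock-2-A}, since both the modulus $16n-1$ and the constant $\frac{\pi^2}{4}$ point to the quadratic ring $\mathbb{Z}[\sqrt{2}]$. First I would invoke a Hecke-type series representation for $T_0^{(8)}(q)$ from the literature on order $8$ functions, of the general shape
\begin{align*}
T_0^{(8)}(q)=\frac{(\cdots)_\infty}{(\cdots)_\infty}\sum_{n=0}^\infty (-1)^n q^{\alpha n^2+\beta n}\bigl(1-q^{\gamma n+\delta}\bigr)\sum_{j=0}^n q^{-j^2-j}.
\end{align*}
Reducing modulo $2$ by the binomial theorem collapses the infinite product to $1$ and turns each factor $1-q^{\gamma n+\delta}$ into $1+q^{\gamma n+\delta}$, leaving a double theta-type sum with integer exponents.

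The second step is to complete the inner sum over $j$ from the half-range $[0,n]$ to the symmetric range $-n-1\le j\le n$, exploiting the symmetry $-j^2-j$ under $j\mapsto -j-1$ exactly as in \eqref{mock-2-A-reduce}, and to rewrite the quadratic exponents in the normalized form $u^2-2v^2$ with $u,v$ odd. After extracting the subseries supported on exponents $\equiv 15\pmod{16}$, I expect a congruence of the form
\begin{align*}
\sum_{n\geq 0} c(T_0^{(8)};n)\,q^{16n-1}\equiv \frac{1}{2}\sum_{u,v} q^{u^2-2v^2} \pmod{2},
\end{align*}
where the right-hand sum ranges over all integer solutions of $u^2-2v^2=16n-1$. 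Using Lemma~\ref{lem-Pell} with the fundamental solution $(x_1,y_1)=(3,2)$ of $x^2-2y^2=1$, each equivalence class of solutions contributes exactly one lattice point in the fundamental domain $-\tfrac12 u<v\le\tfrac12 u$, so the right-hand side counts $H_{\mathbb{Z}[\sqrt{2}]}(16n-1)$ and I obtain
\begin{align*}
c(T_0^{(8)};n)\equiv \frac{1}{2}H_{\mathbb{Z}[\sqrt{2}]}(16n-1)\pmod{2}.
\end{align*}

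With this identity in hand, the characterization follows from Lemma~\ref{lem-Z2}: writing $16n-1=p_1^{e_1}\cdots p_j^{e_j}q_1^{f_1}\cdots q_k^{f_k}$ with $p_i\equiv\pm1$ and $q_i\equiv\pm3\pmod 8$ (there is no factor of $2$ since $16n-1$ is odd), the quantity $\tfrac12 H_{\mathbb{Z}[\sqrt{2}]}(16n-1)$ is odd precisely when all $f_i$ are even and exactly one $e_i\equiv1\pmod 4$ while the rest are even; equivalently $16n-1=p^{4a+1}m^2$ with $p\nmid m$, and necessarily $p\equiv-1\pmod 8$ as forced by $16n-1\equiv 7\pmod 8$. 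Finally, the asymptotic \eqref{mock-8-T0-estimate} comes from Lemma~\ref{lem-estimate} applied with $(A,B)=(16,15)$: since $2$ is the only prime dividing $A$, we have $\prod_{p\mid A}(1+p^{-1})=\tfrac32$ and $\tfrac{\pi^2}{6}\cdot\tfrac32=\tfrac{\pi^2}{4}$. The main obstacle is the bookkeeping in the second step, namely verifying that after the modulo $2$ reduction the ranges of the index $j$ align with the fundamental-domain inequality of Lemma~\ref{lem-Pell} so that each Pell equivalence class is counted exactly once; this is the only place where the precise shape of the Hecke-type representation of $T_0^{(8)}$ genuinely enters.
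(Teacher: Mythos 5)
Your proposal follows the paper's proof essentially verbatim: the paper uses the Hecke-type representation of $T_0^{(8)}(q)$ due to Srivastava and Cui--Gu--Hao, reduces modulo $2$ by the binomial theorem, symmetrizes the $j$-sum to match the fundamental domain of Lemma \ref{lem-Pell} with $(x_1,y_1)=(3,2)$, arrives at $c(T_0^{(8)};n)\equiv \frac{1}{2}H_{\mathbb{Z}[\sqrt{2}]}(16n-1)\pmod{2}$, and concludes via Lemma \ref{lem-Z2} and Lemma \ref{lem-estimate} with $(A,B)=(16,15)$, exactly as you outline. The only discrepancy is in the guessed shape of the Hecke-type series (the actual inner sum is $\sum_{j=-n-1}^{n}(-1)^jq^{-2j^2-3j}$), which you correctly flag as the one place where the details must be checked.
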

\begin{proof}
We use the following Hecke-type series representation found by Srivastava \cite[Eq.\ (5.3)]{Srivastava} and Cui, Gu and Hao \cite{CGH}:
\begin{align}
T_0^{(8)}(q)=q^2\frac{(-q^2;q^2)_\infty}{(q^2;q^2)_\infty}\sum_{n=0}^\infty q^{4n^2+7n}(1-q^{2n+2})\sum_{j=-n-1}^n(-1)^jq^{-2j^2-3j}.
\end{align}
By the binomial theorem, we deduce that
\begin{align}
T_0^{(8)}(q)\equiv q^2\sum_{n=0}^\infty \left(q^{4n^2+7n}+q^{4n^2+9n+2} \right)\sum_{j=-n-1}^nq^{-2j^2-3j} \pmod{2}.
\end{align}
This implies
\begin{align}
\sum_{n=1}^\infty c(T_0^{(8)};n)q^{16n-1} &\equiv \sum_{n=0}^\infty \sum_{j=-n-1}^n\left(q^{(8n+7)^2-2(4j+3)^2}+q^{(8n+9)^2-2(4j+3)^2} \right) \nonumber \\
&\equiv \frac{1}{2}\sum_{n=0}^\infty \sum_{j=-n-1}^n\left(q^{(8n+7)^2-2(4j+1)^2}+q^{(8n+7)^2-2(4j+3)^2}\right. \nonumber \\
&\quad \left. +q^{(8n+9)^2-2(4j+1)^2}+q^{(8n+9)^2-2(4j+3)^2}  \right) \pmod{2}.
\end{align}
By Lemma \ref{lem-Pell} we deduce that
\begin{align}
c(T_0^{(8)};n)\equiv \frac{1}{2}H_{\mathbb{Z}[\sqrt{2}]}(16n-1) \pmod{2}.
\end{align}
Let $16n-1$ have the prime factorization $16n-1=p_1^{e_1}\cdots p_j^{e_j}q_1^{f_1}\cdots q_k^{f_k}$ where the primes $p_i\equiv \pm 1$ (mod 8) and $q_i\equiv \pm 3$ (mod 8). By Lemma \ref{lem-Z2} we know that $\frac{1}{2}H_{\mathbb{Z}[\sqrt{2}]}(16n-1)$ is odd if and only if all the $e_i$ and $f_i$ are even except for exactly one $e_s\equiv 1$ (mod 4). Thus, $c(T_0^{(8)};n)$ is odd if and only if $16n-1=p^{4a+1}m^2$ for some prime $p$ and integer $m$ with $p\nmid m$.

Applying Lemma \ref{lem-estimate} with $(A,B)=(16,15)$, we get \eqref{mock-8-T0-estimate}.
\end{proof}

\begin{theorem}\label{thm-mock-8-T1}
The coefficient $c(T_1^{(8)};n)$ is odd if and only if $16n+7=p^{4a+1}m^2$ for some prime $p$ and integer $m$ with $p\nmid m$. Moreover,
\begin{align}\label{mock-8-T1-estimate}
\#\left\{n\leq N:c(T_1^{(8)};n)\equiv 1 \, (\mathrm{mod \,\, 2}) \right\} =\frac{\pi^2}{4} \frac{N}{\log N}+O\left(\frac{N}{\log^2 N} \right).
\end{align}
\end{theorem}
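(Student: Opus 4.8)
The plan is to run the proof in exact parallel with that of Theorem \ref{thm-mock-8-T0}, since $T_1^{(8)}(q)$ carries a Hecke-type series representation of the same shape as the one used there. First I would invoke the companion representation of $T_1^{(8)}(q)$ due to Srivastava \cite{Srivastava} and Cui, Gu and Hao \cite{CGH}, which has the form
\[
T_1^{(8)}(q)=\frac{(-q^2;q^2)_\infty}{(q^2;q^2)_\infty}\sum_{n=0}^\infty q^{4n^2+cn}(1-q^{2n+d})\sum_{j=-n-1}^n(-1)^jq^{-2j^2-ej},
\]
where the precise linear exponents $c,d,e$ are to be read off from the cited sources, chosen so that after completing the square the surviving exponents are $\equiv 7\pmod{16}$ rather than $\equiv 15\pmod{16}$ (which is what distinguishes $T_1^{(8)}$ from $T_0^{(8)}$). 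Reducing modulo $2$ is immediate: the prefactor satisfies $(-q^2;q^2)_\infty/(q^2;q^2)_\infty\equiv 1\pmod 2$ because $(-a;q)_\infty\equiv (a;q)_\infty\pmod 2$, and the signs $(-1)^j$ disappear by the binomial theorem. This leaves a double theta-type sum whose generic exponent, after completing the square, has the form $(\text{odd})^2-2(\text{odd})^2$ with the odd base of the first square congruent to $3$ or $5$ modulo $8$.

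The second step is to isolate the progression $16n+7$. Collecting the surviving terms in $\sum_n c(T_1^{(8)};n)q^{16n+7}$, I would arrange the exponents as $x^2-2y^2$ with $x\equiv y\equiv 1\pmod 2$, exactly as in the proof of Theorem \ref{thm-mock-8-T0} but with the constant $-1$ replaced by $+7$. The fundamental solution of $x^2-2y^2=1$ is $(x_1,y_1)=(3,2)$, so Lemma \ref{lem-Pell} guarantees that each equivalence class of solutions of $u^2-2v^2=16n+7$ contains a unique representative with $u>0$ and $-\tfrac12 u<v\le\tfrac12 u$; translating this inequality into the summation ranges $-n-1\le j\le n$ (precisely as in the $T_0^{(8)}$ argument) shows that the completed double sum counts each class exactly twice. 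I therefore expect to arrive at
\[
c(T_1^{(8)};n)\equiv \tfrac12 H_{\mathbb{Z}[\sqrt2]}(16n+7)\pmod 2.
\]

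The third step is purely arithmetic. Writing $16n+7=p_1^{e_1}\cdots p_j^{e_j}q_1^{f_1}\cdots q_k^{f_k}$ with $p_i\equiv\pm1$ and $q_i\equiv\pm3\pmod 8$ and applying Lemma \ref{lem-Z2}, the quantity $\tfrac12 H_{\mathbb{Z}[\sqrt2]}(16n+7)$ is odd precisely when every $f_i$ is even and exactly one $e_s\equiv 1\pmod 4$ with the remaining $e_i$ even, i.e.\ when $16n+7=p^{4a+1}m^2$ with $p$ prime and $p\nmid m$. Finally, applying Lemma \ref{lem-estimate} with $(A,B)=(16,7)$ and using $\prod_{p\mid 16}(1+p^{-1})=\tfrac32$ gives $\tfrac{\pi^2}{6}\cdot\tfrac32\cdot\tfrac{N}{\log N}=\tfrac{\pi^2}{4}\tfrac{N}{\log N}$, which is \eqref{mock-8-T1-estimate}. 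The only genuine obstacle is transcribing the Hecke-type representation of $T_1^{(8)}(q)$ with its correct linear exponents and verifying that the completion-of-squares step lands in the class $7\pmod{16}$; once that is confirmed, everything downstream is a verbatim repetition of the $T_0^{(8)}$ computation together with Lemmas \ref{lem-Pell}, \ref{lem-Z2} and \ref{lem-estimate}.
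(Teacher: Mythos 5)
Your proposal follows the paper's proof essentially verbatim: the same Hecke-type representation of $T_1^{(8)}(q)$ from Srivastava and Cui--Gu--Hao, reduction mod $2$, completion of squares into the progression $16n+7$ as $x^2-2y^2$, Lemma \ref{lem-Pell} with fundamental solution $(3,2)$ to obtain $c(T_1^{(8)};n)\equiv\tfrac12 H_{\mathbb{Z}[\sqrt2]}(16n+7)\pmod 2$, Lemma \ref{lem-Z2} for the characterization, and Lemma \ref{lem-estimate} with $(A,B)=(16,7)$ for the asymptotic. The only detail left open (the exact linear exponents; in the paper the sum is $q^{4n^2+3n}(1-q^{2n+1})\sum_{j=-n}^{n}(-1)^jq^{-2j^2-j}$, with inner range $-n\le j\le n$ rather than the $T_0$-style $-n-1\le j\le n$ you pencilled in) is exactly the transcription step you flagged, and it does not change the argument.
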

\begin{proof}
We use the following Hecke-type series representation found by   Srivastava \cite[Eq.\ (5.4)]{Srivastava} and Cui, Gu and Hao \cite{CGH}:
\begin{align}
T_1^{(8)}(q)=\frac{(-q^2;q^2)_\infty}{(q^2;q^2)_\infty}\sum_{n=0}^\infty q^{4n^2+3n}(1-q^{2n+1})\sum_{j=-n}^n(-1)^jq^{-2j^2-j}.
\end{align}
By the binomial theorem, we deduce that
\begin{align}
T_1^{(8)}(q)\equiv \sum_{n=0}^\infty \sum_{j=-n}^n q^{4n^2+3n}(1+q^{2n+1})\sum_{j=-n}^nq^{-2j^2-j} \pmod{2}.
\end{align}
Therefore,
\begin{align}
\sum_{n=0}^\infty c(T_1^{(8)};n)q^{16n+7} & \equiv \frac{1}{2}\sum_{n=0}^\infty \sum_{j=-n}^n \left(q^{(8n+3)^2-2(4j+1)^2}+q^{(8n+5)^2-2(4j+1)^2}\right. \nonumber \\
& \quad \left.+q^{(8n+3)^2-2(4j-1)^2}+q^{(8n+5)^2-2(4j-1)^2}  \right) \pmod{2}.
\end{align}
By Lemma \ref{lem-Pell} we deduce that
\begin{align}
c(T_1^{(8)};n)\equiv \frac{1}{2}H_{\mathbb{Z}[\sqrt{2}]}(16n+7) \pmod{2}.
\end{align}
Let $16n+7$ have the prime factorization $16n+7=p_1^{e_1}\cdots p_j^{e_j}q_1^{f_1}\cdots q_k^{f_k}$, where the primes $p_i\equiv \pm 1$ (mod 8) and $q_j\equiv \pm 3$ (mod 8).  By Lemma \ref{lem-Z2} we know that $\frac{1}{2}H_{\mathbb{Z}[\sqrt{2}]}(16n+7)$ is odd if and only if all the $f_i$ and $e_i$ are even except for exactly one $e_s\equiv 1$ (mod 4). Thus, $c(T_1^{(8)};n)$ is odd if and only if $16n+7=p^{4a+1}m^2$ for some prime $p$ and integer $m$ with $p\nmid m$.

Applying Lemma \ref{lem-estimate} with $(A,B)=(16,7)$, we get \eqref{mock-8-T1-estimate}.
\end{proof}

\begin{theorem}\label{thm-mock-8-V0}
We have
\begin{align}\label{8-V0-cong}
c(V_0^{(8)};n)\equiv \left\{\begin{array}{ll}
1 \pmod{4} & n=0,\\
2 \pmod{4} & n=k^2, \\
0 \pmod{4} &\text{otherwise}. \end{array}\right.
\end{align}
\end{theorem}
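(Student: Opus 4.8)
The plan is to work directly from the first Eulerian representation in the definition of $V_0^{(8)}(q)$, writing $V_0^{(8)}(q)=-1+2S(q)$ where
\[
S(q):=\sum_{n=0}^{\infty}\frac{q^{n^2}(-q;q^2)_{n}}{(q;q^2)_n}.
\]
First I would check that $S(q)$ is a bona fide formal power series with integer coefficients. Each factor of the quotient $(-q;q^2)_n/(q;q^2)_n=\prod_{j=0}^{n-1}\frac{1+q^{2j+1}}{1-q^{2j+1}}$ satisfies
\[
\frac{1+q^{2j+1}}{1-q^{2j+1}}=1+2\sum_{k\ge 1}q^{(2j+1)k}\in\mathbb{Z}[[q]],
\]
so the quotient lies in $\mathbb{Z}[[q]]$, and the factor $q^{n^2}$ guarantees that only finitely many $n$ contribute to any fixed coefficient. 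Hence $c(S;n)\in\mathbb{Z}$ for all $n$, with $c(S;0)=1$; in particular $c(V_0^{(8)};0)=-1+2\cdot 1=1\equiv 1\pmod 4$, while $c(V_0^{(8)};n)=2\,c(S;n)$ for every $n\ge 1$.

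The heart of the argument is the reduction of $S(q)$ modulo $2$. The displayed expansion shows $\frac{1+x}{1-x}\equiv 1\pmod 2$ in $\mathbb{Z}[[x]]$, and a product of finitely many such factors is again $\equiv 1\pmod 2$, so that
\[
\frac{(-q;q^2)_n}{(q;q^2)_n}\equiv 1\pmod 2\qquad\text{for every }n\ge 0.
\]
Multiplying by $q^{n^2}$ and summing over $n$ then gives
\[
S(q)\equiv\sum_{n=0}^{\infty}q^{n^2}\pmod 2,
\]
so $c(S;n)$ is odd exactly when $n$ is a perfect square and even otherwise.

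Combining the two observations yields the theorem. The case $n=0$ is already settled above. For $n\ge 1$ we have $c(V_0^{(8)};n)=2\,c(S;n)$, which is $\equiv 2\pmod 4$ precisely when $c(S;n)$ is odd, i.e.\ when $n=k^2$ for some $k\ge 1$, and is $\equiv 0\pmod 4$ otherwise. This matches the three cases in the statement.

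I expect no serious obstacle here; the proof is considerably shorter than those for the other eighth-order functions because the explicit factor of $2$ in the definition of $V_0^{(8)}(q)$ reduces everything to a parity computation for $S(q)$, and no Hecke-type series or Pell-equation analysis is needed. The one point that genuinely requires care is the justification that the quotient $(-q;q^2)_n/(q;q^2)_n$ may be reduced modulo $2$ termwise: this is legitimate not because numerator and denominator separately reduce to $1$, but because of the identity $\frac{1+x}{1-x}\equiv 1\pmod 2$ in $\mathbb{Z}[[x]]$, which I would state explicitly to avoid any ambiguity about manipulating the geometric series modulo $2$.
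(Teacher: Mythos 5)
Your proposal is correct and follows essentially the same route as the paper: reduce the Eulerian sum $\sum_{n\ge 0} q^{n^2}(-q;q^2)_n/(q;q^2)_n$ to $\sum_{n\ge 0} q^{n^2}$ modulo $2$, so that the prefactor $2$ yields the congruence modulo $4$. The extra care you take in justifying the termwise reduction of the quotient $(-q;q^2)_n/(q;q^2)_n$ via $\frac{1+x}{1-x}\equiv 1\pmod 2$ is a welcome elaboration of a step the paper leaves implicit, but it is not a different method.
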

\begin{proof}
By definition, we have
\begin{align}
V_0^{(8)}(q)=-1+2\sum_{n=0}^\infty \frac{q^{n^2}(-q;q^2)_n}{(q;q^2)_n} \equiv -1+2\sum_{n=0}^\infty q^{n^2} \pmod{4}.
\end{align}
Comparing the coefficients of $q^n$ on both sides, we get \eqref{8-V0-cong}.
\end{proof}

\begin{theorem}\label{thm-mock-8-V1}
The coefficient $c(V_1^{(8)};n)$ is odd if and only if $4n-1=p^{4a+1}m^2$ for some prime $p$ and integer $m$ with $p\nmid m$. Moreover,
\begin{align}\label{mock-8-V1-estimate}
\#\left\{n\leq N:c(V_1^{(8)};n)\equiv 1 \,\, (\mathrm{mod \,\, 2}) \right\} =\frac{\pi^2}{4} \frac{N}{\log N}+O\left(\frac{N}{\log^2 N} \right).
\end{align}
\end{theorem}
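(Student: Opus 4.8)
The plan is to bypass the Hecke-type/$\mathbb{Z}[\sqrt{2}]$ route used for $T_0^{(8)}$ and $T_1^{(8)}$: here the distinguished prime is unrestricted and $4n-1$ occupies both residues $3$ and $7$ modulo $8$, so no single norm form can govern the parity. Instead I would reduce $V_1^{(8)}(q)$ to a Lambert-type series and count divisors. Starting from the first Eulerian representation in the definition of $V_1^{(8)}(q)$ and using $(-q;q^2)_n\equiv(q;q^2)_n\pmod 2$ together with $(q;q^2)_{n+1}=(q;q^2)_n(1-q^{2n+1})$, I obtain
\begin{align*}
V_1^{(8)}(q)\equiv\sum_{n=0}^\infty\frac{q^{(n+1)^2}}{1-q^{2n+1}}=\sum_{n=0}^\infty\sum_{k=0}^\infty q^{(n+1)^2+(2n+1)k}\pmod 2,
\end{align*}
so that $c(V_1^{(8)};N)$ is congruent modulo $2$ to the number of pairs $(j,k)$ with $j\ge1$, $k\ge0$ and $j^2+(2j-1)k=N$, where $j=n+1$.

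Next I would turn this lattice-point count into a divisor count of $4N-1$. For fixed $j$ there is an admissible $k$ precisely when $j^2\le N$ and $(2j-1)\mid(N-j^2)$; from $4(N-j^2)=(4N-1)-(2j-1)(2j+1)$ and $\gcd(2j-1,4)=1$ one sees that $(2j-1)\mid(N-j^2)\iff(2j-1)\mid(4N-1)$. Writing $d=2j-1$, the numbers $d$ are exactly the (odd) positive divisors of $4N-1$, and the inequality $j^2\le N$ becomes $e>d$ with $e=(4N-1)/d$. Since $4N-1\equiv3\pmod4$ is never a perfect square, its divisors pair off as $d<e$ with no fixed point, whence exactly $\tfrac12\tau(4N-1)$ of them satisfy $d<e$. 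This yields
\begin{align*}
c(V_1^{(8)};N)\equiv\tfrac12\tau(4N-1)\pmod 2,
\end{align*}
$\tau$ being the number-of-divisors function.

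Finally, writing $4N-1=\prod_i p_i^{a_i}$, the integer $\tfrac12\tau(4N-1)$ is odd iff $\tau(4N-1)=\prod_i(a_i+1)\equiv2\pmod4$, i.e.\ iff exactly one $a_i\equiv1\pmod4$ and the remaining exponents are even, i.e.\ iff $4N-1=p^{4a+1}m^2$ with $p$ prime and $p\nmid m$; crucially no congruence restriction on $p$ emerges, unlike in the $T_0^{(8)},T_1^{(8)}$ cases. This is the asserted characterization. For \eqref{mock-8-V1-estimate} I would invoke Lemma \ref{lem-estimate} with $(A,B)=(4,3)$, using $4N-1=4(N-1)+3$ and $c(V_1^{(8)};0)=0$; as the only prime dividing $A=4$ is $2$, the constant equals $\frac{\pi^2}{6}(1+2^{-1})=\frac{\pi^2}{4}$. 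The one delicate point is the chain $j^2\le N\iff e\ge d+2\iff e>d$ combined with the non-squareness of $4N-1$, which is exactly what makes the divisor count halve cleanly; the rest is bookkeeping.
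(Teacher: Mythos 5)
Your argument is correct and follows essentially the same route as the paper: reduce $V_1^{(8)}(q)$ modulo $2$ to the Lambert-type series $\sum_{j\ge 1} q^{j^2}/(1-q^{2j-1})$, identify its $N$th coefficient with $\tfrac12 d(4N-1)$ via the pairing of divisors of the non-square $4N-1$, read off the parity from the factorization, and invoke Lemma \ref{lem-estimate} with $(A,B)=(4,3)$. The only cosmetic difference is that the paper establishes $a(N)=\tfrac12 d(4N-1)$ by rewriting the generating function in the variable $q^{4N-1}$ as $\sum q^{(2m-1)(2m+1+4k)}$, whereas you manipulate the divisibility condition $(2j-1)\mid(N-j^2)$ directly; both are the same count.
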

\begin{proof}
By definition we have
\begin{align}
V_1^{(8)}(q)=\sum_{n=0}^{\infty}\frac{q^{(n+1)^2}(-q;q^2)_n}{(q;q^2)_{n+1}}\equiv \sum_{n=1}^\infty \frac{q^{n^2}}{1-q^{2n-1}} \pmod{2}.
\end{align}
Let
\begin{align}
\sum_{n=1}^\infty a(n)q^n:=\sum_{n=1}^\infty \frac{q^{n^2}}{1-q^{2n-1}}.
\end{align}
Then we have $c(V_1^{(8)};n)\equiv a(n)$ (mod 2).

We will show that
\begin{align}
a(n)=\frac{1}{2}d(4n-1) \label{an-exp}
\end{align}
where $d(m)$ denotes the number of positive divisor of $m$. Indeed,
\begin{align*}
\sum_{n=1}^\infty a(n)q^{4n-1}&=\sum_{n=1}^\infty \frac{q^{4n^2-1}}{1-q^{8n-4}} =\sum_{n=1}^\infty q^{(2n-1)(2n+1)}\sum_{k=0}^\infty q^{4k(2n-1)} \\
&=\sum_{n=1}^\infty \sum_{k=0}^\infty q^{(2n-1)(2n+1+4k)}.
\end{align*}
Therefore, $a(n)$ is equal to the number of pairs of integers $(m,k)$ such that $m\geq 1, k\geq 0$ and  $4n-1=(2m-1)(2m+1+4k)$. This  implies  \eqref{an-exp}.

Let $4n-1$ have the prime factorization $4n-1=p_1^{e_1}\cdots p_k^{e_k}$. We have
\begin{align}
d(4n-1)=(e_1+1)\cdots (e_k+1).
\end{align}
Clearly, $\frac{1}{2}d(4n-1)$ is odd if and only all of $e_i$ are even except for exactly one $e_s\equiv 1$ (mod 4). Therefore, $c(V_1^{(8)};n)$ is odd if and only if $4n-1=p^{4a+1}m^2$ for some prime $p$ and integer $m$ with $p\nmid m$.

Applying Lemma \ref{lem-estimate} with $(A,B)=(4,3)$, we get \eqref{mock-8-V1-estimate}.
\end{proof}

From \eqref{an-exp} we get the following interesting identity.
\begin{corollary}
We have
\begin{align}
 \sum_{n=1}^\infty \frac{q^{n^2}}{1-q^{2n-1}}=\sum_{n=1}^\infty \frac{q^n}{1-q^{4n-1}}.
\end{align}
\end{corollary}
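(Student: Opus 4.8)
The plan is to show that both sides have the same coefficient of $q^N$ for every $N\geq 1$, and to identify that common coefficient with the quantity $a(N)$ already computed in the proof of Theorem \ref{thm-mock-8-V1}. Recall from the definition of $a(n)$ and from \eqref{an-exp} that the left-hand side equals $\sum_{n=1}^\infty a(n)q^n$ with
\begin{align*}
a(N)=\frac{1}{2}d(4N-1),
\end{align*}
where $d(m)$ is the number of positive divisors of $m$. So the entire task reduces to proving that the coefficient of $q^N$ on the right-hand side is also $\tfrac12 d(4N-1)$.

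First I would expand the right-hand side as a geometric series, writing
\begin{align*}
\sum_{n=1}^\infty \frac{q^n}{1-q^{4n-1}}=\sum_{n=1}^\infty\sum_{k=0}^\infty q^{n+k(4n-1)}.
\end{align*}
Thus the coefficient of $q^N$ counts the pairs $(n,k)$ with $n\geq 1$, $k\geq 0$ and $N=n+k(4n-1)$. The key algebraic observation is that multiplying by $4$ and subtracting $1$ linearizes this condition: $N=n+k(4n-1)$ is equivalent to
\begin{align*}
4N-1=(4n-1)(4k+1).
\end{align*}
Hence the coefficient equals the number of factorizations $4N-1=d\cdot e$ with $d=4n-1\equiv 3\pmod 4$ (so $d\geq 3$, i.e.\ $n\geq 1$) and $e=4k+1\equiv 1\pmod 4$ (so $e\geq 1$, i.e.\ $k\geq 0$). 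Since $k=(e-1)/4\geq 0$ and $n=(d+1)/4\geq 1$ are automatically nonnegative integers once the congruence conditions hold, this count is exactly the number of divisors of $4N-1$ that are congruent to $3$ modulo $4$.

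The final step is a parity-of-divisors argument. Because $4N-1\equiv 3\pmod 4$ and a perfect square is never $\equiv 3\pmod 4$, the integer $4N-1$ is not a perfect square, so its divisors split into $\tfrac12 d(4N-1)$ unordered complementary pairs $\{d,(4N-1)/d\}$ with $d\neq(4N-1)/d$. In each such pair the product is $\equiv 3\pmod 4$, so exactly one member is $\equiv 3\pmod 4$ and the other is $\equiv 1\pmod 4$. Therefore the number of divisors of $4N-1$ that are $\equiv 3\pmod 4$ equals $\tfrac12 d(4N-1)=a(N)$, which matches the coefficient on the left-hand side and proves the identity. I expect no genuine obstacle here; the only point requiring a little care is the bookkeeping that turns the exponent condition into the clean factorization $4N-1=(4n-1)(4k+1)$ and the verification that $4N-1$ cannot be a perfect square.
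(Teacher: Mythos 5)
Your proof is correct and follows the route the paper intends: it identifies the left-hand coefficient with $a(N)=\tfrac12 d(4N-1)$ via \eqref{an-exp}, expands the right-hand side as a geometric series, linearizes the exponent condition to the factorization $4N-1=(4n-1)(4k+1)$, and counts divisors in residue classes modulo $4$ exactly as the paper does in its derivation of \eqref{an-exp}. The only difference is that you write out explicitly the right-hand-side computation that the paper leaves implicit; there is no gap.
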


\begin{theorem}\label{thm-mock-8-U0S0S1}
We have $c(U_0^{(8)};n)\equiv p_{-3}(n)$ \text{\rm{(mod 2)}}. Moreover, we have $c(S_0^{(8)};n)\equiv p_{-3}(2n)$ \text{\rm{(mod 2)}} and $c(S_1^{(8)};n)\equiv p_{-3}(2n+1)$ \text{\rm{(mod 2)}}.
\end{theorem}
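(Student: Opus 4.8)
The plan is to dispose of the two ``Moreover'' congruences first, since they reduce instantly to the first assertion. The excerpt records $U_0^{(8)}(q)=S_0^{(8)}(q^2)+qS_1^{(8)}(q^2)$, in which $S_0^{(8)}(q^2)$ supplies only even powers of $q$ and $qS_1^{(8)}(q^2)$ only odd powers. Hence $c(S_0^{(8)};n)=c(U_0^{(8)};2n)$ and $c(S_1^{(8)};n)=c(U_0^{(8)};2n+1)$, so once the first congruence $c(U_0^{(8)};n)\equiv p_{-3}(n)\pmod 2$ is in hand, the claims $c(S_0^{(8)};n)\equiv p_{-3}(2n)$ and $c(S_1^{(8)};n)\equiv p_{-3}(2n+1)$ follow with no extra work.

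For the main statement I would avoid producing any new Hecke- or Appell-Lerch representation of $U_0^{(8)}$ and instead compare its Eulerian form directly with that of $\mu^{(2)}(q)$, for which the congruence $\mu^{(2)}(q)\equiv 1/(q;q)_\infty^3\pmod 4$ is already available. The observation driving the proof is that the two Eulerian series collapse to the \emph{same} series modulo $2$. Reducing the Pochhammer symbols mod $2$ (so that $1+q^k\equiv 1-q^k$) gives
\begin{align*}
(-q;q^2)_n\equiv (q;q^2)_n,\qquad (-q^4;q^4)_n\equiv (q^4;q^4)_n=(q^2;q^2)_n(-q^2;q^2)_n\equiv (q^2;q^2)_n^2 \pmod 2,
\end{align*}
and likewise $(-1)^n\equiv 1$ and $(-q^2;q^2)_n^2\equiv (q^2;q^2)_n^2\pmod 2$. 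Since a power series with constant term $1$ is invertible modulo $2$, these reductions pass to the denominators term by term, yielding
\begin{align*}
U_0^{(8)}(q)\equiv \sum_{n=0}^\infty \frac{q^{n^2}(q;q^2)_n}{(q^2;q^2)_n^2}\equiv \mu^{(2)}(q) \pmod 2 .
\end{align*}

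Combining this with $\mu^{(2)}(q)\equiv 1/(q;q)_\infty^3\pmod 4$, hence a fortiori modulo $2$, I obtain $U_0^{(8)}(q)\equiv 1/(q;q)_\infty^3\pmod 2$, that is, $c(U_0^{(8)};n)\equiv p_{-3}(n)\pmod 2$. The coefficient of $q^N$ receives contributions only from the finitely many $n$ with $n^2\le N$, so the termwise congruence is legitimate and no convergence subtlety intervenes. I do not expect a genuine obstacle: the one point needing care is the justification that reducing a denominator Pochhammer symbol modulo $2$ is valid, which is exactly the invertibility of a $1+O(q)$ power series modulo $2$. The whole content of the argument lies in recognizing that the Eulerian forms of $U_0^{(8)}$ and $\mu^{(2)}$ share the same reduction mod $2$; everything else is inherited from the previously established mod-$4$ identity for $\mu^{(2)}$.
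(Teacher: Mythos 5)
Your proof is correct, but it takes a different route from the paper. The paper proves the first congruence by invoking Gordon--McIntosh's Appell--Lerch representation
\begin{align*}
U_0^{(8)}(q)=\frac{(-q;q^2)_\infty}{(q^2;q^2)_\infty}\sum_{n=-\infty}^\infty \frac{1+q^{2n}}{1+q^{4n}}(-1)^nq^{2n^2+n},
\end{align*}
observing that the bilateral sum equals $2\sum_{n}\frac{q^{2n^2+n}}{1+q^{4n}}$ and hence reduces to $1$ modulo $2$, leaving $U_0^{(8)}(q)\equiv \frac{(-q;q^2)_\infty}{(q^2;q^2)_\infty}\equiv \frac{1}{(q;q)_\infty^3}\pmod 2$; the $S_0^{(8)},S_1^{(8)}$ claims are then extracted from $U_0^{(8)}(q)=S_0^{(8)}(q^2)+qS_1^{(8)}(q^2)$ exactly as you do. You instead work entirely with the Eulerian definitions, showing that $U_0^{(8)}(q)$ and $\mu^{(2)}(q)$ both reduce modulo $2$ to $\sum_{n\geq 0}\frac{q^{n^2}(q;q^2)_n}{(q^2;q^2)_n^2}$ --- all your Pochhammer reductions check out, including $(-q^4;q^4)_n\equiv(q^2;q^2)_n^2$, and the inversion of a $1+O(q)$ series with integer coefficients modulo $2$ and the $q^{n^2}$-support argument are exactly the right justifications --- and then you import the paper's earlier theorem $\mu^{(2)}(q)\equiv 1/(q;q)_\infty^3\pmod 4$. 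This buys a genuinely new intermediate congruence, $U_0^{(8)}(q)\equiv\mu^{(2)}(q)\pmod 2$, which the paper never states, and it avoids introducing the Gordon--McIntosh representation; the trade-off is that your argument is not self-contained in the same way, since the $\mu^{(2)}$ result you lean on is itself established via an Appell--Lerch series, so the transcendental input is relocated rather than eliminated. Both proofs are complete and of comparable length.
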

\begin{proof}
Gordon and McIntosh \cite{Gordon-McIntosh} proved that
\begin{align}
U_0^{(8)}(q)=\frac{(-q;q^2)_\infty}{(q^2;q^2)_\infty}\sum_{n=-\infty}^\infty \frac{1+q^{2n}}{1+q^{4n}}(-1)^nq^{2n^2+n}.
\end{align}
Note that
\begin{align}
\sum_{n=-\infty}^\infty \frac{1+q^{2n}}{1+q^{4n}}(-1)^nq^{2n^2+n} =2\sum_{n=-\infty}^\infty \frac{q^{2n^2+n}}{1+q^{4n}}.
\end{align}
We have
\begin{align}
U_0^{(8)}(q)\equiv \frac{(-q;q^2)_\infty}{(q^2;q^2)_\infty}\equiv \frac{1}{(q;q)_\infty^3} \pmod{2},
\end{align}
from which we get the desired conclusion immediately.

Since
\begin{align}
U_0^{(8)}(q)=S_0^{(8)}(q^2)+qS_1^{(8)}(q^2).
\end{align}
We get the parity results for $S_0^{(8)}(q)$ and $S_1^{(8)}(q)$ as by-products.
\end{proof}

\begin{theorem}\label{thm-mock-8-U1}
The coefficient $c(U_1^{(8)};n)$ is odd if and only if $8n-1=p^{4a+1}m^2$ for some prime $p$ and integer $m$ with $p\nmid m$. We have
\begin{align}
\#\left\{n\leq N:c(U_1^{(8)};n)\equiv 1 \,\, (\mathrm{mod \,\, 2}) \right\} =\frac{\pi^2}{4} \frac{N}{\log N}+O\left(\frac{N}{\log^2 N} \right).
\end{align}
\end{theorem}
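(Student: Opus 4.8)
The plan is to exploit the dissection $U_1^{(8)}(q)=T_0^{(8)}(q^2)+qT_1^{(8)}(q^2)$ already recorded among the definitions of the order 8 functions, which immediately separates the coefficient sequence of $U_1^{(8)}$ into its even- and odd-indexed parts. Reading off the two series, I would first record
\[
c(U_1^{(8)};2n)=c(T_0^{(8)};n), \qquad c(U_1^{(8)};2n+1)=c(T_1^{(8)};n).
\]
Thus the entire problem reduces to the two theorems already proved for the functions $T_0^{(8)}$ and $T_1^{(8)}$, and no new Hecke-type representation or Pell-equation analysis is required.

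For the even indices, Theorem \ref{thm-mock-8-T0} tells us that $c(T_0^{(8)};n)$ is odd precisely when $16n-1=p^{4a+1}m^2$ with $p$ prime and $p\nmid m$. Writing $N=2n$, this condition reads $8N-1=p^{4a+1}m^2$. For the odd indices, Theorem \ref{thm-mock-8-T1} says $c(T_1^{(8)};n)$ is odd precisely when $16n+7=p^{4a+1}m^2$; writing $N=2n+1$ we have $8N-1=8(2n+1)-1=16n+7$, so again the condition becomes $8N-1=p^{4a+1}m^2$. The key observation is that the two a priori different arithmetic conditions coming from $T_0^{(8)}$ and $T_1^{(8)}$ collapse into the single uniform statement $8N-1=p^{4a+1}m^2$ once the index is translated back to that of $U_1^{(8)}$. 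This establishes the characterization.

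For the quantitative estimate, I would note that $8n-1=8(n-1)+7$, so the number of $0\leq n\leq N$ with $c(U_1^{(8)};n)$ odd equals the quantity $\gamma(N)$ of Lemma \ref{lem-estimate} taken with $(A,B)=(8,7)$. Then $\tfrac{\pi^2}{6}\prod_{p\mid 8}(1+p^{-1})=\tfrac{\pi^2}{6}\cdot\tfrac{3}{2}=\tfrac{\pi^2}{4}$, which yields the stated asymptotic, in agreement with the identical constant obtained for $A^{(2)}(q)$ in Theorem \ref{thm-mock-2-A}.

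This argument carries no genuine obstacle: all of the hard work, namely the Hecke-type series, the application of Lemma \ref{lem-Pell}, and the formula for $H_{\mathbb{Z}[\sqrt{2}]}$ from Lemma \ref{lem-Z2}, is already packaged inside Theorems \ref{thm-mock-8-T0} and \ref{thm-mock-8-T1}. The only point requiring care is the bookkeeping of the index substitution $N=2n$ versus $N=2n+1$, which is exactly what makes the two separate conditions $16n-1$ and $16n+7$ merge into the single clean condition $8N-1$.
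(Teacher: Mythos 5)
Your proposal is correct and follows exactly the paper's own route: the paper likewise uses the dissection $U_1^{(8)}(q)=T_0^{(8)}(q^2)+qT_1^{(8)}(q^2)$ to get $c(U_1^{(8)};2n)=c(T_0^{(8)};n)$ and $c(U_1^{(8)};2n+1)=c(T_1^{(8)};n)$, then invokes Theorems \ref{thm-mock-8-T0} and \ref{thm-mock-8-T1}. Your extra bookkeeping showing that $16n-1$ and $16n+7$ both become $8N-1$ under the index translation, and the computation of the constant $\pi^2/4$ via Lemma \ref{lem-estimate} with $(A,B)=(8,7)$, are both accurate.
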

\begin{proof}
From the relation
\begin{align*}
U_1^{(8)}(q)=T_0(q^2)+qT_1(q^2)
\end{align*}
we deduce that
\begin{align}
c(U_1^{(8)};2n)= c(T_0^{(8)};n), \quad c(U_1^{(8)};2n+1) = c(T_1^{(8)};n).
\end{align}
The assertions then follow from Theorems \ref{thm-mock-8-T0} and \ref{thm-mock-8-T1}.
\end{proof}

\section{Mock theta functions of order 10}\label{sec-mock-10}
In his lost notebook \cite{lostnotebook}, Ramanujan recorded four mock theta functions of order 10:
\begin{align*}
&\phi^{(10)}(q):=\sum_{n=0}^\infty \frac{q^{n(n+1)/2}}{(q;q^2)_{n+1}}, 
\quad \psi^{(10)}(q):=\sum_{n=1}^\infty \frac{q^{n(n+1)/2}}{(q;q^2)_n},\\ 
& X^{(10)}(q):=\sum_{n=0}^\infty \frac{(-1)^nq^{n^2}}{(-q;q)_{2n}}, 
\quad \chi^{(10)}(q):=\sum_{n=1}^\infty \frac{(-1)^{n-1}q^{n^2}}{(-q;q)_{2n-1}}. 
\end{align*}
\begin{theorem}
The coefficient $c(\phi^{(10)};n)$ is odd if and only if $n=5k^2+2k$ for some integer $k$. The coefficient $c(\psi^{(10)};n)$ is odd if and only if $n=5k^2+4k+1$ for some integer $k$.
\end{theorem}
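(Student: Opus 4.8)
The plan is to follow the same route used for the other parity-type-$(1,0)$ functions in the first group (such as $B^{(2)}(q)$ and $\rho^{(6)}(q)$): locate a Hecke-type series representation for each of $\phi^{(10)}(q)$ and $\psi^{(10)}(q)$, reduce it modulo $2$, and show that the resulting double sum collapses to a single theta series whose exponents are exactly the claimed quadratic polynomials.

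First I would record the Hecke-type representations of $\phi^{(10)}(q)$ and $\psi^{(10)}(q)$ due to Choi \cite{Choi-1,Choi-2}. These have the shape of a multiplicative prefactor (a quotient of $q$-Pochhammer products) times a double sum $\sum_{n\ge 0}\sum_{j}(-1)^{\ast}q^{Q(n,j)}(1-q^{\ast})$ over an indefinite binary quadratic form $Q$. Reducing modulo $2$, the alternating signs disappear by the binomial theorem, and the prefactor simplifies because $(-a;q)_\infty\equiv(a;q)_\infty\pmod 2$; in particular a prefactor of the form $(-q;q)_\infty/(q;q)_\infty$ becomes $\equiv 1$, and a factor $(1-q^{\ast})$ becomes $(1+q^{\ast})$.

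The key step is the collapse of the inner sum. Writing the inner exponent in the completed-square form $-C\bigl(j+\tfrac{D}{2C}\bigr)^2+\mathrm{const}$, the map $j\mapsto -\tfrac{D}{C}-j$ is an exponent-preserving involution of the summation range; the terms it pairs cancel modulo $2$, leaving only the boundary (fixed) terms, which then combine with the outer $n$-sum exactly as in the treatment of $\rho^{(6)}(q)$. After this telescoping I expect to obtain
\begin{align*}
\phi^{(10)}(q)\equiv J_{3,10}\pmod 2,\qquad \psi^{(10)}(q)\equiv qJ_{1,10}\pmod 2.
\end{align*}
The main obstacle will be carrying out this bookkeeping accurately: tracking precisely which boundary terms survive and confirming that the surviving diagonal is genuinely the single theta $J_{3,10}$ (resp. $qJ_{1,10}$) rather than a theta quotient still carrying a factor $1/(q;q)_\infty$. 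This is exactly the distinction between parity type $(1,0)$ and the $(\tfrac12,\tfrac12)$ behavior observed for the order-$7$ functions, so the reduction must leave no residual $1/(q;q)_\infty$.

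Finally, expanding by Jacobi's triple product gives $J_{3,10}=j(q^3;q^{10})=\sum_{k}(-1)^k q^{5k^2-2k}\equiv\sum_{k}q^{5k^2+2k}\pmod 2$ (replacing $k$ by $-k$), and similarly $qJ_{1,10}\equiv\sum_{k}q^{5k^2+4k+1}\pmod 2$. Since the maps $k\mapsto 5k^2+2k$ and $k\mapsto 5k^2+4k+1$ are both injective on $\mathbb{Z}$, each admissible exponent occurs exactly once, so its coefficient is odd while every other coefficient is even. This yields the two characterizations.
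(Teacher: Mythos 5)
Your proposal is correct and follows essentially the same route as the paper: reduce Choi's Hecke-type representations modulo $2$ (the $-2\sum(\cdots)$ terms and the prefactor $(q^2;q^2)_\infty/(q;q)_\infty^2$ both disappear by the binomial theorem), collapse the inner sum $\sum_{|j|\le n}q^{-j^2}$ to its $j=0$ term via the involution $j\mapsto-j$, and combine the two surviving unilateral families into the bilateral theta series $\sum_{k\in\mathbb{Z}}q^{5k^2+2k}$ and $\sum_{k\in\mathbb{Z}}q^{5k^2+4k+1}$, which are exactly your $J_{3,10}$ and $qJ_{1,10}$ modulo $2$. The concluding injectivity observation completes the argument just as in the paper.
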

\begin{proof}
Choi \cite[Eqs.\ (2.15) and (2.18)]{Choi-1} found  the following Hecke-type series representations:
\begin{align}
\phi^{(10)}(q)=&\frac{(q^2;q^2)_\infty}{(q;q)_\infty^2}\left( \sum_{n=0}^\infty \sum_{|j|\leq n}q^{5n^2+2n-j^2}(1-q^{6n+3}) \right. \nonumber \\
&\left. -2\sum_{n=0}^\infty \sum_{j=0}^n q^{5n^2+7n+2-j^2-j}(1-q^{6n+6}) \right), \label{mock-10-phi}\\
\psi^{(10)}(q)=&\frac{(q^2;q^2)_\infty}{(q;q)_\infty^2}\left(\sum_{n=0}^\infty
\sum_{|j|\leq n}q^{5n^2+4n+1-j^2}(1-q^{2n+1}) \right. \nonumber \\
 & \left. -2\sum_{n=0}^\infty\sum_{j=0}^nq^{5n^2+9n+4-j^2-j}(1-q^{2n+2}) \right). \label{mock-10-psi}
\end{align}
From \eqref{mock-10-phi} and \eqref{mock-10-psi} we deduce that
\begin{align}
\phi^{(10)}(q)\equiv \sum_{n=0}^\infty q^{5n^2+2n}(1+q^{6n+3})\equiv \sum_{n=-\infty}^\infty q^{5n^2+2n} \pmod{2}, \label{10-phi} \\
\psi^{(10)}(q) \equiv \sum_{n=0}^\infty q^{5n^2+4n+1}(1+q^{2n+1}) \equiv \sum_{n=-\infty}^\infty q^{5n^2+4n+1} \pmod{2}. \label{10-psi}
\end{align}
The desired assertions follow immediately from \eqref{10-phi} and \eqref{10-psi}.
\end{proof}

\begin{theorem}\label{thm-mock-10-Xchi}
We have
\begin{align}
X^{(10)}(q)\equiv \frac{(q^8,q^{12},q^{20};q^{20})_\infty}{(q;q)_\infty^3} \pmod{2}, \\
\chi^{(10)}(q) \equiv q\frac{(q^4,q^{16},q^{20};q^{20})_\infty}{(q;q)_\infty^3} \pmod{2}.
\end{align}
\end{theorem}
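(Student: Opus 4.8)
The plan is to follow the same strategy as in the proof of Theorem~\ref{thm-mock-7}, replacing the modulus $14$ theta data used there by modulus $20$ data. The starting point will be Hecke-type double-sum representations for $X^{(10)}(q)$ and $\chi^{(10)}(q)$ of the same shape as Choi's formulas \eqref{mock-10-phi}--\eqref{mock-10-psi} for $\phi^{(10)}(q)$ and $\psi^{(10)}(q)$; these are available in \cite{Choi-1}. Each such representation has the form
\begin{align*}
(\text{prefactor})\left( \sum_{n\ge 0}\sum_{|j|\le n} (\pm 1) q^{an^2+bn-j^2}(1-q^{cn+d}) - 2\sum_{n\ge 0}\sum_{j=0}^n (\pm 1) q^{\cdots} \right),
\end{align*}
and the first step is to reduce everything modulo $2$.

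Reducing modulo $2$ accomplishes three simplifications at once, exactly as in the order $7$ case. The term carrying the explicit factor $2$ drops out; the sign $(-1)^j$ and the factor $(1-q^{cn+d})$ become $1$ and $1+q^{cn+d}$; and, crucially, the symmetric inner sum $\sum_{|j|\le n} q^{-j^2}$ collapses to its single $j=0$ term, since the contributions of $j$ and $-j$ cancel in pairs for $j\neq 0$. I also reduce the prefactor: using the elementary congruence $(q;q)_\infty^2\equiv (q^2;q^2)_\infty \pmod{2}$ (which follows from $(1-q^k)^2\equiv 1-q^{2k}$), the rational theta prefactor simplifies to $1/(q;q)_\infty^3$ modulo $2$. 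At this stage one is left with $\frac{1}{(q;q)_\infty^3}\sum_{n\ge 0} q^{an^2+bn}(1+q^{cn+d})$.

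The remaining step is to recognize the surviving $n$-sum as a single bilateral theta series. After splitting the factor $(1+q^{cn+d})$ and applying a reflection $n\mapsto -n-c_0$ to the second half (as in the passage from $\sum_{n\ge 0}q^{7n^2+13n+6}$ to $\sum_{n\le -1}q^{7n^2+n}$ inside Theorem~\ref{thm-mock-7}), the two halves fuse into $\sum_{n=-\infty}^{\infty} q^{10n^2-2n}$ for $X^{(10)}$ and $\sum_{n=-\infty}^{\infty} q^{10n^2-6n+1}$ for $\chi^{(10)}$. Jacobi's triple product then identifies these sums (up to signs, which are irrelevant modulo $2$) as $(q^8,q^{12},q^{20};q^{20})_\infty$ and $q(q^4,q^{16},q^{20};q^{20})_\infty$ respectively, which yields the two claimed congruences.

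The main obstacle will be the bookkeeping in this last step rather than any conceptual difficulty: one must select the correct Choi representation, verify that the prefactor really does collapse to $1/(q;q)_\infty^3$ (and not to some other theta quotient), and check that the quadratic exponent $an^2+bn$ together with the shift $cn+d$ reflects into precisely the residues $8,12$ (respectively $4,16$) modulo $20$ demanded by the triple products on the right-hand side. Since the leading coefficient is $a=10$ here, the completing-the-square and reflection arithmetic is slightly heavier than in the order $7$ computation, but it is entirely routine once the representation is in hand.
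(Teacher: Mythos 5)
Your proposal is correct and follows essentially the same route as the paper: start from Choi's Hecke-type double sums for $X^{(10)}$ and $\chi^{(10)}$, drop the term with the explicit factor $2$, collapse the symmetric inner $j$-sum to its $j=0$ term modulo $2$, reduce the prefactor to $1/(q;q)_\infty^3$, reflect to form the bilateral sums $\sum_{n\in\mathbb{Z}}q^{10n^2+2n}$ and $\sum_{n\in\mathbb{Z}}q^{10n^2+6n+1}$, and finish with Jacobi's triple product. The only slip is bibliographic: the needed representations are Eqs.\ (2.2.6) and (2.2.8) of \cite{Choi-2}, not \cite{Choi-1}.
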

\begin{proof}
Choi \cite[Eqs.\ (2.2.6),(2.2.8)]{Choi-2} found the following Hecke-type series representations:
\begin{align}
X^{(10)}(q)=&\frac{(q;q)_\infty}{(q^2;q^2)_\infty^2} \left( \sum_{n=0}^\infty \sum_{|j|\leq n}q^{10n^2+2n-2j^2}(1-q^{16n+8}) \right. \nonumber \\
&\left. +2\sum_{n=0}^\infty \sum_{j=0}^n q^{10n^2+12n+3-2j^2-2j}(1-q^{16n+16}) \right), \\
\chi^{(10)}(q)=& \frac{(q;q)_\infty}{(q^2;q^2)_\infty^2}\left(\sum_{n=0}^\infty\sum_{|j|\leq n} q^{10n^2+6n+1-2j^2}(1-q^{8n+4}) \right.\nonumber \\
& \left.+2\sum_{n=0}^\infty \sum_{j=0}^nq^{10n^2+16n+6-2j^2-2j}(1-q^{8n+8})  \right).
\end{align}
We deduce that
\begin{align*}
X^{(10)}(q)\equiv \frac{1}{(q;q)_\infty^3} \sum_{n=-\infty}^\infty q^{10n^2+2n} \equiv \frac{(q^8,q^{12},q^{20};q^{20})_\infty}{(q;q)_\infty^3} \pmod{2}, \\
\chi^{(10)}(q)\equiv \frac{1}{(q;q)_\infty^3}\sum_{n=-\infty}^\infty q^{10n^2+6n+1} \equiv q\frac{(q^4,q^{16},q^{20};q^{20})_\infty}{(q;q)_\infty^3} \pmod{2}.
\end{align*}
This proves the theorem.
\end{proof}

\section{Concluding Remarks}\label{sec-concluding}
If Conjecture \ref{conj-mock} is true, then together with Theorem \ref{thm-main}, we know that the 44 classical mock theta functions can be classified into three classes. The first class consists of 21 functions of parity type $(1,0)$, the second class consists of 19 functions of parity type $(\frac{1}{2},\frac{1}{2})$, and the third class contains 4 functions of type $(\frac{3}{4},\frac{1}{4})$.

Now we briefly discuss the 23 functions listed in Conjecture \ref{conj-mock}. Note that $c(g;n)\equiv p(n)$ (mod 2) for $g$ being $f^{(3)}(q)$, $\phi^{(3)}(q)$ or $\phi^{(6)}(q)$. Furthermore, we know from Theorem \ref{thm-mock-6-mu} that $c(2\mu^{(6)};2n)\equiv p(n)$ (mod 2). Thus the parity types of these functions are determined by the parity type of $p(n)$.  Next, we have $c(g;n)\equiv p_{-3}(n)$ (mod 2) for $g$ being $\mu^{(2)}(q)$ and $U_0^{(8)}(q)$.  Theorem \ref{thm-mock-8-U0S0S1} tells us that
\begin{align*}
c(S_0^{(8)};n)\equiv p_{-3}(2n) \pmod{2} \quad \text{and} \quad c(S_1^{(8)};n)\equiv p_{-3}(2n+1) \pmod{2}.
\end{align*}
Thus if we can verify the conjecture for $\mu^{(2)}(q)$ and $S_0^{(8)}(q)$, then it also  holds for $U_0^{(8)}(q)$ and $S_1^{(8)}(q)$.

Furthermore, from \eqref{5-chi-phi} we see that if we can prove the conjecture for $\phi_0^{(5)}(q)$ and $\phi_1^{(5)}(q)$, then it also holds for $\chi_0^{(5)}(q)$ and $\chi_1^{(5)}(q)$.

Finally, from Theorem \ref{thm-mock-6-lambda} we know that if the conjecture holds for $\psi^{(6)}(q)$, then it also holds for $\lambda^{(6)}(q)$.

From the above, we know that we only need to verify Conjecture \ref{conj-mock} for 15 functions:
\begin{align}\label{15-functions}
\mu^{(2)}(q), f^{(3)}(q), \chi^{(3)}(q), \phi_0^{(5)}(q), \phi_1^{(5)}(q), \psi^{(6)}(q), \gamma^{(6)}(q), \mathcal{F}_0^{(7)}(q), \nonumber \\ \mathcal{F}_1^{(7)}(q), \mathcal{F}_2^{(7)}(q), S_0^{(8)}(q), X^{(10)}(q), \chi^{(10)}(q), f_0^{(5)}(q), f_1^{(5)}(q).
\end{align}

For any integer sequence $\{c(n):n\geq 0\}$, we denote
\begin{align}
\delta(\{c(n)\};X):=\frac{1}{X}\#\left\{0\leq n<X: c(n) \text{ is odd}\right\}.
\end{align}
Note that for $f_0^{(5)}(q)$ and $f_1^{(5)}(q)$, from Theorem \ref{thm-mock-5-f0-f1}  we only need to verify that the sequences $c(f_0^{(5)};2n)$ and $c(f_1^{(5)};2n+1)$ are both of type $(\frac{1}{2},\frac{1}{2})$. Taking $X=100000$, using Maple we get the values of $\delta(\{c(n)\};X)$ for the 15 functions listed in \eqref{15-functions}. See Table \ref{tab-density}. These data give numerical evidence that support the truth of Conjecture \ref{conj-mock}. However, it might be quite challenging to prove the conjecture.
\begin{table}[h]
\centering
\renewcommand\arraystretch{1.5}
\begin{tabular}{cccccccc}
\specialrule{1pt}{1pt}{1pt}
  $c(n)$ & $c(\mu^{(2)};n)$ & $c(f^{(3)};n)$ & $c(\chi^{(3)};n)$ & $c(\phi_0^{(5)};n)$ & $c(\phi_1^{(5)};n)$  \\
  \hline
  $\delta(\{c(n)\};X)$ & 0.50161 & 0.50201 & 0.49847 & 0.50226  &0.50162   \\
  \hline
  $c(n)$ & $c(\psi^{(6)};n)$ & $c(\gamma^{(6)};n)$ &  $c(\mathcal{F}_0^{(7)};n)$ & $c(\mathcal{F}_1^{(7)};n)$ & $c(\mathcal{F}_2^{(7)};n)$ \\
  \hline
  $\delta(\{c(n)\};X)$ & 0.50086 & 0.49847 & 0.49857 & 0.49667 & 0.50102     \\
  \hline
  $c(n)$  &  $c(S_0^{(8)};n)$  & $c(X^{(10)};n)$ & $c(\chi^{(10)};n)$ & $c(f_0^{(5)};2n)$ & $c(f_1^{(5)};2n+1)$  \\
 $\delta(\{c(n)\};X)$  & 0.50041 & 0.50063 & 0.50244 & 0.50188 & 0.49838 & \\
 \specialrule{1pt}{1pt}{1pt}
\end{tabular}
\caption{Values of $\delta(\{c(n)\};X)$ with $X=100000$}\label{tab-density}
\end{table}

\subsection*{Acknowledgements}
The author was supported by the National Natural Science Foundation of China (11801424) and a start-up research grant of the Wuhan University.

\end{document}